\pgfplotsset{compat=1.18}
\def\ticklabelfakedlogalth{$\smash[t]{10^{\pgfmathprintnumber{\tick}}}$\vphantom{1}}
\tikzset{custom matrix plot settings/.style={/pgfplots/.cd,enlargelimits=false,colormap name=viridis reversed,title style={font=\small,at={(xticklabel cs:0.5)},below,align=center,text width=3.1cm},every axis title shift=-16pt,mesh/ordering=colwise,point meta=explicit,axis lines*=left,axis line shift=2.5pt,tick align=outside,separate axis lines=false,tick style={black,thin},tick label style={font=\footnotesize},label style={font=\footnotesize},tickwidth=1.2mm,subtickwidth=0.75mm,major grid style={black!25,thin},minor grid style={black!15,line width=0.3pt},label shift=-1.5pt,colorbar horizontal/.append style={colorbar style={axis line style={line width=0.3pt},xticklabel=\ticklabelfakedlogalth,xlabel={\#\ of graphs},xticklabel pos=upper,xlabel style={font=\footnotesize},axis lines*=box,tick align=inside,axis line shift=0pt,tick style={white,thin},colorbar/width=0.39cm,minor tick num=0,anchor=south west,max space between ticks=28}},axis line style=thin,every axis/.append style={line width=0.1pt}}}
\tikzstyle{vertex}=[circle, draw, inner sep=0pt, minimum size=4pt,fill=black]
\tikzstyle{whitevertex}=[circle, draw, inner sep=0pt, minimum size=4pt,fill=white]
\tikzstyle{bluevertex}=[circle, draw, inner sep=0pt, minimum size=4pt,fill=black]
\tikzstyle{hollowvertex}=[circle, draw, inner sep=0pt, minimum size=4pt, fill=white]
\tikzstyle{phantomvertex}=[circle, draw, inner sep=0pt, minimum size=4pt,color=white]
\tikzset{
  .../.tip={[sep=2pt 2]
    Round Cap[]. Circle[length=0pt 2,sep=2pt] Circle[length=0pt 2,sep=2pt] Circle[length=0pt 2, sep=2pt 2]}}
\tikzset{
  .../.tip={[sep=2pt 2]
    Round Cap[]. Circle[length=0pt 2,sep=2pt] Circle[length=0pt 2,sep=2pt] Circle[length=0pt 2, sep=2pt 2]}}
\tikzset{
      ellipsis/.tip={
Square[length=2pt,sep=0pt,color=white] Circle[length=1pt,sep=0pt,color=black] Square[length=1pt,sep=0pt,color=white]
Circle[length=1pt,sep=0pt,color=black] Square[length=1pt,sep=0pt,color=white]
Circle[length=1pt,sep=0pt,color=black] Square[length=2pt,sep=0pt,color=white]}}
\tikzset{middlearrow/.style n args={2}{
        decoration={markings,
            mark= at position {#2} with {\arrow{#1}} ,
        },
        postaction={decorate}
    }
}
\tikzset{/tikz/data visualization/scientific axes/clean without border/.style={
    /tikz/data visualization/@make axes/.style={
      x axis={
        visualize ticks={common={y axis={goto=padded min},direction axis=y axis,high=0pt},major={tick text at low}},
        visualize axis={y axis={goto=padded min}},
        visualize grid={common={direction axis=y axis}},
        padding=.5em,
      },
      y axis={
        visualize ticks={common={x axis={goto=padded min},direction axis=x axis,high=0pt},major={tick text at low}},
        visualize axis={x axis={goto=padded min}},
        visualize grid={common={direction axis=x axis}},
        padding=.5em
      },
    }
}}
\tikzset{graph styles 1/.style={vertex/.style={black,fill,draw,minimum size=6pt,inner sep=0pt,circle,thin},bold/.style={black,line width=0.6mm},plain/.style={black,thin},bold edges/.style=bold,plain edges/.style=plain,label distance=1mm,every label/.style=text node},graph styles 2/.style={graph styles 1,vertex/.append style={fill=none,minimum size=8pt}},text node/.style={rectangle,fill=none,draw=none},caption node/.style={text node,font=\Large}}
\tikzstyle{vertex_alt}=[circle, draw, minimum size=8pt]
\newtheorem{theorem}[subsubsection]{Theorem}
\newtheorem{corollary}[subsubsection]{Corollary}
\newtheorem{observation}[subsubsection]{Observation}
\newtheorem{proposition}[subsubsection]{Proposition}
\newtheorem{lemma}[subsubsection]{Lemma}
\newtheorem{case}{Case}[theorem]
\theoremstyle{definition}
\newtheorem{definition}[subsubsection]{Definition}
\newtheorem{example}[subsubsection]{Example}
\newtheorem*{unnumberednote}{Note}
\def\GraphSix#1"{\begingroup(\itshape graph6:\nolinebreak[3]\ \verb"\aftergroup\endgroup\aftergroup)}
\DeclareMathOperator{\Z}{Z}
\DeclareMathOperator{\Zp}{Z_+}
\newcommand{\Zs}{\Z_-}
\DeclareMathOperator{\F}{F}
\DeclareMathOperator{\comp}{comp}
\DeclareMathOperator{\term}{Term}
\DeclareMathOperator{\rev}{Rev}
\DeclareMathOperator{\cl}{cl}
\DeclareMathOperator{\ini}{initial}
\DeclareMathOperator{\rd}{rd}
\DeclareMathOperator{\p}{P}
\DeclareMathOperator{\T}{T}
\newcommand{\match}{\mu}
\newcommand{\cartProd}{\mathbin{\Box}}
\newcommand{\symmDiff}{\mathbin{\Delta}}
\newcommand{\TokGrf}[2]{\mathcal{#1}^{\mathrm{#2}}}
\newcommand{\RTokExch}{\TokGrf{R}{TE}}
\newcommand{\RTokSlide}{\TokGrf{R}{TS}}
\newcommand{\GenTokExch}{\TokGrf{R}{TE}_X}
\newcommand{\GenTokSlide}{\TokGrf{R}{TS}_X}
\newcommand{\ZTokExch}{\TokGrf{Z}{TE}}
\newcommand{\ZTokSlide}{\TokGrf{Z}{TS}}
\newcommand{\TokSlide}{\ZTokSlide}
\newcommand{\TokExch}{\ZTokExch}
\newcommand{\Bnull}{B^\emptyset_-}
\newcommand{\Wnull}{W^\emptyset_-}
\newcommand{\SDTwo}{\mathop{SD_2}}
\newcommand{\simrel}{\sim}
\newcommand{\simname}{\mathord{\simrel}}
\newcommand{\OverleafHack}[2]{}
\title{Reconfiguration of Minimum PSD Forcing Sets and Minimum Skew Forcing Sets}
 \author[1]{Novi Bong}
 \affil[1]{University of Delaware, nhbong@udel.edu}
\author[2]{Mary Flagg}
 \affil[2]{University of St. Thomas, Houston, flaggm@stthom.edu}
\author[3]{Mark Hunnell}
\affil[3]{Winston-Salem State University,  hunnellm@wssu.edu}
\author[4]{John Hutchens}
\affil[4]{University of San Francisco, johnd.hutchens@gmail.com}
\author[5]{Ryan Moruzzi}
\affil[5]{California State University East Bay, ryan.moruzzi@csueastbay.edu}
\author[6]{Houston Schuerger}
\affil[6]{Univeristy of Texas Permian Basin, schuerger\_h@utpb.edu}
\author[7]{Ben Small}
\affil[7]{University Place WA, bentsm@gmail.com}
 \date{\textbf{Keywords:} Reconfiguration, Zero Forcing, PSD Forcing, Skew Forcing \\Corresponding Author: Mark Hunnell}
\begin{document}

\maketitle

\begin{abstract}
    {Reconfiguration graphs provide a way to represent relationships among solutions to a problem, and have been studied in many contexts.  We investigate the reconfiguration graphs corresponding to minimum PSD forcing sets and minimum skew forcing sets.  We present results for the structure and realizability of certain graph classes as token exchange and token sliding reconfiguration graphs.  Additionally, we use a universal approach to establish structural properties for the reconfiguration graphs of many common graph parameters under these reconfiguration rules. Finally, we compare results on reconfiguration graphs for zero forcing variants.  }
\end{abstract}

\section{Introduction}

Reconfiguration examines the relationship among solutions to a problem.  The vertices of the reconfiguration graph are the feasible solutions to the problem, and its edges are determined by a reconfiguration rule.  Namely, there exists an edge between two vertices of the reconfiguration graph if the corresponding solutions can be transformed into one another by exactly one application of the reconfiguration rule.  Broadly speaking, reconfiguration is studied from three perspectives: structure (connectedness, Hamiltonicity, diameter, etc.), realizability (which graphs arise as a specific kind of reconfiguration graph), and algorithmic properties (such as finding the shortest path between two solutions quickly).  This article is concerned with questions of the first two types.

Given a graph $G$ with vertices $V(G)$, we are interested in problems whose solutions are subsets of vertices, i.e., of the form $S\subseteq V(G)$.  Each set $S$ having the required property is a vertex of the reconfiguration graph.  Clearly, the reconfiguration rule chosen is important to the structure of the reconfiguration graph. In the literature, three reconfiguration rules are considered most frequently.

\begin{enumerate} 
    \item {\bf Token Exchange (TE):} two vertices $S$ and $S^{\prime}$ are adjacent if and only if $S^{\prime}$ is obtained by exchanging one vertex in $S$ for a vertex in $S^{\prime}$ (note this is also sometimes called token jumping);
    \item {\bf Token Sliding (TS):} two vertices $S$ and $S^{\prime}$ are adjacent if and only if $S^{\prime}$ is obtained by exchanging exactly one vertex in $S$ for a vertex in $S^{\prime}$, and the exchanged vertices are neighbors;
    \item {\bf Token Addition and Removal (TAR):} two vertices $S$ and $S^{\prime}$ are adjacent if and only if $S^{\prime}$ is obtained from $S$ through the addition or removal of exactly one vertex.
\end{enumerate}

Reconfiguration questions have been explored in many contexts, including vertex coloring \cite{beier2016coloring,cereceda2008colourings}, independent sets \cite{avis2023independentsets}, dominating sets \cite{bonamy2021dominating}, zero forcing \cite{geneson2023reconfiguration}, and power domination \cite{bjorkman2022power}.  An overview of reconfiguration on graph colorings and dominating sets is given in \cite{mynhardt2020survey}.  

In this article we are primarily interested in the token exchange and token sliding reconfiguration graphs for two variants of zero forcing: PSD forcing and skew forcing.  (Formal definitions will be given in \Cref{prelim}.) PSD forcing was introduced in \cite{param}, and has since been the subject of much interest, see for instance \cite{ekstrand2013psdzf,psdfort,hogben2023new,psdprop}. Similarly, skew forcing and related graph parameters have been studied extensively, see for example   \cite{allison2010minimum,ansill2016failed,curl2019skew,dealba2014some,kingsley2015skew}.   Token exchange reconfiguration graphs have been studied for both standard zero forcing \cite{geneson2023reconfiguration} and power domination \cite{bjorkman2022power}, hence our focus on PSD forcing and skew forcing.

However, many structural questions about the reconfiguration graphs are independent of the forcing rule used, and furthermore apply to the reconfiguration of many graph parameters not related to zero forcing.  Thus we develop a universal approach in \Cref{universal} to establish results for the token exchange and token sliding reconfiguration rules that are of interest to our forcing variants as well as many other graph parameters. The two variants provide an interesting contrast when considering their structural and realizable properties.  These are developed and discussed for PSD forcing in \Cref{psd} and for skew forcing in \Cref{skew}.  In \Cref{connections}, we consider a pair of instances in which there is a deep connection between certain PSD and skew reconfiguration graphs.  In \Cref{compare}, we use the work in \cite{geneson2023reconfiguration} and our results in this article to compare properties of reconfiguration graphs across zero forcing variants.

\section{Preliminaries and notation}
\label{prelim}
In this section we provide definitions for graphs, zero forcing and its variants, and reconfiguration.  Additional background can be found in \cite{hogben2022book} and \cite{mynhardt2020survey}.

\subsection{Graph terminology}
A {\em graph} $G$ is a pair $(V(G),E(G))$, where $V(G)$ is the set of {\em vertices} and $E(G)$ is a set of 2-element sets of vertices called {\em edges}.  To help differentiate between subsets of $V(G)$ of cardinality two and edges, given two distinct vertices $u,v\in V(G)$, the subset of $V(G)$ composed of these two vertices will be denoted by $\{u,v\}$ while the edge between $u$ and $v$ will be denoted by $uv$ (or $vu$).  All graphs are assumed to be finite and {\em simple}, that is, to have neither loops (edges between a vertex and itself) nor multiple edges between any two distinct vertices.  

If $G$ and $H$ are graphs such that $V(H) \subseteq V(G)$ and $E(H) \subseteq E(G)$, then $H$ is a {\em subgraph} of $G$, denoted $H\leq G$.  If $H$ is a subgraph of $G$ and for any  two vertices $u,v \in V(H)$ we have $uv \in E(H)$ if and only if $uv \in E(G)$, then $H$ is an {\em induced subgraph} of $G$.  Given a subset of vertices $S \subseteq V(G)$, the induced subgraph $H$ of $G$ with vertex set $V(H)=S$ will be denoted $G[S]$ and called the subgraph induced by $S$.  In addition, for a given set of vertices $S \subseteq V(G)$, the notation $G-S$ will be used to denote $G[V(G) \setminus S]$.  

Given a vertex $v \in V(G)$, the {\em open neighborhood} of $v$ in $G$, denoted by $N_G(v)$, is the collection of vertices $u$ such that $vu \in E(G)$.  The {\em closed neighborhood} of a vertex $v$, denoted by $N_G[v]$, is $N_G[v]=N_G(v) \cup \{v\}$.  Likewise, given a set of vertices $S \subseteq V(G)$, the open (respectively, closed) neighborhood of $S$ is defined to be the union of the open  (respectively, closed) neighborhoods of the vertices of $S$ and is denoted by $N_G(S)$ (respectively, $N_G[S]$).  A {\em clique} is a set $S \subseteq V(G)$ such that for each $u,v \in S$ distinct, we have $uv \in E(G)$.  We denote by $\omega(G)$ the size of the largest clique of $G$. 

A \emph{path} is a sequence of distinct vertices $v_1,v_2,\dots,v_m$ such that for each $i$ with $1 \leq i \leq m-1$ we have $v_iv_{i+1} \in E(G)$; the \emph{length} of a path is the number of edges in that path.  Given a pair of vertices $u,v \in V(G)$, a {\em $uv$-path} is a path $v_1,v_2,\dots,v_m$ such that $u=v_1$, $v=v_m$.  A graph $G$ is said to be \emph{connected} if for each pair of vertices $u,v \in V(G)$ there exists a $uv$-path.  A graph is {\em disconnected} if it is not connected.   The maximal connected subgraphs of $G$ are its {\em components}.  The set of these components is denoted $\comp(G)$, and given a vertex $v$, the component of $G$ that contains the vertex $v$ is denoted $\comp(G,v)$.  The {\em distance} between two vertices $u$ and $v$ in the same component of a graph is the length of the shortest $uv$-path.  We also view a path as a graph: the \emph{path} $P_n$ is the graph with $V(P_n)=\{v_1,\dots,v_n\}$ and $E(P_n)=\{{ v_iv_{i+1}}: i=1,\dots,n-1\}$.  We denote the complete graph, complete bipartite graph, and cycle graph on $n=p+q$ vertices by $K_n$, $K_{p,q}$, and $C_n$ respectively.

The {\em join} of two graphs $G$ and $H$, denoted by $G \vee H$, is the graph with vertex set $V(G \vee H)=V(G) \cup V(H)$ and edge set $E(G \vee H) = E(G) \cup E(H) \cup \{gh:g \in G, h \in H\}$ (effectively, it is the disjoint union of $G$ and $H$ with every possible edge between $G$ and $H$ added to the graph).  The {\em Cartesian product} of two graphs $G$ and $H$, denoted $G \cartProd H$ is the graph with vertex set $V(G) \times V(H)$ such that given $(g_1,h_1),(g_2,h_2) \in V(G \cartProd H)$, we have $(g_1,h_1)(g_2,h_2) \in E(G \cartProd H)$ if and only if either $g_1=g_2$ and $h_1h_2 \in E(H)$ or $h_1=h_2$ and $g_1g_2 \in E(G)$.

We conclude this section by highlighting some less familiar definitions used in this article for ease of reference later.
\begin{definition}
    Let $G$ be a graph and $\simname$ be an equivalence relation on $V(G)$, forming the partition $V(G)/\simname$, with the equivalence class of $v \in V(G)$ denoted $[v]$.  The {\em quotient graph} of $G$ with respect to $\simname$ is the graph with vertex set $V(G/\simname)=V(G)/\simname$ and edge set $E(G/\simname)$ such that given $[v],[u] \in V(G/\simname)$ we have $[v][u]\in E(G/\simname)$ if and only if $[v] \neq [u]$ and there exist $w_v \in [v]$ and $w_u \in [u]$ such that $w_vw_u \in E(G)$.
\end{definition}

\begin{definition}
      Given vertex-disjoint graphs $G$ and $H$ and vertices $g_v \in V(G)$ and $h_v \in V(H)$, the {\em vertex sum} of $G$ and $H$ given by the identification of $g_v$ and $h_v$, denoted $G \oplus_v H$ is the graph $(G \cup H)/\simname$ such that given distinct $w,z \in V(G) \cup V(H)$, we have $w \simrel z$ if and only if $\{w,z\}=\{g_v,h_v\}$. Due to the simplistic nature of the partition given by $\simname$, we denote $[g_v]$ by $v$ and we denote $[u] \neq [g_v]$ by $u$.
\end{definition}

\begin{definition}
\label{DuplicationTypes}
Let $G$ be a graph and $v \in V(G)$.  Let the graph $G'$ be the result of adding a vertex $v'$ to $G$ such that $N_{G'}(v')=N_G(v)$. The graph $G'$ is said to be constructed from the \emph{duplication} of $v$, and $v$ and $v'$ are said to be \emph{independent twins}.  (For clarity, we will also use the term \emph{independent duplication} to refer to this construction.)  Let the graph $G''$ be the result of adding a vertex $v''$ to $G$ such that $N_{G''}(v'')=N_G[v]$. The graph $G''$ is said to be constructed from the \emph{join-duplication} of $v$, and $v$ and $v''$ are said to be \emph{adjacent twins}.
\end{definition}

\subsection{Zero forcing}\label{zeroforcing}
In each of the variants of zero forcing discussed here, one starts with a graph and colors every vertex of the graph either blue or white. 

Given a set  of blue vertices (with the other vertices colored white), a process is started during which vertices cause white vertices to become blue.  Each variant $X$ is defined by its color change rule $X$-CCR, which governs under what circumstances a vertex can cause a white vertex to become blue during this process.  When a vertex $u$ causes a white vertex $v$ to become blue, this is referred to as $X$-\emph{forcing} (or simply \emph{forcing} when the context is clear) and denoted $u \rightarrow v$.  There is often a choice as to which vertex is chosen to force $v$ among those that can; for standard zero forcing, PSD forcing, and skew forcing, the final set of blue vertices does not depend on which vertices are chosen to perform the forces {or the order in which valid forces are performed}.   It is worth noting that once a vertex is blue, it will remain blue.   In the variants discussed, the goal is for every vertex in the graph to become blue, so the color change rule will be applied until no further forces are possible.  The relevant color change rules are as follows:  
\begin{itemize}
    \item Standard zero forcing color change rule ($\Z$-CCR): If a blue vertex $u$ has a unique white neighbor $v$, then $u$ can force $v$ to become blue;
    \item Positive semidefinite (PSD) forcing color change rule ($\Zp$-CCR): If $B$ is the set of currently blue vertices, $C$ is a component of $G-B$, and $u$ is a blue vertex such that $N_G(u) \cap V(C)=\{v\}$, then $u$ can force $v$ to become blue;
    \item Skew forcing color change rule ($\Zs$-CCR): If any vertex $u$ has a unique white neighbor $v$, then $u$ can force $v$ to become blue.
\end{itemize}

A {\em standard zero forcing set} of a graph $G$ is a set of vertices $B$ such that if $B$ is the set of initially blue vertices and $\Z$-CCR is applied a sufficient number of times, then all vertices of $G$ become blue.  The {\em standard zero forcing number} of a graph $G$ is $\Z(G)=\min\{\vert B\vert: B \text{ is a zero forcing set of }G\}$. A {\em minimum standard zero forcing set} $B$ of a graph $G$ is a zero forcing set of $G$ such that $\vert B \vert=\Z(G)$. 
A {\em (minimum) PSD forcing set} and the {\em PSD forcing number $\Zp(G)$} of a graph are defined analogously using $\Zp$-CCR as the color change rule. A {\em (minimum) skew forcing set} and the {\em skew forcing number $\Zs(G)$} are also defined analogously using the $\Zs$-CCR color change rule.

We now recall the notion of a PSD fort before introducing the analogous definition for skew forts. 

\begin{definition}[\cite{psdfort}]\label{defPSDfort}
Let $G$ be a graph, $F \subseteq V(G)$ be nonempty, and $\{C_i\}_{i=1}^k$ be the components of $G[F]$.  $F$ is a \emph{PSD fort} of $G$ if for each $v \in V(G) \setminus F$ and each $i \in \{1,2,\dots,k\}$, $|N_G(v) \cap V(C_i)| \neq 1$. 
\end{definition}

\begin{theorem}[\cite{psdfort}]\label{thmPSDfort}
Let $G$ be a graph, and $B \subseteq V(G)$.  $B$ is a PSD forcing set of $G$ if and only if $B \cap F \neq \emptyset$ for each PSD fort $F$ of $G$.
\end{theorem}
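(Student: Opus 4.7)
The plan is to prove both directions by contrapositive, using the termination/confluence properties of PSD forcing noted in Section \ref{zeroforcing}.

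For the forward direction, I want to show that if $F$ is a PSD fort with $F \cap B = \emptyset$, then $B$ is not a PSD forcing set. I would prove by induction on the number of applications of $\Zp$-CCR that $F$ remains entirely white throughout the forcing process starting from $B$. The base case holds since $B \subseteq V(G) \setminus F$. For the inductive step, suppose that at some stage the current blue set $B'$ satisfies $B' \cap F = \emptyset$, and suppose for contradiction that some blue $u \in B'$ can force a vertex $v \in F$. Let $C'$ be the component of $G - B'$ containing $v$, so $N_G(u) \cap V(C') = \{v\}$ by $\Zp$-CCR. Since $F \subseteq V(G) \setminus B'$, the component $C_i$ of $G[F]$ containing $v$ satisfies $V(C_i) \subseteq V(C')$, which forces $N_G(u) \cap V(C_i) \subseteq \{v\}$. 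But $v \in N_G(u) \cap V(C_i)$, so $|N_G(u) \cap V(C_i)| = 1$, contradicting the PSD fort condition (note $u \notin F$ since $u$ is blue). Hence no vertex of $F$ is ever forced, and $B$ is not a PSD forcing set.

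For the reverse direction, suppose $B$ is not a PSD forcing set. Run the PSD forcing process from $B$ to termination, and let $F$ be the set of vertices that remain white. By hypothesis $F \neq \emptyset$, and clearly $B \cap F = \emptyset$. I claim $F$ is a PSD fort. Let $v \in V(G) \setminus F$ (so $v$ is blue at termination) and let $C$ be any component of $G[F]$. If $|N_G(v) \cap V(C)| = 1$, say $N_G(v) \cap V(C) = \{w\}$, then because the white vertices at termination are exactly $F$, the component of $G - (V(G) \setminus F)$ containing $w$ is precisely $C$, and so $v$ could apply $\Zp$-CCR to force $w$, contradicting termination. Hence $|N_G(v) \cap V(C)| \neq 1$, confirming that $F$ is a PSD fort disjoint from $B$.

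The main obstacle is the bookkeeping in the forward direction: one must be careful that the component $C_i$ of $G[F]$ used in the fort definition actually sits inside the component $C'$ of the \emph{larger} subgraph $G - B'$ that governs the current $\Zp$-CCR. This is where confluence of PSD forcing is implicitly useful, as it lets us pick a convenient ordering of forces without loss of generality. Once this containment $V(C_i) \subseteq V(C')$ is established, the contradiction with the PSD fort condition follows immediately. The reverse direction is essentially definitional, relying on the fact that $F$ itself is the set of white vertices at termination so that $G[F]$ and $G - (V(G) \setminus F)$ are the same graph.
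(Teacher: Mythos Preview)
Your proof is correct. Note, however, that the paper does not actually prove this theorem: it is simply quoted from \cite{psdfort} without proof, so there is nothing in the paper to compare against directly. That said, the paper does prove the analogous statement for skew forts (\Cref{skewfort}) via \Cref{SkewClosureComplement}, \Cref{ClosureVsForts}, and \Cref{skewderivedset}, and your argument parallels that development closely: your reverse direction is exactly the PSD analogue of \Cref{SkewClosureComplement} (the complement of the closure is a fort), and your forward direction is the PSD analogue of \Cref{ClosureVsForts} (a fort disjoint from the initial set remains disjoint from the closure). The only structural difference is that in the PSD setting you must track the containment $V(C_i)\subseteq V(C')$ between components of $G[F]$ and of $G-B'$, which you handle correctly; this step has no counterpart in the skew case because the skew fort condition is stated for all vertices of $G$ rather than component-by-component.
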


It is worth noting that every graph $G$ contains at least one PSD fort, $V(G)$.  We extend the definition of forts to skew forcing with the following definition:

\begin{definition}
Let $G$ be a graph and $F$ be a nonempty subset of $V(G)$.  If for each vertex $v \in V(G)$, $\left|N_G(v) \cap F\right| \neq 1$, then $F$ is a \emph{skew fort} of $G$.
\end{definition}

Not every graph contains a skew fort; in particular, if $\Z_-(G)=0$, then $G$ contains no skew forts.  However, it is worth noting that in many cases $V(G)$ is still a skew fort.  Specifically, if $G$ contains no pendent vertices, then $V(G)$ is a skew fort of $G$, and as we will see from \Cref{skewfort}, this ensures that $\Z_-(G) \geq 1$.

\begin{lemma}\label{skewfortcup}
Let $G$ be a graph and $F_1$,$F_2$ be skew forts of $G$.  Then $F_1 \cup F_2$ is a skew fort of $G$.
\end{lemma}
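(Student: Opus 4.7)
The plan is to proceed by contradiction, directly from the definition of skew fort. Suppose, toward a contradiction, that $F_1 \cup F_2$ fails to be a skew fort, so there exists some vertex $v \in V(G)$ with $|N_G(v) \cap (F_1 \cup F_2)| = 1$; call this unique neighbor $u$.

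The key observation is that for each $i \in \{1,2\}$, the set $N_G(v) \cap F_i$ is contained in $N_G(v) \cap (F_1 \cup F_2)$, so each of these intersections has size at most $1$. Since $F_i$ is a skew fort, the definition forces $|N_G(v) \cap F_i| \neq 1$, and hence $|N_G(v) \cap F_i| = 0$ for both $i=1$ and $i=2$. But then $u$ lies in neither $F_1$ nor $F_2$, contradicting $u \in F_1 \cup F_2$.

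There is essentially no obstacle here, since nonemptiness of $F_1 \cup F_2$ is automatic from the nonemptiness of either $F_i$, and the argument just combines the defining inequality for $F_1$ and for $F_2$. The only thing to double check is the edge case where $N_G(v)$ meets only one of the $F_i$: this is handled uniformly, because the above argument concludes $|N_G(v) \cap F_i| = 0$ for \emph{both} $i$ simultaneously, leaving no room for $u$ to exist.
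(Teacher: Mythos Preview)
Your proof is correct and takes essentially the same approach as the paper: both arguments rest on the containment $N_G(v)\cap F_i\subseteq N_G(v)\cap(F_1\cup F_2)$ together with $|N_G(v)\cap F_i|\neq 1$. The paper phrases it directly via the inequality $\max\{|N_G(v)\cap F_1|,|N_G(v)\cap F_2|\}\leq |N_G(v)\cap(F_1\cup F_2)|$, while you phrase the same observation as a contradiction, but the content is identical.
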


\begin{proof}
Let $v \in V(G)$, and note that $\max\{|N_G(v) \cap F_1|, |N_G(v) \cap F_2|\} \leq |N_G(v) \cap (F_1 \cup F_2)| \leq {|N_G(v) \cap F_1|}+|N_G(v) \cap F_2|$.  Since neither $|N_G(v) \cap F_1|=1$ nor $|N_G(v) \cap F_2|=1$, it follows that $|N_G(v) \cap (F_1 \cup F_2)| \neq 1$.
\end{proof}

We borrow the concept of the closure of a set of vertices, which provides a concise notation for the result of a certain skew forcing process:

\begin{definition}
Let $G$ be a graph, and let $S\subseteq V(G)$.  Starting with $S$ as the initial set of blue vertices, apply the skew forcing color change rule until no further forces are possible.  The resulting set of blue vertices is the \emph{skew closure} of $S$, denoted $\cl_-(G,S)$.
\end{definition}

\begin{proposition}
\label{SkewClosureComplement}
Let $G$ be a graph and let $S\subseteq V(G)$ be a set of vertices that is not a skew forcing set of $G$.  Then the complement of its skew closure $V(G)\setminus\cl_-(G,S)$ is a skew fort of $G$.
\end{proposition}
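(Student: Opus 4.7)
The plan is to argue by contradiction. Let $F := V(G) \setminus \cl_-(G,S)$. Since $S$ is not a skew forcing set, $\cl_-(G,S) \neq V(G)$, so $F$ is nonempty, meeting the nonemptiness requirement of \Cref{defPSDfort}'s skew analogue. Suppose toward contradiction that $F$ is \emph{not} a skew fort; then by the definition of a skew fort, there exists a vertex $v \in V(G)$ with $|N_G(v) \cap F| = 1$. Call the unique such neighbor $w$.

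Every neighbor of $v$ other than $w$ lies in $\cl_-(G,S)$, i.e., is blue, while $w$ itself lies in $F$ and is therefore white. Hence $w$ is the unique white neighbor of $v$ in $G$. The key observation — and what distinguishes this from standard zero forcing or PSD forcing — is that the skew CCR allows \emph{any} vertex with a unique white neighbor to force, regardless of whether the forcing vertex is itself blue or white. So whether $v \in \cl_-(G,S)$ or $v \in F$, the force $v \to w$ is permitted under $\Z_-$-CCR.

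But this contradicts the definition of $\cl_-(G,S)$: the skew closure is obtained by applying the skew color change rule until no further forces are possible, so the force $v \to w$ should already have been carried out, placing $w$ in $\cl_-(G,S)$ and not in $F$. This contradiction shows no such $v$ exists, and therefore $|N_G(v) \cap F| \neq 1$ for every $v \in V(G)$, so $F$ is a skew fort of $G$.

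The main subtlety to highlight — rather than any real obstacle — is exactly the point above: one must use the feature of $\Z_-$-CCR that the forcing vertex need not be blue, so that the argument works uniformly whether $v$ lies in the closure or in its complement. This is also the reason the statement has such a clean form compared to, say, the standard zero forcing analogue, where a separate case analysis on the color of $v$ would not go through in the same way.
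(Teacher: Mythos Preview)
Your proof is correct and follows essentially the same approach as the paper: both establish nonemptiness from the failure of $S$ to skew force, then observe that any vertex with exactly one neighbor in the complement of the closure would yield a valid skew force, contradicting the termination condition defining $\cl_-(G,S)$. The paper states this directly rather than by contradiction, and omits your commentary on why the skew CCR (allowing white vertices to force) is what makes the argument work without a case split; that remark is accurate but not needed for the proof itself.
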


\begin{proof}
Since $S$ is not a skew forcing set of $G$, $\cl_-(G,S)\neq V(G)$.  By definition, there are no valid skew forces for $\cl_-(G,S)$.  Thus, for all $v\in V(G)$, $|N_G(v)\cap(V(G)\setminus\cl_-(G,S))|\neq 1$.  Thus $V(G)\setminus\cl_-(G,S)$ is a skew fort of $G$.
\end{proof}

\begin{lemma}
\label{ClosureVsForts}
Let $G$ be a graph, let $S\subseteq V(G)$, and let $F$ be a skew fort of $G$ such that $S\cap F=\emptyset$.  Then $\cl_-(G,S)\cap F=\emptyset$ as well.
\end{lemma}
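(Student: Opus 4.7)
The plan is to induct on the number of skew forces performed starting from $S$, maintaining the invariant that the current blue set is disjoint from $F$. More precisely, let $B_0 = S$ and let $B_0, B_1, B_2, \ldots, B_\ell = \cl_-(G,S)$ be the sequence of blue sets obtained by performing one valid skew force at a time (using any fixed order of forces). The base case $B_0 \cap F = \emptyset$ is the hypothesis $S \cap F = \emptyset$.

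For the inductive step, assume $B_i \cap F = \emptyset$ and suppose the force from $B_i$ to $B_{i+1}$ is $u \to v$, so that $v$ is the unique white neighbor of $u$ with respect to $B_i$. I would show $v \notin F$, which suffices since $B_{i+1} = B_i \cup \{v\}$. Suppose toward contradiction that $v \in F$. Since $F$ is a skew fort, $|N_G(u) \cap F| \neq 1$; combined with $v \in N_G(u) \cap F$ this forces $|N_G(u) \cap F| \geq 2$, so $u$ has another neighbor $w \in F$ distinct from $v$. By the inductive hypothesis $B_i \cap F = \emptyset$, so $w \notin B_i$, meaning $w$ is a second white neighbor of $u$. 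This contradicts $v$ being the unique white neighbor of $u$, so $v \notin F$ and $B_{i+1} \cap F = \emptyset$.

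The main (minor) obstacle is justifying that it suffices to argue along a single sequence of forces: this is exactly the well-known confluence property of the skew forcing process already noted in \Cref{zeroforcing} (the final set of blue vertices does not depend on the order of valid forces), so $\cl_-(G,S) = B_\ell$ regardless of the order chosen. Everything else is the short fort/forcing interaction above.
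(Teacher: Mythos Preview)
Your proof is correct and essentially the same as the paper's. The paper phrases it as a contradiction on ``the first vertex of $F$ to become blue'' rather than an explicit induction, but this is the same argument: at that moment the entire fort $F$ is white, and the forcing vertex would then have $|N_G(u)\cap F|=1$, contradicting the skew fort condition.
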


\begin{proof}
Suppose, by way of contradiction, that $w$ is the first vertex of $F$ to become blue during a skew forcing process beginning with $S$.  Let $S'$ be the set of blue vertices at the time that $w$ is forced.  Then $F\subseteq V(G)\setminus S'$ and there exists $v\in V(G)$ such that $N_G(v)\cap (V(G)\setminus S')=\{w\}$.  However, then \[\{w\}\subseteq N_G(v)\cap F\subseteq N_G(v)\cap (V(G)\setminus S')=\{w\},\] contradicting the fact that $F$ is a skew fort.
\end{proof}

\begin{corollary}
\label{skewderivedset}
Let $G$ be a graph, and let $S\subseteq V(G)$.  Then $V(G)\setminus\cl_-(G,S)=\bigcup\mathbb{F}_S$, where $\mathbb{F}_S$ is the set of skew forts $F$ of $G$ such that $S\cap F=\emptyset$.
\end{corollary}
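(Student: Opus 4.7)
The plan is to establish the set equality by double inclusion, leveraging the two results that immediately precede the corollary. The proof should be short, since both directions are essentially one-step applications.

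For the forward inclusion $V(G)\setminus\cl_-(G,S)\subseteq\bigcup\mathbb{F}_S$, I would split into two cases depending on whether $S$ is already a skew forcing set. If $S$ is a skew forcing set of $G$, then $\cl_-(G,S)=V(G)$, so the left side is empty and the inclusion is vacuous. Otherwise, \Cref{SkewClosureComplement} guarantees that $V(G)\setminus\cl_-(G,S)$ is itself a skew fort of $G$. Since $S\subseteq\cl_-(G,S)$ by definition of the closure, this fort is disjoint from $S$, hence lies in $\mathbb{F}_S$, and in particular is contained in $\bigcup\mathbb{F}_S$.

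For the reverse inclusion $\bigcup\mathbb{F}_S\subseteq V(G)\setminus\cl_-(G,S)$, I would argue one element at a time. Pick any $F\in\mathbb{F}_S$; by definition $F$ is a skew fort of $G$ disjoint from $S$, so \Cref{ClosureVsForts} gives $\cl_-(G,S)\cap F=\emptyset$, i.e., $F\subseteq V(G)\setminus\cl_-(G,S)$. Taking the union over all such $F$ yields the inclusion.

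There is essentially no obstacle here: the machinery has already been built. The only subtle point worth explicitly flagging in the write-up is the case distinction in the forward direction, since \Cref{SkewClosureComplement} is stated only for sets $S$ that are not already skew forcing sets, and one should confirm the trivial case separately rather than implicitly assume $\mathbb{F}_S$ is nonempty (indeed, if $S$ is a skew forcing set, $\mathbb{F}_S$ may well be empty, and $\bigcup\emptyset=\emptyset$ matches $V(G)\setminus\cl_-(G,S)=\emptyset$).
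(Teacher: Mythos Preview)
Your proposal is correct and follows essentially the same approach as the paper: one inclusion via \Cref{ClosureVsForts}, the other via \Cref{SkewClosureComplement}. In fact your write-up is slightly more careful, since the paper's proof asserts $V(G)\setminus\cl_-(G,S)\in\mathbb{F}_S$ without explicitly noting that this fails (harmlessly) when $S$ is already a skew forcing set, a case you handle separately.
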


\begin{proof}
If $F\in\mathbb{F}_S$, then $F\subseteq V(G)\setminus\cl_-(G,S)$, by \Cref{ClosureVsForts}.  On the other hand, $V(G)\setminus\cl_-(G,S)\in \mathbb{F}_S$ since $V(G)\setminus\cl_-(G,S)$ is a skew fort of $G$ by \Cref{SkewClosureComplement}.  Thus, $V(G)\setminus\cl_-(G,S)=\bigcup\mathbb{F}_S$.
\end{proof}

The following theorem is an immediate result of \Cref{skewderivedset}.

\begin{theorem}
\label{skewfort}
Let $G$ be a graph and $S \subseteq V(G)$.  Then $S$ is a skew forcing set of $G$ if and only if $S \cap F \neq \emptyset$ for each skew fort $F$ of $G$.
\end{theorem}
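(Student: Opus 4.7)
The plan is to derive this theorem directly from \Cref{skewderivedset}, which expresses the complement of the skew closure as a union of certain skew forts. The key observation is that $S$ is a skew forcing set of $G$ precisely when $\cl_-(G,S)=V(G)$, or equivalently when $V(G)\setminus\cl_-(G,S)=\emptyset$.

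First I would translate both sides of the biconditional through the characterization of forcing sets via closures. For the forward direction, I would assume $S$ is a skew forcing set, so $V(G)\setminus\cl_-(G,S)=\emptyset$. By \Cref{skewderivedset}, this means $\bigcup\mathbb{F}_S=\emptyset$, where $\mathbb{F}_S$ is the collection of skew forts disjoint from $S$. Since skew forts are nonempty by definition, this forces $\mathbb{F}_S=\emptyset$, i.e., no skew fort is disjoint from $S$, which is exactly the condition that $S\cap F\neq\emptyset$ for every skew fort $F$.

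For the reverse direction, I would take the contrapositive: suppose $S$ is not a skew forcing set of $G$. Then $\cl_-(G,S)\neq V(G)$, so $V(G)\setminus\cl_-(G,S)$ is nonempty, and by \Cref{SkewClosureComplement} it is itself a skew fort of $G$. Since skew forcing does not add vertices from $S$ to itself, this fort is disjoint from $S$, furnishing a skew fort $F$ with $S\cap F=\emptyset$, and completing the contrapositive.

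There is no real obstacle here: the work is entirely packaged in \Cref{SkewClosureComplement}, \Cref{ClosureVsForts}, and \Cref{skewderivedset}. The only minor subtlety worth flagging in the write-up is the nonemptiness of skew forts, which is needed to pass from $\bigcup\mathbb{F}_S=\emptyset$ to $\mathbb{F}_S=\emptyset$; this is immediate from the definition but should be stated for clarity.
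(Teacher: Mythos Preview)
Your proposal is correct and matches the paper's approach exactly: the paper simply states that the theorem ``is an immediate result of \Cref{skewderivedset},'' and your write-up is precisely a careful unpacking of that immediacy. Your flagging of the nonemptiness of skew forts is a nice touch of rigor that the paper leaves implicit.
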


\subsection{Reconfiguration}

\begin{definition}
\label{VertexParameterDef}
Let $X$ be a graph parameter.  If there exists a property $x$ such that either:
\[\text{for every graph $G$, }X(G)=\max\{|B|\colon B \subseteq V(G) \text{ and }  B \text{ has property }x \text{ in }G\},\] 
\[\text{ or }\] 
\[\text{for every graph $G$, }X(G)= \min\{|B|\colon B \subseteq V(G) \text{ and } B \text{ has property }x \text{ in }G\},\]
then $X$ is a \emph{vertex parameter} defined by property $x$.  In the first case we say that $X$ is determined via a maximum and in the second case we say that $X$ is determined via a minimum.
\end{definition}

\begin{definition}
\label{TokReconfigDefs}
Let $X$ be a vertex parameter defined by a property $x$, and let $G$ be a graph.   The \emph{token exchange} $X$-reconfiguration graph of $G$, denoted $\GenTokExch(G)$, and the \emph{token sliding} $X$-reconfiguration graph of $G$, denoted $\GenTokSlide(G)$, are defined by
\[V(\GenTokExch(G))=V(\GenTokSlide(G))=\{S\subseteq V(G)\colon |S|=X(G) \text{ and } S \text{ has property }x \text{ in }G\},\]
\begin{multline*}
E(\GenTokExch(G))=\{S_1S_2\colon S_1,S_2\in V(\GenTokExch(G)) \text{ and there exist } v_1\in S_1\setminus S_2,\ v_2\in S_2\setminus S_1 \\
\text{such that } S_1\setminus\{v_1\}=S_2\setminus\{v_2\}\},
\end{multline*}
\[\text{and}\]
\begin{multline*}
E(\GenTokSlide(G))=\{S_1S_2\colon S_1,S_2\in V(\GenTokSlide(G)) \text{ and there exist } v_1\in S_1\setminus S_2,\ v_2\in S_2\setminus S_1 \\
\text{such that } S_1\setminus\{v_1\}=S_2\setminus\{v_2\} \text{ and } v_1v_2\in E(G)\}.
\end{multline*}

We refer to $G$ as the {\em source graph} of $\GenTokExch(G)$ and $\GenTokSlide(G)$.  When $X$ is a zero forcing variant, we use $\mathcal{Z}$ in place of $\mathcal{R}$ to denote reconfiguration graphs, appending the usual subscripts to denote the particular zero forcing variant; e.g., $\ZTokSlide_+(G)=\RTokSlide_{\Z_+}(G)$.  We refer to $\ZTokSlide_+(G)$ as the PSD token sliding graph; other reconfiguration graphs for zero forcing variants are referred to analogously.
\end{definition}

\begin{unnumberednote}
Token exchange and token sliding reconfiguration graphs can be defined more broadly or in other contexts (for instance, one could remove the $|S|=X(G)$ condition from the vertex set definition); however, \Cref{TokReconfigDefs} is sufficient for the purposes of this paper.
\end{unnumberednote}

\begin{observation}
\label{TS<=TE}
For any graph $G$, $\GenTokSlide(G)\leq\GenTokExch(G)$, with $V(\GenTokSlide(G))=V(\GenTokExch(G))$. 
\end{observation}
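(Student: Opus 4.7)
The statement is essentially an immediate unpacking of \Cref{TokReconfigDefs}, so the plan is to verify the two claims directly from the definitions rather than via any combinatorial argument. I would structure the proof as a single short paragraph with two observations.

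First, I would observe that the equality $V(\GenTokSlide(G))=V(\GenTokExch(G))$ is literally built into \Cref{TokReconfigDefs}: both vertex sets are defined to be $\{S\subseteq V(G)\colon |S|=X(G) \text{ and } S \text{ has property }x \text{ in }G\}$, so no argument is required beyond pointing this out.

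Second, to establish $\GenTokSlide(G)\leq\GenTokExch(G)$, it suffices (given the vertex set equality) to show the edge containment $E(\GenTokSlide(G))\subseteq E(\GenTokExch(G))$. I would take an arbitrary $S_1S_2\in E(\GenTokSlide(G))$ and note that by definition there exist $v_1\in S_1\setminus S_2$ and $v_2\in S_2\setminus S_1$ with $S_1\setminus\{v_1\}=S_2\setminus\{v_2\}$ and the additional property $v_1v_2\in E(G)$. These same $v_1,v_2$ witness that $S_1S_2\in E(\GenTokExch(G))$, since the defining condition for $E(\GenTokExch(G))$ is precisely the first two requirements without the adjacency hypothesis. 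Hence $E(\GenTokSlide(G))\subseteq E(\GenTokExch(G))$, and combined with the vertex set equality this yields $\GenTokSlide(G)\leq\GenTokExch(G)$.

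There is no real obstacle here: the observation records the fact that the token sliding rule is a restriction of the token exchange rule, and both reconfiguration graphs are built on the same vertex set of minimum feasible solutions. The only ``care'' needed is to cite \Cref{TokReconfigDefs} explicitly so the reader sees that the containment follows from comparing the edge-set definitions term by term.
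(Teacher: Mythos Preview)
Your proposal is correct and matches the paper's treatment: the paper states this as an observation without proof, since it is immediate from \Cref{TokReconfigDefs} exactly as you describe. Your unpacking of the definitions is precisely the justification the paper leaves implicit.
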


The following theorem identifies a nice property of token sliding graphs for any vertex parameter.

\begin{theorem}
Let $G$ be a bipartite graph and $X$ be a vertex parameter.  Then $\GenTokSlide(G)$ is bipartite.
\end{theorem}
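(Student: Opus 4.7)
The plan is to exhibit an explicit proper $2$-coloring of $\GenTokSlide(G)$ obtained from the bipartition of $G$. Since $G$ is bipartite, fix a bipartition $V(G)=A\sqcup B$ so that every edge of $G$ has exactly one endpoint in $A$ and one in $B$. For each vertex $S$ of $\GenTokSlide(G)$ (i.e.\ each $S\subseteq V(G)$ of size $X(G)$ with property $x$), define $\phi(S)=|S\cap A|\pmod 2$.

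Next I would verify that $\phi$ is a proper $2$-coloring. Suppose $S_1S_2\in E(\GenTokSlide(G))$. By the definition of token sliding adjacency, there exist $v_1\in S_1\setminus S_2$ and $v_2\in S_2\setminus S_1$ with $S_1\setminus\{v_1\}=S_2\setminus\{v_2\}$ and $v_1v_2\in E(G)$. Because $G$ is bipartite, exactly one of $v_1,v_2$ lies in $A$; without loss of generality, $v_1\in A$ and $v_2\in B$ (the other case is symmetric). Then $|S_2\cap A|=|S_1\cap A|-1$, so $\phi(S_1)\neq\phi(S_2)$.

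Since $\phi$ assigns different values in $\{0,1\}$ to the endpoints of every edge, $\GenTokSlide(G)$ is bipartite. There is no real obstacle here: the argument is a one-line parity check, and the only thing that needs to be handled carefully is invoking the token-sliding requirement $v_1v_2\in E(G)$, which is exactly what forces the parity flip and is precisely the feature that distinguishes the token sliding graph from the token exchange graph (the analogous statement for $\GenTokExch(G)$ fails in general, since two swapped vertices need not belong to different parts of the bipartition).
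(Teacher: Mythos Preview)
Your proof is correct and uses essentially the same idea as the paper: both arguments rest on the observation that any edge of $\GenTokSlide(G)$ flips the parity of $|S\cap A|$, since the token-sliding rule forces the exchanged vertices to lie in opposite parts of the bipartition. The only cosmetic difference is that you exhibit an explicit $2$-coloring $\phi(S)=|S\cap A|\bmod 2$, whereas the paper phrases the same parity argument as a contradiction via an odd cycle; your version is arguably more direct.
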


\begin{proof}
Let $G$ be bipartite, with partite sets $A$ and $B$, and let $H=\GenTokSlide(G)$.  If $S_1S_2\in E(H)$, then $S_1\symmDiff S_2=\{a,b\}$ (where $P\symmDiff Q=(P\setminus Q)\cup (Q\setminus P)$ is the symmetric difference of $P$ and $Q$) for some $a\in A$ and $b\in B$ (as all edges in $H$ correspond to edges in $G$).  Thus, if $S_1S_2\in E(H)$, $|S_1\cap A|\equiv |S_2\cap A|+1\pmod{2}$ and $|S_1\cap B|\equiv |S_2\cap B|+1\pmod{2}$.  We now proceed by contradiction: assume that $H$ is not bipartite.  Then $H$ contains some cycle $S_1S_2\cdots S_{2k+1}S_1$ of odd length.  However, then $|S_1\cap A|\equiv |S_2\cap A|+1\equiv\cdots\equiv |S_{2k+1}\cap A|+2k\equiv |S_1\cap A|+2k+1\not\equiv |S_1\cap A|\pmod{2}$, a contradiction.
\end{proof}

To streamline arguments similar to the one above, we say that two sets of vertices $S_1$ and $S_2$ {\em differ by an exchange} if $S_1\setminus S_2=\{a\}$ and $S_2\setminus S_1=\{b\}$, and furthermore $S_1$ and $S_2$ {\em differ by a slide} if in addition $ab \in E(G)$.

\subsection{Relaxed chronologies and path bundles}

A new type of forcing process called a relaxed chronology was introduced in \cite{hogben2023new}.  Relaxed chronologies are generalizations of both chronological lists of forces and propagating families of forces.  Due to this, relaxed chronologies have more flexibility when it comes to which forces are performed at which time-step of a zero forcing process, and as such can be built around structural properties of the graphs in which the forcing process is occurring, making the discussion of certain results easier.

Let $G$ be a graph and $X \in \{\Z, \Z_+, \Z_-\}$. Define $S_X(G,B)$ to be the collection of $X$-forces $u \rightarrow v$ such that $u \rightarrow v$ is a valid force according to color change rule $X$-CCR when $B$ is the set of vertices which is currently blue.  A \emph{relaxed chronology} of $X$-forces $\mathcal{F}=\{F^{(k)}\}_{k=1}^K$ is an ordered collection of sets of $X$-forces with certain properties, used to represent a specific type of forcing process.   It has an associated $X$-forcing set $B \subseteq V(G)$ which is chosen as the set of vertices initially colored blue, and an induced \emph{expansion sequence} $\{E_\mathcal{F}^{[k]}\}_{k=0}^K$, where each set $E_\mathcal{F}^{[k]}$ is the set of vertices that are blue after time-step $k$.  We define $E_\mathcal{F}^{[0]}=B$ and $E_\mathcal{F}^{[k]}=E_\mathcal{F}^{[k-1]}\cup\{v\colon u\to v\in F^{(k)}\text{ for some }u\in V(G)\}$ for all $1\leq k\leq K$.  There are then two requirements for $\mathcal{F}$ to be a relaxed chronology for $B$: for each $1\leq k\leq K$, $F^{(k)}\subseteq S_X(G,E_\mathcal{F}^{[k-1]})$; and, for each $v\in V(G)\setminus B$, there exists exactly one $u\in V(G)$ such that $u\to v\in F^{(k)}$ for some $1\leq k\leq K$ (note that this $k$ must be unique, since it is never valid to force an already-blue vertex).

It is worth noting that if at each time-step $F^{(k)}$ is chosen to be maximal, then the relaxed chronology of $X$-forces $\mathcal F$ will be a \emph{propagating family} of $X$-forces, and if at each time-step $|F^{(k)}|=1$, then the relaxed chronology of $X$-forces will be a \emph{chronological list} of $X$-forces.  The {\em round function} is the function $\rd:V(G) \rightarrow \mathbb N$ such that if $v \in B$, then $\rd(v)=0$, otherwise, if $u \rightarrow v \in F^{(k)}$ for some $u \in V(G)$, then $\rd(v)=k$. 

Let $H$ be an induced subgraph of $G$.  We can relate a forcing process on $G$ to a forcing process on $H$ by restricting the forces in the following manner.  Given a set of $X$-forces $F$ between the vertices in $G$, define its \emph{restriction} to $H$, denoted $F|_H$, to be $\{v\to u\in F: u,v\in V(H)\}$.  Likewise, given a relaxed chronology $\mathcal{F}=\{F^{(k)}\}_{k=1}^K$ for some $X$-forcing set $B$ of $G$, define its \emph{restriction} to $H$, denoted $\mathcal{F}|_H$, to be $\{F^{(k)}|_H\}_{k=1}^K$.  For a relaxed chronology $\mathcal F$ of $X$-forces of $B$ on $G$, a vertex $u\in V(H)$ is an \emph{initial vertex} of $\mathcal F$ in $H$ if either $u \in B$ or $v \rightarrow u$ is a force which occurs during $\mathcal F$ but $v \in V(G) \setminus V(H)$.  The following lemma was introduced in \cite{hogben2023new} in the context of standard and PSD forcing; we extend it here to apply in the skew forcing context as well.

\begin{lemma}\label{initialPSD}\label{initialskew}
Let $X \in \{\Z, \Z_+, \Z_-\}$, $G$ be a graph, $H$ be an induced subgraph of $G$, $B$ be an $X$-forcing set of $G$, and $\mathcal{F}$ be a relaxed chronology of $X$-forces of $B$ on $G$. Let $\ini(\mathcal F|_H)$ be the set of initial vertices of $\mathcal F$ in $H$. Then $\ini(\mathcal F|_H)$ is an $X$-forcing set of $H$.
\end{lemma}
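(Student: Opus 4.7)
My plan is to establish that $\mathcal{F}|_H$ is itself a relaxed chronology of $X$-forces on $H$ whose associated forcing set is $\ini(\mathcal{F}|_H)$.  Once this is proven, the lemma follows immediately: every vertex of $V(H)\setminus \ini(\mathcal{F}|_H)$ is forced in $\mathcal{F}$ by a unique vertex of $V(G)$, and that vertex must lie in $V(H)$ (otherwise the target would have been initial by definition), so the corresponding force survives in $\mathcal{F}|_H$ and all of $V(H)$ eventually becomes blue under the $H$-process.

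The technical heart is verifying that each restricted set $F^{(k)}|_H$ consists of valid $X$-forces at step $k$ of the process on $H$.  Letting $\{\widetilde{E}^{[k]}\}_{k=0}^K$ denote the expansion sequence induced on $H$ by $\mathcal{F}|_H$ starting from $\ini(\mathcal{F}|_H)$, I would prove by induction on $k$ the joint statement that $F^{(k)}|_H\subseteq S_X(H,\widetilde{E}^{[k-1]})$ and $\widetilde{E}^{[k]}\supseteq E_{\mathcal{F}}^{[k]}\cap V(H)$.  The base case is immediate from $\widetilde{E}^{[0]}=\ini(\mathcal{F}|_H)\supseteq B\cap V(H)=E_{\mathcal{F}}^{[0]}\cap V(H)$, and the inductive hypothesis yields the key containment $V(H)\setminus\widetilde{E}^{[k-1]}\subseteq(V(G)\setminus E_{\mathcal{F}}^{[k-1]})\cap V(H)$, which controls the white vertices seen by any forcing vertex in the $H$-process.

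To close the induction, I would handle the three variants uniformly.  For $X\in\{\Z,\Z_-\}$, if $v\to u\in F^{(k)}|_H$ then $N_G(v)\cap(V(G)\setminus E_{\mathcal{F}}^{[k-1]})=\{u\}$, and the key containment gives $N_H(v)\cap(V(H)\setminus\widetilde{E}^{[k-1]})\subseteq\{u\}$; equality holds because $u\notin\widetilde{E}^{[k-1]}$ (otherwise $u$ would be forced twice in $\mathcal{F}|_H$, contradicting the fact that $u\notin\ini(\mathcal{F}|_H)$ is forced exactly once in $\mathcal{F}$, namely at step $k$).  The skew case runs in parallel with the standard case because the skew rule's tolerance for a white forcing vertex $v$ is irrelevant to a neighborhood-based argument.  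For $X=\Z_+$, the component $C'$ of $H-\widetilde{E}^{[k-1]}$ containing $u$ is an induced subgraph of the component $C$ of $G-E_{\mathcal{F}}^{[k-1]}$ containing $u$, yielding $N_H(v)\cap V(C')\subseteq N_G(v)\cap V(C)=\{u\}$.  The main obstacle I anticipate is the careful bookkeeping between the expansion sequences of $\mathcal{F}$ and $\mathcal{F}|_H$; however, the $\Z$ and $\Z_+$ cases already appear in \cite{hogben2023new}, so the only essential new verification is the skew case, which is in fact the most straightforward of the three.
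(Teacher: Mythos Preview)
Your proposal is correct and follows essentially the same approach as the paper: both arguments show that $\mathcal{F}|_H$ is a relaxed chronology of $X$-forces for $\ini(\mathcal{F}|_H)$ on $H$ by inducting on $k$, using that $H$ is induced so white neighborhoods in $H$ sit inside white neighborhoods in $G$, and tracking that the $H$-expansion sequence contains $E_{\mathcal{F}}^{[k]}\cap V(H)$. The paper, like you, defers the $\Z$ and $\Z_+$ cases to \cite{hogben2023new} and treats only $\Z_-$ explicitly; the only cosmetic difference is that the paper identifies the $H$-expansion sequence as exactly $(E_{\mathcal{F}}^{[k]}\cap V(H))\cup\ini(\mathcal{F}|_H)$ rather than proving the one-sided containment you state, but the containment is all that is needed.
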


\begin{proof}
The case when $X$ is $\Z$ is Lemma 5.4 in \cite{hogben2023new}; the case when $X$ is $\Z_+$ is Lemma 5.3 from the same paper.  We will now treat the remaining case, when $X$ is $\Z_-$.

Let $\mathcal{F}=\{F^{(k)}\}_{k=1}^K$ with corresponding expansion sequence $\{E_{\mathcal{F}}^{[k]}\}_{k=0}^K$. We first show that the skew forces in $F^{(k)}|_H$ are valid when $E_{\mathcal{F}}^{[k-1]}\cap V(H)$ is blue. Consider $v\to u\in F^{(k)}|_H$. Since $v\to u\in F^{(k)}$, $u$ is the unique white neighbor of $v$ in $G$ when $E_{\mathcal{F}}^{[k-1]}$ is blue. Since $H$ is an induced subgraph of $G$ and $u,v \in V(H)$, $u$ is the unique white neighbor of $v$ in $H$ when $E_{\mathcal{F}}^{[k-1]}\cap V(H)$ is blue.

We now claim that if $(E_{\mathcal{F}}^{[k-1]}\cap V(H))\cup \ini(\mathcal F|_H)$ is blue, then all vertices in $(E_{\mathcal{F}}^{[k]}\cap V(H))\cup \ini(\mathcal F|_H)$ will be blue after the forces in $F^{(k)}|_H$ are performed. Consider $v\to u\in F^{(k)}$ with $u\in (E_{\mathcal{F}}^{[k]} \cap V(H))\cup \ini(\mathcal F|_H)$. If $v\notin V(H)$, then $u \in \ini(\mathcal F|_H)$. So we conclude that $u$ is initially blue, and hence also blue after time-step $k$. Otherwise, $v\in V(H)$, so the preceding paragraph implies that $v\to u\in F^{(k)}|_H$ is a valid skew force in $H$, and $u$ will be blue after time-step $k$. Combining these two results with induction on $k$, we conclude that $\mathcal{F}|_H$ is a relaxed chronology of skew forces for $\ini(\mathcal F|_H)$ in $H$ with expansion sequence $\{(E_{\mathcal{F}}^{[k]}\cap V(H))\cup \ini(\mathcal F|_H)\}_{k=0}^K$. 
\end{proof}

Consider a relaxed chronology of standard forces $\mathcal{F}=\{F^{(k)}\}_{k=1}^N$ for a standard zero forcing set $B$ of $G$.  The \emph{terminus} of $\mathcal{F}$, denoted $\term(\mathcal{F})$, is the set of vertices of $G$ that do not perform any forces in $\mathcal{F}$.  The \emph{reversal} of $\mathcal{F}$, denoted $\rev(\mathcal{F})$, is the result of reversing the forces and time-steps in $\mathcal{F}$, i.e., $\rev(\mathcal{F})=(F_{\rev}^{(k)})_{k=1}^K$ with $F_{\rev}^{(k)}=\{v\to u:u\to v\in F^{(K-k+1)}\}$.

The notion of path bundles induced by a vertex was also introduced in \cite{hogben2023new} and provides insight into the process of modifying one PSD forcing set into another, which will play a key role in PSD reconfiguration graphs.   To define path bundles induced by a vertex, let $B$ be a PSD forcing set of $G$, and $\mathcal{F}$ be a relaxed chronology of PSD forces of $B$ on $G$. Fix a vertex $x\in V(G)$.  For $k=0,1,2,\dots,\rd(x)$, define $C^{k}_x=\comp(G-E_{\mathcal F}^{[k]},x)$ to be the component of $G-E_{\mathcal F}^{[k]}$ containing $x$.  The {\em path bundle} of $\mathcal F$ induced by $x$ is the collection of paths $\mathcal Q=\{Q_i\}_{i=1}^{|B|}$ constructed as follows: Let $\mathcal Q^{[0]}$ be the paths given trivially by the vertices in $B$.  At each time-step $k$, consider only the forces $u \rightarrow v \in F^{(k)}$ such that at time-step $k$, $u$ is blue and $v \in V(C^k_x)$.  Append $v$ to the end of the path containing $u$ to form $\mathcal Q^{[k]}$. 
 After time-step $\rd(x)$, let $\mathcal Q=\mathcal Q^{[\rd(x)]}$.    The following theorem introduced in \cite{hogben2023new} identifies the specifics concerning the modification of one PSD forcing set into another via path bundles induced by a vertex.

\begin{theorem}[{\cite{hogben2023new}}]\label{reverse}
Let $G$ be a graph and $B$ be a PSD forcing set of $G$. Let $\mathcal{F}$ be a relaxed chronology of PSD forces for $B$ on $G$, and let $\mathcal{Q}$ be the path bundle of $\mathcal{F}$ induced by $x\in V(G)$. Then $\term(\mathcal{F}|_H)$ is a PSD forcing set of $G$. Furthermore, a relaxed chronology of PSD forces for $\term(\mathcal{F}|_H)$ can be constructed by reversing the forces between vertices in $H$ and preserving all remaining forces. 
\end{theorem}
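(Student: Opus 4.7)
The subgraph $H$ referenced in the statement is the subgraph of $G$ induced by the union of vertices of the paths in $\mathcal{Q}$; note that since $\mathcal{Q}$ contains exactly $|B|$ paths, one rooted at each vertex of $B$, we have $B\subseteq V(H)$. The plan is to construct an explicit relaxed chronology of PSD forces for $\term(\mathcal{F}|_H)$ on $G$ that witnesses both conclusions at once, by reversing $\mathcal{F}|_H$ inside $H$ and then replaying the original non-restricted forces in their inherited order. The first order of business is to invoke \Cref{initialPSD} to see that $\mathcal{F}|_H$ is already a relaxed chronology of PSD forces on the induced subgraph $H$ starting from $\ini(\mathcal{F}|_H)$; this reduces the reversal question to a problem that lives entirely inside $H$.

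Next, I would form $\rev(\mathcal{F}|_H)$ and show it is a valid relaxed chronology of PSD forces on $H$ whose initial set is $\term(\mathcal{F}|_H)$. A quick bookkeeping check shows that the vertices which are not forced in $\rev(\mathcal{F}|_H)$ are precisely those that perform no force in $\mathcal{F}|_H$, so the initial set is correct. Validity of each reversed force is established by exploiting the structure of the path bundle $\mathcal{Q}$: the forces of $\mathcal{F}|_H$ that propagate into $V(C_x^k)$ do so along the paths of $\mathcal{Q}$ toward $x$, and reversing them corresponds to propagating back outward from $x$ along those same paths. At each time step of the reversed chronology the remaining white portion of $V(H)$ is exactly the union of tail segments of the paths of $\mathcal{Q}$, and the reversed force $v\to u$ has $u$ as the unique neighbor in the appropriate component. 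Having established the reversed chronology on $H$, I would then splice it with the remaining forces on $G$: perform every reversed force from $\rev(\mathcal{F}|_H)$ first, and then replay each non-restricted force $u\to v\in F^{(k)}\setminus F^{(k)}|_H$ in its original time order. After the first phase all of $V(H)$ is blue, and in particular $B\subseteq V(H)$ is blue, so every non-restricted force is executed with strictly more blue vertices than at the analogous moment in $\mathcal{F}$. Monotonicity of the PSD color change rule (extra blue vertices can only split components into smaller ones, and so cannot destroy a valid PSD force whose forcee is still white) guarantees that each preserved force remains a valid PSD force at its new time step, and the combined chronology colors all of $V(G)$.

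The heart of the argument, and the step I expect to be the real obstacle, is the validity of the reversal in Step 2: reversing a PSD relaxed chronology is not generally valid without additional structure, and here the path-bundle construction is doing all of the combinatorial work. One must verify carefully that at each time step of $\rev(\mathcal{F}|_H)$ the component of $H$ minus the current blue set containing a given reversed forcee is exactly a suffix of one of the paths of $\mathcal{Q}$, and that parallel path-extensions occurring at the same original time step do not create ambiguous neighborhoods upon reversal. Once this compatibility between the component structure of $H$ and the layered path structure of $\mathcal{Q}$ is pinned down precisely, the rest of the proof is routine bookkeeping.
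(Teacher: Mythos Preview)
This theorem is quoted from \cite{hogben2023new} and is not proved in the present paper, so there is no in-paper argument to compare against.  A few remarks on your outline are nonetheless in order.

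Your two-phase plan contains a real gap at the seam between Steps~2 and~3.  You propose to show that $\rev(\mathcal{F}|_H)$ is a valid PSD relaxed chronology \emph{on the induced subgraph $H$}, and then to execute those same reversed forces as Phase~1 \emph{on $G$}, concluding that afterwards all of $V(H)$ is blue.  But a PSD force that is valid in an induced subgraph need not be valid in the ambient graph: when the reversed force $v\to u$ is attempted in $G$, the vertex $v$ may have neighbours in $V(G)\setminus V(H)$---all of which are white throughout Phase~1---and any such neighbour lying in the same component of $G$ minus the current blue set as $u$ would block the force.  The path-bundle structure does indeed prevent this, but the verification has to be carried out in $G$, not merely in $H$; this is exactly the delicate point you flag in your final paragraph, only displaced one level up.

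One way to close the gap is to abandon the sequential split and argue directly in $G$: show that at each stage the current set of path endpoints is itself a PSD forcing set of $G$, exploiting that the reversed path-forces live inside the shrinking component $C_x^{k}$ while the preserved forces act on components disjoint from it.  This interleaved view makes the component bookkeeping on $G$ transparent and is how such reversal arguments are typically organized in the source you cite.
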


\section{Universal Parameter Properties}
\label{universal}

Many characteristics of reconfiguration graphs can be derived based purely on properties common among many graph parameters, and in particular many parameters related to zero forcing (examples can be found in \Cref{TypesTable}). In this section, we define two classes into which many important graph parameters fall, as well as some properties of parameters which provide key information about the respective reconfiguration graphs.  We deduce some consequences of these classifications and properties for the reconfiguration graphs, thereby establishing some universal results for reconfiguration graphs. Separately, we determine an upper bound on the size of a maximum clique in the token sliding graph in terms of the clique number of the source graph.

\subsection{Parameter classes}\label{tablesection}

The standard zero forcing number \cite{barioli2013aim}, PSD forcing number \cite{param}, skew forcing number \cite{allison2010minimum}, $k$-forcing number \cite{kforce}, minor monotone floor of zero forcing number \cite{paramlong}, and power domination number \cite{haynes2002powerdom} (with the role of zero forcing evident in \cite{brueni2005pmu}) are each common zero forcing variants and thus central to our conversation.  Since the power domination number can also be considered a domination variant, the domination number, total domination number \cite{totald}, vertex cover number, and independence number, which are each common examples of domination variants (as vertex covers and maximal independent sets are each respectively dominating sets) are also closely related to the topic at hand.  In addition, study has been done examining the relationships between the zero forcing number and the chromatic number \cite{zfvschrom}, the independence number \cite{zfvsindep}, the vertex cover number \cite{zfvsvc}, the domination number \cite{zfvsdom}, and the total domination number \cite{zfvstotd}.  The path cover number is a commonly utilized lower bound on the zero forcing number \cite{param}. Similarly, the tree cover number is a commonly utilized lower bound on the PSD forcing number \cite{bozemantree}, arguments using spider covers are used in the work on power domination \cite{haynes2002powerdom}, and arguments using clique covers are common in the work on zero forcing \cite{barioli2013aim}.

Many graph parameters share certain characteristics that are fundamental to the structure of their reconfiguration graphs.
The first and most fundamental characteristic is whether a given parameter is defined based on properties possessed by subsets of the vertex set, by partitions of the vertex set, or by some other means.  We have already introduced vertex parameters (\Cref{VertexParameterDef}), which are graph parameters defined by subsets of the vertex set.  We establish results for vertex parameters in this section, and defer a discussion of partition parameters to \Cref{part_params}.

A common, and very useful, property of vertex parameters occurs when they operate independently across the connected components of a graph, in some sense.  We make this notion of operating independently rigorous in the following definition and refer to such vertex parameters as summable.  

\begin{definition}\label{def:SummableDefinition}
Let $X$ be a vertex parameter defined by property $x$.  If for every graph $G$,
\[B \subseteq V(G) \text{ has property }x \text{ in }G \]
\[\text{if and only if}\]
\[B= \bigcup_{C \in \comp(G)}B_C \text{, where each }B_C \subseteq V(C) \text{ has property }x \text{ in }C,\]
then $X$ is a \emph{summable vertex parameter}.
\end{definition}

The following result provides computational information about summable vertex parameters. 

\begin{lemma}\label{compsum}
If $X$ is a summable vertex parameter, then for every graph $G$, 
\[X(G)=\sum_{C \in \comp(G)}X(C).\]
\end{lemma}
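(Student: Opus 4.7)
The plan is to argue directly from the two definitions: \Cref{VertexParameterDef} tells us $X(G)$ is an extremum (either a min or a max) of $|B|$ over sets $B$ with property $x$, and \Cref{def:SummableDefinition} tells us that the collection of feasible sets in $G$ is in natural bijection with the Cartesian product, over $C\in\comp(G)$, of the collections of feasible sets in each $C$. Summing cardinalities and commuting the extremum with a sum over independent coordinates then yields the claim.

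More concretely, I would split into the two cases from \Cref{VertexParameterDef}. Suppose first that $X$ is determined via a minimum, so that $X(G)=\min\{|B|\colon B\subseteq V(G)\text{ has property }x\text{ in }G\}$. By summability, every feasible $B\subseteq V(G)$ has the form $B=\bigcup_{C\in\comp(G)}B_C$ with each $B_C\subseteq V(C)$ feasible in $C$, and conversely any such union is feasible in $G$. Since the components are pairwise vertex-disjoint, $|B|=\sum_{C\in\comp(G)}|B_C|$, so
\[
X(G)=\min_{(B_C)_C}\sum_{C\in\comp(G)}|B_C|,
\]
where the minimum ranges over all tuples $(B_C)_{C\in\comp(G)}$ with $B_C$ feasible in $C$. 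Because the choices of $B_C$ in different components are completely independent, the minimum of the sum equals the sum of the minima, which gives $\sum_{C\in\comp(G)}X(C)$.

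For the other case, when $X$ is determined via a maximum, the identical argument works verbatim with $\min$ replaced by $\max$: summability still gives the bijection with tuples, disjointness of components still yields $|B|=\sum_C|B_C|$, and independence of the coordinates still allows us to move the extremum inside the sum.

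There is no real obstacle here; the only thing that requires a sentence of care is the ``extremum commutes with the sum'' step, which I would justify by the standard one-line argument that an optimal tuple can be assembled by optimizing each coordinate separately (using that every component admits at least one feasible set, which is guaranteed by $V(C)$ itself being feasible when $B=V(G)$ is taken in the defining quantifier, or more directly by applying the summability equivalence to any feasible $B$ in $G$).
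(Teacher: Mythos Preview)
Your proof is correct and follows essentially the same approach as the paper: both split into the min/max cases, use summability to decompose feasible sets as unions over components, use disjointness to write $|B|=\sum_C|B_C|$, and then commute the extremum with the sum by independence of the choices in each component. Your version is slightly more explicit about justifying that last step and about existence of feasible sets in each component, but the argument is the same.
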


\begin{proof}
    Let $G$ be a graph and let $C_1,C_2,\dots,C_k$ be the connected components of $G$. For each $B \subseteq V(G)$ with property $x$, by the fact that $x$ is a summable graph parameter, $B=\bigcup_{i=1}^{k}B_i$ for some $B_i \subseteq V(C_i)$ such that $B_i$ has property $x$ in $C_i$. 
\begin{case}
Suppose $X$ is defined by a maximum.  Then,
\[ X(G)=\max \sum_{i=1}^{k} |B_i| = \sum_{i=1}^{k} \max\{|B_i|:B_i\text{ has property } x \text{ in } C_i\} = \sum_{i=1}^{k} X(C_i ).\]
\end{case} 
\begin{case}
Suppose $X$ is defined by a minimum.  Then,
\[ X(G)=\min \sum_{i=1}^{k} |B_i| = \sum_{i=1}^{k} \min\{|B_i|:B_i\text{ has property } x \text{ in } C_i\} = \sum_{i=1}^{k} X(C_i).\qedhere\]
\end{case}
\end{proof}

The token exchange and token sliding reconfiguration graphs of a disconnected graph have a very nice Cartesian product structure if the parameter $X$ is summable. Note that the following theorem is a generalization of Theorem 4.3 in \cite{bjorkman2022power}, which considered parameters defined as the minimum cardinality of vertex sets with property $x$.

\begin{theorem}
\label{disjoint-gives-box}
Let $X$ be a summable vertex parameter, and let $G=\bigsqcup_{i=1}^N G_i$ be a graph.  Then $\GenTokExch(G)\cong\bigbox_{i=1}^N\GenTokExch(G_i)$ and $\GenTokSlide(G)\cong\bigbox_{i=1}^N\GenTokSlide(G_i)$.
\end{theorem}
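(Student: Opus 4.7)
The plan is to exhibit the isomorphism explicitly via the natural component-decomposition map, and then check that it carries the edge relation correctly.

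First I would define the map $\Phi\colon V(\GenTokExch(G))\to\prod_{i=1}^N V(\GenTokExch(G_i))$ by $\Phi(B)=(B\cap V(G_1),\dots,B\cap V(G_N))$, and verify that it is a bijection.  By summability (\Cref{def:SummableDefinition}), any $B$ with property $x$ decomposes as $\bigcup_i B_i$ where each $B_i := B\cap V(G_i)$ has property $x$ in $G_i$.  Combining $|B|=X(G)$ with \Cref{compsum} yields $\sum_i |B_i|=\sum_i X(G_i)$.  If $X$ is determined via a minimum, then $|B_i|\geq X(G_i)$ for every $i$, and if $X$ is determined via a maximum, then $|B_i|\leq X(G_i)$ for every $i$; in either case the summed equality forces $|B_i|=X(G_i)$ for each $i$, so $\Phi(B)$ lies in the target.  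Summability also provides the inverse: the union of any tuple of component-wise property-$x$ sets of the correct sizes has property $x$ in $G$ and has size $X(G)$.

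Next I would verify the edge correspondence for the token exchange graph.  Suppose $B$ and $B'$ differ by an exchange, with $B\setminus B'=\{v\}$ and $B'\setminus B=\{v'\}$, and let $v\in V(G_j)$ and $v'\in V(G_k)$.  If $j\neq k$, then $|B'_j|=|B_j|-1$, contradicting that both component sizes must equal $X(G_j)$.  Hence $j=k$, so $B_i=B'_i$ for all $i\neq j$ while $B_j$ and $B'_j$ differ by an exchange in $G_j$; this is precisely the condition for $\Phi(B)$ and $\Phi(B')$ to be adjacent in $\bigbox_{i=1}^N \GenTokExch(G_i)$.  The converse direction is immediate, since any Cartesian-product edge modifies exactly one coordinate via a single exchange in the corresponding factor, which assembles to an exchange in $G$.

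For the token sliding graphs, the same bijection $\Phi$ works, and the only additional check is that the slide condition $vv'\in E(G)$ is equivalent to $vv'\in E(G_j)$.  This holds because $G$ is a disjoint union, so $E(G)=\bigcup_i E(G_i)$ and every edge of $G$ lies within a single component, necessarily the common component $G_j$ identified above.  The main technical point is the forced equality $|B_i|=X(G_i)$ on every component, which is immediate from \Cref{compsum} together with the definition of summability; the remaining edge analysis is essentially bookkeeping, so I do not anticipate a serious obstacle.
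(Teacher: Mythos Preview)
Your proposal is correct and follows essentially the same approach as the paper: both define the natural bijection between $V(\GenTokExch(G))$ and $\prod_i V(\GenTokExch(G_i))$ via component restriction, use \Cref{compsum} to force $|B_i|=X(G_i)$ on each component, and then verify the edge correspondence by noting that an exchange must lie in a single component (the paper phrases this via the equality $|B^1_i|=|B^2_i|=X(G_i)$, you via the contradiction $|B'_j|=|B_j|-1$). The only cosmetic differences are that the paper writes the bijection in the opposite direction and treats one of the min/max cases explicitly, deferring the other as similar.
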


\begin{proof}
We will assume that $X(G)$ is defined by a maximum; the case where it is defined by a minimum is similar.  By definition, $B\subseteq V(G)$ has property $x$ for $G$ if and only if each $B_i=B\cap V(G_i)$ has property $x$ for $G_i$.  If $B$ has maximum cardinality for $G$, then $X(G)=\sum_i X(G_i)\geq\sum_i|B_i|=|B|=X(G)$, so one must have that $|B_i|=X(G_i)$ for each $i$ (i.e., each $B_i$ has maximum cardinality for $G_i$).  Conversely, if each $B_i$ has maximum cardinality for $G_i$, $X(G)=\sum_i X(G_i)=\sum_i|B_i|=|B|$, so $|B|=X(G)$ and $B$ has maximum cardinality for $G$.  Thus, $V(\GenTokExch(G))=\phi(V(\bigbox_i\GenTokExch(G_i)))$, where $\phi((B_i)_i)=\bigcup_i B_i$ is a bijection between $V(\bigbox_i\GenTokExch(G_i))=\prod_i V(\GenTokExch(G_i))$ and $V(\GenTokExch(G))$ (note that $V(\GenTokExch(G^\star))=V(\GenTokSlide(G^\star))$ for \emph{any} graph $G^\star$).

For the following, $B^k$ (for some $k$) will be used to denote a vertex in $V(\GenTokExch(G))$ and $B^k_i$ (for some $k$) will be used to denote a vertex in $V(\GenTokExch(G_i))$, with the implicit relationships that $B^k=\bigcup_i B^k_i=\phi((B^k_i)_i)$ and $B^k_i=B^k\cap V(G_i)$.

Consider $(B^1_i)_i, (B^2_i)_i \in V(\bigbox_i\GenTokExch(G_i))$. Then $(B^1_i)_i(B^2_i)_i\in E(\bigbox_i\GenTokExch(G_i))$ if and only if there exist some $\ell$ and $p\neq q\in V(G_\ell)$ such that $B^1_\ell\setminus B^2_\ell=\{p\}$, $B^2_\ell\setminus B^1_\ell=\{q\}$, and $B^1_i=B^2_i$ for all $i\neq\ell$.  Thus, if $(B^1_i)_i(B^2_i)_i\in E(\bigbox_i\GenTokExch(G_i))$, then $B^1\setminus B^2=\{p\}$ and $B^2\setminus B^1=\{q\}$, and so $B^1B^2\in E(\GenTokExch(G))$.

Conversely, if $B^1B^2\in E(\GenTokExch(G))$, then $B^1\setminus B^2=\{p\}$ and $B^2\setminus B^1=\{q\}$ for some $p\neq q\in V(G)$.  The fact that $|B^1_i|=|B^2_i|=X(G_i)$ for all $i$ implies that there exists $\ell$ such that $p,q\in V(G_\ell)$ and $B^1_i=B^2_i$ for all $i\neq\ell$.  Thus, $(B^1_i)_i(B^2_i)_i\in E(\bigbox_i\GenTokExch(G_i))$.  Therefore, $\GenTokExch(G)\cong\bigbox_i\GenTokExch(G_i)$.

Now, $B^1B^2\in E(\GenTokSlide(G))$ if and only if $B^1B^2\in E(\GenTokExch(G))$ and $pq\in E(G)$ (letting $p$, $q$, and $\ell$ be as earlier).  This is true if and only if $B^1_i=B^2_i$ for all $i\neq\ell$, $B^1_\ell B^2_\ell\in E(\GenTokExch(G_\ell))$, and $pq\in E(G_\ell)$, since $G=\bigsqcup_i G_i$.  In turn, this is true if and only if $(B^1_i)_i(B^2_i)_i\in E(\bigbox_i\GenTokSlide(G_i))$.  Thus, $\GenTokSlide(G)\cong\bigbox_i\GenTokSlide(G_i)$.
\end{proof}

\begin{corollary}
Let $X$ be a summable vertex parameter such that $X(K_1)=1$.  Then for each pair of graphs $G$ and $H$, with $H \cong G \sqcup mK_1$ for some natural number $m$, $\GenTokSlide(G)\cong\GenTokSlide(H)$ and $\GenTokExch(G)\cong\GenTokExch(H)$.
\end{corollary}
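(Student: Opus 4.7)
The plan is to apply \Cref{disjoint-gives-box} directly, using the observation that $K_1$ acts as the identity element for the Cartesian product of graphs (up to isomorphism).

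First, I would note that $H\cong G\sqcup mK_1$ and so \Cref{disjoint-gives-box} gives
\[\GenTokSlide(H)\cong\GenTokSlide(G)\cartProd\underbrace{\GenTokSlide(K_1)\cartProd\cdots\cartProd\GenTokSlide(K_1)}_{m\text{ copies}},\]
and analogously for $\GenTokExch$. Thus the content of the corollary reduces to identifying $\GenTokSlide(K_1)$ and $\GenTokExch(K_1)$.

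Next I would compute $\GenTokSlide(K_1)$ and $\GenTokExch(K_1)$. Writing $V(K_1)=\{v\}$, the only subsets of $V(K_1)$ are $\emptyset$ and $\{v\}$. The hypothesis $X(K_1)=1$ means that, regardless of whether $X$ is defined via a maximum or a minimum, there is a subset of $V(K_1)$ of size $1$ having property $x$, namely $\{v\}$. Hence $V(\GenTokExch(K_1))=V(\GenTokSlide(K_1))=\{\{v\}\}$, a single vertex, and so each of these reconfiguration graphs is isomorphic to $K_1$.

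Finally, I would invoke the fact that $\Gamma\cartProd K_1\cong\Gamma$ for every graph $\Gamma$ (which is immediate from the definition of the Cartesian product, since taking $V(\Gamma)\times\{v\}$ recovers a copy of $\Gamma$ with no extra edges added, as there are no edges to combine in $K_1$). Applying this $m$ times yields
\[\GenTokSlide(H)\cong\GenTokSlide(G)\cartProd\underbrace{K_1\cartProd\cdots\cartProd K_1}_{m\text{ copies}}\cong\GenTokSlide(G),\]
and likewise $\GenTokExch(H)\cong\GenTokExch(G)$, which is the desired conclusion.

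There is no real obstacle here; the entire argument is a routine consequence of \Cref{disjoint-gives-box} once one verifies that $\GenTokExch(K_1)=\GenTokSlide(K_1)\cong K_1$. The only mild subtlety is being careful that $X(K_1)=1$ genuinely forces $\{v\}$ to have property $x$ in both the maximum and minimum cases, which is why this hypothesis is included explicitly in the statement.
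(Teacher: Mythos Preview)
Your argument is correct and is exactly the intended approach: the paper states this as an immediate corollary of \Cref{disjoint-gives-box} without further proof, and your derivation---decomposing via the Cartesian product, computing $\GenTokExch(K_1)\cong\GenTokSlide(K_1)\cong K_1$ from the hypothesis $X(K_1)=1$, and then using that $K_1$ is the identity for $\cartProd$---is precisely the routine verification that justifies it.
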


We next define a property of some vertex parameters that, among other things, allows us to investigate the effect of vertex deletion on the reconfiguration graph of the induced subgraph.

\begin{definition}\label{coverable_def}
Let $X$ be a {vertex parameter} defined by property $x$.  If for every graph $G$ and every vertex $v \in V(G)$,
\[B \text{ has property }x \text{ in }G-v\]
\[\text{implies}\]
\[B \cup \{v\} \text{ has property }x \text{ in }G,\]
then $X$ is a \emph{coverable vertex parameter}.
\end{definition}

\begin{theorem}\label{covsubgraph}
Let $X$ be a coverable vertex parameter, let $G$ be a graph, and let $v \in V(G)$.  If $X(G)=X(G-v)+1$, then $\GenTokExch(G-v)\leq\GenTokExch(G)$ and $\GenTokSlide(G-v)\leq\GenTokSlide(G)$.
\end{theorem}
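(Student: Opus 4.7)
The plan is to embed $\GenTokExch(G-v)$ into $\GenTokExch(G)$ (and likewise for token sliding) by adjoining $v$ to every admissible set of $G-v$. Concretely, I would define $\phi\colon V(\GenTokExch(G-v))\to V(\GenTokExch(G))$ by $\phi(B)=B\cup\{v\}$ and identify $\GenTokExch(G-v)$ with its image under $\phi$, which will exhibit $\GenTokExch(G-v)$ as a subgraph of $\GenTokExch(G)$.

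The first step is to verify that $\phi$ actually lands in $V(\GenTokExch(G))$. Since $B\subseteq V(G-v)$ has property $x$ in $G-v$, the coverable hypothesis (\Cref{coverable_def}) immediately gives that $\phi(B)=B\cup\{v\}$ has property $x$ in $G$. Combined with $v\notin B$ and the cardinality hypothesis $X(G)=X(G-v)+1$, this yields $|\phi(B)|=|B|+1=X(G-v)+1=X(G)$, so $\phi(B)\in V(\GenTokExch(G))$. Injectivity of $\phi$ is automatic, since $B=\phi(B)\setminus\{v\}$ recovers $B$ uniquely from $\phi(B)$.

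Next I would check that edges carry over under $\phi$. Suppose $B_1B_2\in E(\GenTokExch(G-v))$, so there exist $p\in B_1\setminus B_2$ and $q\in B_2\setminus B_1$ with $B_1\setminus\{p\}=B_2\setminus\{q\}$; crucially, $p,q\in V(G-v)$ are both distinct from $v$. Then $\phi(B_1)\setminus\{p\}=(B_1\cup\{v\})\setminus\{p\}=(B_2\cup\{v\})\setminus\{q\}=\phi(B_2)\setminus\{q\}$, which witnesses that $\phi(B_1)$ and $\phi(B_2)$ differ by the same exchange in $G$, hence $\phi(B_1)\phi(B_2)\in E(\GenTokExch(G))$. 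For the token sliding statement, the extra requirement $pq\in E(G-v)$ transfers to $pq\in E(G)$ because $G-v=G[V(G)\setminus\{v\}]$ is an induced subgraph of $G$, completing the argument.

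There is no substantive obstacle: the hypothesis $X(G)=X(G-v)+1$ is used exactly once, to guarantee that $\phi(B)$ has the correct cardinality to be a vertex of $\GenTokExch(G)$, and the coverable property handles property $x$. The mild subtlety worth noting is that the statement $\GenTokExch(G-v)\leq\GenTokExch(G)$ should be read through the identification $\phi$, since vertices of $\GenTokExch(G-v)$ have size $X(G-v)$ and hence are not literally vertices of $\GenTokExch(G)$ until we adjoin~$v$.
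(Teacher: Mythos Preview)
Your proposal is correct and follows essentially the same approach as the paper: define the embedding $B\mapsto B\cup\{v\}$, use coverability and the hypothesis $X(G)=X(G-v)+1$ to show this lands in $V(\GenTokExch(G))$ injectively, and then verify that the exchange (respectively, slide) witnessing an edge in the reconfiguration graph of $G-v$ also witnesses an edge in that of $G$. Your write-up is, if anything, slightly more explicit than the paper's about the injectivity and the identification via $\phi$.
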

\begin{proof}
Let $x$ be the property defining $X$ and let $G$ be a graph such that $X(G)=X(G-v)+1$.  For each $B \in V(\GenTokSlide(G-v))=V(\GenTokExch(G-v))$, let $B'=B \cup \{v\}$.  Since $X$ is a coverable vertex parameter and $X(G)=X(G-v)+1$, it is clear that for distinct $B_1,B_2 \in V(\GenTokSlide(G-v))=V(\GenTokExch(G-v))$, we will have distinct $B_1',B_2' \in V(\GenTokSlide(G))=V(\GenTokExch(G))$. Furthermore, if $B_1B_2 \in E(\GenTokExch(G-v))$, then $B_1 \setminus B_2=\{u\}$ and $B_2 \setminus B_1=\{w\}$ for some pair of vertices $u,w \in V(G-v)$.  It follows that $B_1' \setminus B_2'=\{u\}$ and $B_2' \setminus B_1'=\{w\}$ and therefore $B_1'B_2' \in E(\GenTokExch(G))$.  Thus $\GenTokExch(G-v)\leq\GenTokExch(G)$.  

If $B_1B_2 \in E(\GenTokSlide(G))$, then we also have that $uw \in E(G-v)$, and it follows that $uw \in E(G)$.  Therefore $B_1'B_2' \in E(\GenTokSlide(G))$, and thus $\GenTokSlide(G-v)\leq\GenTokSlide(G)$.
\end{proof}

The following theorem shows that when vertex parameters are both summable and coverable, even more information can be obtained about their reconfiguration graphs.

\begin{theorem}\label{genconnectplus}
Let $X$ be a vertex parameter that is coverable and summable, let $G$ and $H$ be graphs, with $g_v \in V(G)$ and $h_v \in V(H)$, and let $G \oplus_v H$ be the result of identifying $g_v$ and $h_v$ in $G\sqcup H$.  If $X(G \oplus_v H)=X(G-g_v)+X(H-h_v)+1$, then 
\[\GenTokSlide(G-g_v) \cartProd \GenTokSlide(H-h_v) \cong \GenTokSlide((G \oplus_v H)-v)\leq\GenTokSlide(G \oplus_v H)\text{ and}\]
\[\GenTokExch(G-g_v) \cartProd \GenTokExch(H-h_v) \cong \GenTokExch((G \oplus_v H)-v) \leq \GenTokExch(G \oplus_v H).\]
\end{theorem}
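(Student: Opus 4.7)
The plan is to decompose the statement into two parts, the isomorphism and the subgraph containment, and handle each by invoking the machinery already built in the preceding theorems (\Cref{disjoint-gives-box} and \Cref{covsubgraph}). The key structural observation is that deleting the identified vertex from a vertex sum disconnects it back into its two summands: specifically, $(G \oplus_v H) - v \cong (G - g_v) \sqcup (H - h_v)$. This is immediate from the definition of $G \oplus_v H$ as the quotient $(G \cup H)/\simname$ that only identifies $g_v$ with $h_v$.

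For the isomorphism, I would apply \Cref{disjoint-gives-box} (the disconnection-is-box theorem) directly to the disjoint union $(G - g_v) \sqcup (H - h_v)$, since $X$ is summable. Specializing the $\bigbox$ to two factors yields
\[\GenTokExch((G \oplus_v H) - v) \cong \GenTokExch(G - g_v) \cartProd \GenTokExch(H - h_v),\]
and likewise for $\GenTokSlide$.

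For the subgraph containment, I would first use summability (via \Cref{compsum}) to observe that
\[X((G \oplus_v H) - v) = X(G - g_v) + X(H - h_v),\]
and thus, by the hypothesis $X(G \oplus_v H) = X(G - g_v) + X(H - h_v) + 1$, we have $X(G \oplus_v H) = X((G \oplus_v H) - v) + 1$. This sets up exactly the hypothesis needed to apply \Cref{covsubgraph} (with the source graph $G \oplus_v H$ and the deleted vertex $v$), yielding $\GenTokExch((G \oplus_v H) - v) \leq \GenTokExch(G \oplus_v H)$ and $\GenTokSlide((G \oplus_v H) - v) \leq \GenTokSlide(G \oplus_v H)$.

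Chaining the isomorphism from the first step with the containment from the second step gives both desired conclusions. The proof is essentially a short composition of already-established results, and there is no real obstacle beyond verifying the disjoint-union identification $(G \oplus_v H) - v \cong (G - g_v) \sqcup (H - h_v)$; the cardinality hypothesis is exactly what reconciles the summable count on the deleted side with the coverability bump needed to invoke \Cref{covsubgraph}.
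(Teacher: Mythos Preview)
Your proposal is correct and follows essentially the same approach as the paper's proof: both use the identification $(G \oplus_v H)-v \cong (G-g_v)\sqcup(H-h_v)$, apply \Cref{disjoint-gives-box} for the Cartesian product isomorphism, use \Cref{compsum} to rewrite the hypothesis as $X(G \oplus_v H)=X((G \oplus_v H)-v)+1$, and then invoke \Cref{covsubgraph} for the subgraph containment.
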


\begin{proof}
    Let $x$ be the property defining $X$.  Since $X$ is a summable vertex parameter and $(G-g_v) \sqcup (H-h_v) \cong (G \oplus_v H)-v$, by \Cref{disjoint-gives-box}, $\GenTokSlide(G-g_v) \cartProd \GenTokSlide(H-h_v)\cong\GenTokSlide((G \oplus_v H)-v)$.  Furthermore, by \Cref{compsum}, we have that $X((G \oplus_v H)-v)=X((G-g_v) \sqcup (H-h_v))=X(G-g_v)+X(H-h_v)$.  Since $X$ is coverable and $X(G \oplus_v H)=X(G-g_v)+X(H-h_v)+1=X((G \oplus_v H)-v)+1$, by \Cref{covsubgraph}, $\GenTokSlide((G \oplus_v H)-v)\leq\GenTokSlide(G \oplus_v H)$.  A similar argument shows that $\GenTokExch(G-g_v) \cartProd \GenTokExch(H-h_v) \cong \GenTokExch((G \oplus_v H)-v) \leq \GenTokExch(G \oplus_v H)$.
\end{proof}

\subsection{Partition parameters}\label{part_params}
The majority of zero forcing variants and domination variants are vertex parameters, and in many cases they are defined by minima.  Note that there are exceptions, such as the independence number, which is defined by a maximum.  However, many important graph parameters are defined via partitions of the vertex set rather than subsets of the vertex set.  We refer to these graph parameters as partition parameters and establish some of their properties in this section.

\begin{definition}
Let $X$ be a graph parameter.  If there exists a property $x$ such that either:
\[\text{for every graph }G,\ X(G)=\max\{|\mathcal Y|: \mathcal Y \text{ is a partition of } V(G) \text{ and } \mathcal Y \text{ has property }x \text{ in }G\},\] 
\[\text{ or }\] 
\[\text{for every graph }G,\ X(G)=\min\{|\mathcal Y|: \mathcal Y \text{ is a partition of } V(G) \text{ and } \mathcal Y \text{ has property }x \text{ in }G\},\] 
then $X$ is a \emph{partition parameter} defined by property $x$.
\end{definition}

Perhaps the most frequently-studied partition parameter is the chromatic number of a graph. 
 However, partition parameters which are of particular interest to the study of zero forcing variants include path covers, tree covers, spider covers, and clique covers.  In each case, the partitions of interest are those for which the subgraphs induced by each partite set belong to a specific graph class.  We refer to these partition parameters as $\mathcal G$-cover numbers.

\begin{definition}
Let $\mathcal G$ be a class of graphs.  Let $G$ be a graph and $\mathcal Y$ be a partition of the vertices of $G$.  Then $\mathcal Y$ is a \emph{$\mathcal G$-cover} of $G$ if and only if for each $Y \in \mathcal Y$, $G[Y] \in \mathcal G$.  The \emph{$\mathcal G$-cover number} of $G$, denoted $\mathcal G(G)$, is the parameter
\[\mathcal G(G)=\min\{|\mathcal Y|: \mathcal Y \text{ is a partition of } V(G) \text{ and for each } Y \in \mathcal Y, G[Y] \in \mathcal G\}.\]
\end{definition}

For a graph $G$, partition parameters are defined by partitions of $V(G)$; reconfiguration rules that are defined instead on subsets of $V(G)$, such as token exchange, are not directly applicable to such parameters.  One resolution to this is to utilize transversals of the respective partitions to create functionally-equivalent vertex parameters.

\begin{definition}
Let $G$ be a graph and $\mathcal Y$ be a partition of $V(G)$.  $B \subseteq V(G)$ is a \emph{transversal} of $\mathcal Y$ in $G$ if $|Y \cap B|=1$ for each $Y \in \mathcal Y$.  If $B$ is a transversal of a partition $\mathcal Y$ then we denote this by $B \in \mathcal T(\mathcal Y)$.
\end{definition}

\begin{definition}
Let $X$ be a partition parameter defined by a property $x$. Then the \emph{transversal number} defined by $X$, denoted $X_\mathcal T$, is given by
\[X_{\mathcal T}(G)=\sigma\left(\{|B|: B \in \mathcal T(\mathcal Y) \text{ for some partition }\mathcal Y \text{ of } V(G) \text{ with property }x\}\right),\]
where the function $\sigma$ takes the maximum cardinality of the set if $X(G)$ is defined by a maximum and takes the minimum cardinality otherwise.
\end{definition}

\begin{observation}
Let $X$ be a partition parameter and $X_{\mathcal T}$ be the transversal number defined by $X$.  Then $X_{\mathcal T}$ is a vertex parameter, and for all graphs $G$, $X_{\mathcal T}(G)=X(G)$. 
\end{observation}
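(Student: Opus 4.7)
The plan is to verify the two assertions directly from the definitions, with essentially no real obstacle beyond careful unpacking. The key observation underlying both claims is that whenever $B$ is a transversal of a partition $\mathcal{Y}$, one has $|B|=|\mathcal{Y}|$, since by definition $|Y\cap B|=1$ for each $Y\in\mathcal{Y}$ and $B=\bigcup_{Y\in\mathcal{Y}}(Y\cap B)$ is a disjoint union of singletons indexed by $\mathcal{Y}$.

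To show $X_{\mathcal{T}}$ is a vertex parameter, the plan is to exhibit an explicit defining property. Let $x$ be the property defining $X$, and define a property $x_{\mathcal{T}}$ on subsets of $V(G)$ by declaring that $B\subseteq V(G)$ has property $x_{\mathcal{T}}$ in $G$ if and only if there exists a partition $\mathcal{Y}$ of $V(G)$ with property $x$ in $G$ such that $B\in \mathcal{T}(\mathcal{Y})$. Then I would simply observe that
\[\{|B|\colon B\in\mathcal{T}(\mathcal{Y}) \text{ for some partition }\mathcal{Y}\text{ of }V(G)\text{ with property }x\}=\{|B|\colon B\subseteq V(G) \text{ and }B\text{ has property }x_{\mathcal{T}}\text{ in }G\},\]
so that $X_{\mathcal{T}}(G)=\sigma(\{|B|\colon B\subseteq V(G)\text{ and }B\text{ has property }x_{\mathcal{T}}\text{ in }G\})$, which matches \Cref{VertexParameterDef} exactly (with $\sigma$ selecting the appropriate max or min according to whether $X$ was defined by a max or a min).

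For the equality $X_{\mathcal{T}}(G)=X(G)$, I would use the key observation above in two directions. On the one hand, any partition $\mathcal{Y}$ of $V(G)$ with property $x$ admits at least one transversal $B$ (just pick one vertex from each block), and this $B$ satisfies $|B|=|\mathcal{Y}|$. So every value $|\mathcal{Y}|$ appearing in the set defining $X(G)$ also appears in the set defining $X_{\mathcal{T}}(G)$. Conversely, every $|B|$ appearing in the set defining $X_{\mathcal{T}}(G)$ equals $|\mathcal{Y}|$ for the corresponding witnessing partition $\mathcal{Y}$ with property $x$, so it appears in the set defining $X(G)$. Hence the two sets of cardinalities coincide, and applying $\sigma$ (max or min, matching the convention used for $X$) to both sides yields $X_{\mathcal{T}}(G)=X(G)$.

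The main (very minor) subtlety to watch is the quantifier convention: $X_{\mathcal{T}}$ is well-defined only because the set of cardinalities under consideration is nonempty exactly when the analogous set for $X$ is nonempty, which follows from the same transversal-exists argument. Beyond that, the proof is a straightforward verification, and I do not anticipate any genuine obstacle.
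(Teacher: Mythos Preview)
Your argument is correct and is exactly the routine unpacking the paper has in mind; the paper itself states this as an observation without proof, and your key point that $|B|=|\mathcal{Y}|$ for any transversal $B$ of $\mathcal{Y}$ is precisely why the result is considered immediate.
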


It has been shown for certain classes of graphs that the path cover number, denoted $\p(G)$, is equal to the standard zero forcing number and likewise that for many graphs the tree cover number, denoted $\T(G)$, is equal to the PSD forcing number \cite{barioli2013aim,bozemantree}.

\begin{lemma}[\cite{param}]\label{chainpath}
Let $G$ be a graph, $B$ be a standard zero forcing set of $G$, $\mathcal F$ be a chronological list of forces of $B$ on $G$, and let $\mathcal C$ be the chain set induced by $\mathcal F$.  Then $\mathcal C$ is a path cover of $G$.
\end{lemma}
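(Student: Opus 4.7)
The plan is to verify directly that $\mathcal{C}$ satisfies the two defining features of a path cover: each chain is a path in $G$, and the chains partition $V(G)$. Recall the chain-set construction: for each $b \in B$, the chain starting at $b$ is the maximal sequence $b = w_0, w_1, w_2, \ldots, w_k$ such that $w_{i-1} \to w_i$ is a force appearing in $\mathcal{F}$.

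First I would show that each chain is a path in $G$. Consecutive vertices $w_{i-1}, w_i$ satisfy $w_{i-1}w_i \in E(G)$ because the $\Z$-CCR only permits $u \to v$ when $v$ is a (white) neighbor of $u$. The vertices in a chain are distinct: $w_0 = b \in B$ is initially blue, while each $w_i$ with $i \geq 1$ was white immediately before being forced, and once blue a vertex never reverts; no vertex can thus appear in the chain twice.

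Next I would argue vertex-disjointness of distinct chains. The crucial observation is that in standard zero forcing, each vertex performs at most one force: when $u$ forces $v$ in $\mathcal{F}$, all of $u$'s other neighbors are already blue, and since vertices remain blue thereafter, $u$ can never again have a unique white neighbor. This means each $w_i$ has at most one successor in its chain, so the chain-building process does not branch. Combined with the fact that every $v \in V(G)\setminus B$ is forced exactly once during $\mathcal{F}$, each non-initial vertex continues exactly one chain, and each $b \in B$ starts exactly one chain and is never forced, so $V(G)$ is partitioned among the chains.

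Finally, coverage is immediate: for any $v \in V(G) \setminus B$, $v$ is forced by some unique $u$, and tracing the forcing sequence $\cdots \to u \to v$ backward reaches a unique $b \in B$, so $v$ lies in the chain starting at that $b$; each $b \in B$ is manifestly in its own chain. I expect no substantive obstacle here: the statement is largely a definition-chase, and the only real subtlety is the ``each vertex forces at most once'' fact that guarantees chains are genuine paths rather than branching trees.
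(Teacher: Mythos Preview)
Your argument is correct and is precisely the standard verification of this fact. Note that the paper does not give its own proof of this lemma: it is quoted from \cite{param} and stated without proof, so there is nothing further to compare against.
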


\begin{lemma}[\cite{ekstrand2013psdzf}]\label{forcingtree}
Let $G$ be a graph, $B$ be a PSD forcing set of $G$, $\mathcal F$ be a chronological list of forces of $B$ on $G$, and let $\mathcal T$ be the collection of forcing trees induced by $\mathcal F$.  Then $\mathcal T$ is a tree cover of $G$.
\end{lemma}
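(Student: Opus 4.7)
The plan is to verify the two defining conditions for $\mathcal{T}=\{T_b:b\in B\}$ to be a tree cover of $G$: the vertex sets $V(T_b)$ partition $V(G)$, and each induced subgraph $G[V(T_b)]$ is a tree. The partition is a routine induction on the round function $\rd$ associated with $\mathcal{F}$: each $b\in B$ is the root of its own forcing tree, and a non-initial vertex $v$ has a unique forcer $u$ (with $\rd(u)<\rd(v)$) in the chronological list $\mathcal{F}$, so $v$ is appended to the tree containing $u$. Since each forcing edge comes from an application of the PSD color change rule, every edge of $T_b$ lies in $E(G)$, so $T_b$ is a connected subgraph of $G$ and in particular $G[V(T_b)]$ is connected.

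The substantive step is acyclicity. I would prove the following claim: for every non-root $v\in V(T_b)$, the only $V(T_b)$-neighbor of $v$ in $G$ with strictly smaller round is the forcer $w$ of $v$. Summed over all non-root vertices, this yields $|E(G[V(T_b)])|=|V(T_b)|-1$, which together with connectedness forces $G[V(T_b)]$ to be a tree. To prove the claim, suppose for contradiction that $x\in V(T_b)\setminus\{w\}$ satisfies $xv\in E(G)$ and $\rd(x)<\rd(v)$; then $x$ is either an ancestor of $v$ in $T_b$, or $x$ and $v$ lie in distinct branches below their lowest common ancestor $y\in V(T_b)$.

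In the ancestor case, let $u$ be the next vertex after $x$ on the $T_b$-path to $v$. At time $\rd(u)$, the remaining $T_b$-chain from $u$ to $v$ is entirely white and connected in $G$, so $v$ lies in the white component $C$ containing $u$; but validity of the force $x\to u$ requires $N_G(x)\cap V(C)=\{u\}$, contradicting $v\in N_G(x)\cap V(C)$ with $v\neq u$. In the branching case, let $u_1$ and $u_1'$ be the distinct children of $y$ on the $T_b$-paths to $v$ and $x$ respectively, and assume without loss of generality that $\rd(u_1)<\rd(u_1')$. At time $\rd(u_1)$ the PSD color change rule gives $N_G(y)\cap V(C_1)=\{u_1\}$ for the white component $C_1$ containing $u_1$, so the still-white vertex $u_1'$ (adjacent to $y$) must lie in a different white component $C_1'$. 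Propagating the forcing chains from $u_1$ to $v$ and from $u_1'$ to $x$, each still white at time $\rd(u_1)$, places $v\in V(C_1)$ and $x\in V(C_1')$, contradicting $xv\in E(G)$ between two white vertices in different components. The branching case is the main obstacle: one must identify the correct moment (the earlier of the two child-forces of $y$) and carry the PSD color-change constraint through both forcing chains to separate $v$ and $x$ into different white components.
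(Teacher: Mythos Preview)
The paper does not prove this lemma; it is quoted from \cite{ekstrand2013psdzf} and used as a black box. So there is no ``paper's own proof'' to compare against.

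Your argument is correct. A few remarks on the details you should make explicit when writing it up. First, the case split (ancestor versus distinct branches) is exhaustive because $\rd(x)<\rd(v)$ rules out $v$ being an ancestor of $x$: rounds strictly increase along root-to-leaf paths in $T_b$. Second, in the branching case your WLOG is harmless because the conclusion you need (that $v$ and $x$ lie in different white components at the chosen moment, while both are still white) is symmetric in $v$ and $x$; the assumption $\rd(x)<\rd(v)$ plays no further role there. Third, when you say ``the forcing chain from $u_1'$ to $x$ is still white at time $\rd(u_1)$,'' this uses $\rd(u_1)<\rd(u_1')\leq\rd(x)$, which follows from your WLOG and the ancestor relation; it is worth stating. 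Finally, your edge count $|E(G[V(T_b)])|=|V(T_b)|-1$ relies on the fact that in a chronological list no two vertices of $T_b$ share a round (only the root has round $0$, and each later round forces a single vertex), so every edge of $G[V(T_b)]$ is accounted for exactly once by its larger-round endpoint.
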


\begin{proposition}
Let $G$ be a graph.
\begin{enumerate}
    \item If $\p(G)=\Z(G)$, then $\TokExch(G)\leq\RTokExch_{\p}(G)$ and $\TokSlide(G)\leq\RTokSlide_{\p}(G)$.
    \item If $\T(G)=\Z_+(G)$, then $\TokExch_+(G)\leq\RTokExch_{\T}(G)$ and $\TokSlide_+(G)\leq\RTokSlide_{\T}(G)$.
\end{enumerate} 
\end{proposition}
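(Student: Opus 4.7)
The plan is to observe that both conclusions have the same shape: show that every minimum zero forcing set (respectively PSD forcing set) is a transversal of some minimum path cover (respectively minimum tree cover), and that the edge conditions of the token exchange and token sliding rules transfer verbatim between the two reconfiguration graphs. I would treat the standard/path cover case first, then remark that the PSD/tree cover case is entirely analogous via \Cref{forcingtree}.

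First I would verify the vertex containment $V(\TokExch(G))\subseteq V(\RTokExch_{\p}(G))$. Let $B$ be a minimum standard zero forcing set, so $|B|=\Z(G)=\p(G)$. Pick any chronological list $\mathcal F$ of forces for $B$ on $G$, and let $\mathcal C$ be the induced chain set. By \Cref{chainpath}, $\mathcal C$ is a path cover of $G$, and because each chain starts at a unique initial vertex in $B$, we have $|\mathcal C|=|B|=\p(G)$, so $\mathcal C$ is a minimum path cover. Moreover, $B$ intersects each chain in exactly one vertex (its starting vertex), so $B\in\mathcal T(\mathcal C)$. Since $|B|=\p(G)=\p_{\mathcal T}(G)$, this places $B\in V(\RTokExch_{\p}(G))=V(\RTokSlide_{\p}(G))$.

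Next I would handle the edge containment. Suppose $B_1B_2\in E(\TokExch(G))$; then there exist $v_1\in B_1\setminus B_2$ and $v_2\in B_2\setminus B_1$ with $B_1\setminus\{v_1\}=B_2\setminus\{v_2\}$. By the previous paragraph, both $B_1$ and $B_2$ are vertices of $\RTokExch_{\p}(G)$, and the defining condition of an edge in $\RTokExch_{\p}(G)$ from \Cref{TokReconfigDefs} is literally the same exchange condition, so $B_1B_2\in E(\RTokExch_{\p}(G))$. For the token sliding comparison, the same argument works because both $\TokSlide(G)$ and $\RTokSlide_{\p}(G)$ add the identical extra requirement $v_1v_2\in E(G)$. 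This gives $\TokExch(G)\leq\RTokExch_{\p}(G)$ and $\TokSlide(G)\leq\RTokSlide_{\p}(G)$.

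The PSD/tree cover statement follows the same skeleton: replace $B$ with a minimum PSD forcing set, $\mathcal F$ with a chronological list of PSD forces, and the chain set with the forcing tree cover guaranteed by \Cref{forcingtree}. Each forcing tree is rooted at a unique vertex of $B$, so $B$ is a transversal of a tree cover of size $\Z_+(G)=\T(G)$, and the edge-condition transfer argument is unchanged. There is no real obstacle here; the only subtle point worth stating carefully is that $|\mathcal C|=|B|$ (and $|\mathcal T|=|B|$) because each element of the forcing set starts exactly one chain/forcing tree, which is what makes the resulting cover minimum and makes $B$ a transversal in the strict sense of $\lvert Y\cap B\rvert=1$ for every part $Y$.
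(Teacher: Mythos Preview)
Your proof is correct and follows essentially the same approach as the paper: use \Cref{chainpath} (respectively \Cref{forcingtree}) to produce a path cover (tree cover) of size $|B|=\Z(G)=\p(G)$ (respectively $|B|=\Z_+(G)=\T(G)$), observe that $B$ is a transversal of it, and then note that the edge conditions in the two reconfiguration graphs are identical. The paper's argument is nearly word-for-word the same, just slightly more terse on the edge-containment step.
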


\begin{proof}
For (1), suppose $\p(G)=\Z(G)$ and let $B \in V(\TokExch(G))=V(\TokSlide(G))$, $\mathcal F$ be a chronological list of forces of $B$ on $G$, and $\mathcal C$ be the chain set induced by $\mathcal F$.  Then by \Cref{chainpath}, $\mathcal C$ is a path cover of $G$, and since $|\mathcal C|=|B|=\Z(G)=\p(G)$, $\mathcal C$ is a minimum path cover of $G$.  Furthermore, each forcing chain in $\mathcal C$ contains exactly one vertex in $B$, so $B$ is a transversal of $\mathcal C$ and $B \in V(\RTokExch_{\p}(G))=V(\RTokSlide_{\p}(G))$.  Since $V(\TokExch(G)) \subseteq V(\RTokExch_{\p}(G))$, given $B_1B_2 \in E(\TokExch(G))$, it follows that $B_1B_2 \in E(\RTokExch_{\p}(G))$, and so $\TokExch(G)\leq\RTokExch_{\p}(G)$.  Likewise, $\TokSlide(G)\leq\RTokSlide_{\p}(G)$.

For (2), the proof is analogous to that above but utilizes \Cref{forcingtree}.
\end{proof}

The next result is an example of the types of insights that reconfiguration graphs can provide for partition parameters. 

\begin{theorem}\label{thmpartbox}
Let $G$ be a graph, let $X$ be a partition parameter defined by property $x$, and let $\mathcal Y=\{Y_i\}_{i=1}^m$ be a partition of $V(G)$ having property $x$ such that $|\mathcal Y|=X(G)$.  Then the following are equivalent:
\begin{enumerate}
    \item $\GenTokExch(G) \cong \bigbox_{i=1}^m K_{|Y_i|}$;
    \item $\GenTokSlide(G) \cong \bigbox_{i=1}^m G[Y_i]$;
    \item $\mathcal Y$ is the unique partition of $V(G)$ having property $x$ such that $|\mathcal Y|=X(G)$.
\end{enumerate}
\end{theorem}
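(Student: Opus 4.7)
The plan is to prove the equivalence by the cycle $(3)\Rightarrow(1)$, $(3)\Rightarrow(2)$, $(1)\Rightarrow(3)$, and $(2)\Rightarrow(3)$. The key setup observation is that, by the definition of the transversal number, $V(\GenTokExch(G))=V(\GenTokSlide(G))$ is exactly the collection of size-$m$ subsets $B\subseteq V(G)$ that arise as transversals of some $m$-part partition of $V(G)$ with property $x$. Since both Cartesian products $\bigbox_{i=1}^m K_{|Y_i|}$ and $\bigbox_{i=1}^m G[Y_i]$ have exactly $\prod_{i=1}^m |Y_i|$ vertices, a single vertex-count argument will dispatch the two reverse implications simultaneously.

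For $(3)\Rightarrow(1)$ and $(3)\Rightarrow(2)$, I will assume $\mathcal{Y}$ is the unique $m$-part partition of $V(G)$ with property $x$, so every vertex $B$ of the reconfiguration graph must be a transversal of $\mathcal{Y}$. The map $\phi(B)=(b_1,\ldots,b_m)$, where $\{b_i\}=B\cap Y_i$, is then a bijection onto $\prod_{i=1}^m Y_i$. Two such transversals $B,B'$ differ by an exchange exactly when their $\phi$-images disagree in precisely one coordinate, which is the adjacency rule for $\bigbox_{i=1}^m K_{|Y_i|}$; this establishes $(1)$. For $(2)$, the exchanged vertices must additionally be adjacent in $G$, and since they both lie in the same $Y_i$, this is equivalent to adjacency in $G[Y_i]$, yielding $\GenTokSlide(G)\cong\bigbox_{i=1}^m G[Y_i]$.

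For $(1)\Rightarrow(3)$ and $(2)\Rightarrow(3)$, I will prove the contrapositive by producing an ``extra'' vertex whenever a second $m$-part partition $\mathcal{Y}'\neq\mathcal{Y}$ of $V(G)$ with property $x$ is assumed to exist. Since $|\mathcal{Y}|=|\mathcal{Y}'|$ but $\mathcal{Y}\neq\mathcal{Y}'$, neither partition can refine the other, so some block $Y_i$ of $\mathcal{Y}$ fails to be contained in any single block of $\mathcal{Y}'$; pick $v,w\in Y_i$ lying in distinct blocks $Y'_j,Y'_k$ of $\mathcal{Y}'$. Completing $\{v,w\}$ to a transversal of $\mathcal{Y}'$ by choosing one vertex from each of the remaining $m-2$ blocks of $\mathcal{Y}'$ produces a set $B$ of size $m$ that is a valid vertex of the reconfiguration graph but, containing two elements of $Y_i$, is not a transversal of $\mathcal{Y}$. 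Together with the $\prod_{i=1}^m|Y_i|$ transversals of $\mathcal{Y}$ already counted, this forces $|V(\GenTokExch(G))|>\prod_{i=1}^m|Y_i|$, which rules out both Cartesian-product isomorphisms at once.

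The main obstacle is the refinement step in the last paragraph: one must argue carefully that two distinct partitions of equal cardinality are never comparable under refinement, so that the pair $(v,w)$ needed to build the extra transversal is guaranteed to exist. Everything else reduces to unpacking the definitions of transversal, exchange-adjacency, and slide-adjacency, with no appeal to forcing-specific machinery.
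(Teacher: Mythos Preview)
Your proposal is correct and follows essentially the same route as the paper: the paper also proves $(3)\Rightarrow(1)$ and $(3)\Rightarrow(2)$ by showing uniqueness forces every reconfiguration vertex to be a transversal of $\mathcal{Y}$ and then checking edge correspondences, and proves $(1)\Rightarrow(3)$ and $(2)\Rightarrow(3)$ by the same contrapositive construction of a transversal of $\mathcal{Y}'$ hitting a single block of $\mathcal{Y}$ twice. Your refinement argument (two distinct partitions of equal cardinality cannot be comparable) is in fact a justification the paper omits when asserting the existence of such $Y$, $Y_1'$, $Y_2'$.
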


\noindent Before we turn to the proof of \Cref{thmpartbox}, we introduce a lemma which will be used for that proof.

\begin{lemma}\label{lempartbox}
    Let $G$ be a graph, let $X$ be a partition parameter defined by property $x$, and let $\mathcal Y=\{Y_i\}_{i=1}^m$ be a partition of $V(G)$ having property $x$ such that $|\mathcal Y|=X(G)$.  Then $\bigbox_{i=1}^m K_{|Y_i|}$ is isomorphic to a subgraph of $\GenTokExch(G)$ and $\bigbox_{i=1}^m G[Y_i]$ is isomorphic to a subgraph of $\GenTokSlide(G)$.
\end{lemma}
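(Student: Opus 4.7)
The plan is to exhibit an explicit injective homomorphism from $\bigbox_{i=1}^m K_{|Y_i|}$ into $\GenTokExch(G)$ (and from $\bigbox_{i=1}^m G[Y_i]$ into $\GenTokSlide(G)$) that sends tuples of partite representatives to the corresponding transversals of $\mathcal Y$. Since the vertex sets of $\GenTokExch(G)$ and $\GenTokSlide(G)$ coincide and the edges of $\GenTokSlide(G)$ form a subset of those of $\GenTokExch(G)$, both embeddings will come from the same vertex map.

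First I would set up the vertex map. Label the vertices of $K_{|Y_i|}$ by the elements of $Y_i$, so that $V\!\left(\bigbox_{i=1}^m K_{|Y_i|}\right)=\prod_{i=1}^m Y_i$. Define $\phi\colon\prod_{i=1}^m Y_i\to V(\GenTokExch(G))$ by $\phi(y_1,\dots,y_m)=\{y_1,\dots,y_m\}$. Because the $Y_i$ are pairwise disjoint, $\phi$ is injective and each image $B=\phi(y_1,\dots,y_m)$ is a transversal of $\mathcal Y$ with $|B|=m=X(G)=X_{\mathcal T}(G)$; since $\mathcal Y$ has property $x$, $B$ is a vertex of $\GenTokExch(G)=\GenTokSlide(G)$ (interpreting $X$ as the associated transversal parameter $X_{\mathcal T}$).

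Next I would verify adjacency. Suppose $(y_1,\dots,y_m)$ and $(y_1',\dots,y_m')$ are adjacent in $\bigbox_{i=1}^m K_{|Y_i|}$. By definition of the Cartesian product, there is a unique coordinate $j$ with $y_j\neq y_j'$, and $y_i=y_i'$ for $i\neq j$. Writing $B=\phi(y_1,\dots,y_m)$ and $B'=\phi(y_1',\dots,y_m')$, this gives $B\setminus B'=\{y_j\}$ and $B'\setminus B=\{y_j'\}$, so $B$ and $B'$ differ by an exchange and $BB'\in E(\GenTokExch(G))$. For the token-sliding statement, note that adjacency in $\bigbox_{i=1}^m G[Y_i]$ requires in addition $y_jy_j'\in E(G[Y_j])\subseteq E(G)$, so $B$ and $B'$ differ by a slide and $BB'\in E(\GenTokSlide(G))$.

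The argument contains no real obstacle; the only point requiring care is confirming that every transversal of the fixed partition $\mathcal Y$ is a valid vertex of the reconfiguration graph, which follows immediately because $\mathcal Y$ itself has property $x$ and attains the extremal value $X(G)$. Combining the injectivity of $\phi$ with the edge-preservation just established yields the two claimed subgraph embeddings.
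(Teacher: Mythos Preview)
Your proposal is correct and follows essentially the same approach as the paper: define the map from tuples in $\prod_i Y_i$ to their underlying transversal sets, use disjointness of the partition to get injectivity, and verify edge preservation coordinate-wise for both the token-exchange and token-sliding cases. The only cosmetic difference is that you make explicit the identification of $X$ with its transversal vertex parameter $X_{\mathcal T}$, which the paper leaves implicit.
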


\begin{proof}
Let $V(\bigbox_{i=1}^m K_{|Y_i|})=V(\bigbox_{i=1}^m G[Y_i])=\mathcal V$, where $\mathcal V$ is the collection of ordered $m$-tuples $(v_{i,j_i})_{i=1}^m$ such that for each $i$, we have $v_{i,j_i} \in Y_i$.  So for each ordered $m$-tuple $(v_{i,j_i})_{i=1}^m$, the corresponding set of vertices $\{v_{i,j_i}\}_{i=1}^m$ is a transversal of $\mathcal Y$ and thus $\{v_{i,j_i}\}_{i=1}^m \in V(\GenTokExch(G))=V(\GenTokSlide(G))$.  Since $\mathcal Y=\{Y_i\}_{i=1}^m$ is a partition of $V(G)$, this correspondence is bijective.  If $(v_{i,j_i})_{i=1}^m(v_{i,k_i})_{i=1}^m \in E(\bigbox_{i=1}^m K_{|Y_i|})$, then $v_{i,j_i}=v_{i,k_i}$ for each $i \neq i_0$.  In this case, $\{v_{i,j_i}\}_{i=1}^m$ and $\{v_{i,k_i}\}_{i=1}^m$ are transversals of $\mathcal Y$ differing by an exchange, and thus $\{v_{i,j_i}\}_{i=1}^m\{v_{i,k_i}\}_{i=1}^m \in E(\GenTokExch(G))$.  It follows that $\bigbox_{i=1}^m K_{|Y_i|}$ is isomorphic to a subgraph of $\GenTokExch(G)$.  Similarly, if $(v_{i,j_i})_{i=1}^m(v_{i,k_i})_{i=1}^m \in E(\bigbox_{i=1}^m G[Y_i]$), then $v_{i,j_i}=v_{i,k_i}$ for each $i \neq i_0$ and $v_{i_0,j_{i_0}}v_{i_0,k_{i_0}} \in E(G)$.  In this case, $\{v_{i,j_i}\}_{i=1}^m$ and $\{v_{i,k_i}\}_{i=1}^m$ are transversals of $\mathcal Y$ differing by a slide, and thus $\{v_{i,j_i}\}_{i=1}^m\{v_{i,k_i}\}_{i=1}^m \in E(\GenTokSlide(G))$. 
 It follows that $\bigbox_{i=1}^m G[Y_i]$ is isomorphic to a subgraph of $\GenTokSlide(G)$.
\end{proof}

\begin{proof}[Proof of \Cref{thmpartbox}]
Let $\mathcal V=V(\bigbox_{i=1}^m K_{|Y_i|})=V(\bigbox_{i=1}^m G[Y_i]))$ be the collection of ordered $m$-tuples $(v_{i,j_i})_{i=1}^m$ such that for each $i$, we have $v_{i,j_i} \in Y_i$, and let $\phi(\mathcal V)$ be the collection of corresponding sets $\{v_{i,j_i}\}_{i=1}^m$.  Since $\mathcal Y=\{Y_i\}_{i=1}^m$ is a partition of $V(G)$, this correspondence is bijective.  Also note that by \Cref{lempartbox}, $\bigbox_{i=1}^m K_{|Y_i|}$ is isomorphic to a subgraph of $\GenTokExch(G)$ and $\bigbox_{i=1}^m G[Y_i]$ is isomorphic to a subgraph of $\GenTokSlide(G)$.

We first show, by way of the contrapositive, that (1) implies (3).  If there exists a partition $\mathcal Y'=\{Y'_i\}_{i=1}^m$ of $V(G)$ having property $x$, with $\mathcal Y' \neq \mathcal Y$ and $|\mathcal Y'|=X(G)$, then there exists a partite set $Y \in \mathcal Y$ and two distinct partite sets $Y_1',Y_2' \in \mathcal Y'$ such that $Y \cap Y_1'\neq \emptyset$ and $Y \cap Y_2' \neq \emptyset$.  Let $v'_i \in Y'_i$ for each $i$, with $v_1' \in Y \cap Y_1'$ and $v_2' \in Y \cap Y_2'$.  It follows that $\{v'_i\}_{i=1}^m$ is a transversal of $\mathcal Y'$.  However, since $v_1',v_2' \in Y \in \mathcal Y$, $\{v'_i\}_{i=1}^m \not \in \phi(\mathcal V)$, and thus $\bigbox_{i=1}^m K_{|Y_i|}$ is isomorphic to a proper subgraph of $\GenTokExch(G)$.  An identical argument shows that (2) implies (3).

We now show that (3) implies (2).  Since $\mathcal Y$ is the unique partition of $V(G)$ having property $x$ such that $|\mathcal Y|=X(G)$ and each $\{v_{i,j_i}\}_{i=1}^m \in V(\GenTokSlide(G))$ must be a transversal of this partition,  $\phi(\mathcal V)=V(\GenTokSlide(G))$.  Furthermore, if $\{v_{i,j_i}\}_{i=1}^m\{v_{i,k_i}\}_{i=1}^m \in E(\GenTokSlide(G))$, then the two vertices $u,v \in \{v_{i,j_i}\}_{i=1}^m\symmDiff\{v_{i,k_i}\}_{i=1}^m$ must be such that $u,v \in Y_{i_0} \in \mathcal Y$ for some $i_0$ and $uv \in E(G)$.  So $(v_{i,j_i})_{i=1}^m$ and $(v_{i,k_i})_{i=1}^m$ are such that $v_{i,j_i}=v_{i,k_i}$ for each $i\neq i_0$ and $v_{i_0,j_{i_0}}v_{i_0,k_{i_0}} \in E(G[Y_{i_0}])$.  Thus, $(v_{i,j_i})_{i=1}^m(v_{i,k_i})_{i=1}^m \in E(\bigbox_{i=1}^mG[Y_i])$ and so $\GenTokSlide(G)$ is isomorphic to a subgraph of $\bigbox_{i=1}^m G[Y_i]$. 
 In particular, since $\bigbox_{i=1}^m G[Y_i]$ is isomorphic to a subgraph of $\GenTokSlide(G)$ it follows that $\GenTokSlide(G) \cong \bigbox_{i=1}^m G[Y_i]$.  A similar argument shows that (3) implies (1).
\end{proof}

In \cite{bjorkman2022power, bong2022isomorphisms}, $X$-set parameters were defined and studied to derive what can be considered universal results for token addition and removal reconfiguration graphs.  Note that a parameter being summable is equivalent to the parameter satisfying criterion (3).

\begin{definition}\label{x_set_def}
An $X$-set parameter is a graph parameter defined to be the minimum cardinality of an $X$-set of $G$, where the $X$-sets of $G$ are subsets of $V(G)$ defined by a given property and satisfy the following conditions:
\begin{enumerate}
    \item If $B$ is an $X$-set of $G$ and $B \subseteq B' \subseteq V(G)$, then $B'$ is an $X$-set of $G$.
    \item The empty set is not an $X$-set of any graph.
    \item $B$ is an  $X$-set of a disconnected graph $G$ if and only if $B$ is the union of an $X$-set of each component of $G$. 
    \item If $G$ has no isolated vertices, then every set of $\left|V(G)\right|-1$ vertices is an $X$-set.
\end{enumerate}
\end{definition}

It is important to note that the skew forcing number is not an $X$-set parameter, since it fails criterion (2). 
 However, \Cref{SkewIsSummable} shows that the skew forcing number is summable.  As illustrated by \Cref{disjoint-gives-box} for example, there are important results which do not require the other three criteria and instead rely solely on summability, and thus apply to a larger family of vertex parameters. 

In \Cref{TypesTable}, we summarize which parameters satisfy the previously-established $X$-set property (\Cref{x_set_def}) and the summable (\Cref{def:SummableDefinition}) and coverable (\Cref{coverable_def}) properties defined herein for several important graph parameters.  While it is obvious that $X$-set parameters must be summable, none of the other implications among these properties hold.
Arguments supporting the conclusions summarized in this table can be found in Appendix \ref{table}.

\renewcommand{\arraystretch}{1.1}
\begin{table}[h!]
\centering
\begin{tabular}{|l|c|c|c|}
\hline
Vertex Parameter & Summable & Coverable & $X$-set Parameter \\
\hline
\hline
Zero Forcing Number & Yes & Yes & Yes\\
\hline
PSD Forcing Number & Yes & Yes & Yes\\
\hline
$k$-forcing Number & Yes & Yes & Yes\\
\hline
Power Domination Number & Yes & Yes & Yes\\
\hline
Domination Number & Yes & Yes & Yes\\
\hline
Path Cover Transversal Number & Yes & Yes & Yes\\
\hline
Spider Cover Transversal Number & Yes & Yes & Yes\\
\hline
Tree Cover Transversal Number & Yes & Yes & Yes\\
\hline
Clique Cover Transversal Number & Yes & Yes & Yes \\
\hline
Skew Forcing Number & Yes & Yes & {\bf No}\\
\hline
Vertex Cover Number & Yes & Yes & {\bf No}\\
\hline
Independence Number & Yes & {\bf No} & {\bf No}\\
\hline
Total Domination Number & Yes & {\bf No} & {\bf No}\\
\hline
Minor Monotone Floor of Zero Forcing Number & {\bf No} & Yes & {\bf No}\\
\hline
Chromatic Transversal Number & {\bf No} & Yes & {\bf No}\\
\hline
\end{tabular}
\caption{Vertex parameters and their characteristics}
\label{TypesTable}
\end{table}

\renewcommand{\arraystretch}{1}

\subsection{Cliques in reconfiguration graphs}
Some structural results on reconfiguration graphs do not require any special properties to be satisfied.  For instance, one can relate cliques in a token sliding graph to cliques in the source graph, using purely set-theoretic arguments.

\begin{lemma}
\label{SetDifferenceLemma}
Let $U$ be a set, and let $W\subseteq\mathcal{P}(U)$ ($|W|>1$; $\mathcal{P}(U)$ is the power set of $U$), such that for all $P,Q\in W$, $P\neq Q$, $|P\setminus Q|=|Q\setminus P|=1$.   Let $Y=\bigcap W$, $Z=(\bigcup W)\setminus Y$.  Then either $W=\{Y\cup\{z\}\colon z\in Z\}$ or $W=\{Y\cup (Z\setminus\{z\})\colon z\in Z\}$.
\end{lemma}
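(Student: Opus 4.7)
The plan is to pin down the shape of $W$ via a careful analysis of pairwise and triple-wise symmetric differences, then compute $Y$ and $Z$ to match one of the two stated forms. Since $|P \setminus Q| = |Q \setminus P|$ for every $P \neq Q$ in $W$, all sets in $W$ share a common cardinality. I fix any reference set $P_0 \in W$; then for each $P \in W \setminus \{P_0\}$ the hypothesis yields unique elements $a(P) \in P_0 \setminus P$ and $b(P) \in P \setminus P_0$ such that $P = (P_0 \setminus \{a(P)\}) \cup \{b(P)\}$.

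The key combinatorial step is a triple analysis: for any two distinct $P, Q \in W \setminus \{P_0\}$, an elementwise computation of $P \symmDiff Q$ from the explicit descriptions above, combined with $|P \symmDiff Q| = 2$, forces either $a(P) = a(Q)$ or $b(P) = b(Q)$, but not both (as ``both'' would yield $P = Q$). In other words, any two sets in $W \setminus \{P_0\}$ either agree on which element of $P_0$ they omit, or on which outside element they include.

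I then upgrade this local dichotomy to a global one: either \emph{all} $P \in W \setminus \{P_0\}$ agree on a single common value $a_0$, or \emph{all} agree on a single common value $b_0$. Suppose neither holds; fix some $P_1 \in W \setminus \{P_0\}$ with values $(a_0, b_0)$ and produce $P$ with $a(P) = a_0$, $b(P) \neq b_0$, and $Q$ with $b(Q) = b_0$, $a(Q) \neq a_0$. Applying the triple analysis to the pair $(P, Q)$ in $W \setminus \{P_0\}$ gives $a(P) \neq a(Q)$, whence $b(P) = b(Q) = b_0$, contradicting $b(P) \neq b_0$.

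In the first global case, direct computation gives $Y = P_0 \setminus \{a_0\}$ and $Z = \{a_0\} \cup \{b(P) : P \in W \setminus \{P_0\}\}$, so that $W = \{Y \cup \{z\} : z \in Z\}$; in the second, $Y = P_0 \setminus \{a(P) : P \in W \setminus \{P_0\}\}$ and $Z = \{b_0\} \cup \{a(P) : P \in W \setminus \{P_0\}\}$, so that $W = \{Y \cup (Z \setminus \{z\}) : z \in Z\}$. The case $|W| = 2$ is automatically covered since the two forms coincide when $|Z| = 2$. The main obstacle is the bookkeeping in the triple analysis; once the local dichotomy is established, the global consistency argument and the explicit identification of $Y$ and $Z$ proceed by routine set algebra.
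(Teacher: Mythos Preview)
Your proposal is correct and follows essentially the same strategy as the paper: fix a reference set $P_0$ (the paper's $S_0$), parametrize the remaining sets by which element of $P_0$ they drop and which outside element they add, and reduce to a dichotomy. The paper organizes the casework slightly differently---it works with $Y$ and $Z$ from the outset, sets $A=S_0\cap Z$ and $B=Z\setminus S_0$, and splits on whether $|B|=1$ or $|B|\geq 2$---but this is exactly your ``all $a(P)$ agree'' versus ``all $b(P)$ agree'' dichotomy in disguise; one small point worth making explicit is that your ``produce $P$ with $a(P)=a_0,\ b(P)\neq b_0$'' step already uses the local dichotomy applied to $P_1$ and the witness with $b\neq b_0$.
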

\begin{proof}
Fix $S_0\in W$, and let $W'=W\setminus\{S_0\}$.  Let $A=S_0\cap Z=S_0\setminus Y$, $B=Z\setminus S_0=Z\setminus A$.  For all $S\in W'$, $|S\cap B|=|S\setminus S_0|=1=|S_0\setminus S|=|A\setminus S|$.  Thus, for all $S\in W$, $|S\cap Z|=|A|-|A\setminus S|+|S\cap B|=|A|$ and $|Z\setminus S|=|B|-|S\cap B|+|A\setminus S|=|B|$.  $|A|\neq 0$ (otherwise $W=\{Y\}$); likewise, $|B|\neq 0$ (else $W=\{Y\cup Z\}$).  If $|B|=1$, then $W=\{Y\cup (Z\setminus\{z\})\colon z\in Z\}$, and we are done.  Otherwise, let $|B|\geq 2$.

It is asserted that if $P,Q\in W'$ and $P\cap Q\cap B=\emptyset$, then $P\cap A=Q\cap A$.  Since $|P\cap B|=|Q\cap B|=1$, $P\cap B=\{b_P\}$ and $Q\cap B=\{b_Q\}$ for some $b_P,b_Q\in B$; since $P\cap Q\cap B=\emptyset$, $b_P\neq b_Q$.  Likewise, $|A\setminus P|=|A\setminus Q|=1$, so $A\setminus P=\{a_P\}$ and $A\setminus Q=\{a_Q\}$ for some $a_P,a_Q\in A$.  $a_P=a_Q$, as otherwise $\{a_P,b_Q\}\subseteq Q\setminus P$ and $|Q\setminus P|>1$.  Thus, $P\cap A=A\setminus (A\setminus P)=A\setminus\{a_P\}=A\setminus\{a_Q\}=Q\cap A$.

Let $b_1,b_2\in B$, $b_1\neq b_2$.  There exist $S_1,S_2\in W'$ such that $b_1\in S_1$ and $b_2\in S_2$; since $|S_1\cap B|=|S_2\cap B|=1$, $S_1\cap B=\{b_1\}$ and $S_2\cap B=\{b_2\}$.  Since $S_1\cap S_2\cap B=\emptyset$, $S_1\cap A=S_2\cap A$.  For any $S\in W'$, $|S\cap B|=1$, so either $S\cap S_1\cap B=\emptyset$ or $S\cap S_2\cap B=\emptyset$.  Thus, for all $S\in W'$, $S\supseteq S\cap A=S_1\cap A=S_2\cap A$.  Therefore, $S_1\cap A\subseteq Y\cap Z=\emptyset$, and $|A|=|A\setminus S_1|+|A\cap S_1|=1$.  We then have that $W=\{Y\cup\{z\}\colon z\in Z\}$.
\end{proof}

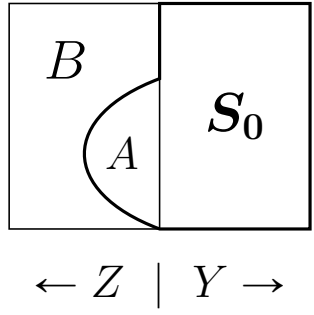
\begin{figure}[hbt]
\centering
\begin{tikzpicture}
\draw[very thick] (0,0) .. controls (-4/3,0.5) and (-4/3,1.5) .. (0,2) -- (0,3) -- (2,3) -- (2,0) -- cycle;
\draw[semithick] (0,0) -- (-2,0) -- (-2,3) -- (0,3) -- cycle;
\node[font=\LARGE] at (0,-0.75) {$\gets Z\ \ |\ \ Y\to$};
{ \boldmath \node[font=\huge] at (1,1.5) {$S_0$}; }
\node[font=\LARGE] at (-0.5,1) {$A$};
\node[font=\huge] at (-1.25,2.25) {$B$};
\end{tikzpicture}
\caption{Illustration of set structure in proof of \Cref{SetDifferenceLemma}.}
\end{figure}

\begin{corollary}
\label{TokExchCSubgraph}
Let $G$ be a graph, $H=\GenTokExch(G)$, and $W\subseteq V(H)$.  If $H[W]\cong K_{|W|}$ is a complete graph, then either $W=\{Y\cup\{z\}\colon z\in Z\}$ or $W=\{Y\cup (Z\setminus\{z\})\colon z\in Z\}$, where $Y=\bigcap W$ and $Z=(\bigcup W)\setminus Y$.
\end{corollary}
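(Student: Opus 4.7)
The plan is to derive the corollary as an essentially immediate consequence of \Cref{SetDifferenceLemma}, by observing that membership in a clique of $\GenTokExch(G)$ forces exactly the set-theoretic hypotheses of that lemma.

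First I would dispose of the trivial cases: if $|W|\leq 1$, then the statement holds vacuously (and in applications one typically assumes $|W|>1$, matching the standing hypothesis of \Cref{SetDifferenceLemma}). So assume $|W|>1$. Take $U=V(G)$ so that $W\subseteq\mathcal{P}(U)$, since every element of $V(H)$ is by definition a subset of $V(G)$.

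Next, I would unpack the hypothesis that $H[W]\cong K_{|W|}$. This means that for every pair of distinct elements $P,Q\in W$, the edge $PQ$ lies in $E(H)=E(\GenTokExch(G))$. By \Cref{TokReconfigDefs}, this edge exists precisely when there are $v_1\in P\setminus Q$ and $v_2\in Q\setminus P$ with $P\setminus\{v_1\}=Q\setminus\{v_2\}$; equivalently, $P$ and $Q$ differ by an exchange. Since $P$ and $Q$ are distinct and both have cardinality $X(G)$, this forces $P\setminus Q=\{v_1\}$ and $Q\setminus P=\{v_2\}$, so $|P\setminus Q|=|Q\setminus P|=1$.

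At this point the hypotheses of \Cref{SetDifferenceLemma} are exactly satisfied for $W$, $U$, $Y=\bigcap W$, and $Z=(\bigcup W)\setminus Y$. Applying that lemma gives the conclusion directly: either $W=\{Y\cup\{z\}\colon z\in Z\}$ or $W=\{Y\cup(Z\setminus\{z\})\colon z\in Z\}$. There is no real obstacle here; the corollary is essentially a translation of the abstract set-difference statement into the language of token exchange reconfiguration.
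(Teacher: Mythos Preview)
Your proposal is correct and matches the paper's approach exactly: the paper states this as a corollary with no proof, since it follows immediately from \Cref{SetDifferenceLemma} once one observes that adjacency in $\GenTokExch(G)$ means the two sets differ by an exchange. One small quibble: the $|W|=1$ case does not actually hold vacuously (with $Y=S$ and $Z=\emptyset$ both candidate descriptions of $W$ are empty), so this is really a minor omission in the corollary's statement rather than something your argument covers; as you note, applications use $|W|>1$.
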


\begin{corollary}
\label{TokSlideCSubgraph}
Let $G$ be a graph, $H=\GenTokSlide(G)$, and $W\subseteq V(H)$, with $|W|>1$.  If $H[W]\cong K_{|W|}$ is a complete graph, then either $W=\{Y\cup\{z\}\colon z\in Z\}$ or $W=\{Y\cup (Z\setminus\{z\})\colon z\in Z\}$, where $Y=\bigcap W$ and $Z=(\bigcup W)\setminus Y$.  Moreover, $G[Z]\cong K_{|W|}$.
\end{corollary}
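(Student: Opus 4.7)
The plan is to break the corollary into two parts: the structural characterization of $W$ (which follows formally from the token exchange version) and the additional claim about $G[Z]$ (which uses the stronger adjacency condition of token sliding).

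For the first part, I would simply observe that by \Cref{TS<=TE}, $\GenTokSlide(G) \leq \GenTokExch(G)$ with the same vertex set. Hence if $H[W] \cong K_{|W|}$ in $\GenTokSlide(G)$, then $W$ also induces a complete subgraph of $\GenTokExch(G)$, so \Cref{TokExchCSubgraph} applies and produces the dichotomy $W=\{Y\cup\{z\}\colon z\in Z\}$ or $W=\{Y\cup (Z\setminus\{z\})\colon z\in Z\}$.

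The moreover part is handled by a short case analysis, using that in $\GenTokSlide(G)$ two adjacent vertices not only differ by an exchange but also by an edge of $G$. In the first case, the map $z \mapsto Y \cup \{z\}$ is a bijection from $Z$ onto $W$ (since $Y \cap Z = \emptyset$), so $|Z|=|W|$. For any two distinct $z_1,z_2 \in Z$, the corresponding sets $S_1 = Y\cup\{z_1\}$ and $S_2 = Y\cup\{z_2\}$ are joined in $H$, so $S_1 \setminus S_2 = \{z_1\}$, $S_2 \setminus S_1 = \{z_2\}$ must satisfy $z_1 z_2 \in E(G)$, forcing $G[Z] \cong K_{|Z|} = K_{|W|}$. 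In the second case, the analogous map $z \mapsto Y \cup (Z \setminus \{z\})$ is again a bijection; for distinct $z_1,z_2$, the sets $Y \cup (Z \setminus \{z_1\})$ and $Y \cup (Z \setminus \{z_2\})$ have symmetric difference $\{z_1,z_2\}$, so sliding adjacency again gives $z_1 z_2 \in E(G)$ and the same conclusion.

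I do not anticipate a real obstacle here: the hard combinatorial content is already carried by \Cref{SetDifferenceLemma} and \Cref{TokExchCSubgraph}, and the incremental step for token sliding is to promote each exchange along an edge of $H$ into an edge of $G$ between the corresponding elements of $Z$. The only thing to be careful about is correctly identifying which vertex lies in $S_1 \setminus S_2$ versus $S_2 \setminus S_1$ in the second case (the "complement" parametrization), but this is a one-line check.
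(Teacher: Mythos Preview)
Your proposal is correct and matches the paper's intended reasoning: the paper states this as an immediate corollary with no explicit proof, and the natural derivation is precisely what you wrote---reduce to \Cref{TokExchCSubgraph} via \Cref{TS<=TE} for the dichotomy, then in each case read off $z_1z_2\in E(G)$ from the token sliding adjacency condition on the pair of sets indexed by $z_1,z_2$.
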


\begin{theorem} \label{thm:tsclique}
Let $G$ be a graph.  Then $\omega(G)\geq\omega(\GenTokSlide(G))$.
\end{theorem}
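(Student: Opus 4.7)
The plan is to simply apply \Cref{TokSlideCSubgraph}, which already does nearly all the work: it characterizes the vertex sets of cliques in $\GenTokSlide(G)$ and, crucially, asserts that the symmetric difference $Z = (\bigcup W) \setminus \bigcap W$ of any such clique $W$ induces a clique of the same size in the source graph $G$.

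Concretely, I would let $W \subseteq V(\GenTokSlide(G))$ be a maximum clique in $\GenTokSlide(G)$, so $\lvert W\rvert = \omega(\GenTokSlide(G))$. The trivial cases $\lvert W\rvert \leq 1$ require only the observation that $\omega(G) \geq 1$ whenever $V(G) \neq \emptyset$ (and if $V(G) = \emptyset$, then $\GenTokSlide(G)$ has at most the vertex $\emptyset$, giving $\omega(\GenTokSlide(G)) \leq 1 = \omega(K_1)$ after noting no edges can exist without vertices in $G$). For $\lvert W\rvert \geq 2$, set $Y = \bigcap W$ and $Z = (\bigcup W) \setminus Y$. Since $\GenTokSlide(G)[W] \cong K_{\lvert W\rvert}$, \Cref{TokSlideCSubgraph} applies and delivers $G[Z] \cong K_{\lvert W\rvert}$ directly. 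Hence $\omega(G) \geq \lvert Z\rvert = \lvert W\rvert = \omega(\GenTokSlide(G))$, as required.

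The main obstacle here is not in this theorem itself but in the lemma it rests on; all the combinatorial content — that the pairwise symmetric-difference condition forces one of the two "pencil" structures $W = \{Y \cup \{z\} : z \in Z\}$ or $W = \{Y \cup (Z \setminus \{z\}) : z \in Z\}$, and that the corresponding exchanged vertices in $Z$ must be pairwise adjacent in $G$ (since each edge of $\GenTokSlide(G)[W]$ corresponds to a slide along an edge of $G$) — is already dispatched in \Cref{SetDifferenceLemma} and \Cref{TokSlideCSubgraph}. So the proof at this level reduces to invoking those results and handling the degenerate small-clique cases.
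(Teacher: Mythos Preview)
Your proposal is correct and matches the paper's approach exactly: the paper states \Cref{thm:tsclique} immediately after \Cref{TokSlideCSubgraph} with no separate proof, treating it as an immediate consequence of the clause $G[Z]\cong K_{|W|}$ in that corollary. Your handling of the degenerate case $|W|\leq 1$ is a bit tangled (in particular the parenthetical about $V(G)=\emptyset$ is garbled and, strictly speaking, the inequality can fail for the empty graph under some conventions), but the paper ignores this edge case as well.
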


\section{Reconfiguration for Positive Semidefinite Forcing}\label{psd}
In this section we study reconfiguration graphs under token sliding and token exchange using the positive semidefinite (PSD) color change rule (defined in \Cref{zeroforcing}).  We begin by recalling some basic facts about PSD forcing.  We then consider trees and complete graphs and describe their reconfiguration graphs.  Note that in most common graph families there is no useful general description of the reconfiguration graphs, but in these cases there is.  We consider the realizability of trees and complete graphs. Next we consider a family of graphs that yields arbitrarily many connected components for both token sliding and token exchange graphs.  We begin with some well-known results for PSD forcing, which will be useful throughout.

\begin{theorem}[\cite{ekstrand2013psdzf}]
\label{MinPSDIncExc}
Let $G$ be a graph, and let $v\in V(G)$ be a vertex with at least one incident edge.  There exist minimum PSD forcing sets $A$, $B$ of $G$ such that $v\notin A$ and $v\in B$.
\end{theorem}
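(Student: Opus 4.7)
The plan is to prove the two existence claims separately, in each case reducing to an application of Theorem \ref{reverse} and the path bundle machinery to transport between minimum PSD forcing sets.

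For the existence of $B$ containing $v$, start from any minimum PSD forcing set $B_0$. If $v\in B_0$, take $B=B_0$. Otherwise $v$ is blued during any PSD forcing process from $B_0$, so fix a relaxed chronology $\mathcal F$ for $B_0$ and form the path bundle $\mathcal Q$ induced by $v$. Since $v\notin B_0$, $v$ is the terminal vertex of a (unique) path in $\mathcal Q$. Applying Theorem \ref{reverse} with $H$ the subgraph spanned by the paths of $\mathcal Q$ yields a new PSD forcing set $\term(\mathcal F|_H)$ whose elements are exactly the terminal vertices of the paths of $\mathcal Q$. Because the paths are vertex-disjoint and there are $|B_0|$ of them, this set has size $|B_0|$ and contains $v$; it is the desired $B$.

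For the existence of $A$ not containing $v$, again start with a minimum PSD forcing set $B_0$ and assume $v\in B_0$ (otherwise $A=B_0$ works). First observe that $v$ must have a neighbor $u\notin B_0$: if every neighbor of $v$ lay in $B_0\setminus\{v\}$, then $\{v\}$ would be a component of $G-(B_0\setminus\{v\})$; since $v$ has at least one incident edge, some neighbor of $v$ in $B_0\setminus\{v\}$ could force $v$, after which the remaining forces of $B_0$'s process could be executed, making $B_0\setminus\{v\}$ a PSD forcing set and contradicting the minimality of $B_0$. Next, construct a relaxed chronology $\mathcal F$ for $B_0$ in which $v$ performs at least one force $v\to w$, and consider the path bundle $\mathcal Q$ induced by $w$. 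The path of $\mathcal Q$ starting at $v$ extends through $w$, so its terminal vertex is different from $v$; applying Theorem \ref{reverse} as in Part 1 produces a minimum PSD forcing set $A=\term(\mathcal F|_H)$ with $v\notin A$.

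The principal obstacle is establishing, in Part 2, the existence of a chronology in which $v$ actually performs a force. Since $B_0\setminus\{v\}$ is not a PSD forcing set, \Cref{thmPSDfort} guarantees a PSD fort $F^*$ with $B_0\cap F^*=\{v\}$; the fact that $v$ has a neighbor ensures $\{v\}$ itself is not a PSD fort, so $|F^*|\geq 2$. The vertices of $F^*\setminus\{v\}$ are precisely those whose blueing in $B_0$'s process relies on $v$ being initially blue, and the plan is to exploit the flexibility in ordering the forces of $\mathcal F$ so that $v$ itself performs the first force that produces a vertex of $F^*\setminus\{v\}$—thereby guaranteeing that $v$'s path in some induced path bundle has positive length.
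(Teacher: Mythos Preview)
The paper does not prove this theorem; it is quoted from \cite{ekstrand2013psdzf} without argument, so there is no in-paper proof to compare against.  What follows is an assessment of your proposal on its own merits.

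Part 1 is fine.  The path bundle $\mathcal Q$ induced by $v$ consists of $|B_0|$ vertex-disjoint paths, exactly one of which terminates at $v$, and Theorem~\ref{reverse} then gives a minimum PSD forcing set $\term(\mathcal F|_H)$ containing $v$.

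Part 2 has a genuine gap at exactly the point you flag as the principal obstacle.  You correctly reduce to finding a chronology for $B_0$ in which $v$ performs a force, and you correctly produce a connected PSD fort $F^*$ with $B_0\cap F^*=\{v\}$ and $|F^*|\ge 2$.  But the closing assertion---that by reordering one can arrange for $v$ to perform the first force whose target lies in $F^*\setminus\{v\}$---is not justified, and it is not clear that it can be.  Concretely, suppose you defer all forces into $F^*\setminus\{v\}$ as long as possible and let $E$ be the blue set when the first such force $a\to b$ becomes unavoidable.  If $a\notin F^*$, the fort condition yields a second neighbor $b'\in N(a)\cap F^*$ with $b'\neq b$; however $b'$ may equal $v$ (so $a$ is merely adjacent to $v$), or $b'$ may lie in a different component of $G-E$ than $b$ (which is fully consistent with $N(a)\cap V(\comp(G-E,b))=\{b\}$).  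Neither outcome forces $a=v$, nor does either allow you to replace the force $a\to b$ by a force $v\to\cdot$ at that moment.  Your claim that ``the vertices of $F^*\setminus\{v\}$ are precisely those whose blueing\ldots relies on $v$'' is also not right for an arbitrary fort $F^*$; it holds only for the specific fort $V(G)\setminus\mathrm{cl}_+(G,B_0\setminus\{v\})$, and even choosing that particular $F^*$ does not immediately close the gap, because a vertex $a\in\mathrm{cl}_+(G,B_0\setminus\{v\})$ can acquire a valid force into $F^*\setminus\{v\}$ \emph{only once $v$ is declared blue} (through the change in component structure), without $v$ itself having a valid force at that stage.  The underlying claim---that a vertex of a minimum PSD forcing set with an incident edge can always be made to force in some chronology---is true, but establishing it needs a more careful inductive or structural argument than the reordering sketch you give.
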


\begin{theorem}[\cite{hogben2022book}] \label{Z+=n-1}\label{Z+=1}
    Let $G$ be a graph on $n$ vertices.
\begin{enumerate}
    \item $\Z_+(G)=1$ if and only if $G$ is a tree.
    \item If $G$ has no isolated vertices, then $\Z_+(G)=n-1$ if and only if $G\cong K_n$.
\end{enumerate}
\end{theorem}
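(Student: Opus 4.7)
For part (1), my plan is as follows. For the ``tree implies $\Z_+(G)=1$'' direction, I would take an arbitrary vertex $v$ of a tree $T$ and argue by induction on the distance from $v$ that every vertex can be forced starting from $\{v\}$: the key observation is that for a blue vertex $u$, each component of $T - B$ (where $B$ is the current blue set) that neighbors $u$ does so in exactly one vertex, because any second neighbor would create a cycle. For the converse, the cleanest approach is to invoke \Cref{forcingtree}. Given a PSD forcing set $B$ of size $1$, produce a chronological list of forces; the resulting collection of forcing trees is a tree cover of $G$ whose size equals $|B|=1$. Hence $V(G)$ is itself the vertex set of a single induced tree, so $G$ is a tree.

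For the forward direction of part (2), that $\Z_+(K_n)=n-1$, I would verify both inequalities directly. Any set of $n-1$ vertices leaves a unique white vertex $w$ that forms its own component of $K_n - B$, and every blue vertex has exactly $w$ as its neighbor in that component, so $w$ is forced; hence $\Z_+(K_n) \leq n-1$. For the lower bound, any set of $\leq n-2$ blue vertices leaves at least two white vertices, which (being pairwise adjacent in $K_n$) all lie in a single component of $K_n - B$, and each blue vertex has $\geq 2$ neighbors in that component, so no force is possible; hence $\Z_+(K_n)\geq n-1$.

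For the converse, I would prove the contrapositive: if $G\not\cong K_n$ (and $G$ has no isolated vertices), then $\Z_+(G)\leq n-2$. Pick two nonadjacent vertices $u,v\in V(G)$ and set $B = V(G)\setminus\{u,v\}$. Since $uv\notin E(G)$, the components of $G - B = G[\{u,v\}]$ are the singletons $\{u\}$ and $\{v\}$. Because $G$ has no isolated vertices, $u$ has a neighbor, which necessarily lies in $B$, and that blue vertex has $u$ as its unique neighbor in the component $\{u\}$, so it forces $u$; symmetrically $v$ is forced. Thus $B$ is a PSD forcing set of size $n-2$, contradicting $\Z_+(G)=n-1$.

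The main obstacle, such as it is, is the converse of part (1): one must rule out both disconnection and the presence of cycles. Disconnection is immediate (no force crosses components), but the cycle case is where one really needs a structural tool. \Cref{forcingtree} handles this uniformly and compactly, which is why I would route the proof through tree covers rather than attempt a case-by-case cycle argument.
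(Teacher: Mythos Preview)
The paper does not supply its own proof of this theorem; it is quoted from \cite{hogben2022book} as background and used without argument. So there is nothing to compare your proposal against on the paper's side.

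That said, your proof is correct. The use of \Cref{forcingtree} for the converse of part (1) is clean and, given the paper's definition of a $\mathcal G$-cover (each part must \emph{induce} a graph in $\mathcal G$), it does exactly what you need: a one-element tree cover forces $G[V(G)]=G$ to be a tree. Your contrapositive for part (2) is the standard argument and handles the disconnected case automatically, since nonadjacent $u,v$ may lie in different components. One tiny omission: you do not state that $\Z_+(G)\ge 1$ for any nonempty $G$ (the PSD rule requires a blue vertex to act), but this is immediate.
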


\begin{proposition}[\cite{peters2012positive}]\label{migrate}
 Let $G$ be a graph, let $B$ be a PSD forcing set of $G$, let $\mathcal F$ be a chronological list of forces, and let $u \rightarrow v$ be the first force in $\mathcal F$.  Then $B'=(B \cup \{v\})\setminus \{u\}$ is a PSD forcing set of $G$, with $v\to u$ being a valid first force for $B'$.
\end{proposition}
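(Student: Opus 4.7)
The plan is to build a valid PSD-forcing process for $B'=(B\cup\{v\})\setminus\{u\}$ by showing that $v\to u$ is a valid first PSD force and then concatenating the remainder of the given chronology $\mathcal{F}$, which becomes legitimate once we observe that both processes reach the common blue set $B\cup\{v\}$ after one step.

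First, I would set up the comparison between $G-B$ and $G-B'$. Let $C$ denote the component of $G-B$ containing $v$; validity of the force $u\to v$ under the $\Zp$-CCR means precisely that $N_G(u)\cap V(C)=\{v\}$. Since $B'$ is obtained from $B$ by removing $u$ and inserting $v$, we have $V(G-B')=(V(G)\setminus B)\cup\{u\}\setminus\{v\}$, so passing from $G-B$ to $G-B'$ deletes $v$ and adds $u$. The key structural observation I would establish is that the component $C_u$ of $G-B'$ containing $u$ is disjoint from $V(C)\setminus\{v\}$. The reason is that $u$'s neighbors in $G-B'$ are $(N_G(u)\setminus B)\setminus\{v\}$, and since $N_G(u)\cap V(C)=\{v\}$, none of these neighbors lie in $V(C)\setminus\{v\}$; the sub-components of $C\setminus\{v\}$ inside $G-B-v$ therefore cannot be attached to $u$ in $G-B'$. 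Thus $C_u$ consists of $u$ together with those components of $G-B$ other than $C$ that are joined to $u$.

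Next I would verify that $v\to u$ is a valid PSD force for $B'$: I must show $N_G(v)\cap V(C_u)=\{u\}$. The inclusion $\supseteq$ is immediate because $uv\in E(G)$ and $u\in V(C_u)$. For the reverse, suppose $w\in N_G(v)\cap V(C_u)$ with $w\neq u$. Then $w\in V(G)\setminus B$ and $w\neq v$, and since $w$ is adjacent to $v$ in $G-B$ it must lie in $V(C)\setminus\{v\}$; but this contradicts the disjointness established above. Hence $N_G(v)\cap V(C_u)=\{u\}$ as required.

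Finally, once the force $v\to u$ has been performed from $B'$, the new set of blue vertices is $B'\cup\{u\}=B\cup\{v\}$, which coincides with the blue set reached from $B$ after the first force of $\mathcal{F}$. Consequently, appending the remaining forces $F^{(2)},F^{(3)},\ldots$ of $\mathcal{F}$ (which remain valid PSD forces, as the blue set at each subsequent time-step is identical to the blue set in the original process) yields a complete PSD-forcing chronology for $B'$. Thus $B'$ is a PSD forcing set and $v\to u$ is a valid first force, as claimed. I do not anticipate a serious obstacle: the only subtle point is the structural claim that $V(C_u)$ avoids $V(C)\setminus\{v\}$, which is a direct consequence of the defining condition $N_G(u)\cap V(C)=\{v\}$ for $u\to v$ to be PSD-valid.
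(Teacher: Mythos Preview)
Your argument is correct and matches the approach the paper alludes to: the paper does not prove this proposition itself (it is cited from \cite{peters2012positive}), but the accompanying note states that the validity of $v\to u$ as an initial force for $B'$ is ``the core of its proof,'' and that is precisely what you establish before concatenating the tail of $\mathcal{F}$. Your structural claim that $V(C_u)\cap(V(C)\setminus\{v\})=\emptyset$ is the right hinge and is justified exactly as you say, since $N_G(u)\cap V(C)=\{v\}$ forces every neighbor of $u$ in $G-B'$ to lie in components of $G-B$ other than $C$.
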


\begin{unnumberednote}
The cited proposition does not explicitly state the validity of $v\to u$ as an initial force for $B'$; however, this is the core of its proof.
\end{unnumberednote}

\begin{lemma}\label{PSDEdgeExistence}
Let $G$ be a graph.  If $G$ contains an edge, then $\ZTokExch_+(G)$ and $\ZTokSlide_+(G)$ both contain an edge.
\end{lemma}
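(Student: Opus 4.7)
The plan is to exhibit two distinct minimum PSD forcing sets that differ by a slide along an edge of $G$; since $\ZTokSlide_+(G)\leq\ZTokExch_+(G)$ by \Cref{TS<=TE}, producing such an edge in $\ZTokSlide_+(G)$ settles both claims. The main tool will be the migration result \Cref{migrate}.

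First I would observe that for any graph $G$ containing an edge $uv$, the set $V(G)\setminus\{v\}$ is a PSD forcing set: the component of $G - (V(G)\setminus\{v\})$ containing $v$ is $\{v\}$, and $u$ is a blue vertex with $N_G(u)\cap\{v\}=\{v\}$, so $u\to v$ is a valid PSD force. Hence $\Z_+(G)\leq |V(G)|-1$, and any minimum PSD forcing set $B$ of $G$ is a proper subset of $V(G)$. In particular, any chronological list of PSD forces $\mathcal{F}$ for $B$ contains at least one force.

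Next, let $u\to v$ be the first force in such a chronological list $\mathcal{F}$. By the PSD color change rule, $v$ is a white neighbor of $u$ in $G$, so $uv\in E(G)$. Applying \Cref{migrate}, the set $B'=(B\cup\{v\})\setminus\{u\}$ is a PSD forcing set of $G$. Since $u\in B$ and $v\notin B$, we have $|B'|=|B|=\Z_+(G)$, so $B'$ is also a minimum PSD forcing set, and $B\neq B'$. By construction $B\setminus B'=\{u\}$, $B'\setminus B=\{v\}$, and $uv\in E(G)$, so $B$ and $B'$ differ by a slide. Therefore $BB'\in E(\ZTokSlide_+(G))\subseteq E(\ZTokExch_+(G))$, completing the argument.

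There is essentially no obstacle here: once the existence of a first force is verified (which requires only the observation $\Z_+(G)<|V(G)|$ when $G$ has an edge), \Cref{migrate} does the real work by guaranteeing that swapping the endpoints of a first force produces another PSD forcing set of the same size.
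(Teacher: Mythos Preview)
Your proof is correct and follows essentially the same approach as the paper: both arguments produce a nonempty chronological list of forces for some minimum PSD forcing set $B$, take its first force $u\to v$, and then apply \Cref{migrate} to obtain a second minimum PSD forcing set $B'=(B\cup\{v\})\setminus\{u\}$ adjacent to $B$ by a slide. The only cosmetic difference is that the paper invokes \Cref{MinPSDIncExc} to guarantee $B\subsetneq V(G)$, whereas you verify directly that $V(G)\setminus\{v\}$ is a PSD forcing set and hence $\Z_+(G)\leq |V(G)|-1$; your route is slightly more self-contained but otherwise identical in substance.
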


\begin{proof}
Let $xy\in E(G)$.  By \Cref{MinPSDIncExc}, there exists a minimum PSD forcing set $B$ such that $x\notin B$.  Let $\mathcal{F}$ be a chronological list of forces for $B$ on $G$.  Since $x\notin B$, $\mathcal{F}$ cannot be empty; thus, it must have a first force $u\to v$.  By \Cref{migrate}, $B'=(B \cup \{v\})\setminus \{u\}$ is a minimum PSD forcing set.  It thus follows that $BB'\in\ZTokExch_+(G)$; since the validity of $u\to v$ requires $uv\in E(G)$, we also have that $BB'\in\ZTokSlide_+(G)$.
\end{proof}

The next two results are used in the proof of \Cref{noPSDteTree}, but are included here because they provide some information about PSD forcing sets that may be useful outside of the reconfiguration context.

\begin{lemma}\label{compsize}
Let $G$ be a connected graph of order $n \geq 3$ and $B$ be a minimum PSD forcing set of $G$.  If $G \not \cong K_n$, then either
\begin{enumerate}
    \item each component of $G-B$ has only one vertex, and there is a vertex $v \in B$ such that $v$ has at least two neighbors in $V(G-B)$, or
    \item at least one component $C$ of $G-B$ contains at least two vertices.
\end{enumerate}
\end{lemma}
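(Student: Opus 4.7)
The plan is an argument by contradiction. Suppose, on the contrary, that $G$ is connected with $n \geq 3$, $B$ is a minimum PSD forcing set, $G \not\cong K_n$, and yet neither (1) nor (2) holds. The combined negation of (1) and (2) gives that every component of $G-B$ is a single vertex (so $W := V(G) \setminus B$ is independent in $G$) and that every $b \in B$ has at most one neighbor in $W$. My goal is to exhibit a $b_0 \in B$ for which $B \setminus \{b_0\}$ is still a PSD forcing set, contradicting the minimality of $B$.

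If some $b_0 \in B$ has zero $W$-neighbors, this case is straightforward: in $G[W \cup \{b_0\}]$ every component is a singleton (by independence of $W$ together with the choice of $b_0$), each $w \in W$ can be forced by a neighbor in $B \setminus \{b_0\}$ (which exists since $G$ is connected and $b_0 \not\sim w$), and $b_0$ itself is forced by any of its $G[B]$-neighbors, which exist since $G$ is connected with $n \geq 3$ and all of $b_0$'s neighbors lie in $B$.

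Otherwise, every $b \in B$ has exactly one $W$-neighbor, yielding a map $f : B \to W$ that is surjective by connectivity of $G$. For any fixed $b_0 \in B$ with $w_0 := f(b_0)$, starting the forcing process from $B \setminus \{b_0\}$ forces all of $W \setminus \{w_0\}$ at step 1 using vertices in $f^{-1}(w) \subseteq B \setminus \{b_0\}$, leaving only the edge-component $\{b_0, w_0\}$ to resolve. A short case analysis of candidate forcing vertices shows that this residual component can be resolved if and only if some $u \in B \setminus \{b_0\}$ distinguishes $b_0$ and $w_0$ as neighbors; explicitly, either $u \in N_{G[B]}(b_0) \setminus f^{-1}(w_0)$ (forcing $b_0$) or $u \in f^{-1}(w_0) \setminus (\{b_0\} \cup N_{G[B]}(b_0))$ (forcing $w_0$). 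In either sub-case, $B \setminus \{b_0\}$ is a PSD forcing set, again contradicting minimality.

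The main obstacle is ruling out the possibility that no such $b_0$ exists. If both conditions fail for every $b_0 \in B$, one deduces $N_{G[B]}(b_0) = f^{-1}(f(b_0)) \setminus \{b_0\}$ for all $b_0 \in B$, meaning two $B$-vertices are $G[B]$-adjacent if and only if they share a $W$-neighbor. Hence $G[B]$ decomposes as a disjoint union of cliques $\{G[f^{-1}(w)] : w \in W\}$, and combined with $W$-independence plus $N_G(w) \cap B = f^{-1}(w)$, each set $f^{-1}(w) \cup \{w\}$ is a full connected component of $G$. Connectivity of $G$ forces $|W| = 1$, whence $G$ is the clique on $|B|+1 = n$ vertices, contradicting $G \not\cong K_n$.
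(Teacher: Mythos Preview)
Your proof is correct, but it takes a somewhat different route from the paper's. The paper first asserts (without detailed justification) that every $b\in B$ has a neighbor in $W$, then uses $|W|\geq 2$ and connectivity of $G$ to directly locate an edge $v_av_b\in E(G[B])$ with $v_a\in N_G(w_a)$, $v_b\in N_G(w_b)$, $w_a\neq w_b$; removing $v_b$ from $B$ then works because $v_a$ can force $v_b$ in the two-vertex component $\{v_b,w_b\}$. In your notation, the paper simply exhibits a particular $b_0=v_b$ together with a witness $u=v_a\in N_{G[B]}(b_0)\setminus f^{-1}(w_0)$.

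Your argument is more exhaustive: you treat the zero-neighbor case explicitly (which the paper glosses over), and then, rather than constructing a single good $b_0$, you show that if \emph{no} $b_0$ admits a distinguishing $u$, the fiber structure $N_{G[B]}(b)=f^{-1}(f(b))\setminus\{b\}$ forces $G$ to split into cliques $\{f^{-1}(w)\cup\{w\}\}_{w\in W}$, hence $|W|=1$ and $G\cong K_n$. This buys you a cleaner self-contained argument at the cost of a little more bookkeeping; the paper's approach is shorter but leans on the unproved claim that a minimum PSD forcing set cannot contain a vertex whose entire neighborhood is already in $B$ (your zero-neighbor case supplies exactly that missing justification).
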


\begin{proof}
We suppose by way of contradiction that each component of $G-B$ has only one vertex, and there is not a vertex $v \in B$ such that $v$ has at least two neighbors in $V(G-B)$.  First note that since $G$ is a connected graph of order $n$ with $G \not \cong K_n$, it follows from \Cref{Z+=n-1} that $\Z_+(G) \leq n-2$. Let $\{w_i\}_{i=1}^{|V(G-B)|}$ be the vertices of $G-B$.  Note that since $B$ is a minimum PSD forcing set of $G$, with $G$ connected and of order $n \geq 3$, each vertex in $B$ is adjacent to some vertex in $V(G-B)$.  As no vertex in $B$ is adjacent to more than one vertex in $V(G-B)$, the vertices of $B$ can be partitioned into the sets $\{N_G(w_i)\}_{i=1}^{|V(G-B)|}$.  No component of $G-B$ contains more than one vertex, so there are no edges between any vertices in the set $\{w_i\}_{i=1}^{|V(G-B)|}$.  Finally, since no vertex in $B$ is adjacent to more than one vertex in $V(G-B)$, but $G$ is connected and $|V(G-B)| \geq 2$, there exist vertices $v_a$ and $v_b$ such that $v_av_b \in E(G)$, $v_a \in N_G(w_a)$, and $v_b \in N_G(w_b)$, with $w_a \neq w_b$.  We will show, by contradiction, that $B'=B \setminus \{v_b\}$ is a PSD forcing set of $G$, and thus $B$ is not of minimum cardinality among the PSD forcing sets of $G$.  Since $G-B$ is a collection of isolated vertices and no vertex in $B$ is adjacent to more than one vertex in $V(G-B)$, it follows that $G-B'$ is a collection of isolated vertices and one component containing two vertices, specifically $v_b$ and $w_b$.  Since $v_aw_b \not \in E(G)$ and $v_av_b \in E(G)$, $v_b$ is the unique neighbor of $v_a$ in the component of $G-B'$ containing $v_b$.  Thus $v_a \rightarrow v_b$ is a valid initial force of $B'$.  Since $B$ is a PSD forcing set of $G$, it follows that $B'$ is a PSD forcing set of $G$.  Since $B' \subsetneq B$, it follows that $B$ cannot be a minimum PSD forcing set of $G$, a contradiction.
\end{proof}

\begin{lemma}\label{PSDinitial}
Let $G$ be a connected graph of order $n \geq 3$.  If $G \not \cong K_n$, then there exist a minimum PSD forcing set $B$ of $G$ and a vertex $v \in B$ such that $v \rightarrow w_1$ and $v \rightarrow w_2$ are both valid initial forces for $B$, with $w_1,w_2$ distinct vertices in $V(G) \setminus B$.
\end{lemma}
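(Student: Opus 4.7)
The plan is to apply \Cref{compsize} to a minimum PSD forcing set $B$ of $G$ and split into the two cases it provides. In case (1), every component of $G-B$ is a singleton and some $v\in B$ has at least two neighbors in $V(G-B)$; these neighbors $w_1, w_2$ lie in distinct singleton components of $G-B$. Since each such singleton component has precisely one vertex, $v$'s unique neighbor in the component $\{w_i\}$ is $w_i$ itself, so $v \rightarrow w_1$ and $v \rightarrow w_2$ are both valid initial forces for $B$, concluding the proof in this case.

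In case (2), there is a component $C$ of $G-B$ with $|V(C)|\geq 2$. Since $B$ is a PSD forcing set, the first vertex of $C$ to be forced must be forced from outside $C$, so there exist $u\in B$ and $w\in V(C)$ with $N_G(u)\cap V(C)=\{w\}$. By \Cref{migrate}, $B' = (B\setminus\{u\})\cup\{w\}$ is another minimum PSD forcing set. I would then argue that $w$ has two valid initial forces for $B'$. The first, $w \rightarrow u$, is valid because $w$'s white neighbors in $G$ all lie in $V(C)$; thus, in the component of $G-B'$ containing $u$ (which consists of $u$ together with vertices of components of $G-B$ other than $C$ that $u$ is adjacent to), the only neighbor of $w$ is $u$ itself. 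For the second force when $|V(C)|=2$: writing $C=\{w,w'\}$, the vertex $w'$ becomes an isolated singleton in $G-B'$ (its only white neighbor in $G-B$ was $w$, and $u$ is not adjacent to $w'$ by the choice of $u$), so $w \rightarrow w'$ is a valid second initial force.

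For $|V(C)|\geq 3$, the direct construction requires some component of $C-w$ in $G-B'$ to have $w$ as a unique neighbor. If the initial choice $(u, w)$ does not satisfy this---for example when $C$ induces a complete graph at $w$---I would iterate the migration, selecting a first force of $B'$ and migrating again to produce $B''$, continuing until the conclusion is reached. A potential function such as the total number of vertices lying in multi-vertex components of $G-B$ strictly decreases under a migration in which the newly white vertex $u$ is isolated in $G-B'$, which occurs whenever $u$ has no white neighbors in $G-B$ outside of $C$; together with the finiteness of the set of minimum PSD forcing sets, this yields termination in this regime.

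The main obstacle is case (2) with $|V(C)|\geq 3$ when the newly white vertex $u$ has white neighbors in components of $G-B$ other than $C$ (so that after migration, $u$ merges with other components), since the potential function above may fail to strictly decrease. In this situation the argument must either select the migration step more carefully to ensure progress, or identify a different vertex of $B'$ whose initial-force count has grown as a result of the migration---for instance, a blue neighbor of $u$ that has gained a unique neighbor in the new component structure of $G-B'$.
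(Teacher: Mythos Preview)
Your Case~1 matches the paper exactly, and your handling of Case~2 with $|V(C)|=2$ is correct.  The gap is in Case~2 with $|V(C)|\geq 3$, and you identify it yourself: the iteration via repeated migration has no termination argument once the newly white vertex $u$ can merge with other white components.  Your proposed potential (the number of vertices in multi-vertex components of $G-B$) can strictly \emph{increase} at such a step---for example, if $G[C]$ is complete and $u$ is adjacent to a singleton component $\{d\}$ of $G-B$, then after migration $G-B'$ has the clique on $C\setminus\{w\}$ together with a new two-vertex component on $\{u,d\}$, and the potential goes from $|V(C)|$ to $|V(C)|+1$.  ``Finiteness of the set of minimum PSD forcing sets'' does not rule out cycling among them, and the alternatives you sketch in the last paragraph are not developed into an argument.

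The paper sidesteps iteration entirely in Case~2 by invoking the path-bundle reversal theorem (\Cref{reverse}).  Run a chronological list of forces $\mathcal{F}$ from $B$; let $x$ be the last vertex of $C$ forced, and let $\ell$ be the last time-step at which the white component containing $x$ has at least two vertices.  The (unique) vertex $u$ forced at time $\ell+1$ then lies in that component, is adjacent to $x$, and is forced by some $w$.  After modifying $\mathcal{F}$ to $\hat{\mathcal{F}}$ so that $u\to x$ is the force at time $\ell+2$, \Cref{reverse} applied to the path bundle $\mathcal{Q}_u$ induced by $u$ yields a minimum PSD forcing set $B_u=\term(\hat{\mathcal{F}}|_{\mathcal{Q}_u})$ for which $u\to w$ is a valid initial force.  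Doing the same with the path bundle induced by $x$ gives $B_x$ with valid initial force $x\to u$; since $B_u=(B_x\cup\{u\})\setminus\{x\}$, \Cref{migrate} then shows $u\to x$ is also a valid initial force for $B_u$.  Thus the desired pair $(B_u,u)$, with $w_1=w$ and $w_2=x$, is produced in one construction with no recursion.
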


\begin{proof}
First note that since $G$ is a connected graph of order $n \geq 3$ and $G \not \cong K_n$, \Cref{compsize} applies.  Thus, for any minimum PSD forcing set $B$ of $G$, either each component of $G-B$ has only one vertex and there is a vertex $v \in B$ such that $v$ has at least two neighbors in $V(G-B)$, or at least one component $C$ of $G-B$ contains at least two vertices.
\begin{case}
     There is a minimum PSD forcing set $B$ of $G$ such that each component of $G-B$ has only one vertex, and there is a vertex $v \in B$ such that $v$ has at least two neighbors in $V(G-B)$.
\end{case}
    
    Choose two vertices $w_1,w_2 \in N_G(v) \cap V(G-B)$.  Since $w_1$ and $w_2$ are the only vertices in their respective components of $G-B$, it follows that both $v \rightarrow w_1$ and $v \rightarrow w_2$ are valid initial forces for $B$.
\begin{case}
There is a minimum PSD forcing set $B$ of $G$ such that at least one component $C$ of $G-B$ contains two vertices.
\end{case}

Let $\mathcal F=\{F^{(k)}\}_{k=1}^{n-|B|}$ be a chronological list of forces of $B$ on $G$ with expansion sequence $\{E_{\mathcal F}^{[k]}\}_{k=0}^{n-|B|}$.  Further, let $x$ be the last vertex of $C$ forced during $\mathcal F$, and let time-step $\ell$ be the last time-step at which the component $C_x^{\ell}$ of $G-E_{\mathcal F}^{[\ell]}$ containing $x$ contains at least two vertices.  Finally, let $u \in C^\ell_x$ such that some vertex $w$ forces $u$ during time-step $\ell+1$.  Since the removal of $u$ from $C_x^\ell$ (which is connected) makes $x$ an isolated vertex, $ux \in E(G)$.  Since $x$ is the only vertex in the component of $G-E_{\mathcal F}^{[\ell+1]}$ containing $x$, it follows that $u \rightarrow x$ is a valid force when $E_{\mathcal F}^{[\ell+1]}$ is blue.  Thus there exists a chronological list of forces $\hat{\mathcal F}$ of $B$ on $G$ with the first $\ell+2$ time-steps being such that $\{\hat{F}^{(k)}\}_{k=1}^{\ell+1}=\{F^{(k)}\}_{k=1}^{\ell+1}$ and $\hat{F}^{(\ell+2)}=\{u \rightarrow x\}$. Let $\mathcal Q_x$ be the path bundle induced by $\mathcal{\hat{F}}$ and $x$, and $\mathcal Q_u$ be the path bundle induced by $\mathcal{\hat{F}}$ and $u$.  By \Cref{reverse}, $B_x=\term(\mathcal{\hat{F}}|_{\mathcal Q_x})$ and $B_u=\term(\mathcal{\hat{F}}|_{\mathcal Q_u})$ are PSD forcing sets of $G$.  Further, by \Cref{reverse}, $u\to w$ is a valid initial force for $B_u$ and $x\to u$ is a valid initial force for $B_x$.  By \Cref{migrate}, $u\to x$ is a valid initial force for $B_u=(B_x \cup \{u\})\setminus \{x\}$, which completes the proof.
\end{proof}

\subsection{Reconfiguration of Trees}

Reconfiguration graphs of source graph families rarely have simple descriptions. However, under the PSD color change rule, the characterizations of the token exchange and token sliding graphs of trees are simple.  This stands in contrast to the corresponding results in \Cref{sec:SkTrees}.

\begin{theorem}\label{treesPSD}
    Let $T$ be a tree with $n$ vertices.  Then $\ZTokExch_+(T) \cong K_n$ and $\ZTokSlide_+(T)\cong T$.
\end{theorem}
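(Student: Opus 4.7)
The plan is straightforward: we want to identify $V(\ZTokExch_+(T)) = V(\ZTokSlide_+(T))$ explicitly with $V(T)$, and then read off the edges from the two reconfiguration rules.

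First I would invoke \Cref{Z+=1}(1) to conclude that $\Z_+(T) = 1$, so that every vertex of $\ZTokExch_+(T)$ and $\ZTokSlide_+(T)$ is a singleton $\{v\}$ for some $v \in V(T)$. The main content of the argument is then to show that \emph{every} singleton $\{v\}$ is in fact a minimum PSD forcing set of $T$. This is where I would apply \Cref{MinPSDIncExc}: for $n \geq 2$, every vertex $v$ of $T$ has an incident edge, so there exists a minimum PSD forcing set $B$ containing $v$; since $|B| = \Z_+(T) = 1$, we get $B = \{v\}$. The case $n = 1$ is handled trivially. Consequently the map $v \mapsto \{v\}$ is a bijection $V(T) \to V(\ZTokExch_+(T)) = V(\ZTokSlide_+(T))$.

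With the vertex sets identified, the edges fall out immediately from the definitions. Two distinct singletons $\{u\}$ and $\{v\}$ always satisfy $\{u\} \setminus \{v\} = \{u\}$ and $\{v\} \setminus \{u\} = \{v\}$, so they differ by an exchange; hence every pair of vertices of $\ZTokExch_+(T)$ is adjacent, giving $\ZTokExch_+(T) \cong K_n$. For the token sliding graph, the additional requirement is that $uv \in E(T)$, so under the identification $\{v\} \leftrightarrow v$ the edge set of $\ZTokSlide_+(T)$ is exactly $E(T)$, yielding $\ZTokSlide_+(T) \cong T$.

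There is no real obstacle here; the only step that requires appeal to a nontrivial prior result is the claim that every singleton in $T$ is a forcing set, which is immediate from \Cref{MinPSDIncExc} combined with $\Z_+(T)=1$. Everything else is bookkeeping on the definitions of token exchange and token sliding adjacency.
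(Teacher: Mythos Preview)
Your argument is correct and matches the paper's approach: both identify the minimum PSD forcing sets of $T$ with the singletons $\{v\}$ (the paper just calls this ``well-known'' where you explicitly combine \Cref{Z+=1} with \Cref{MinPSDIncExc}), and then read off the token-exchange and token-sliding edges directly from the definitions. The only difference is that you supply a justification for the ``well-known'' step, which is a harmless elaboration.
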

\begin{proof}
    It is well-known that each singleton set is a minimum PSD forcing set of a tree, and the first result is immediate. Under the token sliding reconfiguration rule, the edges of $\ZTokSlide_+(T)$ are therefore precisely the edges of $T$. Therefore, the second result follows. 
\end{proof}

We now consider the realizability of trees as token exchange graphs. It was shown in \cite{geneson2023reconfiguration} that $\ZTokExch(G) \ncong K_{1,r}$ for any $r\geq 2$.  Furthermore, one can construct graphs $G$ such that $\ZTokExch(G) \cong P_r$ for any $r\neq 3$.  The realizability of other trees under the standard color change rule is unresolved. However, no trees of order at least three are realizable as token exchange graphs under the PSD color change rule, as \Cref{noPSDteTree} shows.

\begin{theorem}
\label{noPSDteTree}
    Let $G$ be a graph such that either
    \begin{enumerate}
         \item $G$ has more than one nontrivial component, or
        \item $G$ has a component containing at least three vertices.
    \end{enumerate}
    Then $\ZTokExch_+(G) \not \cong T$ for any tree $T$.
\end{theorem}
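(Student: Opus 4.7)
The plan is to show that under either hypothesis $\ZTokExch_+(G)$ contains a cycle, and hence cannot be a tree. I split the hypothesis into two (possibly overlapping) cases: (i) $G$ has at least two nontrivial components; (ii) $G$ has a nontrivial component of order at least three. In case (i) I will exhibit a $4$-cycle, and in case (ii) I will exhibit a triangle.

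For case (i), write $G = \bigsqcup_i G_i$ where the $G_i$ are the components, and let $G_{i_1}, G_{i_2}$ be two distinct nontrivial components. Since the PSD forcing number is a summable vertex parameter (Table~\ref{TypesTable}), Theorem~\ref{disjoint-gives-box} gives $\ZTokExch_+(G) \cong \bigbox_i \ZTokExch_+(G_i)$. Each nontrivial component contains an edge, so Lemma~\ref{PSDEdgeExistence} implies that both $\ZTokExch_+(G_{i_1})$ and $\ZTokExch_+(G_{i_2})$ contain edges. The Cartesian product of two graphs each containing an edge has $C_4$ as a subgraph, so $\ZTokExch_+(G)$ contains a $C_4$.

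For case (ii), by case (i) I may assume that every other component of $G$ is an isolated vertex; let $C$ be the unique nontrivial component. Since $\Z_+(K_1) = 1$, the corollary following Theorem~\ref{disjoint-gives-box} gives $\ZTokExch_+(G) \cong \ZTokExch_+(C)$, so it suffices to produce a triangle in $\ZTokExch_+(C)$. I split on whether $C$ is complete. If $C \cong K_n$ with $n \geq 3$, Theorem~\ref{Z+=n-1} gives $\Z_+(C) = n - 1$; every $(n-1)$-subset of $V(C)$ is a minimum PSD forcing set, and any two such sets differ by an exchange, so $\ZTokExch_+(C) \cong K_n$ contains a triangle. If $C$ is not complete, Lemma~\ref{PSDinitial} supplies a minimum PSD forcing set $B$ of $C$ and a vertex $v \in B$ with distinct $w_1, w_2 \in V(C) \setminus B$ such that both $v \to w_1$ and $v \to w_2$ are valid initial forces for $B$. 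Two applications of Proposition~\ref{migrate} (to chronological lists of forces for $B$ beginning with $v \to w_1$ and with $v \to w_2$, respectively) show that $B_1 = (B \setminus \{v\}) \cup \{w_1\}$ and $B_2 = (B \setminus \{v\}) \cup \{w_2\}$ are minimum PSD forcing sets of $C$. The sets $B, B_1, B_2$ are pairwise distinct, and each pair differs by the exchange of a single vertex, so they form a triangle in $\ZTokExch_+(C)$.

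The main delicacy lies in the subcase split in case (ii): Lemma~\ref{PSDinitial} explicitly excludes complete graphs, so the complete case must be handled separately via the direct observation that $\ZTokExch_+(K_n) \cong K_n$. The reduction from $G$ to $C$ also requires checking $\Z_+(K_1) = 1$ so that the corollary following Theorem~\ref{disjoint-gives-box} applies. Once these points are in place, the remainder of the argument is a direct cycle-exhibition relying on summability and the migration lemma.
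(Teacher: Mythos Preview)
Your proof is correct and follows essentially the same approach as the paper: the $4$-cycle argument for multiple nontrivial components and the triangle argument (splitting on $C\cong K_n$ versus $C\not\cong K_n$, using Lemma~\ref{PSDinitial} and Proposition~\ref{migrate} in the latter case) match the paper's proof. The only cosmetic differences are that the paper embeds $\ZTokExch_+(C)$ as a subgraph of $\ZTokExch_+(G)$ directly via the Cartesian product, whereas you first reduce to a single nontrivial component and then invoke the isomorphism $\ZTokExch_+(G)\cong\ZTokExch_+(C)$; and the paper cites a later result for $\ZTokExch_+(K_n)\cong K_n$ while you derive it from Theorem~\ref{Z+=n-1}.
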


\begin{proof}
     First, suppose $G$ has more than one nontrivial component. Consider two such nontrivial components $C$ and $D$.  By \Cref{PSDEdgeExistence}, $\ZTokExch_+(C)$ and $\ZTokExch_+(D)$ each contain an edge. 
 Furthermore, by \Cref{def:SummableDefinition} and \Cref{disjoint-gives-box}, $\ZTokExch_+(C) \cartProd \ZTokExch_+(D)$ is isomorphic to a subgraph of $\ZTokExch_+(G)$.  Thus, $\ZTokExch_+(G)$ contains a 4-cycle and cannot be a tree. Now, assume that $G$ has a nontrivial component $C$ containing at least three vertices. We proceed with cases on the structure of $C$.
    \begin{case}
    $C \cong K_{|V(C)|}$
    \end{case}
   By \Cref{corcomplete}, $\ZTokExch_+(C) \cong K_{|V(C)|}$.  By \Cref{disjoint-gives-box}, $\ZTokExch_+(C)$ is isomorphic to a subgraph of $\ZTokExch_+(G)$.  Since $C$ contains at least 3 vertices, $\ZTokExch_+(G)$ contains a clique of size 3 and therefore cannot be a tree.

    \begin{case}
    $C \not \cong K_{|V(C)|}$
    \end{case}
    By \Cref{PSDinitial} there exist a minimum PSD forcing set $B$ of $C$ and a vertex $v \in B$ such that $v \rightarrow u_1$ and $v \rightarrow u_2$ are both valid initial forces for $B$, with $u_1,u_2$ being distinct vertices in $V(C) \setminus B$.  Thus by \Cref{migrate}, $B_1=(B \cup \{u_1\})\setminus \{v\}$ and $B_2=(B_1 \cup \{u_2\})\setminus \{v\}$ are each minimum PSD forcing sets of $C$.  Since $B$, $B_1$, and $B_2$ each differ by exactly one vertex, they form a 3-cycle in $\ZTokExch_+(C)$.  Since $\ZTokExch_+(C)$ is isomorphic to a subgraph of $\ZTokExch_+(G)$ by \Cref{disjoint-gives-box}, it follows that $\ZTokExch_+(G)$ contains a 3-cycle and cannot be a tree.
\end{proof}

We note that the corresponding result for token sliding graphs does not hold; for example $\ZTokSlide_+(K_{2,3})\cong K_{1,6}$ is a tree.

\subsection{Reconfiguration of Complete Graphs}
Complete graphs are another natural family of graphs whose reconfiguration graphs are easily described.  \Cref{Z+=n-1} allows us to easily deduce the structure of the reconfiguration graph for a complete source graph.

\begin{theorem}\label{psdCompleteTETS}
$\ZTokExch_+(K_n) = \ZTokSlide_+(K_n) \cong K_n$.
\end{theorem}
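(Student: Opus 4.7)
The plan is to apply \Cref{Z+=n-1} directly to pin down both the vertex set and the edge sets of the two reconfiguration graphs. Since $K_n$ has no isolated vertices, \Cref{Z+=n-1} gives $\Z_+(K_n) = n-1$, so every minimum PSD forcing set has size $n-1$, i.e., is the complement of a single vertex. Conversely, for each $v \in V(K_n)$ the set $B_v := V(K_n)\setminus\{v\}$ is trivially a PSD forcing set, since $v$ is the unique white vertex and any vertex in $B_v$ can force it. Thus
\[V(\ZTokExch_+(K_n)) = V(\ZTokSlide_+(K_n)) = \{B_v : v \in V(K_n)\},\]
a set of exactly $n$ elements.

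Next, I would check that any two distinct minimum PSD forcing sets $B_u$ and $B_v$ differ by an exchange, and moreover by a slide. For $u \neq v$ we have $B_u \setminus B_v = \{v\}$ and $B_v \setminus B_u = \{u\}$, so $B_uB_v \in E(\ZTokExch_+(K_n))$. Since $uv \in E(K_n)$, we also get $B_uB_v \in E(\ZTokSlide_+(K_n))$ by \Cref{TokReconfigDefs}. Hence every pair of distinct vertices is adjacent in both reconfiguration graphs, giving
\[\ZTokExch_+(K_n) = \ZTokSlide_+(K_n) \cong K_n.\]

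There is essentially no obstacle here; the entire argument is a two-line unpacking of the definitions once $\Z_+(K_n) = n-1$ is in hand, combined with the observation that in a complete source graph the token exchange and token sliding relations coincide.
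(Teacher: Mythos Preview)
Your proof is correct and follows essentially the same approach as the paper: both identify the minimum PSD forcing sets of $K_n$ as the $(n-1)$-subsets $B_v = V(K_n)\setminus\{v\}$ and then observe that any two of these differ by a single exchange which, since $K_n$ is complete, is automatically a slide. Your write-up is simply a more detailed version of the paper's two-sentence argument.
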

\begin{proof}
     It is well-known that every set $B \subseteq V(K_n)$ containing $n-1$ vertices is a minimum PSD forcing set of $K_n$; the first result is then immediate. Under the token sliding reconfiguration rule, the edges of $\ZTokSlide_+(K_n)$ are therefore precisely the edges of $K_n$. Therefore, the second result follows. 
\end{proof}

\begin{theorem}
\label{CompleteTokExch}
Let $G$ be a graph with no isolated vertices, such that $\ZTokExch_+(G)\cong K_n$.  Then either $G\cong K_n$, or $G$ is a tree on $n$ vertices.
\end{theorem}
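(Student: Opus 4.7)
The plan is to apply Corollary~\ref{TokExchCSubgraph} to the full vertex set $W = V(\ZTokExch_+(G))$, which is the set of all minimum PSD forcing sets of $G$. Since $\ZTokExch_+(G) \cong K_n$, every pair of minimum PSD forcing sets differs by a single exchange, so $W$ satisfies the hypothesis of that corollary. Setting $Y = \bigcap W$ and $Z = (\bigcup W) \setminus Y$, one of two structural descriptions of $W$ will hold, and each will correspond, via Theorem~\ref{MinPSDIncExc} together with Theorems~\ref{Z+=1} and \ref{Z+=n-1}, to one of the two claimed graph families.

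First I would verify $n \geq 2$: since $G$ has no isolated vertices it contains an edge, so Lemma~\ref{PSDEdgeExistence} gives $|V(\ZTokExch_+(G))| \geq 2$. Applying Corollary~\ref{TokExchCSubgraph} then yields
\[
W = \{Y \cup \{z\} : z \in Z\} \quad \text{or} \quad W = \{Y \cup (Z \setminus \{z\}) : z \in Z\},
\]
with $|Z| = n$ in either case. Next I would invoke Theorem~\ref{MinPSDIncExc}: every vertex $v \in V(G)$ is non-isolated, so $v$ lies in some minimum PSD forcing set (hence $v \in \bigcup W = Y \cup Z$) and is omitted from some minimum PSD forcing set (hence $v \notin \bigcap W = Y$). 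Therefore $Y = \emptyset$ and $Z = V(G)$, and in particular $|V(G)| = n$.

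Finally I would split into the two cases. In the first, $W$ is the collection of $n$ singletons $\{z\}$ for $z \in V(G)$, so $\Zp(G) = 1$, and Theorem~\ref{Z+=1} gives that $G$ is a tree on $n$ vertices. In the second, $W$ is the collection of all $(n-1)$-subsets of $V(G)$, so $\Zp(G) = n - 1 = |V(G)| - 1$; since $G$ has no isolated vertices, Theorem~\ref{Z+=n-1} then yields $G \cong K_n$.

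There is no substantive obstacle once Corollary~\ref{TokExchCSubgraph} and Theorem~\ref{MinPSDIncExc} are available: the corollary produces a dichotomy whose two branches line up with the two target families, and the no-isolated-vertices hypothesis is exactly what is needed to force $Y = \emptyset$ and $Z = V(G)$, after which the standard characterizations of $\Zp(G) = 1$ and $\Zp(G) = |V(G)| - 1$ close each case.
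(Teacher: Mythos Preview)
Your proposal is correct and follows essentially the same argument as the paper: apply Corollary~\ref{TokExchCSubgraph} to $W=V(\ZTokExch_+(G))$, use Theorem~\ref{MinPSDIncExc} to force $Y=\emptyset$ and $Z=V(G)$, and then invoke the characterizations of $\Z_+(G)=1$ and $\Z_+(G)=|V(G)|-1$. Your explicit verification that $n\geq 2$ via Lemma~\ref{PSDEdgeExistence} is a nice touch that the paper omits, since Corollary~\ref{TokExchCSubgraph} (through Lemma~\ref{SetDifferenceLemma}) does implicitly require $|W|>1$.
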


\begin{proof}
Let  $H=\ZTokExch_+(G)$. By \Cref{TokExchCSubgraph}, either (a) $V(H)=\{S_0\cup\{v\}\colon v\in Z\}$ ($S_0\cap Z=\emptyset$) or (b) $V(H)=\{S_0\cup (Z\setminus\{v\})\colon v\in Z\}$ ($S_0\cap Z=\emptyset$).  It also follows that $|Z|=|V(H)|$.  By \Cref{MinPSDIncExc}, for all $v\in V(G)$, there exist minimum PSD forcing sets $A_v,B_v\in V(H)$ such that $v\notin A_v$ and $v\in B_v$.  Thus, for all $v\in V(G)$, $v\notin S_0$; thus $S_0=\emptyset$.  Additionally, in both (a) and (b), $v\in Z$ for all $v\in V(G)$, so $Z=V(G)$.  Thus, $n=|V(H)|=|Z|=|V(G)|$, and either $\Z_+(G)=1$ (in case (a)) or $\Z_+(G)=n-1$ (in case (b)).  Application of \Cref{Z+=n-1} then completes the proof.
\end{proof}

\begin{corollary}\label{corcomplete}
Let $G$ be a graph with no isolated vertices, such that $\ZTokSlide_+(G)\cong K_n$.  Then $G\cong K_n$.
\end{corollary}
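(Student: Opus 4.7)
The plan is to bootstrap from the token exchange result (\Cref{CompleteTokExch}) via the fact that the token sliding graph is always a spanning subgraph of the token exchange graph (\Cref{TS<=TE}). First I would observe that since $V(\ZTokSlide_+(G)) = V(\ZTokExch_+(G))$ and $E(\ZTokSlide_+(G)) \subseteq E(\ZTokExch_+(G))$, a hypothesis that $\ZTokSlide_+(G) \cong K_n$ forces $\ZTokExch_+(G)$ to be a graph on $n$ vertices in which every pair is already adjacent, hence $\ZTokExch_+(G) \cong K_n$ as well.

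Next I would invoke \Cref{CompleteTokExch}, which (combined with the no-isolated-vertices hypothesis) tells us that either $G \cong K_n$, in which case we are done, or $G$ is a tree on $n$ vertices, which must be ruled out except in the degenerate case where trees and complete graphs coincide.

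To handle the tree case, I would apply \Cref{treesPSD}: for a tree $T$ on $n$ vertices, $\ZTokSlide_+(T) \cong T$. Combined with the assumption $\ZTokSlide_+(G) \cong K_n$, this forces $G \cong K_n$ as a tree, which is only possible for $n \leq 2$. The $n=1$ possibility is excluded since $G$ has no isolated vertices, and for $n=2$ the tree $K_2$ is itself complete, so $G \cong K_n$ holds in this boundary case too. The argument is essentially just a two-line reduction, so I do not expect a substantive obstacle — the only thing to be careful about is correctly handling the small-$n$ overlap between the two cases in \Cref{CompleteTokExch}.
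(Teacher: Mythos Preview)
Your proof is correct and shares the first two steps with the paper's proof: both use \Cref{TS<=TE} to deduce $\ZTokExch_+(G)\cong K_n$ and then invoke \Cref{CompleteTokExch} to reduce to the dichotomy $G\cong K_n$ or $G$ a tree on $n$ vertices.

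The difference lies in how the tree case is eliminated. You appeal to \Cref{treesPSD}, which gives $\ZTokSlide_+(T)\cong T$ for any tree $T$, forcing $T\cong K_n$ and hence $n\leq 2$; you then tidy up the small-$n$ overlap. The paper instead invokes \Cref{TokSlideCSubgraph}, the general clique-transfer result: a copy of $K_n$ in $\ZTokSlide_+(G)$ forces $K_n\leq G$, and since $|V(G)|=n$ in either branch of the dichotomy this immediately yields $G\cong K_n$ without any case analysis on $n$. Your route is slightly more hands-on but relies only on the elementary description of $\ZTokSlide_+$ for trees; the paper's route is a one-line application of a structural lemma and avoids the boundary bookkeeping. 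Both are short and valid.
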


\begin{proof}
Let $H=\ZTokSlide_+(G)$. By \Cref{TS<=TE}, $\ZTokExch_+(G)\cong K_n$, so \Cref{CompleteTokExch} implies that either $G\cong K_n$ or $G$ is a tree on $n$ vertices.  \Cref{TokSlideCSubgraph} implies that $K_n\leq G$, so it must be that $G\cong K_n$.
\end{proof}

\begin{proposition}
    If $G$ is connected, then $\ZTokExch_+(G)=\ZTokSlide_+(G)$ if and only if $G \cong K_n$ for some $n$.
\end{proposition}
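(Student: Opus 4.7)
The backward direction is immediate: if $G\cong K_n$ then every pair of distinct vertices of $G$ is adjacent, so every token exchange between minimum PSD forcing sets is automatically a token slide, giving $\ZTokExch_+(G)=\ZTokSlide_+(G)$ (as also recorded in \Cref{psdCompleteTETS}). For the forward direction, I will argue the contrapositive: assuming $G$ is connected but not complete, I will exhibit an edge of $\ZTokExch_+(G)$ that is not an edge of $\ZTokSlide_+(G)$.

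Connectedness with $|V(G)|\leq 2$ forces $G\cong K_1$ or $G\cong K_2$, contradicting non-completeness; so assume $|V(G)|\geq 3$. By \Cref{PSDinitial}, there exist a minimum PSD forcing set $B$ and a vertex $v\in B$ admitting two valid initial forces $v\to w_1$ and $v\to w_2$ for distinct $w_1,w_2\in V(G)\setminus B$. Applying \Cref{migrate} to each of these forces in turn shows that $B_1=(B\setminus\{v\})\cup\{w_1\}$ and $B_2=(B\setminus\{v\})\cup\{w_2\}$ are both minimum PSD forcing sets; they differ by exchanging $w_1$ and $w_2$, so $B_1B_2\in E(\ZTokExch_+(G))$.

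The decisive step is to arrange that $w_1w_2\notin E(G)$, which would yield $B_1B_2\notin E(\ZTokSlide_+(G))$ and complete the contradiction. To secure this I will revisit the two cases of \Cref{compsize} used inside the proof of \Cref{PSDinitial}. In the first case, every component of $G-B$ is a singleton and $v$ has two neighbors $w_1,w_2\in V(G)\setminus B$ lying in distinct components, so an edge $w_1w_2$ would merge them, a contradiction. In the second case, some component $C$ of $G-B$ has at least two vertices, and the construction inside the proof of \Cref{PSDinitial} produces $v=u$, $w_1=w$ (the vertex that PSD-forces $u$ at some timestep $\ell+1$ of a chronological list $\mathcal F$), and $w_2=x$ (the last vertex of $C$ to be forced, adjacent to $u$). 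Validity of the force $w\to u$ demands that $u$ be the unique neighbor of $w$ in the component $C_x^\ell$ of $G-E_{\mathcal F}^{[\ell]}$ containing $u$; since $x\in C_x^\ell$ and $x\neq u$ (as otherwise the remaining vertices of $C_x^\ell\setminus\{x\}$ would have to be forced after $x$, contradicting the choice of $x$), adjacency $wx\in E(G)$ would contradict this uniqueness, whence $w_1w_2=wx\notin E(G)$.

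The main obstacle is that \Cref{PSDinitial} as stated does not record this non-adjacency, so the strengthened conclusion must be extracted from the specific construction used in its proof (together with a careful application of the PSD color change rule), rather than simply cited. Once that is done, $B_1B_2$ is an edge of $\ZTokExch_+(G)$ with no counterpart in $\ZTokSlide_+(G)$, contradicting the hypothesized equality and forcing $G\cong K_n$.
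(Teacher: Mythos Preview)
Your proof is correct and follows the same route as the paper: use \Cref{PSDinitial} to obtain a minimum PSD forcing set $B$ and $v\in B$ with two valid initial forces $v\to w_1$, $v\to w_2$, then observe that $B_1=(B\setminus\{v\})\cup\{w_1\}$ and $B_2=(B\setminus\{v\})\cup\{w_2\}$ are adjacent in $\ZTokExch_+(G)$ but not in $\ZTokSlide_+(G)$ because $w_1w_2\notin E(G)$.

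However, you work far harder than necessary for the non-adjacency $w_1w_2\notin E(G)$: there is no need to revisit the internal case analysis of \Cref{PSDinitial}. The PSD color change rule alone already forces it. Since $v\to w_1$ and $v\to w_2$ are both valid initial forces for $B$, each $w_i$ is the \emph{unique} neighbor of $v$ in its component of $G-B$; as $w_1\neq w_2$, they must lie in distinct components of $G-B$, and hence cannot be adjacent in $G$. The paper simply records this as ``Note that $u_1u_2\notin E(G)$'' for exactly this reason.
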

\begin{proof}
    Suppose $G$ is connected and $G\not\cong K_n$.  Then \Cref{PSDinitial} applies, so there is a PSD zero forcing set $B$ and $v\in B$ such that $v$ can be chosen to force two distinct vertices $u_1$ and $u_2$ in the first time step.  Note that $u_1u_2 \not\in E(G)$.  Then $B\setminus\{v\}\cup\{u_1\}$ and $B\setminus\{v\}\cup\{u_2\}$ are both valid PSD zero forcing sets whose corresponding vertices are adjacent in $\ZTokExch_+(G)$ but not in $\ZTokSlide_+(G)$.  The other direction is immediate from \Cref{psdCompleteTETS}.
\end{proof}

\subsection{Disconnected reconfiguration graphs}

We note that there are many examples of graphs with connected reconfiguration graphs under the token exchange rule. These include trees, cycles, grid graphs, wheels, and many more.  An exhaustive search shows that the smallest graph $G$ with $\ZTokExch_+(G)$ being disconnected is $G=K_{4,4}$.  This leads to a family of graphs that shows that there are reconfiguration graphs with arbitrarily many components, namely, certain complete multipartite graphs.

\begin{proposition}[{\cite{peters2012positive}}]
\label{CompleteMultiZPlus}
For $n_1\geq n_2 \geq\dots \geq n_k$, $\Z_+(K_{n_1,n_2,\dots,n_k})=n_2+n_3+\dots+n_k$.
\end{proposition}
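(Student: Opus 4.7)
The plan is to establish matching upper and lower bounds of $n_2+\cdots+n_k$ on $\Z_+(K_{n_1,n_2,\dots,n_k})$, using a direct construction for the upper bound and the PSD fort characterization (\Cref{thmPSDfort}) for the lower bound. Throughout, let $V_1,\dots,V_k$ denote the partite sets, with $|V_i|=n_i$, and let $n=n_1+\cdots+n_k$.

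For the upper bound, I would verify that $B=V_2\cup\cdots\cup V_k$ is a PSD forcing set of size $n_2+\cdots+n_k$. Writing $W=V(G)\setminus B=V_1$, each component of $G[W]$ is a singleton $\{v\}$ since $V_1$ is an independent set in $K_{n_1,\dots,n_k}$. Any $u\in B$ lies in some $V_j$ with $j\neq 1$, so $u$ is adjacent to $v$ and $N_G(u)\cap\{v\}=\{v\}$; hence $u\to v$ is a valid PSD force under $\Z_+$-CCR. Every vertex of $V_1$ can be forced blue this way, so $B$ is indeed a PSD forcing set.

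For the lower bound, I would show that every $W\subseteq V(G)$ with $|W|>n_1$ is itself a PSD fort; \Cref{thmPSDfort} then implies that every PSD forcing set $B$ intersects $W$, forcing $|V(G)\setminus B|\leq n_1$ and therefore $\Z_+(K_{n_1,\dots,n_k})\geq n-n_1=n_2+\cdots+n_k$. To verify the fort property for such $W$, first observe that $|V_i|=n_i\leq n_1<|W|$ for each $i$, so $W$ cannot be contained in any single partite set and meets at least two distinct $V_i$; consequently $G[W]$ is a connected complete multipartite graph, i.e., has a single component. Second, for any $v\in V(G)\setminus W$ with $v\in V_\ell$, one has $|N_G(v)\cap W|=|W\setminus V_\ell|=|W|-|W\cap V_\ell|$. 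Since $v\in V_\ell\setminus W$ we have $|W\cap V_\ell|\leq n_\ell-1$, and combining this with $|W|\geq n_1+1$ and $n_\ell\leq n_1$ yields
\[|W\setminus V_\ell|\geq (n_1+1)-(n_\ell-1)=n_1-n_\ell+2\geq 2,\]
so $|N_G(v)\cap W|\neq 1$ and the fort condition is satisfied.

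The main obstacle is the arithmetic in the fort verification: showing $|W\setminus V_\ell|\geq 2$ (not merely $\geq 1$) requires the simultaneous use of $|W|\geq n_1+1$, $|W\cap V_\ell|\leq n_\ell-1$, and the hypothesis $n_\ell\leq n_1$. The ordering $n_1\geq n_2\geq\cdots\geq n_k$ is precisely what provides the slack needed to rule out the borderline case $|W\setminus V_\ell|=1$, which is the only way a ``large'' $W$ could fail to be a PSD fort.
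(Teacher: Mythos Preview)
Your argument is correct. The upper bound is immediate from the construction, and your fort-based lower bound is sound: the key inequality $|W\setminus V_\ell|\geq (n_1+1)-(n_\ell-1)\geq 2$ is exactly what is needed, and you have correctly justified that $G[W]$ is connected so that only the single-component fort condition (\Cref{ConnectedPSDFortChecking}) must be checked.

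There is nothing to compare against, however: the paper does not prove \Cref{CompleteMultiZPlus} but simply cites it from \cite{peters2012positive}. Your proof therefore supplies strictly more than the paper does. One minor remark: your upper-bound argument tacitly uses $k\geq 2$ (so that $B\neq\emptyset$ and a blue vertex exists to perform the initial forces); this is the standard convention for complete multipartite graphs and is consistent with how the paper uses the result, but you may wish to make it explicit.
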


\begin{lemma}\label{completemultiZplusSig}
    Fix an integer $q$ greater than or equal to 4, and let $G = K_{q,q,\dots,q}$, the complete $r$-partite graph on $rq$ vertices.  If $B$ is a minimum PSD forcing set of $G$, then the complement of $B$ in $V(G)$ intersects at most two of the partite sets, and at most one of the intersections can contain more than one vertex. 
\end{lemma}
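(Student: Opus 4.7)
The plan is to use Proposition~\ref{CompleteMultiZPlus} to pin down the size of $W := V(G)\setminus B$ and then translate the availability of an initial PSD force into a constraint on how $W$ sits across the partite sets. By that proposition, $|W|=rq-(r-1)q=q$. Write $V_1,\ldots,V_r$ for the partite sets and put $W_i=W\cap V_i$, so the $W_i$ partition $W$.

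The first key observation is structural: any $u\in V_i$ is adjacent in $G$ to exactly the vertices outside $V_i$, so $N_G(u)\cap W = W\setminus W_i$. Moreover, if $W$ meets at least two partite sets then $G[W]$ is a complete multipartite graph on at least two nonempty parts, hence connected, so the unique component of any $v\in W$ in $G[W]$ is $W$ itself. Under $\Z_+$-CCR, a valid initial force $u\to v$ from some $u\in B$ therefore requires $|N_G(u)\cap W|=1$, equivalently $|W_i|=q-1$ for the index with $u\in V_i$. Combined with $|W|=q$, this forces $W\setminus V_i$ to be a single vertex in some other partite set, so $W$ meets exactly two partite sets, one of them a singleton.

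Now I would argue by contradiction. Suppose the conclusion fails. Then either (a)~$W$ meets at least three partite sets, or (b)~$W$ meets exactly two partite sets both of size at least~$2$. In case~(a), $|W_i|=q-1$ is impossible, since it would leave a single vertex outside $V_i$ and so restrict $W$ to just two parts. In case~(b), $|W_i|=q-1$ is also impossible, since it would force the other intersection to have size~$1$. By the preceding paragraph, no initial PSD force exists in either case, contradicting that $B$ is a PSD forcing set. The remaining possibility, that $W$ lies in a single partite set, trivially satisfies the conclusion.

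The only subtle point is making sure the component $C$ in the $\Z_+$-CCR is taken in $G-B = G[W]$ rather than in $G$; once this is in hand, the connectedness of $G[W]$ (when at least two $W_i$ are nonempty) immediately upgrades the local neighbor count condition to the global condition $|N_G(u)\cap W|=1$, which is what drives the contradictions. The hypothesis $q\ge 4$ plays no essential role in this proof beyond ensuring $|W|\ge 2$ so that the forcing condition is genuinely restrictive; it is presumably included to match hypotheses used in the lemma's applications later in the paper.
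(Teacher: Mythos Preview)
Your argument is correct. Both you and the paper reach the conclusion by showing that no initial PSD force can occur when $W$ has one of the forbidden shapes, but the mechanisms differ. The paper exhibits an explicit small PSD fort inside $W$ in each case---an induced $K_3$ when $W$ meets three partite sets, and an induced $C_4$ when $W$ meets two partite sets in at least two vertices each---and observes that every vertex of $B$ has at least two neighbors in that fort. Your route instead invokes Proposition~\ref{CompleteMultiZPlus} to get $|W|=q$, notes that $G[W]$ is connected, and turns the initial-force condition into the arithmetic constraint $|W_i|=q-1$, which is incompatible with both forbidden shapes. The paper's approach is slightly more self-contained (it never needs the exact value of $|W|$), while yours is arguably cleaner in that it handles the single component $W$ directly rather than locating a substructure. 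Your remark that $q\ge 4$ is inessential here is correct; that hypothesis is used only in the theorem that follows.
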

\begin{proof}
  For $i \in \{ 1, 2, \dots, r\}$, let $S_i$ denote the $i$-th partite set of $V(G)$.  Let $B$ be a minimum PSD forcing set of $G$  (see \Cref{K444}) and denote by $W$ the complement of $B$ in $V(G)$.  Then $W$ can intersect at most two partite sets, since otherwise there would be an induced $K_3$ in $W$ and every $v \in B$ would be adjacent to at least two of the vertices in this induced $K_3$.  If $W$ intersects two partite sets $S_i$ and $S_j$ and $|W \cap S_i| \geq 2$ and $|W \cap S_j |\geq 2$, then $W$ contains an induced $C_4$, and each vertex of $B$ is adjacent to two vertices of this induced $C_4$, thus no forces can take place.
\end{proof}

\begin{theorem}
    Let $q$ be a fixed integer greater than or equal to 4, and consider $G = K_{q,q,\dots,q}$, the complete $r$-partite graph on $rq$ vertices.  Then $\ZTokSlide_+(G)$ and $\ZTokExch_+(G)$ both contain exactly $r$ connected components.
\end{theorem}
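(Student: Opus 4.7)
The plan is to define an ``association'' of each minimum PSD forcing set with one of the $r$ partite sets, show that sets sharing an association are connected in $\ZTokSlide_+(G)$ (and hence in $\ZTokExch_+(G)$) through a common hub, and show that a single token exchange cannot change the association.

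Let $S_1,\dots,S_r$ be the partite sets of $G=K_{q,q,\dots,q}$. By \Cref{CompleteMultiZPlus}, $\Z_+(G)=(r-1)q$, so for every minimum PSD forcing set $B$ the complement $W=V(G)\setminus B$ has exactly $q$ vertices. Combining $|W|=q$ with \Cref{completemultiZplusSig}, the only possibilities are: (a) $W=S_i$ for some $i$; or (b) $W=(S_i\setminus\{v\})\cup\{u\}$ for some $i$, some $v\in S_i$, and some $u\in V(G)\setminus S_i$. In both cases the index $i$ is uniquely determined (as the only partite set with $|W\cap S_i|\geq q-1$, using $q\geq 4$), and I will call $i$ the \emph{association} of $B$. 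For each $i$ let $B^{(i)}=V(G)\setminus S_i$, the unique set in class (a) for that $i$.

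For the ``same association implies same component'' direction, take any $B$ associated with $i$. If $B=B^{(i)}$ there is nothing to do; otherwise $W=(S_i\setminus\{v\})\cup\{u\}$ with $u\in S_j$ for some $j\neq i$. Then $W\symmDiff S_i=\{v,u\}$, so $B$ and $B^{(i)}$ differ by an exchange; since $v\in S_i$ and $u\in S_j$ lie in distinct partite sets of a complete multipartite graph, $vu\in E(G)$, so this is actually a token slide. Hence every set associated with $i$ is adjacent in $\ZTokSlide_+(G)$ to the hub $B^{(i)}$, so the association class of $i$ is contained in one component of $\ZTokSlide_+(G)$, and therefore of $\ZTokExch_+(G)$ as well.

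For the ``different associations implies different components'' direction, I will show that if $B$ is a minimum PSD forcing set associated with $i$ and $B'$ is a minimum PSD forcing set with $BB'\in E(\ZTokExch_+(G))$, then $B'$ is also associated with $i$. Let $W,W'$ be the respective complements; then $W'=(W\setminus\{x\})\cup\{y\}$ for some $x\in W$, $y\in V(G)\setminus W$. I then do a short case check on which ``type'' the vertex $x$ belongs to (the full partite piece inside $W$, the singleton piece, or the removed vertex of $S_i$) and where $y$ lies, discarding every choice that forces $W'$ to intersect three partite sets or to hit two partite sets with $\geq 2$ vertices each — these are precisely the configurations ruled out by \Cref{completemultiZplusSig}, so the corresponding $B'$ is not a minimum PSD forcing set and does not belong to the reconfiguration graph. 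Every remaining configuration leaves $W'$ of type (a) or (b) with the same distinguished partite set $S_i$, so the association is preserved. Since adjacency in $\ZTokSlide_+(G)$ implies adjacency in $\ZTokExch_+(G)$ by \Cref{TS<=TE}, the same conclusion holds there. Combined with the previous paragraph this shows that the components of $\ZTokExch_+(G)$ and of $\ZTokSlide_+(G)$ are exactly the $r$ association classes, proving the theorem.

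The main obstacle is the case analysis in the last paragraph: one has to enumerate enough subcases of $(x,y)$ to be confident that every invalid exchange is ruled out by \Cref{completemultiZplusSig}, while being careful that the exchange genuinely changes the association in no remaining case. The hypothesis $q\geq 4$ is what makes the partite set with $q-1$ vertices in $W$ unambiguously distinguishable from a singleton piece, so I would make explicit use of $q\geq 4$ at exactly the moment association is first defined.
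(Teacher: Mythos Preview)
Your proof is correct and follows the same overall architecture as the paper's: define an association of each minimum PSD forcing set to a partite set via the unique $S_i$ with $|W\cap S_i|\geq q-1$, show every set associated with $i$ is adjacent (by a slide) to the hub $B^{(i)}=V(G)\setminus S_i$, and then show no edge of $\ZTokExch_+(G)$ crosses between association classes.

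The one place where the paper does something different is the last step. Instead of your case analysis on the type of $x$ and the location of $y$, the paper simply bounds the symmetric difference: if $B_1$ is associated with $i$ and $B_2$ with $j\neq i$, then $W_1$ meets $S_j$ in at most one vertex while $W_2$ meets $S_j$ in at least $q-1$, and symmetrically for $S_i$; hence $|B_1\symmDiff B_2|=|W_1\symmDiff W_2|\geq 2(q-2)=2q-4\geq 4$ for $q\geq 4$, so $B_1B_2\notin E(\ZTokExch_+(G))$. This one-line count replaces your entire case check and makes the role of the hypothesis $q\geq 4$ completely transparent. Your case analysis does go through (I checked the subcases), but the symmetric-difference bound is the cleaner way to package it.
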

\begin{proof}
Let $S_i$ denote the $i$th partite set of $G$.  By \Cref{CompleteMultiZPlus}, all minimum PSD forcing sets $B$ of $G$ satisfy $|B|=q(r-1)$, and $|V(G)\setminus B|=q$.  On the other hand, by \Cref{completemultiZplusSig}, for any minimum PSD forcing set $B$ of $G$, $|(V(G)\setminus B)\cap S_i|>1$ for at most one value of $i$.  Thus, each minimum PSD forcing set $B$ of $G$ satisfies $|(V(G)\setminus B)\cap S_i|\geq q-1$ for exactly one value of $i$.  Let $\mathcal{B}_i$ be the set of minimum PSD forcing sets $B$ of $G$ satisfying $|(V(G)\setminus B)\cap S_i|\geq q-1$.  Let $B_i=V(G)\setminus S_i$.  For each $B\in\mathcal{B}_i$, either $B=B_i$ or $B=(B_i\cup\{v\})\setminus\{w\}$ for some $v\in S_i$ and $w\in V(G)\setminus S_i=N(v)$.  Thus, for all $B\in\mathcal{B}_i$ with $B\neq B_i$, $BB_i\in E(\ZTokSlide_+(G))\subseteq E(\ZTokExch_+(G))$.  Thus, $\ZTokExch_+(G)[\mathcal{B}_i]$ and $\ZTokSlide_+(G)[\mathcal{B}_i]$ are connected and non-empty for each $i$.  Now for $i \neq j$, let $B_1\in\mathcal{B}_i$, $B_2\in\mathcal{B}_j$.  Since $q\geq 4$ and $|B_1\symmDiff B_2|\geq 2q-4$, we have that $B_1B_2\notin E(\ZTokExch_+(G))\supseteq E(\ZTokSlide_+(G))$.  Thus, the sets in each $\mathcal{B}_i$ are in different connected components of $\ZTokExch_+(G)$ and of $\ZTokSlide_+(G)$.
\end{proof}

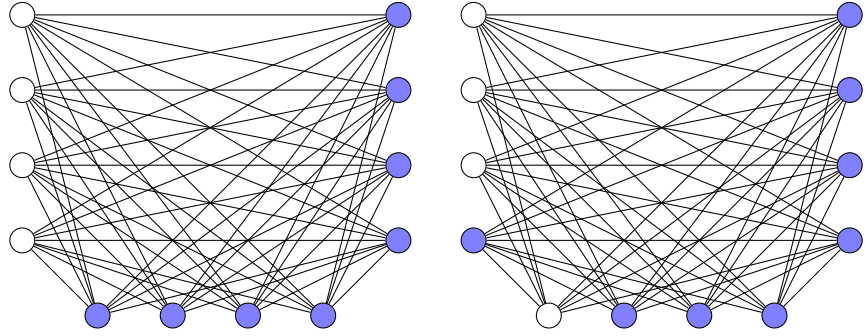
\begin{figure}[ht]
\centering
\begin{tikzpicture}[scale=1, auto,swap]

\foreach \i in {1,2,3,4}
  \node[vertex_alt] (v\i) at (0,-\i) { };

\foreach \i in {5,6,7,8}
  \node[vertex_alt,fill=blue!50] (v\i) at (\i-4,-5) { };

  \foreach \i in {9,10,11,12}
    \node[vertex_alt,fill=blue!50] (v\i) at (5,\i-13) { };

\foreach \i in {1,2,3,4}
  \foreach \j in {5,6,7,8,9,10,11,12}
    \draw (v\i) -- (v\j);
\foreach \i in {5,6,7,8}
  \foreach \j in {9,10,11,12}
    \draw (v\i) -- (v\j);

\begin{scope}[shift={(6,0)}]  
\foreach \i in {1,2,3}
  \node[vertex_alt] (v\i) at (0,-\i) { };
  \node[vertex_alt,fill=blue!50] (v4) at (0,-4) { };

\foreach \i in {6,7,8}
  \node[vertex_alt,fill=blue!50] (v\i) at (\i-4,-5) { };
    \node[vertex_alt] (v5) at (1,-5) { };
  \foreach \i in {9,10,11,12}
    \node[vertex_alt,fill=blue!50] (v\i) at (5,\i-13) { };

\foreach \i in {1,2,3,4}
  \foreach \j in {5,6,7,8,9,10,11,12}
    \draw (v\i) -- (v\j);
\foreach \i in {5,6,7,8}
  \foreach \j in {9,10,11,12}
    \draw (v\i) -- (v\j);
\end{scope}

\end{tikzpicture}
\caption{Up to symmetry, there are two minimum PSD forcing sets for $K_{4,4,4}$.}

\label{K444}
\end{figure}

We remark that there are many constructions which lead to arbitrarily many connected components in PSD token sliding and token exchange graphs.  The uniform complete multipartite graphs are but one example of source graphs which have this property.

\section{Reconfiguration for Skew Forcing}
\label{skew}

In this section we study reconfiguration graphs under token sliding and token exchange using the skew color change rule (defined in \Cref{zeroforcing}).  We begin by recalling some basic facts about skew forcing.  We then consider paths, cycles, and complete graphs, and describe their reconfiguration graphs.  We also consider various aspects of the reconfiguration graphs of trees, and realizability of complete graphs.

The concept of a matching in a graph is very useful for describing skew forcing sets. Recall that a \emph{matching} in a graph $G$ is a subset $M\subseteq E(G)$ of the set of edges such that no vertex is incident to more than one edge in $M$.  The vertices involved in $M$ are called \emph{$M$-saturated}, and the vertices that do not show up in $M$ are called \emph{$M$-unsaturated}.  Furthermore, a \emph{perfect matching} is a matching $M$ where all vertices are $M$-saturated, and a \emph{maximum matching} is a matching with the most possible edges for $G$.  The number of edges in a maximum matching in a graph $G$ is the \emph{matching number} of $G$, denoted $\match(G)$.

The following results demonstrate the connections between matchings and skew forcing sets.

\begin{proposition}[\cite{dealba2014some}]\label{prop:skew0}
    Let $G$ be a connected graph on at least three vertices. If $\Z_-(G)=0$, then $G$ has a unique perfect matching, $|V(G)|$ is even, and $G$ contains at least one leaf. 
\end{proposition}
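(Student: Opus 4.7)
The plan is to analyze a complete skew forcing process starting from $B=\emptyset$. Since $\Z_-(G)=0$, I fix a chronological list $\mathcal{F}$ of skew forces from the empty set and let $\tau(v)$ denote the time-step at which $v$ becomes blue; thus $\tau$ is a bijection from $V(G)$ to $\{1,\dots,|V(G)|\}$. The leaf statement comes almost for free: the preliminaries note that $V(G)$ is a skew fort whenever $G$ has no pendent vertices, and \Cref{skewfort} then gives $\Zs(G)\geq 1$ in that case, so the contrapositive yields a leaf in $G$.

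For the matching statements, I would extract from $\mathcal{F}$ a function $f\colon V(G)\to V(G)$ sending each $u$ to the vertex it forces. Every vertex is forced exactly once by definition, and no vertex forces twice: if $u$ forced $v_1$ at time $t_1$ and $v_2$ at a later time $t_2$, then $v_2$ would already be a white neighbor of $u$ at time $t_1-1$ alongside $v_1$, violating the unique-white-neighbor requirement of $\Zs$-CCR. Since there are $|V(G)|$ forces and each vertex forces at most once, every vertex forces exactly once, so $f$ is a bijection with $uf(u)\in E(G)$ for all $u$. To see $f$ is an involution, suppose it has an orbit $v_1\to v_2\to\cdots\to v_k\to v_1$ of length $k\geq 3$; relabeling so that $\tau(v_2)=\min_i\tau(v_i)$, the force $v_1\to v_2$ at time $\tau(v_2)$ requires $v_2$ to be the unique white neighbor of $v_1$, yet $v_k v_1\in E(G)$ together with $\tau(v_k)\geq\tau(v_2)$ makes $v_k$ a second white neighbor, a contradiction. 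Hence $f$ is a fixed-point-free involution and $M=\{\{u,f(u)\}:u\in V(G)\}$ is a perfect matching of $G$, forcing $|V(G)|$ to be even.

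Uniqueness of $M$ I would prove by the same minimum-time trick applied to any alternative perfect matching $M'$. Namely, $M\symmDiff M'$ decomposes into vertex-disjoint even alternating cycles, at least one of which, call it $C$, has length at least $4$. Picking $v^*\in C$ minimizing $\tau$ on $C$ and letting $u$ denote its $M$-partner in $C$, the force $u\to v^*$ at time $\tau(v^*)$ (which occurs since $f(u)=v^*$) requires $v^*$ to be the unique white neighbor of $u$. But $u$ also has an $M'$-partner $w\in C$, distinct from $v^*$, with $\tau(w)\geq\tau(v^*)$; hence $w$ is a second white neighbor of $u$ at time $\tau(v^*)-1$, the desired contradiction.

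The main obstacle is the involution step. The key insight is that because $\Zs$-CCR permits white forcers, each matched pair $\{u,f(u)\}$ participates in two forces (one in each direction), and this bidirectional structure is precisely what gives the minimum-$\tau$ vertex of any putative $f$-orbit of length $\geq 3$ an extra uninvited white neighbor. Once that observation is in place, the uniqueness step is essentially the same argument with the role of an $f$-orbit played by an alternating cycle in $M\symmDiff M'$.
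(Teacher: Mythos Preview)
Your argument is correct. Note, however, that the paper does not supply its own proof of this proposition: it is quoted verbatim from \cite{dealba2014some} and used as a black box. So there is no in-paper argument to compare against; what you have written is a self-contained proof that fits cleanly within the paper's framework (it uses only the $\Zs$-CCR, the skew-fort observation about pendent-free graphs preceding \Cref{skewfort}, and elementary matching theory).

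A couple of minor tightenings you might want to make explicit. In the involution step, the reason $v_k$ is genuinely a \emph{second} white neighbor is that $k\geq 3$ forces $v_k\neq v_2$; you use this implicitly but it is worth stating. In the uniqueness step, the chain of containments ``$v^*\in C\Rightarrow u\in C\Rightarrow w\in C$'' relies on the fact that in an $M/M'$-alternating cycle both the $M$-edge and the $M'$-edge at each vertex lie in $C$; again this is standard but deserves a clause. With those two remarks added the proof reads without gaps.
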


\begin{proposition}[\cite{dealba2014some}] \label{prop:skewmultipartite}
If $G$ is a connected graph on $n$ vertices, then either $\Z_-(G) = n-2$ or $\Z_-(G)\leq n-4$. Furthermore, $\Z_-(G)=n-2$ if and only if $G$ is a complete multipartite graph $K_{n_1,n_2,\dots,n_s}$ with $s\geq 2$ and $n_i \geq 1$ for all $i$.
\end{proposition}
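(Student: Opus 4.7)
The plan is to prove the proposition in three stages (assuming $n\geq 2$ throughout): establish the universal bound $\Z_-(G)\leq n-2$, show that complete multipartite source graphs attain this bound, and give a constructive argument that any connected non-complete-multipartite graph already has $\Z_-(G)\leq n-4$, so the value $n-3$ is never attained.

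First I would observe that for any edge $uv\in E(G)$, the set $V(G)\setminus\{u,v\}$ is a skew forcing set via the forces $u\rightarrow v$ and then $v\rightarrow u$, each valid because the forcing vertex has exactly one remaining white neighbor. This gives $\Z_-(G)\leq n-2$.

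Next, suppose $G=K_{n_1,\ldots,n_s}$ with $s\geq 2$ and $n_i\geq 1$, and let $W\subseteq V(G)$ have $|W|=3$. I would show $W$ contains a skew fort by cases. If two vertices of $W$ lie in a common part $V_i$, they are independent twins sharing neighborhood $V(G)\setminus V_i$ and thus form a 2-element skew fort. Otherwise the three vertices of $W$ occupy three distinct parts, and every $u\in V(G)$ satisfies $|N_G(u)\cap W|\in\{2,3\}$ (it is $3$ if $u$ lies outside these three parts, and $2$ if $u$ lies in one of them), so $W$ itself is a skew fort. Any set of size at least $3$ therefore contains a skew fort, and \Cref{skewfort} rules out skew forcing sets of size at most $n-3$, giving $\Z_-(G)\geq n-2$ and hence $\Z_-(G)=n-2$.

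For the remaining case, suppose $G$ is connected but not complete multipartite. Then non-adjacency fails to be a transitive relation on $V(G)$, so there exist distinct $x,y,z\in V(G)$ with $xy,yz\notin E(G)$ and $xz\in E(G)$. Since $G$ is connected and $y$ has no neighbor in $\{x,y,z\}$, some $w\in N_G(y)\setminus\{x,y,z\}$ exists; in particular $n\geq 4$. I would then verify that $B'=V(G)\setminus\{x,y,z,w\}$ is a skew forcing set via the sequence
\[
y\rightarrow w,\qquad x\rightarrow z,\qquad z\rightarrow x,\qquad w\rightarrow y,
\]
where each force is valid because at that moment the forcing vertex has exactly one remaining white neighbor (the target). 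This yields $\Z_-(G)\leq n-4$, simultaneously establishing the reverse implication in the characterization and ruling out $\Z_-(G)=n-3$.

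The main obstacle I anticipate is not finding this construction but verifying that the four forces above remain valid despite unknown adjacencies between $w$ and $\{x,z\}$. The ordering I chose sidesteps this: $w$ is forced first, so any edges $xw$ or $zw$ cannot contribute extra white neighbors in the later steps, where only $y$ (and sequentially $z$, then $x$) is white. Small $n$ also requires a brief check: for $n\leq 3$ every connected graph other than $K_1$ is $K_2$, $P_3=K_{1,2}$, or $K_3=K_{1,1,1}$, each complete multipartite, so the hypothesis that $G$ is connected and not complete multipartite automatically forces $n\geq 4$ and makes the choice of $w$ unproblematic.
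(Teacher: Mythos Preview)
The paper does not prove this proposition; it merely cites it from \cite{dealba2014some}, so there is no in-paper argument to compare against. Your proof is correct and self-contained: the upper bound via any edge, the skew-fort analysis of three white vertices in a complete multipartite graph, and the induced-$P_3$-in-the-complement construction (with the forcing order $y\to w$, $x\to z$, $z\to x$, $w\to y$ chosen so that the unknown adjacencies $xw$, $zw$ are irrelevant) all check out, including the small-$n$ verification.
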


Let $B^{\emptyset}_-(G)$ be the set of vertices colored blue by applying the skew forcing color change rule to $\emptyset \subseteq V(G)$ until no further forces can be performed (i.e., $B^{\emptyset}_-(G)=\cl_-(G,\emptyset)$ is the skew closure of the empty set), and let $W^{\emptyset}_-(G)=V(G) \setminus B^{\emptyset}_-(G)$.

\begin{proposition}
   Let $G$ be a graph and $\mathbb F$ be the collection of skew forts contained in $G$.  Then $\bigcup \mathbb F=W^{\emptyset}_-(G)$.
\end{proposition}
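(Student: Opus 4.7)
The plan is to observe that this is essentially an immediate specialization of \Cref{skewderivedset} to the case $S=\emptyset$. Recall that \Cref{skewderivedset} states $V(G)\setminus\cl_-(G,S)=\bigcup\mathbb{F}_S$, where $\mathbb{F}_S$ is the set of skew forts $F$ of $G$ such that $S\cap F=\emptyset$. First I would unpack the definition of $W^\emptyset_-(G)$: by construction $B^\emptyset_-(G)=\cl_-(G,\emptyset)$, and so $W^\emptyset_-(G)=V(G)\setminus\cl_-(G,\emptyset)$.

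Next I would note that when $S=\emptyset$, the vacuous condition $S\cap F=\emptyset$ is satisfied by \emph{every} skew fort $F$ of $G$. Hence $\mathbb{F}_\emptyset=\mathbb{F}$, the full collection of skew forts of $G$. Substituting $S=\emptyset$ into \Cref{skewderivedset} then yields
\[W^\emptyset_-(G)=V(G)\setminus\cl_-(G,\emptyset)=\bigcup\mathbb{F}_\emptyset=\bigcup\mathbb{F},\]
which is the desired equality.

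There is no real obstacle here; the only subtlety worth mentioning in the write-up is the convention that if $G$ has no skew forts at all (which by the discussion following the definition of skew fort happens precisely when $\Z_-(G)=0$), then $\mathbb{F}=\emptyset$ and $\bigcup\mathbb{F}=\emptyset$, which is consistent with $W^\emptyset_-(G)=\emptyset$ in that case since then $\emptyset$ itself is a skew forcing set. Thus the proof reduces to two lines citing \Cref{skewderivedset}.
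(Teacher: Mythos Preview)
Your proposal is correct and follows essentially the same approach as the paper: both reduce the statement to \Cref{skewderivedset} applied with $S=\emptyset$, observing that the disjointness condition $S\cap F=\emptyset$ becomes vacuous so that $\mathbb{F}_\emptyset=\mathbb{F}$.
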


\begin{proof}
By \Cref{skewderivedset}, $v \in V(G)$ will become blue during a forcing process starting from $\emptyset$ if and only if $v \not \in \bigcup \mathbb F$.
\end{proof}

\begin{observation}
Any minimum skew forcing set $B$ of a graph $G$ satisfies $B \subseteq W^{\emptyset}_-(G)$.
\end{observation}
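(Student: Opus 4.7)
The plan is to combine Theorem~\ref{skewfort} (skew forcing sets hit every skew fort) with Lemma~\ref{ClosureVsForts} (skew forts are preserved under skew closure). The key intermediate fact is that every skew fort $F$ of $G$ is contained in $W^\emptyset_-(G)$: applying Lemma~\ref{ClosureVsForts} with $S = \emptyset$ (vacuously disjoint from $F$) yields $\cl_-(G,\emptyset) \cap F = \emptyset$, i.e., $F \subseteq V(G) \setminus \cl_-(G,\emptyset) = W^\emptyset_-(G)$.

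Given this, the main argument is short. Let $B$ be a minimum skew forcing set and set $B' = B \cap W^\emptyset_-(G)$. For any skew fort $F$ of $G$, Theorem~\ref{skewfort} gives some $x \in B \cap F$; since $F \subseteq W^\emptyset_-(G)$, this $x$ also lies in $B \cap W^\emptyset_-(G) = B'$, so $B' \cap F \neq \emptyset$. By Theorem~\ref{skewfort} again, $B'$ is itself a skew forcing set. Since $B' \subseteq B$ and $B$ is of minimum cardinality, we must have $B' = B$, i.e., $B \subseteq W^\emptyset_-(G)$.

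The potentially annoying edge case is when $G$ has no skew forts. In that situation Theorem~\ref{skewfort} makes the empty set a skew forcing set (vacuously), so the unique minimum skew forcing set is $\emptyset$, and the inclusion $\emptyset \subseteq W^\emptyset_-(G)$ holds trivially. No real obstacle arises; the entire argument is a two-line application of the fort characterization, so the write-up should be brief.
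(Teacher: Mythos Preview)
Your argument is correct. The paper states this as an \emph{observation} without proof, placed immediately after the proposition asserting $\bigcup\mathbb{F}=W^\emptyset_-(G)$; your intermediate fact that every skew fort lies in $W^\emptyset_-(G)$ is precisely that proposition, and the remainder of your argument via Theorem~\ref{skewfort} and minimality is the natural (and evidently intended) justification.
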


\begin{theorem}[\cite{hogben2016fractional}]\label{thm:leafstrip}  If $G$ is a graph with leaf $u$ and $v$ is the neighbor of $u$, then $\Z_-(G-\{u,v\}) = \Z_-(G)$.
\end{theorem}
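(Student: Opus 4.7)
The plan is to prove the two inequalities $\Z_-(G) \le \Z_-(G - \{u,v\})$ and $\Z_-(G-\{u,v\}) \le \Z_-(G)$ separately; the upper-bound direction is a direct construction, while the lower-bound direction requires invoking \Cref{initialskew} together with a structural observation about forces from $v$.

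For $\Z_-(G) \le \Z_-(G - \{u,v\})$, let $B'$ be a minimum skew forcing set of $H = G - \{u,v\}$. I would show that $B'$, viewed as a subset of $V(G)$, is itself a skew forcing set of $G$. The forcing process on $G$ starting from $B'$ proceeds in three stages. First, since $N_G(u) = \{v\}$ and $v$ is initially white, $v$ is the unique white neighbor of $u$, so $u \to v$ is a valid skew force. Second, after $v$ is blue, each $y \in V(H)$ has the same set of white neighbors in $G$ as in $H$, because $y$ is not adjacent to $u$ and $v$ is now blue; consequently, any relaxed chronology that turns all of $V(H)$ blue starting from $B'$ on $H$ can be replayed verbatim on $G$. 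Finally, only $u$ remains white, and it is $v$'s unique white neighbor, so $v \to u$ completes the process.

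For $\Z_-(G - \{u,v\}) \le \Z_-(G)$, let $B$ be a minimum skew forcing set of $G$ and let $\mathcal{F}$ be a relaxed chronology of skew forces for $B$ on $G$. Applying \Cref{initialskew} with $H = G - \{u,v\}$, the set $\ini(\mathcal F|_H)$ is a skew forcing set of $H$, so it suffices to bound $|\ini(\mathcal F|_H)| \le |B|$. By definition, $\ini(\mathcal F|_H) = (B \cap V(H)) \cup \{w \in V(H) : y \to w \text{ occurs in } \mathcal F \text{ for some } y \in \{u,v\}\}$. Since $N_G(u) = \{v\}\notin V(H)$, $u$ contributes no forces into $V(H)$. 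The key observation is that in any relaxed chronology of skew forces, $v$ can be the source of at most one force: once $v$'s unique white neighbor has been forced blue, $v$ has no remaining white neighbors, and since vertices never revert to white, $v$ can never again have exactly one white neighbor.

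Hence either $v$ performs no force into $V(H)$, in which case $|\ini(\mathcal F|_H)| = |B \cap V(H)| \le |B|$; or $v$ performs a single force $v \to w$ with $w \in V(H)$. In the latter case $w \notin B$ (since $w$ is forced), and moreover $u \in B$: at the moment of $v \to w$, every neighbor of $v$ other than $w$ must already be blue, including $u$; but the only possible forcer of $u$ is $v$ (since $N_G(u) = \{v\}$), and $v$'s single force has been spent on $w \ne u$, so $u$ must have been in $B$. This gives $|B \cap V(H)| \le |B| - 1$ and hence $|\ini(\mathcal F|_H)| \le (|B| - 1) + 1 = |B|$, as needed.

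The main obstacle is the second inequality. A naive restriction of $B$ to $V(H)$ need not be a skew forcing set of $H$, so \Cref{initialskew} must be invoked to account for vertices of $V(H)$ that are forced ``from outside'' (i.e., by $u$ or $v$) in $\mathcal{F}$. The count closes tightly only because $v$ can force at most once and because, when that one force lands inside $V(H)$, the requirement that $u$ be blue at force-time forces $u \in B$, supplying exactly the slack needed to absorb the extra vertex in $\ini(\mathcal F|_H)$.
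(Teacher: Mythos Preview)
The paper does not prove this theorem; it is quoted from \cite{hogben2016fractional} and used as a black box. Your argument is correct and self-contained: the forward inequality is handled by the obvious replay after the initial white-vertex force $u\to v$, and the reverse inequality is obtained cleanly via \Cref{initialskew}, with the crucial observation that in skew forcing a vertex can force at most once, so if $v$'s single force lands in $V(H)$ then $u$ must already lie in $B$, recovering the needed slack. This is a standard and natural proof of the leaf-stripping lemma.
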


The leaf-stripping algorithm, Algorithm 1 from \cite{hogben2016fractional}, is an application of \Cref{thm:leafstrip} which starts with a graph $G$ and, if $G$ contains any leaves, finds a smaller graph $\widehat{G}$ with the same skew forcing number. It is as follows: \\
\hrule
\ \\
\noindent \textbf{Input:}  Graph $G$ \\
\noindent \textbf{Output:} Graph $\widehat{G}$ with $\delta(\widehat{G}) \neq 1$ or $V(\widehat{G}) = \emptyset$ \\
$\widehat{G} := G$ \\
\textbf{while} $\widehat{G}$ has a leaf $u$ with neighbor $v$ \textbf{do} \\
$|$ \  $\widehat{G} := \widehat{G}-\{u,v\}$ \\
\textbf{end} \\
\textbf{return} $\widehat{G}$ \\
\hrule
\ \\

The leaf-stripping algorithm applies nontrivially to any graph with a leaf, but it allows the direct computation of a minimum skew forcing set for a tree, as we will see in Section \ref{sec:SkTrees}.  The following {observation} is part of Remark 3.23 in \cite{hogben2016fractional}.

\begin{observation}
\label{LeafStripping:TreeToPK_1}
    If $G = T$ is a tree, then $\widehat{G} \cong pK_1$ for some $p\geq 0$ and $V(\widehat{G})$ is a skew forcing set for $T$, thus $\Z_-(T) =|V(\widehat{G})|$.
\end{observation}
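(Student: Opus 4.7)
The plan is to verify the three assertions in turn: that $\widehat{G}\cong pK_1$, that $V(\widehat{G})$ is a skew forcing set of $T$, and that $\Z_-(T)=|V(\widehat{G})|$. The first assertion follows quickly from the fact that $\widehat{G}$ is obtained from the tree $T$ by repeated vertex deletions and is therefore a forest. The leaf-stripping algorithm terminates only when $\delta(\widehat{G})\neq 1$ (or $V(\widehat{G})=\emptyset$), and since every tree on at least two vertices has a leaf, every component of $\widehat{G}$ must be a single vertex; hence $\widehat{G}\cong pK_1$ for some $p\geq 0$.

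For the second assertion, I will induct on $|V(T)|$. To cleanly apply the induction hypothesis to $T-\{u,v\}$, which may be a forest rather than a tree, I plan to phrase the statement for forests and handle each component independently, invoking the summability of $\Z_-$ (see \Cref{def:SummableDefinition}). The base cases $|V(T)|\leq 1$ are immediate. For the inductive step, let $u$ be the first leaf processed by the algorithm, with neighbor $v$; then $\widehat{G}=\widehat{T-\{u,v\}}$. Beginning with $V(\widehat{G})$ blue in $T$, the force $u\to v$ is valid because $v$ is $u$'s unique (white) neighbor in $T$. Once $v$ is blue, I replay the forces guaranteed by the induction hypothesis on $T-\{u,v\}$: each such force $x\to y$ remains valid in $T$, because for any $x\in V(T-\{u,v\})$ the only possible neighbor outside $V(T-\{u,v\})$ is $v$ (since $u$ is a leaf of $T$ with unique neighbor $v$), and $v$ is already blue, so the white-neighbor count at $x$ is unchanged. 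After these forces, every vertex of $T-\{u,v\}$ is blue, so $v$'s sole white neighbor in $T$ is $u$, enabling the concluding force $v\to u$.

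For the third assertion, iterating \Cref{thm:leafstrip} yields $\Z_-(T)=\Z_-(\widehat{G})$. Every vertex of $pK_1$ is isolated and hence can never be forced under the skew rule, so $\Z_-(\widehat{G})=p=|V(\widehat{G})|$. Combining this with the upper bound $\Z_-(T)\leq|V(\widehat{G})|$ from the second assertion gives $\Z_-(T)=|V(\widehat{G})|$.

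The main subtlety is the inductive step of the second assertion—specifically, confirming that the forces the hypothesis produces on $T-\{u,v\}$ remain valid in the larger graph $T$. This could go wrong if reintroducing $u$ or $v$ gave some vertex $x\in V(T-\{u,v\})$ additional white neighbors at the relevant time-step, but the leaf structure of $u$ and the fact that $v$ is forced blue before the replay both rule this out.
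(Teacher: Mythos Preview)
Your proof is correct. The paper itself does not prove this observation; it simply cites it as part of Remark 3.23 in \cite{hogben2016fractional}. So you have supplied a self-contained argument where the paper relies on an external reference.

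Your approach is sound. The first assertion is exactly right: an induced subgraph of a tree is a forest, and a forest with no leaves is edgeless. For the second assertion, your induction works, and your handling of the key subtlety---that replaying a force $x\to y$ from $T-\{u,v\}$ inside $T$ is valid because the only extra neighbor $x$ could gain is $v$, which has already been forced blue by $u\to v$---is correct and clearly explained. Note that once you phrase the statement for forests (as you do), the base case should include all edgeless forests, not just $|V(T)|\leq 1$; this is implicit in what you wrote but worth stating explicitly, since in the inductive step the algorithm may terminate immediately if the forest happens to have no leaf. For the third assertion, iterating \Cref{thm:leafstrip} already gives $\Z_-(T)=\Z_-(\widehat{G})=p$ directly, so the upper bound from the second assertion is not strictly needed for the equality (though of course it is consistent with it).
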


\subsection{Paths and Cycles}

We label the vertices of $P_n$ from the set $\{ 1,2, \dots, n\}$ in path order starting from a leaf.  For $C_n$, we adopt the same convention, but any vertex can be the start of the labeling.  The parity of the vertices using this labeling plays an important role in succinctly describing minimum skew forcing sets. 
 Paths are interesting examples for skew forcing because, unlike standard zero forcing and PSD forcing, there is the possibility for empty skew forcing sets.  The skew forcing numbers for $P_n$ are $\Z_-(P_n)=1$ when $n$ is odd and $\Z_-(P_n)=0$ when $n$ is even.  When $n$ is odd, a minimum skew forcing set for $P_n$ consists of any single vertex with an odd label. 

\begin{theorem} \label{thm:skewpaths}
     Let $P_n$ be the path graph on $n$ vertices.
     \begin{enumerate}
         \item When $n$ is even, $\ZTokExch_-(P_n) = \ZTokSlide_-(P_n) \cong K_1$.
         \item When $n$ is odd, $\ZTokExch_-(P_n) \cong K_{(n+1)/2}$ and $\ZTokSlide_-(P_n) \cong \frac{n+1}{2}K_1$.
     \end{enumerate}
\end{theorem}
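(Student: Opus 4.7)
My plan is to split into the two parity cases for $n$, dispatching the easy even case first and then focusing on identifying the minimum skew forcing sets when $n$ is odd.

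\textbf{Even case.} When $n$ is even, the leaf-stripping algorithm on $P_n$ removes the pairs $\{1,2\}, \{3,4\}, \ldots, \{n-1,n\}$ in sequence and terminates with the empty graph, so \Cref{LeafStripping:TreeToPK_1} gives $\Z_-(P_n)=0$. The unique subset of $V(P_n)$ of cardinality $0$ is $\emptyset$, so each reconfiguration graph has a single vertex, yielding $\ZTokExch_-(P_n) = \ZTokSlide_-(P_n) \cong K_1$.

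\textbf{Odd case --- characterizing minimum skew forcing sets.} When $n$ is odd, leaf-stripping terminates at a single isolated vertex, so $\Z_-(P_n)=1$. The central step is to show that the minimum skew forcing sets of $P_n$ are exactly the singletons $\{v\}$ with $v$ odd. For the easier direction, I will verify that $F = \{v \in V(P_n) : v \text{ odd}\}$ is a skew fort of $P_n$: every vertex of $P_n$ has neighbors of opposite parity only, and since $n$ is odd both leaves are odd-labeled, so each even-labeled vertex has both of its neighbors in $F$ while each odd-labeled vertex has none of its neighbors in $F$; thus $|N(w) \cap F| \in \{0,2\}$ for every $w \in V(P_n)$. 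By \Cref{skewfort}, every skew forcing set must meet $F$, which rules out every even-labeled singleton. The harder direction --- which I expect to be the main obstacle --- is verifying that each odd-labeled singleton actually \emph{is} a skew forcing set. My plan here is to note that for odd $v$, $P_n - v$ decomposes into two paths $P_L$ and $P_R$ of even orders $v-1$ and $n-v$, each of which admits $\emptyset$ as a skew forcing set by the even case. Every skew force $u\to w$ valid in $P_L$ (resp.\ $P_R$) starting from $\emptyset$ remains valid in $P_n$ starting from $\{v\}$, since the only additional neighbor a vertex of $P_L\cup P_R$ can pick up in $P_n$ is $v$, which is blue and therefore cannot increase the white-neighbor count of $u$. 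Running the two processes colors all of $V(P_n)\setminus\{v\}$ blue; the boundary cases $v=1$ and $v=n$ are handled automatically since then one of $P_L, P_R$ is empty.

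\textbf{Reading off the reconfiguration graphs.} Having identified the $(n+1)/2$ minimum skew forcing sets as the odd-labeled singletons, the structure of each reconfiguration graph is immediate. Any two distinct singletons differ by an exchange, so $\ZTokExch_-(P_n) \cong K_{(n+1)/2}$. On the other hand, any two odd-labeled vertices of $P_n$ differ in label by at least $2$ and hence are nonadjacent in $P_n$, so no two singletons differ by a slide, giving $\ZTokSlide_-(P_n) \cong \frac{n+1}{2}\,K_1$.
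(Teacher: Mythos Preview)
Your proof is correct and follows essentially the same strategy as the paper: identify the minimum skew forcing sets of $P_n$ (empty set when $n$ is even, odd-labeled singletons when $n$ is odd) and read off the reconfiguration graphs. The paper's argument is terser---it simply asserts that even vertices are forced by white-vertex forces and that odd singletons suffice---whereas you supply more explicit justification via the skew fort characterization (\Cref{skewfort}) for the necessary direction and an even-subpath decomposition for sufficiency; both routes are valid and arrive at the same conclusion.
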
 

\begin{proof}
    When $n$ is even, the unique minimum skew forcing set of $P_n$ is the empty set. When $n$ is odd, the even vertices will be forced by a white vertex. Since any single odd vertex forms a minimum skew forcing set and there are $(n+1)/2$ odd vertices, $\ZTokExch_-(P_n) \cong K_{(n+1)/2}$. Exchanging an odd vertex with its even neighbor results in a set that does not force the entire graph, and thus there are no edges in the token sliding graph.
\end{proof}

Cycles also yield reconfiguration graphs whose structure is determined by the parity of the order of the graph.  

\begin{theorem} \label{thm:skewcycles}
    Let $C_n$ be the cycle graph on $n$ vertices.
    \begin{enumerate}
        \item When $n$ is odd, $\ZTokExch_-(C_n) \cong K_n$  and $\ZTokSlide_-(C_n) \cong C_n$.
        \item When $n$ is even, $\ZTokExch_-(C_n) \cong K_{n/2} \cartProd K_{n/2}$ and $\ZTokSlide_-(C_n) \cong \frac{n^2}{4}K_1$.
    \end{enumerate}
\end{theorem}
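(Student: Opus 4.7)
The plan is to leverage \Cref{skewfort}, which reduces identifying minimum skew forcing sets to intersecting every skew fort. So I would first enumerate all skew forts of $C_n$. Since every vertex of $C_n$ has exactly two neighbors, the fort condition $|N_{C_n}(v) \cap F| \neq 1$ becomes $|N_{C_n}(v) \cap F| \in \{0, 2\}$ for each $v$. Indexing vertices cyclically and writing $f$ for the indicator function of $F$, this translates to the recurrence $f(v-1) = f(v+1)$ (indices mod $n$). For $n$ odd, chasing the recurrence forces $f$ to be constant on $V(C_n)$ (since $\gcd(2,n)=1$), so the only nonempty skew fort is $V(C_n)$; for $n$ even, $f$ is independently constant on the odd-indexed and even-indexed vertices, giving exactly three skew forts: the odd-indexed set, the even-indexed set, and $V(C_n)$.

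For $n$ odd, \Cref{skewfort} immediately yields $\Z_-(C_n)=1$ and that every singleton $\{v\}$ is a minimum skew forcing set. Then any two distinct singletons differ by an exchange, giving $\ZTokExch_-(C_n) \cong K_n$, while a slide additionally requires that the two chosen vertices be adjacent in $C_n$, so $\ZTokSlide_-(C_n) \cong C_n$.

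For $n$ even, any skew forcing set must meet both the odd-vertex fort and the even-vertex fort, so $\Z_-(C_n) \geq 2$; conversely, any two-element set containing one odd and one even vertex meets all three forts and so is a skew forcing set by \Cref{skewfort}. Hence the minimum skew forcing sets are exactly the $(n/2)^2$ pairs $\{i, j\}$ with $i$ odd and $j$ even. Two such sets differ by an exchange iff they agree in exactly one of the odd or even coordinate; organizing sets by the ordered pair (odd vertex, even vertex) gives a bijection to $V(K_{n/2}) \times V(K_{n/2})$ under which exchange-adjacency is precisely the Cartesian product adjacency, so $\ZTokExch_-(C_n) \cong K_{n/2} \cartProd K_{n/2}$. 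For token sliding, an edge would require swapping a vertex for a neighbor in $C_n$; but all edges of $C_n$ cross between odd and even vertices, so any slide would produce a pair of like-parity vertices, violating the parity structure required for membership in $V(\ZTokSlide_-(C_n))$. Thus $\ZTokSlide_-(C_n)$ is edgeless and isomorphic to $\frac{n^2}{4} K_1$. The principal task here is the parity-recurrence enumeration of skew forts; everything downstream is bookkeeping, though care is needed to rule out all slides in the even case via the bipartite structure of $C_n$.
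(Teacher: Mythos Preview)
Your proof is correct and, once the minimum skew forcing sets are identified, proceeds identically to the paper: both arguments observe that for odd $n$ every singleton is a minimum skew forcing set (yielding $K_n$ under exchange and $C_n$ under sliding), and for even $n$ the minimum sets are precisely the $(n/2)^2$ pairs $\{a,b\}$ with $a$ odd and $b$ even, from which the Cartesian product and edgeless structures follow by the same parity bookkeeping.

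The one genuine difference is how you justify the description of the minimum skew forcing sets. The paper simply asserts $\Z_-(C_n)$ and the form of the minimum sets as known facts, whereas you derive them from \Cref{skewfort} by enumerating all skew forts of $C_n$ via the recurrence $f(v-1)=f(v+1)$. Your route is more self-contained and makes transparent \emph{why} the parity structure governs the even case (the two forts are exactly the two parity classes), at the cost of a short extra paragraph. The paper's version is terser but leaves the reader to verify the forcing-set claims directly from the color change rule.
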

\begin{proof}
    When $n$ is odd, $\Z_-(C_n)=1$ with the minimum skew forcing sets consisting of any single vertex. Two singleton sets are connected by an edge in the token sliding graph if and only if the vertices they contain are neighbors in $C_n$. For $n$ even, $\Z_-(C_n)=2$ and there are $(\frac{n}{2})^2$ minimum skew forcing sets of the form $\{a,b\}$ with $a$ odd and $b$ even.  Distinct vertices $\{a_1,b_1\}$ and $\{a_2,b_2\}$ are connected in $\ZTokExch_-(C_n)$ if and only if $a_1=a_2$ or $b_1=b_2$, giving the desired Cartesian product. Exchanging a vertex with its neighbor results in a set consisting of either two odd vertices or two even vertices. None of these sets are skew forcing sets. Hence, there are no edges in $\ZTokSlide_-(C_n)$ when $n$ is even.
\end{proof}

\subsection{Complete Graphs}

Recall that for the complete graph $K_n$, we have $\Z_-(K_n) = n-2$. Notice that the token sliding graph is the same as the token exchange graph since all vertices are neighbors. Any subset of  size $n-2$ is a minimum skew forcing set for $K_n$.  So the vertices of $\ZTokExch_-(K_n)$ are the $(n-2)$-subsets and two vertices are adjacent exactly when their intersection is of size $n-3$.  This is the definition of the Johnson graph $J(n,n-2)$. See \cite{alggraph} for more details on Johnson graphs.

\begin{observation}
\label{SkewOfKn}
    $\ZTokExch_-(K_n) = \ZTokSlide_-(K_n) = J(n,n-2)$.
\end{observation}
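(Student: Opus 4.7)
The plan is to verify three things in sequence: (i) the vertex sets agree, (ii) the edge relation of $\ZTokExch_-(K_n)$ matches that of the Johnson graph, and (iii) the token sliding condition is vacuous in $K_n$, so $\ZTokSlide_-(K_n)$ coincides with $\ZTokExch_-(K_n)$.

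For (i), I would start from \Cref{prop:skewmultipartite}: since $K_n = K_{1,1,\dots,1}$ is complete multipartite with $s = n \geq 2$ parts, $\Z_-(K_n) = n-2$. I then need to check that every $(n-2)$-subset $B \subseteq V(K_n)$ is a skew forcing set; this is the only step that requires a tiny bit of care, since in $K_n$ no \emph{blue} vertex has a unique white neighbor when two whites remain. The key point is that the $\Zs$-CCR allows \emph{any} vertex (including a white one) to perform a force: if the two white vertices are $u$ and $v$, then $u$'s unique white neighbor is $v$, so $u \to v$ is a valid skew force; after this force only $u$ is white and any blue vertex forces $u$. Hence every $(n-2)$-subset is a minimum skew forcing set, and
\[V(\ZTokExch_-(K_n)) = V(\ZTokSlide_-(K_n)) = \binom{V(K_n)}{n-2} = V(J(n,n-2)).\]

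For (ii), two sets $S_1,S_2 \in V(\ZTokExch_-(K_n))$ are adjacent exactly when they differ by an exchange, that is, when $|S_1 \setminus S_2| = |S_2 \setminus S_1| = 1$, equivalently $|S_1 \cap S_2| = n-3$. This is precisely the adjacency relation defining $J(n,n-2)$, so $\ZTokExch_-(K_n) = J(n,n-2)$.

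For (iii), the token sliding rule adds the requirement $v_1 v_2 \in E(K_n)$ for the exchanged pair, but in $K_n$ every pair of distinct vertices is adjacent, so this requirement is automatic. Thus $E(\ZTokSlide_-(K_n)) = E(\ZTokExch_-(K_n))$, completing the equality. No real obstacle arises; the only subtlety worth flagging in the write-up is the reminder that under $\Zs$-CCR white vertices may force, which is what ensures every $(n-2)$-subset of $V(K_n)$ works as a minimum skew forcing set.
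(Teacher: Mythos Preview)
Your proposal is correct and follows essentially the same approach as the paper: the paper's justification (given in the paragraph immediately preceding the Observation) notes that $\Z_-(K_n)=n-2$, that every $(n-2)$-subset is a minimum skew forcing set, that adjacency in the token exchange graph is precisely intersection of size $n-3$, and that token sliding coincides with token exchange since all vertices of $K_n$ are neighbors. Your write-up simply adds a bit more detail, in particular the explicit skew-forcing argument showing why every $(n-2)$-subset works.
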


We now characterize graphs for which $\ZTokExch_-(G) \cong K_m$ for some $m > 1$.

\begin{lemma} \label{skewTEcomplete}
Let $G$ be a graph such that $\Z_-(G) = 1$ and there exist exactly $m$ singleton sets that are skew forcing sets for $G$. Then, $\ZTokExch_-(G)\cong K_m$.
\end{lemma}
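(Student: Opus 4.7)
The proof is essentially a direct unpacking of the definitions, so my plan is brief. The key observation is that when $\Z_-(G) = 1$, every vertex of $\ZTokExch_-(G)$ is a singleton set.

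First, I would identify the vertex set: by hypothesis, $\Z_-(G) = 1$ and there are exactly $m$ singleton skew forcing sets, so $V(\ZTokExch_-(G))$ consists of exactly these $m$ singletons, giving $|V(\ZTokExch_-(G))| = m$.

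Next, I would verify that any two distinct vertices are adjacent. Let $\{u\}, \{v\} \in V(\ZTokExch_-(G))$ with $u \neq v$. Then $\{u\} \setminus \{v\} = \{u\}$ and $\{v\} \setminus \{u\} = \{v\}$, so taking $v_1 = u$ and $v_2 = v$ in \Cref{TokReconfigDefs}, we have $\{u\} \setminus \{v_1\} = \emptyset = \{v\} \setminus \{v_2\}$. Thus $\{u\}\{v\} \in E(\ZTokExch_-(G))$. Hence every pair of distinct vertices is joined by an edge, so $\ZTokExch_-(G) \cong K_m$.

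There is no real obstacle here; the lemma simply records that under the token exchange rule, all singleton solutions are pairwise adjacent, so the reconfiguration graph on $m$ singleton skew forcing sets is necessarily complete.
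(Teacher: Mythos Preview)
Your argument is correct; the paper states this lemma without proof, treating it as immediate from the definitions, and your unpacking is precisely the routine verification one would supply.
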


\begin{proposition}
     \label{skewstar}
 Let $G = K_{1,m}$ for $m \geq 3$. Then $\ZTokExch_-(G)\cong K_m$.  
\end{proposition}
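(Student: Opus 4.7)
The plan is to identify $\Z_-(K_{1,m})$ quickly via a previously established result, enumerate the minimum skew forcing sets by case analysis, and then verify that they form a complete graph under the token-exchange relation. Let $c$ denote the center of $K_{1,m}$ and $\ell_1,\dots,\ell_m$ denote the leaves.

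First, since $K_{1,m}$ is a complete bipartite (hence complete multipartite) graph on $m+1$ vertices, \Cref{prop:skewmultipartite} yields $\Z_-(K_{1,m})=(m+1)-2=m-1$. Thus every vertex of $\ZTokExch_-(K_{1,m})$ is an $(m-1)$-subset of $V(K_{1,m})$, i.e., a subset omitting exactly two vertices. Note that Lemma \ref{skewTEcomplete} does not apply directly here, since $\Z_-(K_{1,m}) \neq 1$ when $m \geq 3$; a direct analysis is required.

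Next, I would classify the $(m-1)$-subsets $B$ according to whether or not they contain the center. If $c\in B$, then $B$ omits two leaves $\ell_i$ and $\ell_j$; the only white vertices are $\ell_i$ and $\ell_j$, each having only the blue neighbor $c$, while $c$ itself has two white neighbors. Hence no vertex has a unique white neighbor and $B$ is not a skew forcing set. If $c\notin B$, then $B=\{\ell_j:j\neq i\}$ for some $i$; any $\ell_k\in B$ has the unique white neighbor $c$, so $\ell_k\to c$ is valid, after which $c$ has the unique white neighbor $\ell_i$, so $c\to \ell_i$ completes the forcing. Hence the minimum skew forcing sets of $K_{1,m}$ are exactly the $m$ sets $B_i=\{\ell_j:j\neq i\}$ for $i=1,\dots,m$.

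Finally, for distinct $i,j\in\{1,\dots,m\}$, $B_i\setminus B_j=\{\ell_j\}$ and $B_j\setminus B_i=\{\ell_i\}$, so $B_i$ and $B_j$ differ by an exchange and are thus adjacent in $\ZTokExch_-(K_{1,m})$. Since every pair of vertices of $\ZTokExch_-(K_{1,m})$ is adjacent and there are exactly $m$ vertices, $\ZTokExch_-(K_{1,m})\cong K_m$. The main obstacle is the case $c\in B$, and the key step is observing that removing $c$ together with only $m-2$ leaves leaves a configuration in which no vertex has a unique white neighbor; the rest is bookkeeping.
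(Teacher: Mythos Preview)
Your proof is correct and takes essentially the same approach as the paper: identify the minimum skew forcing sets of $K_{1,m}$ as precisely the $(m-1)$-subsets of the leaf partite set, and observe that any two such sets differ by a single exchange. The paper's proof is a one-sentence assertion of this fact, whereas you supply the supporting case analysis (including the verification that sets containing the center are not skew forcing), so your version is simply a more detailed execution of the same argument.
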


\begin{proof}
For $G=K_{1,m}$, the minimum skew forcing sets are all sets of size $m-1$ from the $m$ vertices in the second partite set.  So we get $\ZTokExch_-(G)\cong K_m$.
\end{proof}

\begin{proposition}\label{prop:skewTEisKn}
    Let $G$ be a graph on $n$ vertices. Then $\ZTokExch_-(G) \cong K_n$ if and only if $\Z_-(G)=1$ and each singleton set is a skew forcing set of $G$.
\end{proposition}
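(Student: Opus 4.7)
The plan is to deduce the backward direction from Lemma \ref{skewTEcomplete}, then tackle the forward direction using the clique-structure result of Corollary \ref{TokExchCSubgraph}. For the backward direction, if $\Z_-(G) = 1$ and every singleton $\{v\} \subseteq V(G)$ is a skew forcing set, then the minimum skew forcing sets are precisely $\{\{v\} \colon v \in V(G)\}$, of which there are $n$, so Lemma \ref{skewTEcomplete} (applied with $m = n$) gives $\ZTokExch_-(G) \cong K_n$.

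For the forward direction, assume $\ZTokExch_-(G) \cong K_n$. The $n = 1$ case is immediate (then $G = K_1$ and $\{v\}$ is the unique minimum skew forcing set), so assume $n \geq 2$ and set $W = V(\ZTokExch_-(G))$. Since $\ZTokExch_-(G)[W]$ is complete on $n$ vertices, Corollary \ref{TokExchCSubgraph} gives two possibilities: either (a) $W = \{Y \cup \{z\} \colon z \in Z\}$, or (b) $W = \{Y \cup (Z \setminus \{z\}) \colon z \in Z\}$, with $Y = \bigcap W$ and $Z = (\bigcup W) \setminus Y$. Because $|Z| = n$, $Y \cap Z = \emptyset$, and $Y \cup Z \subseteq V(G)$, we must have $Y = \emptyset$ and $Z = V(G)$. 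In case (a), the minimum skew forcing sets are exactly the $n$ singletons of $V(G)$, yielding $\Z_-(G) = 1$ with each singleton a skew forcing set, as desired.

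The main obstacle is ruling out case (b), in which the $n$ minimum skew forcing sets would be $\{V(G) \setminus \{z\} \colon z \in V(G)\}$ and hence $\Z_-(G) = n - 1$. For $n = 2$ this family coincides with the family from case (a), so the case (a) conclusion still applies. For $n \geq 3$: if $G$ has an isolated vertex $v$, then $V(G) \setminus \{v\}$ cannot skew-force $v$ (since $v$ has no neighbor available to perform such a force), an immediate contradiction. Otherwise every component of $G$ has at least two vertices, so by the summability of the skew forcing number recorded in \Cref{TypesTable}, $\Z_-(G) = \sum_{C \in \comp(G)} \Z_-(C)$. Combining Proposition \ref{prop:skewmultipartite}, which gives $\Z_-(C) \leq |V(C)| - 2$ for components $C$ with $|V(C)| \geq 3$ and $\Z_-(K_2) = 0$ for $K_2$ components, and letting $p$ and $q$ denote the numbers of $K_2$ components and components of order at least $3$ respectively, summation yields $\Z_-(G) \leq n - 2p - 2q \leq n - 2$ (since $p + q \geq 1$), contradicting $\Z_-(G) = n - 1$ and completing the proof.
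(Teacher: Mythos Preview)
Your proof is correct and follows essentially the same approach as the paper: both directions use \Cref{skewTEcomplete} and \Cref{TokExchCSubgraph} respectively, and both arrive at $Y=\emptyset$, $Z=V(G)$, then rule out the case $\Z_-(G)=n-1$. The only difference is that the paper dispatches case (b) with the single assertion ``$\Z_-(G)\neq n-1$ for any graph,'' whereas you supply the justification explicitly via summability and \Cref{prop:skewmultipartite}; your more careful treatment (including the isolated-vertex subcase and the $n\leq 2$ edge cases) is a welcome elaboration of what the paper leaves implicit.
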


\begin{proof}
    First note that by \Cref{skewTEcomplete}, if $\Z_-(G)=1$ and each singleton set is a skew forcing set of $G$, then $\ZTokExch_-(G) \cong K_n$.  Next, suppose $\ZTokExch_-(G) \cong K_n$ and let $W=V(\ZTokExch_-(G))$. By \Cref{TokExchCSubgraph}, there exist sets $Y,Z \subseteq V(G)$ such that either $W=\{Y\cup\{z\}\colon z\in Z\}$ or $W=\{Y\cup (Z\setminus\{z\})\colon z\in Z\}$, where $Y=\bigcap W$ and $Z=(\bigcup W)\setminus Y$. Since $|W|=n$, $|Y|=0$ and $Z=V(G)$. Thus, either each minimum skew forcing set contains only a single vertex, or all sets of size $n-1$ are minimum skew forcing sets. The latter is impossible since $\Z_-(G) \neq n-1$ for any graph. Therefore, $G$ is a graph in which each singleton set is a skew forcing set.
\end{proof}

\Cref{thm:skewcycles} shows that odd cycles have the property that any singleton set is a skew forcing set. The next example is an infinite family of graphs with this property.

\begin{example}\label{trianglesTEisKn}
Given a positive integer $r>1$, let $F_r=K_1 \vee rK_2$ be the friendship graph. Then $\Z_-(F_r)=1$ and any singleton set is a skew forcing set. Label the $K_1$ vertex as $c$. If $c$ is blue, then each copy of $K_2$ may force itself blue by a white vertex force followed by a standard force since each vertex in $K_2$ is the single white neighbor of the other vertex. If one vertex $v$ that is not the center vertex is colored blue, then its neighbor $w$ has one white neighbor $c$, and hence $w$ forces $c$ to turn blue. Then the rest of the graph may be forced as before.
\end{example}

\begin{proposition}\label{prop:skewTSnotKn}
    Let $G$ be a graph on $n$ vertices. Then $\ZTokSlide_-(G) \ncong K_n$.
\end{proposition}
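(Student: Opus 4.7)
My plan is to proceed by contradiction: suppose $\ZTokSlide_-(G)\cong K_n$ for some graph $G$ on $n$ vertices. The first move is to promote this to the token exchange setting. By Observation~\ref{TS<=TE}, $\ZTokSlide_-(G) \leq \ZTokExch_-(G)$ with the same vertex set, and since $K_n$ already realizes every possible edge on its vertex set, the inclusion forces $\ZTokExch_-(G)\cong K_n$ as well.

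The second step is to invoke Proposition~\ref{prop:skewTEisKn}, which now yields $\Z_-(G)=1$ and that every singleton $\{v\}$ with $v\in V(G)$ is a minimum skew forcing set of $G$. In particular, $V(\ZTokSlide_-(G))=\{\{v\}:v\in V(G)\}$, and by the definition of the slide rule, $\{u\}\{v\}$ is an edge of $\ZTokSlide_-(G)$ exactly when $uv\in E(G)$. For the slide graph to be complete on these $n$ singletons, every pair of distinct vertices of $G$ must then be adjacent in $G$, so $G\cong K_n$.

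The final step is to derive a contradiction from $G\cong K_n$ together with $\Z_-(G)=1$. Viewing $K_n \cong K_{1,1,\dots,1}$ as complete multipartite, Proposition~\ref{prop:skewmultipartite} gives $\Z_-(K_n)=n-2$, so $n-2=1$ forces $n=3$. I expect the borderline case $n=3$ to be the main obstacle: the chain of implications rules out $n\geq 4$ cleanly, and $n\leq 2$ is trivial on size grounds, but $G\cong K_3$ actually satisfies every preceding step (its three singletons are all minimum skew forcing sets and are pairwise slide-adjacent). To push the argument through in full generality one must either build an implicit hypothesis excluding $n=3$ into the statement, or else refine the slide-graph analysis (for instance via Corollary~\ref{TokSlideCSubgraph}, whose moreover clause gives $G[Z]\cong K_n$ directly and allows both cases $W=\{\{z\}\}$ and $W=\{Z\setminus\{z\}\}$ to be dismissed together against $\Z_-(K_n)=n-2$) to isolate and separately dispatch this small case.
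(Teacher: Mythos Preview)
Your route to $G\cong K_n$ differs from the paper's. The paper applies Theorem~\ref{thm:tsclique} directly: $\ZTokSlide_-(G)\cong K_n$ gives $\omega(G)\geq n$, hence $G\cong K_n$ since $|V(G)|=n$; it then asserts that $\ZTokSlide_-(K_n)\not\cong K_n$ to obtain a contradiction. Your detour through $\ZTokExch_-(G)$ and Proposition~\ref{prop:skewTEisKn} is longer but reaches the same intermediate conclusion $G\cong K_n$, with the extra datum $\Z_-(G)=1$.

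More importantly, you are right to flag $n=3$ as a genuine obstruction, and it is one the paper's own proof does not address. For $K_3$ one has $\Z_-(K_3)=1$ (this is Theorem~\ref{thm:skewcycles} with $n=3$, or Observation~\ref{SkewOfKn} giving $J(3,1)\cong K_3$), every singleton is a minimum skew forcing set, and all three singletons are pairwise slide-adjacent, so $\ZTokSlide_-(K_3)\cong K_3$. The same issue arises trivially at $n=1$, since $\Z_-(K_1)=1$ and $\ZTokSlide_-(K_1)\cong K_1$. Thus the proposition as stated is false, and the paper's concluding assertion that ``the token sliding graph of $K_n$ is not isomorphic to $K_n$'' fails precisely at $n\in\{1,3\}$. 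Neither Corollary~\ref{TokSlideCSubgraph} nor any other refinement will dispatch $n=3$, because it is an actual counterexample; the statement requires an explicit hypothesis such as $n\geq 4$ (or $n\notin\{1,3\}$) to be correct. Your caution here is warranted and sharper than the paper's treatment.
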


\begin{proof}
    Suppose $\ZTokSlide_-(G) \cong K_n$. By \Cref{thm:tsclique}, $\omega(G) \geq n$, implying that $G\cong K_n$. However, the token sliding graph of $K_n$ is not isomorphic to $K_n$, creating a contradiction.
\end{proof}

\subsection{Disconnected skew reconfiguration graphs}

We now discuss source graphs whose {token exchange} graphs under the skew forcing rule are disconnected.  These results will be useful when we compare reconfiguration across forcing rules in \Cref{compare}.

\begin{definition}
    For $n\geq 4$ and $2\leq k\leq\frac{n}{2}$, the  \emph{$k$-th complete basket graph} on $n+2$ vertices, denoted $KB(n,k)$, is the graph with $K_n$ as an induced subgraph consisting of $\{v_1,\ldots, v_n\}$ and two more vertices $h_1$, $h_2$ with $h_1$ adjacent to each of $\{v_1,\ldots, v_k,h_2\}$ and $h_2$ adjacent to $\{v_{k+1}, \ldots, v_n,h_1\}$.
\end{definition}

\begin{figure}
    \centering
\begin{tikzpicture}[graph styles 1,scale=0.65]
\node["$h_1$",vertex] (h1) at (-2.6,3.6) {};
\node["$h_2$",vertex] (h2) at (2.6,3.6) {} edge (h1);
\graph [clockwise,phase=238,radius=13mm] {
subgraph K_n [V={1,...,7},/tikz/every node/.style={node font=\footnotesize,circle,draw,inner sep=1pt},typeset={$v_\tikzgraphnodename$}];
\foreach \x in {1,...,3} { \x --[looseness/.evaluated={1.55-0.3*\x-0.02*\x*\x},bend left/.evaluated={66-20*\x+\x*\x}] (h1) };
\foreach \x in {4,...,7} { \x --[looseness/.evaluated={0.25+0.04*\x+0.017*\x*\x},bend right/.evaluated={\x*\x+7*\x-35}] (h2) };
};
\end{tikzpicture}
\caption{The graph $KB(7,3)$}
\end{figure}
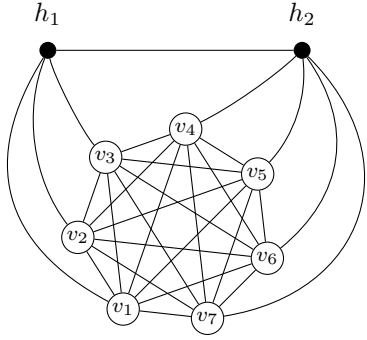

\begin{proposition}

The token exchange graph  $\ZTokExch_-(KB(n,k))$ has two connected components.
\end{proposition}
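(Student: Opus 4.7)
The plan is to define a $\{+,-\}$-valued invariant $\sigma$ on the minimum skew forcing sets of $KB(n,k)$, show it is preserved under token exchange (hence at least two components), and verify each $\sigma$-class is internally connected (hence exactly two). Throughout, set $A := \{v_1, \ldots, v_k, h_2\}$ and $A' := V(KB(n,k)) \setminus A = \{v_{k+1}, \ldots, v_n, h_1\}$.

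I would first establish $\Z_-(KB(n,k)) = n-2$. Since $v_1 \not\sim h_2$ while $v_1 \sim h_1 \sim h_2$, non-adjacency is not transitive in $KB(n,k)$, so $KB(n,k)$ is not complete multipartite; \Cref{prop:skewmultipartite} then gives $\Z_-(KB(n,k)) \leq n-2$. For the reverse inequality, $B_0 := \{v_1, \ldots, v_{n-2}\}$ is skew forcing (the force $h_1 \to h_2$ is available since $\{v_1, \ldots, v_k\} \subseteq B_0$, after which the remaining whites are forced as in $K_n$), and a fort-avoidance check rules out $|B| \leq n-3$.

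For any minimum skew forcing set $B$, let $W := V(KB(n,k)) \setminus B$, so $|W| = 4$. The central lemma is $|W \cap A| \neq 2$: assuming $|W \cap A| = |W \cap A'| = 2$ and splitting on which of $h_1, h_2$ lie in $W$, $W$ in each case contains one of the minimal skew forts $\{h_1, h_2, v_i, v_j\}$, $\{h_1, v_{i_1}, v_{i_2}, v_j\}$, $\{h_2, v_i, v_{j_1}, v_{j_2}\}$, or $\{v_{i_1}, v_{i_2}, v_{j_1}, v_{j_2}\}$ (with $v_{i_\star} \in \{v_1,\dots,v_k\}$, $v_{j_\star} \in \{v_{k+1},\dots,v_n\}$), contradicting the fact that $B$ is forcing. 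Thus $\sigma(B) := \mathrm{sign}(|W \cap A| - 2) \in \{+, -\}$ is well-defined, and because each token exchange changes $|W \cap A|$ by at most one, $\sigma$ is constant on each connected component of $\ZTokExch_-(KB(n,k))$. Both values are realized: $B_- := \{v_1,\ldots,v_{n-2}\}$ has $|W \cap A| = 1$ and $B_+ := V(K_n) \setminus \{v_1, v_2\}$ has $|W \cap A| = 3$.

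To obtain exactly two components, it remains to show each $\sigma$-class is connected. A further fort-avoidance step shows that in the $\sigma = -$ class every complement has the form $W = \{x, h_1, v_{j_1}, v_{j_2}\}$ with $x \in A$ and $\{v_{j_1}, v_{j_2}\} \subseteq \{v_{k+1},\dots,v_n\}$ (otherwise $h_1 \notin W$ and $|W \cap \{v_{k+1},\dots,v_n\}| \geq 3$ produces the side-$2$ fort $\{v_{j_1}, v_{j_2}, v_{j_3}\} \subseteq W$), and symmetrically in the $\sigma = +$ class $W = \{h_2, v_{i_1}, v_{i_2}, y\}$ with $\{v_{i_1}, v_{i_2}\} \subseteq \{v_1,\dots,v_k\}$ and $y \in A'$. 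In either class, the induced subgraph of $\ZTokExch_-(KB(n,k))$ is isomorphic to a Cartesian product of a complete graph and a Johnson graph, hence connected. The main obstacle is this second fort-avoidance step, requiring case-by-case verification for each allowed profile of $W$; once that is done, the product structure makes connectivity immediate.
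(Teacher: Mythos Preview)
Your argument is correct and follows the same route as the paper's: split the minimum skew forcing sets into two families (your $\sigma=\pm$ classes coincide exactly with the paper's two displayed forms for $B$) and observe that a single exchange cannot pass between them. The paper's proof merely asserts the characterization and the value $\Z_-(KB(n,k))=n-2$ without justification; your fort analysis for $|W\cap A|\neq 2$ and the explicit product structure $K_{k+1}\cartProd J(n-k,2)$ (and its mirror on the other side) supply the missing verification and the internal connectivity that the paper leaves implicit. One small slip: in your ``otherwise'' clause for the $\sigma=-$ class, the subcase $|W\cap A|=0$ with $h_1\in W$ (namely $W=\{h_1,v_{j_1},v_{j_2},v_{j_3}\}$) also arises, so $h_1\notin W$ is not guaranteed---but the essential conclusion $|W\cap\{v_{k+1},\dots,v_n\}|\geq 3$ still holds there, and the argument goes through unchanged.
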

\begin{proof}
Each minimum skew forcing set must contain either $h_1$ or $h_2${, but} never both.  The skew forcing number is $n-2$.  The minimum skew zero forcing sets for $KB(n,k)$ are exactly the subsets of $V(KB(n,k))$ of the form 
\begin{equation*}\label{KB1}
\{a_1,\ldots, a_k,b_1,\ldots, b_{n-k-2}\}
\end{equation*}
where $a_i \in N(h_1)$ and $b_i \in N(h_2)\setminus\{h_1\},$ or
\begin{equation*}\label{KB2}
\{a_1,\ldots, a_{k-2},b_1,\ldots, b_{n-k}\} 
\end{equation*}
where $a_i \in N(h_1)\setminus\{h_2\}$ and $b_i \in N(h_2)$.
\end{proof}

It is noteworthy that the smallest complete basket graph, $KB(4,2)$ (shown in \Cref{TableGraphs}), is actually the smallest graph with a disconnected skew forcing token exchange graph.

\subsection{Trees}\label{sec:SkTrees}

In this section we use the connections between skew forcing and matchings to determine the structure of token exchange graphs for trees. The leaf-stripping algorithm applied to the tree $T$ outputs a graph $\widehat{T}$ which, by \Cref{LeafStripping:TreeToPK_1}, is either empty or a graph with no edges. The set $V(\widehat{T})$ is a skew forcing set for $\widehat{T}$ and also for $T$. Moreover, the graph $T-\widehat{T}$ has a perfect matching (note that a forest cannot have more than one perfect matching, so it is unique). This is the crux of the proof of the following {proposition}.

\begin{proposition}[\cite{dealba2014some}]\label{perfect_matching}  If $T$ is a tree and $B\subseteq V(T)$, then $B$ is a minimum skew forcing set for $T$ if and only if $T-B$ has a unique perfect matching, which is a maximum matching for $T$.
\end{proposition}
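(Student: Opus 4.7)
The plan is to leverage two foundational observations. First, a forest has at most one perfect matching: if $M_1 \neq M_2$ were both perfect, their symmetric difference $M_1 \symmDiff M_2$ would be a disjoint union of alternating cycles and paths, but the forest structure excludes cycles and any alternating path would have unmatched endpoints, contradicting both matchings being perfect. Second, iterated applications of \Cref{thm:leafstrip} together with \Cref{LeafStripping:TreeToPK_1} yield the identity $\Z_-(T) = |V(T)| - 2\mu(T)$, so for any $B$ with $|B| = \Z_-(T)$ we have $|V(T - B)| = 2\mu(T)$; hence any perfect matching of $T - B$ automatically has size $\mu(T)$ and is a maximum matching of $T$. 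These reduce the proposition to the equivalence ``$B$ is a minimum skew forcing set iff $T - B$ has a perfect matching.''

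For the $(\Leftarrow)$ direction, assume $M$ is a perfect matching of $T - B$, so $|B| = \Z_-(T)$ for free. By \Cref{skewfort} it suffices to show $B$ meets every skew fort, so assume for contradiction a nonempty skew fort $F^* \subseteq V(T) \setminus B$. The skew fort condition forces every vertex of $T[F^*]$ to have degree $\neq 1$; but $T[F^*]$ is a subforest of $T$, so it must be edgeless, i.e., $F^*$ is an independent set of $T$. Pick any $v_0 \in F^*$ with $M$-partner $v_0^*$; independence forces $v_0^* \notin F^*$. The skew fort condition at $v_0^*$, combined with $v_0 \in N_T(v_0^*) \cap F^*$, gives $|N_T(v_0^*) \cap F^*| \geq 2$, yielding $v_1 \in F^* \cap N_T(v_0^*)$ with $v_1 \neq v_0$. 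Iterating produces an infinite $M$-alternating walk $v_0 - v_0^* - v_1 - v_1^* - v_2 - v_2^* - \cdots$ whose vertices are all distinct (any repetition would close a cycle in the tree $T$), contradicting the finiteness of $T$.

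For the $(\Rightarrow)$ direction, I would proceed by strong induction on $|V(T)|$ (extended to forests componentwise), with the trivial base cases. Given a leaf $u$ of $T$ with neighbor $v$, first establish $v \notin B$: if $v \in B$, then from $B \setminus \{v\}$ the force $u \to v$ is valid (since $v$ is $u$'s unique white neighbor, irrespective of whether $u$ itself is blue or white), reaching blue-set $B$ from which the original process completes; this contradicts minimality. Next, assuming a leaf $u$ with $u \notin B$ can be chosen, set $T' = T - \{u, v\}$; then $\Z_-(T') = \Z_-(T) = |B|$ by \Cref{thm:leafstrip}. A skew-fort argument then shows $B$ remains a minimum skew forcing set of $T'$: any nonempty skew fort $F^*$ of $T'$ disjoint from $B$ either is itself a skew fort of $T$ disjoint from $B$ (when $N_T(v) \cap F^* = \emptyset$) or extends to $F^* \cup \{u\}$, a skew fort of $T$ disjoint from $B$ (when $N_T(v) \cap F^* \neq \emptyset$), in each case contradicting $B$'s skew forcing of $T$. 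By induction, $T' - B$ has a perfect matching $M'$ that is maximum in $T'$; setting $M = M' \cup \{uv\}$ yields a perfect matching of $T - B$ of size $\mu(T') + 1 = \mu(T)$.

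The main obstacle is the subclaim that some leaf of $T$ always lies outside $B$ for any minimum skew forcing set, so that the inductive reduction can actually be performed. If every leaf of $T$ were in $B$, one would have to show that removing some leaf $u \in B$ (with neighbor $v \notin B$ by the first step) yields a still-skew-forcing $B \setminus \{u\}$, producing a smaller skew forcing set and contradicting minimality. The delicate point is that the forcing process from $B \setminus \{u\}$ first performs $u \to v$ and then must complete on $T - u$ starting from $B \setminus \{u\} \cup \{v\}$; one must verify this completion via a careful analysis of the forces remaining in the $B$-process, or alternatively give a direct skew-fort proof that the ``all-leaves-in-$B$'' configuration cannot arise for a minimum $B$. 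This is the step I expect to require the most care.
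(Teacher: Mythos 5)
Your reverse direction is complete and correct, and it takes a genuinely different route from the paper, which cites this proposition to de Alba et al.\ and only sketches the leaf-stripping construction (which exhibits \emph{one} minimum skew forcing set whose complement is perfectly matched, rather than characterizing all of them). The fort-plus-alternating-walk argument is sound: a skew fort avoiding $B$ induces an edgeless subgraph of the tree, and the matching of $T-B$ then generates a non-backtracking, hence simple, infinite walk. Two smaller caveats. First, the ``reduction'' as you state it (``minimum skew forcing set iff $T-B$ has a perfect matching'') is not literally an equivalence --- $B=V(T)$ has $T-B$ vacuously perfectly matched --- so the hypothesis that the matching is maximum in $T$ must be carried along, as your subsequent argument in fact does. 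Second, the identity $\Z_-(T)=|V(T)|-2\match(T)$ is the paper's \Cref{ZMinusVsMatchingNumber}, which the paper derives \emph{from} this proposition; your independent derivation from \Cref{thm:leafstrip} and \Cref{LeafStripping:TreeToPK_1} additionally needs the standard fact that leaf-stripping produces a maximum matching (i.e., $\match(T)=1+\match(T-u-v)$ for a leaf $u$ with neighbor $v$), which you should prove or cite to avoid circularity.

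The genuine gap is the one you flag yourself: the forward induction needs a leaf $u\notin B$, and you only establish that every leaf's \emph{neighbor} avoids $B$. This is not a routine verification, and the obvious repairs fail. If all leaves lay in $B$ and $u\in B$ is a leaf with neighbor $v\notin B$, the blue set $(B\setminus\{u\})\cup\{v\}$ reached after the free force $u\to v$ is not the set the original process started from, and $v$'s later forces can be blocked by the still-white $u$; on the fort side, a skew fort $F$ with $F\cap B=\{u\}$ need not yield a fort $F\setminus\{u\}$, since $v$ may have exactly one other neighbor in $F$, so you cannot conclude that $B\setminus\{u\}$ misses a fort of $T$. The subclaim you need --- no minimum skew forcing set of a tree with an edge contains every leaf --- is true, but every proof I can see either already uses the matching characterization you are trying to establish, or requires a separate structural argument about maximum matchings of trees (e.g., via the Gallai--Edmonds decomposition, an edge count on each component of the bipartite graph between the exposed vertices and their neighbours shows the leaves of $T$ there strictly outnumber the exposed vertices, so some leaf is always saturated). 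Until that subclaim is proved without circularity, the forward direction is incomplete.
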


\begin{corollary}\label{ZMinusVsMatchingNumber}
If $T$ is a tree, then $\Z_-(T)=V(T)-2\match(T)$.
\end{corollary}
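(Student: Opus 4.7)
The plan is to derive this directly from \Cref{perfect_matching} by an edge-counting argument. Let $T$ be a tree and let $B$ be a minimum skew forcing set of $T$, so $|B|=\Z_-(T)$. By \Cref{perfect_matching}, $T-B$ has a (unique) perfect matching $M$ which is a maximum matching of $T$.

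Now I would count $|M|$ in two ways. On the one hand, since $M$ is a perfect matching of $T-B$, every vertex of $T-B$ is saturated by exactly one edge of $M$, and therefore
\[
|M| = \frac{|V(T-B)|}{2} = \frac{|V(T)|-|B|}{2} = \frac{|V(T)|-\Z_-(T)}{2}.
\]
On the other hand, $M$ is a maximum matching of $T$, so $|M|=\match(T)$. Equating these two expressions and solving for $\Z_-(T)$ gives the desired equality $\Z_-(T)=|V(T)|-2\match(T)$.

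There is no real obstacle here: the work has been done in \Cref{perfect_matching}, and the corollary is just the numerical consequence. The only thing worth flagging is that the equation implicitly uses that $|V(T)|-\Z_-(T)$ is even, which is automatic from the existence of the perfect matching on $T-B$.
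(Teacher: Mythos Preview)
Your argument is correct and is exactly the intended one: the paper states this as an immediate corollary of \Cref{perfect_matching} without an explicit proof, and your edge-counting derivation is the natural way to extract the numerical identity from that proposition.
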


Applying the leaf-stripping algorithm to a tree $T$ involves choices of the order of the leaves processed. Unless the tree has a perfect matching, different choices result in different minimum skew forcing sets. To reduce the complexity of identifying minimum skew forcing sets, we define the following subgraph.  

\begin{definition}
The \emph{skew-nontrivial subgraph} $\widecheck{G}$ of a graph $G$ is the graph obtained from $G$ in the following manner:

\begin{itemize}
    \item Apply the skew forcing rule on $G$ until no further forces can be performed, with the initial set being the empty set.
    \item If a blue vertex $v$ has only blue vertices as its neighbors, delete it.
    \item Also, delete each edge that has only blue vertices as endpoints.
    \item The resulting subgraph is $\widecheck{G}$.
\end{itemize}      
\end{definition}

The result is a (possibly disconnected) subgraph that determines the token exchange graph of $G$.  Note that the skew-nontrivial subgraph of a tree is equivalent to the core of a tree defined in \cite{hp67}.  If $\widecheck{G} = G$, we say that $G$ is skew-nontrivial.

Skew-nontrivial subgraphs have implications for the reconfiguration of graphs in general, as the next result shows.  However, they are especially useful for analyzing reconfiguration graphs of trees, which will be their primary use in this article.

\begin{theorem}\label{thm:FundThmOfSkewNontrivial}
For any graph $G$, $\ZTokExch_-(\widecheck{G})=\ZTokExch_-(G)$ and $\ZTokSlide_-(\widecheck{G})=\ZTokSlide_-(G)$.
\end{theorem}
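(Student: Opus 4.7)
The plan is to show that the crucial invariants — the collection of skew forts and the set of minimum skew forcing sets — are preserved when passing from $G$ to $\widecheck{G}$, and then to check that the edge conditions for both token exchange and token sliding are respected. Write $W = W_-^\emptyset(G)$ for the ``uncolored'' set and $U = V(G)\setminus V(\widecheck{G})$ for the deleted blue vertices (those blue vertices whose $G$-neighbors are all blue). The construction of $\widecheck{G}$ gives, directly, the neighborhood descriptions $N_{\widecheck{G}}(v)=N_G(v)$ for $v\in W$ (no neighbor of a white vertex can lie in $U$, since $U$-vertices have only blue neighbors; no incident edge is deleted, since it has a white endpoint) and $N_{\widecheck{G}}(v)=N_G(v)\cap W$ for $v\in B\cap V(\widecheck{G})$.

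First I would prove that the skew forts of $G$ and the skew forts of $\widecheck{G}$ are exactly the same family. By the proposition stated just before the observation, $\bigcup\mathbb{F}=W$ where $\mathbb{F}$ is the collection of all skew forts of $G$, so every skew fort of $G$ is a subset of $W\subseteq V(\widecheck{G})$. For any $F\subseteq W$ I would then compare, for each $v\in V(G)$, the quantity $|N_G(v)\cap F|$ to the corresponding quantity in $\widecheck{G}$: for $v\in W$ the two coincide by the first neighborhood identity; for $v\in B\cap V(\widecheck{G})$ the values agree because $F\subseteq W$ forces $N_G(v)\cap F=N_G(v)\cap W\cap F=N_{\widecheck{G}}(v)\cap F$; and for $v\in U$ we have $N_G(v)\cap F\subseteq N_G(v)\cap W=\emptyset$, so no additional constraint is imposed. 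Thus $F$ satisfies the fort condition in $G$ if and only if it satisfies it in $\widecheck{G}$, and the fort collections agree.

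Next I would use \Cref{skewfort}: a set is a skew forcing set of either graph exactly when it meets every member of this common fort collection. Combining with the observation that minimum skew forcing sets of $G$ lie in $W_-^\emptyset(G)=W$, and applying the same observation to $\widecheck{G}$ (together with the fort-identification above and the proposition $\bigcup\mathbb{F}(\widecheck{G})=W_-^\emptyset(\widecheck{G})$), we conclude that every minimum skew forcing set of $\widecheck{G}$ also lies in $W$. Since the ``meets every fort'' condition depends only on the intersection with $W$, the minimum skew forcing sets of $G$ and of $\widecheck{G}$ coincide as subsets of $V(G)$, so $\Z_-(G)=\Z_-(\widecheck{G})$ and $V(\ZTokExch_-(G))=V(\ZTokExch_-(\widecheck{G}))=V(\ZTokSlide_-(G))=V(\ZTokSlide_-(\widecheck{G}))$.

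Finally, I would check edges. Token exchange edges are defined purely set-theoretically on the common vertex set, so $E(\ZTokExch_-(G))=E(\ZTokExch_-(\widecheck{G}))$ automatically. For token sliding, an edge requires in addition that the swapped pair $v_1,v_2\in W$ satisfies $v_1v_2\in E(G)$ (respectively $E(\widecheck{G})$); but an edge incident to two white vertices is never deleted in forming $\widecheck{G}$, so $E(G)\cap\binom{W}{2}=E(\widecheck{G})\cap\binom{W}{2}$ and the slide edges coincide as well. The only genuinely delicate step is the fort case analysis in paragraph two, specifically checking that shrinking the neighborhood of a $B\cap V(\widecheck{G})$ vertex from $N_G(v)$ down to $N_G(v)\cap W$ does not alter the condition $|N(v)\cap F|\neq 1$ when $F\subseteq W$ — which is where the restriction to forts contained in $W$ is essential.
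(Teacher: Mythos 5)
Your fort-based route is genuinely different from the paper's (which argues directly that the deleted vertices and edges ``play no role'' in any skew forcing process), and most of it is fine: the neighborhood identities, the verification that a set $F\subseteq W$ is a skew fort of $G$ if and only if it is a skew fort of $\widecheck{G}$, and the edge analysis for both reconfiguration rules are all correct. The gap is in your third paragraph, where you claim the two fort collections coincide. What your second paragraph actually delivers, combined with $\bigcup\mathbb{F}(G)=W^{\emptyset}_-(G)=W$, is that every skew fort of $G$ is a skew fort of $\widecheck{G}$, hence $W\subseteq W^{\emptyset}_-(\widecheck{G})$. To get equality of the fort collections you need the \emph{reverse} containment $W^{\emptyset}_-(\widecheck{G})\subseteq W$ --- equivalently, that every retained blue vertex $v\in B^{\emptyset}_-(G)\cap V(\widecheck{G})$ is still forced from $\emptyset$ in $\widecheck{G}$ --- and the ingredients you cite (the fort identification plus $\bigcup\mathbb{F}(\widecheck{G})=W^{\emptyset}_-(\widecheck{G})$) only reproduce the containment you already have. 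Without it, $\widecheck{G}$ could a priori possess a new skew fort meeting $B^{\emptyset}_-(G)\setminus U$; every skew forcing set of $\widecheck{G}$ would have to hit that fort, so your argument as written only yields that skew forcing sets of $\widecheck{G}$ are skew forcing sets of $G$ (i.e.\ $\Z_-(G)\le\Z_-(\widecheck{G})$ and one inclusion of vertex sets), not the claimed equality.

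The missing containment is true, but it needs a separate idea, and this --- not the case analysis you flag at the end --- is where the real delicacy lies. One way to close it: in any chronological list of skew forces for $\emptyset$ on $G$, a vertex that performs a force must have \emph{all} of its neighbors in $B^{\emptyset}_-(G)$, so the deleted vertices (blue with all neighbors blue) are only ever forced by other deleted vertices; since each vertex performs at most one force in a skew forcing process, the map sending each forced vertex to its forcer is injective and therefore restricts to a bijection of the deleted set onto itself, whence every retained blue vertex is in fact forced by a \emph{white} vertex $w$ with $N_G(w)$ contained in the retained blue set. Such forces survive verbatim in $\widecheck{G}$, giving $B^{\emptyset}_-(G)\cap V(\widecheck{G})\subseteq\cl_-(\widecheck{G},\emptyset)$ and hence $W^{\emptyset}_-(\widecheck{G})=W$; with that in hand the rest of your proof goes through. (The paper's own proof sidesteps the fort machinery entirely by making this ``plays no role'' claim about the forcing process its central assertion.)
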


\begin{proof}
Let $B^{\emptyset}_-(G)$ be the set of vertices colored blue by skew forcing with the empty set.  Note that if a vertex $v \in B^{\emptyset}_-(G)$ has all blue neighbors, it plays no role in the rest of the skew forcing process. Also, an edge for which both incident vertices are in $B^{\emptyset}_-(G)$ does not affect the remainder of the skew forcing process. Therefore, $S$ is a minimum skew forcing set for $G$ if and only if $S$ is a minimum skew forcing set for the skew-nontrivial subgraph $\widecheck{G}$.  Further, the only edges removed to obtain the skew-nontrivial subgraph are between vertices that are not part of any minimum skew forcing set of $G$, so their removal cannot affect any potential token sliding.  Thus, $\ZTokExch_-(\widecheck{G})=\ZTokExch_-(G)$ and $\ZTokSlide_-(\widecheck{G})=\ZTokSlide_-(G)$.
\end{proof}

\begin{corollary}
\label{nontrivtree}
Given a graph $G$ with $\Zs(G) \geq 1$, let $\widecheck{G} = \bigsqcup_{i=1}^k S_i$, where the $S_i$ are the components of the skew-nontrivial subgraph of $G$. Then 
\[ \ZTokExch_-(G) \cong \bigbox_{i=1}^k \ZTokExch_-(S_i). \]    
\end{corollary}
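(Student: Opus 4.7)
The plan is to combine two results already established in the paper: \Cref{thm:FundThmOfSkewNontrivial} and \Cref{disjoint-gives-box}. First I would invoke \Cref{thm:FundThmOfSkewNontrivial} to replace the source graph $G$ by its skew-nontrivial subgraph without altering the token exchange reconfiguration graph, writing
\[\ZTokExch_-(G) = \ZTokExch_-(\widecheck{G}).\]
This step is free, since the minimum skew forcing sets of $G$ and $\widecheck{G}$ coincide (and the edge sets in the reconfiguration graphs match).

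Next, I would apply \Cref{disjoint-gives-box} to the disjoint decomposition $\widecheck{G} = \bigsqcup_{i=1}^k S_i$. For that to be legitimate, I need the skew forcing number $\Z_-$ to be a summable vertex parameter in the sense of \Cref{def:SummableDefinition}. This is recorded in \Cref{TypesTable} (and, as the narrative indicates, is verified in the appendix referenced as \Cref{SkewIsSummable}): a set $B\subseteq V(H)$ is a minimum skew forcing set of a disconnected graph $H$ if and only if it is a disjoint union of minimum skew forcing sets of the components of $H$. With summability in hand, \Cref{disjoint-gives-box} yields
\[\ZTokExch_-(\widecheck{G}) \cong \bigbox_{i=1}^{k}\ZTokExch_-(S_i),\]
and chaining this with the previous equality gives the corollary.

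The hypothesis $\Z_-(G)\geq 1$ is only needed to ensure that $\widecheck{G}$ has at least one component, so that the indexed Cartesian product is well-defined; it plays no deeper role. I do not anticipate a genuine obstacle here: the whole argument is essentially a two-line chain of citations, and the only content is verifying that the summability hypothesis of \Cref{disjoint-gives-box} is satisfied for $\Z_-$, which is already part of the paper's bookkeeping.
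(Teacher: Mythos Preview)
Your proposal is correct and matches the paper's own proof, which simply applies \Cref{disjoint-gives-box} to the components of $\widecheck{G}$ (with the identification $\ZTokExch_-(G)=\ZTokExch_-(\widecheck{G})$ from \Cref{thm:FundThmOfSkewNontrivial} taken as implicit). Your explicit invocation of summability via \Cref{SkewIsSummable} and your remark about the role of the hypothesis $\Z_-(G)\geq 1$ are helpful clarifications that the paper leaves tacit.
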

\begin{proof}
 By applying \Cref{disjoint-gives-box} to the components $S_1, S_2, \dots, S_k$ of $\widecheck{G}$, the skew token exchange graph has the given Cartesian product form.
\end{proof}

\begin{lemma}\label{graphnontriv}
Suppose $G$ is a bipartite graph with a bipartition $\{U_1,U_2\}$ such that every leaf of $G$ is contained in $U_1$.  Then $G$ is skew-nontrivial. 
\end{lemma}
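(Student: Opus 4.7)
The plan is to verify that the skew-nontrivial subgraph $\widecheck{G}$ coincides with $G$, i.e., that neither deletion rule in the construction of $\widecheck{G}$ triggers. The key step is to show that the entire skew closure of the empty set lies in $U_2$; that is, $\Bnull(G) \subseteq U_2$. I would prove this by induction on the time-step of the skew forcing process starting from $\emptyset$. At time-step $0$ no vertex is blue, so the blue set is vacuously contained in $U_2$. Assume that after time-step $k$ every blue vertex lies in $U_2$, and suppose $u \to v$ is a valid skew force performed at time-step $k+1$. If $u \in U_2$, then every neighbor of $u$ lies in $U_1$ and is still white by the inductive hypothesis; the existence of a unique white neighbor of $u$ would then force $\deg(u) = 1$, making $u$ a leaf residing in $U_2$, which contradicts the hypothesis on the bipartition. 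Hence $u \in U_1$, and consequently $v \in U_2$.

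With $\Bnull(G) \subseteq U_2$ in hand, both deletion conditions in the construction of $\widecheck{G}$ fail. Since $G$ is bipartite, $U_2$ is independent, so no edge can have both endpoints in $\Bnull(G)$ and thus no edges are removed. Moreover, any blue vertex $v$ lies in $U_2$, so all of its neighbors lie in $U_1$ and are therefore white; since $v$ was forced blue by some vertex of $U_1$, $v$ has at least one neighbor, and hence $v$ has a white neighbor and is not deleted. Therefore $\widecheck{G} = G$, which is precisely the statement that $G$ is skew-nontrivial.

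The only subtle step is the inductive argument excluding forcers in $U_2$; everything else follows directly from the bipartite structure and the fact that a forced vertex must have at least one neighbor (its forcer). I do not foresee any hidden obstacles beyond setting up the induction cleanly.
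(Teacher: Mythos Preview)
Your proposal is correct and follows essentially the same approach as the paper: both argue by induction on the time-step that $\Bnull(G)\subseteq U_2$ (the paper phrases this as ``no vertex in $U_1$ becomes blue,'' but the case analysis on the location of the forcer is the same), and then observe that the bipartite structure together with the fact that forced vertices have at least one neighbor prevents either deletion rule from triggering.
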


\begin{proof}

Following the skew color change rule, let $\emptyset$ be chosen to be the initial set of blue vertices and assume that after time-step $k \geq 0$, no vertex in $U_1$ is blue.  Note that since no vertex in $U_2$ is a leaf and vertices in $U_2$ are only adjacent to vertices in $U_1$, each member of $U_2$ that is not an isolated vertex has more than one white neighbor. During time-step $k+1$, since no vertices in $U_1$ are adjacent to each other and every vertex in $U_2$ has more than one white neighbor, no vertices in $U_1$ will become blue.  Continuing in this manner, when no further forces are possible, vertices in $U_2$ may be blue, but every vertex in $U_1$ will be white.  Since no isolated vertex can be forced and no vertices in $U_2$ are adjacent to each other, no vertices or edges would be deleted in the construction of $\widecheck{G}$  and $G$ is skew-nontrivial.
\end{proof}

\begin{proposition}\label{whitedge}
Let $T$ be a tree.
\begin{enumerate}
    \item $T[W^{\emptyset}_-(T)]$ is either empty or a graph with no edges.
    \item $T$ is skew-nontrivial if and only if the coloring generated by applying the skew forcing rule to the empty set is a proper $2$-coloring, that is, $\{B_-^{\emptyset}(T), W^{\emptyset}_-(T)\}$ is a bipartition of $T$.
\end{enumerate}
\end{proposition}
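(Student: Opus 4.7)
The plan is to prove part (1) directly by contradiction using the fact that an induced subgraph of a tree is a forest, and then derive part (2) by combining (1) with a straightforward unpacking of the definition of $\widecheck{T}$.

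For (1), I would argue by contradiction. Suppose $T[W^{\emptyset}_-(T)]$ contains an edge. Since $T$ is a tree, this induced subgraph is itself a forest, and any nonempty forest with at least one edge must contain a vertex of degree exactly one that is an endpoint of some edge. Let $u$ be such a leaf of $T[W^{\emptyset}_-(T)]$ and let $v$ be its unique neighbor there. By the definition of the induced subgraph, every other neighbor of $u$ in $T$ lies in $B^{\emptyset}_-(T)$, so $v$ is in fact the \emph{unique} white neighbor of $u$ in $T$ at the end of the skew forcing process. But then the skew color change rule would allow the force $u\to v$, contradicting the fact that $B^{\emptyset}_-(T)=\cl_-(T,\emptyset)$ is closed under all valid skew forces. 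Hence $T[W^{\emptyset}_-(T)]$ must be edgeless.

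For (2), I would use (1) to observe that $W^{\emptyset}_-(T)$ is always an independent set in $T$, so the partition $\{B^{\emptyset}_-(T), W^{\emptyset}_-(T)\}$ is a proper $2$-coloring (i.e., a bipartition) precisely when $B^{\emptyset}_-(T)$ is also independent. I would then unpack the construction of $\widecheck{T}$: the only edges removed are those whose endpoints are both blue, and the only vertices removed are blue vertices whose entire neighborhood in $T$ is blue. Thus $T=\widecheck{T}$ iff (a) $B^{\emptyset}_-(T)$ is independent and (b) no blue vertex has a fully blue neighborhood. In a tree on at least two vertices there are no isolated vertices, so whenever (a) holds each blue vertex has at least one (necessarily white) neighbor and (b) follows automatically; the degenerate case $T\cong K_1$ is immediate. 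Therefore $T$ is skew-nontrivial iff $B^{\emptyset}_-(T)$ is independent, which, together with (1), is exactly the condition that $\{B^{\emptyset}_-(T), W^{\emptyset}_-(T)\}$ is a bipartition.

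The only real obstacle is the leaf argument in (1); once one sees that an edge inside $W^{\emptyset}_-(T)$ forces the induced forest to have a degree-one vertex, the contradiction with the termination of the skew closure is immediate. The verification for (2) is then routine bookkeeping on the deletion rules defining $\widecheck{T}$, with a minor sanity check for the small-tree cases.
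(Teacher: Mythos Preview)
Your proposal is correct and matches the paper's proof almost exactly: part~(1) is the same leaf-in-a-forest contradiction, and part~(2) is the same reduction to independence of $B^{\emptyset}_-(T)$ via the deletion rules for $\widecheck{T}$. If anything, your treatment of the converse in~(2) is more careful than the paper's, which simply asserts that a proper $2$-coloring implies $T=\widecheck{T}$ without explicitly checking that condition~(a) forces condition~(b).
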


\begin{proof}
We proceed by contradiction: assume that $E(T[W^{\emptyset}_-(T)])\neq\emptyset$.  $T[W^{\emptyset}_-(T)]\leq T$, so $T[W^{\emptyset}_-(T)]$ is a forest.  Since $E(T[W^{\emptyset}_-(T)])\neq\emptyset$, $T[W^{\emptyset}_-(T)]$ must contain a leaf $u$ with neighbor $v$.  However, then $u\to v$ is a valid skew force for $B_-^\emptyset(T)$, contradicting the definition of $B_-^\emptyset(T)$.

Now, if $T$ is skew-nontrivial, then after applying the skew forcing rule to the empty set, no vertices in $B_-^{\emptyset}(T)$ are adjacent to each other.  By the first part, no vertices in $W^{\emptyset}_-(T)$ are adjacent each other, and thus $\{B_-^{\emptyset}(T),W^{\emptyset}_-(T)\}$ is a bipartition of $T$.  Conversely, if applying the skew forcing rule to the empty set generates a proper 2-coloring of $T$, then $T=\widecheck{T}$.
\end{proof}

\begin{lemma}
\label{BnullDegree}
Let $T$ be a skew-nontrivial tree, and let $u\in\Bnull(T)$.  Then $\deg(u)\geq 2$.
\end{lemma}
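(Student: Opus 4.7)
The plan is to derive a contradiction by assuming $\deg(u) \leq 1$ and exploiting the bipartition structure established in the preceding proposition.

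First, I would rule out $\deg(u) = 0$. If $u$ were isolated, no vertex would have $u$ as a neighbor, so no valid skew force could ever turn $u$ blue during the process starting from $\emptyset$. Hence $u \notin \Bnull(T)$, contradicting our hypothesis. (Alternatively, since $T$ is a tree, this situation only arises for $T \cong K_1$, in which case $\Bnull(T) = \emptyset$ and the statement is vacuous.)

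Next, I would suppose for contradiction that $\deg(u) = 1$, and let $v$ denote the unique neighbor of $u$. Since $T$ is skew-nontrivial, \Cref{whitedge} gives that $\{\Bnull(T), W^{\emptyset}_-(T)\}$ is a bipartition of $T$. Since $u \in \Bnull(T)$ and $uv \in E(T)$, we must have $v \in W^{\emptyset}_-(T)$.

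Finally, I would observe that this yields an immediate contradiction with the definition of $W^{\emptyset}_-(T)$. Namely, $W^{\emptyset}_-(T) = V(T) \setminus \cl_-(T, \emptyset)$, so by definition of the skew closure, no further skew forces are possible once the blue set equals $\Bnull(T)$. But $u$ has the single neighbor $v$, and $v$ is white, so $v$ is the unique white neighbor of $u$; thus $u \to v$ is a valid skew force. This contradicts the fact that the skew forcing process has terminated, forcing the conclusion that $\deg(u) \geq 2$. The entire argument is short and the only real content is the correct invocation of the bipartition structure, so no step presents a serious obstacle.
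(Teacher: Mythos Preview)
Your proof is correct and essentially the same as the paper's. The only cosmetic difference is the direction of the contradiction in the degree-$1$ case: the paper first observes that $u\to w$ is a valid skew force for $\emptyset$, hence $w\in\Bnull(T)$, and then invokes the bipartition to conclude $u\notin\Bnull(T)$; you instead use the bipartition first to place $v\in W^{\emptyset}_-(T)$ and then note that $u\to v$ remains valid at termination.
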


\begin{proof}
Let $u\in V(T)$.  If $\deg(u)=0$, then $u\notin\Bnull(T)$, since no vertex can ever force it.  If $u$ has a single neighbor $w$, then $u\to w$ is a valid skew force for $\emptyset$ in $T$.  Then $w\in\Bnull(T)$, and since $\{\Bnull(T),W^{\emptyset}_-(T)\}$ is a bipartition of $T$ by \Cref{whitedge}, $u\notin\Bnull(T)$.
\end{proof}

\begin{proposition}\label{treenontriv}
A tree is skew-nontrivial if and only if the distance between each pair of leaves is even.
\end{proposition}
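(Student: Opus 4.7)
The plan is to combine \Cref{whitedge} (which characterizes skew-nontrivial trees via the bipartition $\{\Bnull(T),\Wnull(T)\}$) with \Cref{graphnontriv} (which produces skew-nontriviality from a bipartition in which every leaf lies in one part). Since a tree is bipartite and has a unique bipartition, two vertices lie in the same part if and only if the unique path between them has even length, i.e., their distance is even. This reduces the statement to a question about which part of the bipartition contains the leaves.

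For the forward direction, assume $T$ is skew-nontrivial. By \Cref{whitedge}(2), $\{\Bnull(T),\Wnull(T)\}$ is a bipartition of $T$. Let $u$ be any leaf with unique neighbor $w$. Starting the skew forcing process from $\emptyset$, the force $u\to w$ is valid at time step $1$, since $w$ is the only (white) neighbor of $u$; hence $w\in\Bnull(T)$. Because $uw\in E(T)$ and $\{\Bnull(T),\Wnull(T)\}$ is a bipartition, $u$ must lie in $\Wnull(T)$. Thus every leaf lies in $\Wnull(T)$, and the parity observation above shows that all pairs of leaves are at even distance.

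For the reverse direction, suppose every pair of leaves is at even distance. Let $\{U_1,U_2\}$ be the unique bipartition of $T$. The even-distance hypothesis places every leaf of $T$ in the same part; after relabeling, assume every leaf lies in $U_1$. Then \Cref{graphnontriv} applies and yields that $T$ is skew-nontrivial.

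The heavy lifting has already been done in the cited results, so the main task is translating between the "skew-nontrivial" condition and the bipartition condition. The only subtle point is recognizing that in the forward direction, knowing $w\in\Bnull(T)$ immediately forces $u\in\Wnull(T)$ (rather than leaving open the possibility that $u$ is later forced into $\Bnull(T)$); this follows at once from the fact that $\{\Bnull(T),\Wnull(T)\}$ is a bipartition, so no edge can have both endpoints in $\Bnull(T)$.
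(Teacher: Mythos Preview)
Your proof is correct and follows essentially the same approach as the paper: both directions rely on \Cref{graphnontriv} and \Cref{whitedge} together with the observation that leaves force their neighbors blue in the first time-step. The only cosmetic difference is that the paper argues the forward implication by contrapositive (odd-distance leaves put blue vertices in both parts of the bipartition, contradicting \Cref{whitedge}), whereas you argue it directly (all leaves land in $\Wnull(T)$).
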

\begin{proof}
If the distance between each pair of leaves is even, then by \Cref{graphnontriv}, the tree is skew-nontrivial.  On the other hand, suppose that not every pair of leaves is an even distance apart, and let $\{U_1,U_2\}$ be a bipartition of $T$.  Then, applying the skew forcing rule to the empty set, after the first time-step there are blue vertices in both $U_1$ and $U_2$.  Thus by \Cref{whitedge}, $T$ is not skew-nontrivial.  
\end{proof}

\begin{theorem}
\label{treesSkew}
    Let $T$ be a tree. Then $\ZTokExch_-(T)$ is connected.
\end{theorem}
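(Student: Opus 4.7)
The plan is to translate the statement into a question about maximum matchings of $T$ via \Cref{perfect_matching}, and then use the symmetric difference of two maximum matchings to construct a path in $\ZTokExch_-(T)$ between any two minimum skew forcing sets.

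Concretely, fix minimum skew forcing sets $B_1$ and $B_2$ of $T$. By \Cref{perfect_matching}, $T-B_1$ and $T-B_2$ have (unique) perfect matchings $M_1$ and $M_2$, which are maximum matchings of $T$ with $B_i = V(T)\setminus V(M_i)$. I examine $M_1 \symmDiff M_2 \subseteq E(T)$: every vertex has degree at most $1$ in each $M_i$, so this subgraph has maximum degree $2$, and since $T$ has no cycles, $M_1 \symmDiff M_2$ decomposes into a disjoint union of simple paths. Any two consecutive edges of such a path must come from different matchings (otherwise the matching property would be violated at their common endpoint), so each path alternates between $M_1$- and $M_2$-edges; and each path must have even length, because an odd-length path in $M_1 \symmDiff M_2$ would be an augmenting path for one of $M_1, M_2$, contradicting maximality.

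I then induct on the number $k$ of paths in $M_1 \symmDiff M_2$. For $k=0$, $M_1 = M_2$ and $B_1 = B_2$, giving trivial connectivity. For $k \geq 1$, select a path $P = v_0 v_1 \cdots v_m$ of even length $m \geq 2$, with (WLOG) $v_0v_1 \in M_1$, and set $M' = M_1 \symmDiff P$. A short check confirms $M'$ is again a matching of $T$: internal vertices of $P$ swap their $M_1$-edge in $P$ for an $M_2$-edge in $P$, the endpoints of $P$ have no matching edge outside of $P$ (else $P$ would not be a maximal component of $M_1 \symmDiff M_2$), and vertices outside $V(P)$ are untouched. Since $|M_1 \cap P| = |M_2 \cap P| = m/2$, we have $|M'| = |M_1|$, so $M'$ is a maximum matching. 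The associated minimum skew forcing set $B' = V(T)\setminus V(M')$ satisfies $B' = (B_1 \setminus \{v_m\}) \cup \{v_0\}$, because $v_0$ is $M_1$-saturated but $M'$-unsaturated while $v_m$ is $M_1$-unsaturated but $M'$-saturated, and all other vertices retain their $M_1$-saturation status. Hence $B_1 B' \in E(\ZTokExch_-(T))$. Moreover, $M' \symmDiff M_2 = (M_1 \symmDiff M_2) \symmDiff P$ has $k-1$ components, so by induction $B'$ and $B_2$ lie in the same component of $\ZTokExch_-(T)$, completing the proof.

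The main obstacle is the structural analysis of $M_1 \symmDiff M_2$ in a tree, specifically verifying that every component is an even-length alternating path, and then checking that flipping one such path yields a valid maximum matching differing from $M_1$ by exactly one exchange of vertices. These are standard matching-theoretic arguments in the forest setting, but the bookkeeping with \Cref{perfect_matching} requires care to ensure that the intermediate sets $B'$ are minimum skew forcing sets (which follows automatically from $M'$ being a maximum matching of $T$ together with the forest property that $T-B'$ has at most one perfect matching).
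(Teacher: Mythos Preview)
Your proof is correct and follows essentially the same approach as the paper: both arguments pass to the associated maximum matchings via \Cref{perfect_matching}, decompose $M_1\symmDiff M_2$ into even-length alternating paths (using the augmenting-path/Berge argument for evenness), and then flip one path at a time to produce a sequence of single token exchanges connecting $B_1$ to $B_2$. The only cosmetic difference is that you phrase the final step as an induction on the number of paths while the paper iterates over the pairs $(a_i,b_i)$ directly.
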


\begin{proof}
    Let $S_1=\{a_1,\ldots, a_k\}$ and $S_2=\{b_1,\ldots,b_k\}$ be two minimum skew forcing sets of $T$.  Let $M_i$ be the maximum matching of $T$ such that $S_i$ is the set of $M_i$-unsaturated vertices.  Let $H$ be the subgraph of $T$ whose edge set is $M_1 \symmDiff M_2$ and whose vertex set is the set of vertices incident to those edges.

    We claim that $H$ consists of even length alternating paths. To see that $H$ must consist only of disjoint paths, note that a vertex in $H$ is at most incident to one edge in $M_1$ and one edge in $M_2$, hence its degree is one or two. The paths must be alternating because matchings only allow disjoint edges. If there exists an odd alternating path component $P$ in $H$, then without loss of generality assume that the edges at each end of $P$ are in $M_1$. 
Since the endpoints $x$ and $y$ of $P$ are $M_1$-saturated, any edges which they are incident to outside of $P$ cannot be in $M_1$.  Furthermore, said other edges cannot be in $M_2$, either, since this would contradict the maximality of $P$.  Hence, $P$ is an augmenting path for $M_2$, implying that $M_2$ is not maximal by Berge's Theorem. Thus all path components in $H$ are of even length. 

    Note that the endpoints of the paths in $H$ are either $M_1$-unsaturated or $M_2$-unsaturated and hence either in $S_1$ or in $S_2$.  Since each path is even, one endpoint of each path in $H$ is in $S_1\setminus S_2$ and the other is in $S_2 \setminus S_1$. Without loss of generality let us assume that either $a_i =b_i$ or $a_i$ and $b_i$ are ends of the same $(M_1,M_2)$-alternating path in $H$.  To see that $S_1$ and $S_2$ are connected in $\ZTokExch_-(T)$ we can first exchange $a_1$ with $b_1$, if they are distinct, by shifting the matching on the path between them from the edges in $M_1$ to the edges in $M_2$, which reveals the edge between $\{a_1,a_2,\ldots, a_k\}$ and $\{b_1,a_2,\ldots,a_k\}$.  We can do this in turn for each pair $(a_i,b_i)$ until $S_1$ has been reconfigured into $S_2$.
\end{proof}

Since the token exchange graph of a forest is the Cartesian product of the token exchange graphs of its tree components we have the following {corollary}.

\begin{corollary}
The token exchange graph of a forest is connected.
\end{corollary}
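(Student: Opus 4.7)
The plan is to reduce the forest case directly to the tree case via the summability of the skew forcing number and the disjoint union structure theorem \Cref{disjoint-gives-box}. Concretely, let $F$ be a forest with tree components $T_1,T_2,\ldots,T_k$, so that $F\cong\bigsqcup_{i=1}^{k}T_i$. According to \Cref{TypesTable}, the skew forcing number is a summable vertex parameter, so I can invoke \Cref{disjoint-gives-box} to obtain the isomorphism
\[
\ZTokExch_-(F)\;\cong\;\bigbox_{i=1}^{k}\ZTokExch_-(T_i).
\]

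Next, I will apply the theorem just proved (\Cref{treesSkew}), which guarantees that $\ZTokExch_-(T_i)$ is connected for each tree component $T_i$. Note that this covers the degenerate subcase $\Zs(T_i)=0$ as well, since then $\ZTokExch_-(T_i)$ is a single vertex (the empty set being the unique minimum skew forcing set), which is vacuously connected. Finally, I will invoke the standard fact that the Cartesian product of connected graphs is connected, concluding that $\ZTokExch_-(F)$ is connected.

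There is no real obstacle here, since the substantive work was done in \Cref{treesSkew} and in the development of summability. The only point requiring a sentence of justification is the invocation of summability for skew forcing, but this is an entry in \Cref{TypesTable} with an argument deferred to the appendix, so I will simply cite it. The proof is therefore essentially a one-line consequence of \Cref{disjoint-gives-box}, \Cref{treesSkew}, and the standard connectedness property of Cartesian products.
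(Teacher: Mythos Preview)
Your proposal is correct and mirrors the paper's own argument exactly: the paper simply notes that the token exchange graph of a forest is the Cartesian product of the token exchange graphs of its tree components (i.e., invokes summability via \Cref{disjoint-gives-box}), then appeals to \Cref{treesSkew} and the connectedness of Cartesian products of connected graphs. Your extra remark about the degenerate case $\Zs(T_i)=0$ is a harmless clarification that the paper leaves implicit.
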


\begin{corollary}
\label{SNTconnectedTE}
Let $T$ be a tree.  The induced subgraph of $\ZTokExch_-(T)$ of minimum skew forcing sets that contain only leaves is connected.
\end{corollary}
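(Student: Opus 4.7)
The plan is to revisit the reconfiguration path constructed in the proof of \Cref{treesSkew} and verify that, when $S_1$ and $S_2$ consist only of leaves, every intermediate set produced by that construction also consists only of leaves.

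Given two minimum skew forcing sets $S_1 = \{a_1, \ldots, a_k\}$ and $S_2 = \{b_1, \ldots, b_k\}$ of $T$ that contain only leaves, \Cref{perfect_matching} yields maximum matchings $M_1, M_2$ of $T$ such that $S_i$ is the set of $M_i$-unsaturated vertices. As in the proof of \Cref{treesSkew}, the subgraph $H$ with edge set $M_1 \symmDiff M_2$ consists of vertex-disjoint even-length $(M_1,M_2)$-alternating paths, and the endpoints of each such path lie in $S_1 \symmDiff S_2$. After relabeling, we may assume that for each $i$, either $a_i = b_i$ or $a_i$ and $b_i$ are the two endpoints of a common component of $H$.

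The reconfiguration path from $S_1$ to $S_2$ is then obtained by successively exchanging $a_i$ with $b_i$ whenever $a_i \neq b_i$, for $i = 1, 2, \ldots, k$, each exchange being realized by shifting the matching along the corresponding alternating path. The $i$-th intermediate set is $S^{(i)} = \{b_1, \ldots, b_i, a_{i+1}, \ldots, a_k\}$, and by the argument in \Cref{treesSkew} each consecutive pair $S^{(i-1)}S^{(i)}$ (when $a_i \neq b_i$) is an edge of $\ZTokExch_-(T)$.

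The only point to verify beyond the proof of \Cref{treesSkew} is that each $S^{(i)}$ consists only of leaves. This is immediate: each $a_j$ and each $b_j$ lies in $S_1 \cup S_2$, which consists entirely of leaves by hypothesis, so $S^{(i)} \subseteq S_1 \cup S_2$ is a set of leaves. The main (trivial) obstacle is thus dispatched by the observation that the alternating-path endpoints used in the construction are always drawn from $S_1 \cup S_2$, so leafness is preserved throughout the reconfiguration. Consequently, the entire path from $S_1$ to $S_2$ lies in the induced subgraph of $\ZTokExch_-(T)$ spanned by leaf-only minimum skew forcing sets, establishing the claim.
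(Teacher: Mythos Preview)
Your proof is correct and follows essentially the same approach as the paper: both invoke the reconfiguration path from the proof of \Cref{treesSkew} and observe that each intermediate set $\{b_1,\dots,b_i,a_{i+1},\dots,a_k\}$ lies in $S_1\cup S_2$ and hence consists only of leaves.
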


\begin{proof}
This follows from the proof of \Cref{treesSkew} by letting $S_1$ and $S_2$ consist only of leaves of $T$, and observing that each intermediate set $\{b_1,\dots,b_i,a_{i+1},\dots a_k\}$ between $S_1$ and $S_2$ also consists only of leaves of $T$.
\end{proof}

The following theorem is in direct contrast to \Cref{PSDEdgeExistence} and \Cref{treesPSD}.

\begin{theorem}\label{TslideIsolate}
    The skew token sliding graph $\ZTokSlide_-(T)$ of a tree $T$ contains no edges. 
\end{theorem}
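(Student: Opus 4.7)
The plan is to argue by contradiction using the matching characterization of minimum skew forcing sets in trees (\Cref{perfect_matching}). Suppose for the sake of contradiction that $S_1 S_2 \in E(\ZTokSlide_-(T))$, with $S_1\setminus S_2=\{u\}$, $S_2\setminus S_1=\{v\}$, $uv\in E(T)$, and let $S=S_1\cap S_2$. By \Cref{perfect_matching}, $T-S_1$ has a unique perfect matching $M_1$, and $T-S_2$ has a unique perfect matching $M_2$. Viewed as edge sets inside $T-S$ (whose vertex set is $V(T-S_1)\cup\{u\}=V(T-S_2)\cup\{v\}$), the matching $M_1$ saturates every vertex except $u$, while $M_2$ saturates every vertex except $v$; thus every vertex other than $u,v$ is saturated by both.

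Next I would consider the symmetric difference $H=M_1\triangle M_2$ as a subgraph of the forest $T-S$. Standard matching arithmetic gives that every vertex in $V(T-S)\setminus\{u,v\}$ has degree $0$ or $2$ in $H$, while $u$ and $v$ have degree exactly $1$ (each being $M_i$-saturated for exactly one $i$, with no chance of the same edge lying in both $M_1$ and $M_2$ at those vertices). Because $T-S$ is a forest, $H$ has no cycles, so its components are paths. Since $u,v$ are the only degree-$1$ vertices, they must be the two endpoints of the unique non-trivial path component $P$ of $H$. By the standard alternation, $P$ starts at $u$ with an $M_2$-edge and ends at $v$ with an $M_1$-edge, forcing $|E(P)|$ to be even.

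Finally I would exploit that $uv\notin M_1$ (since $u\notin V(T-S_1)$) and $uv\notin M_2$ (since $v\notin V(T-S_2)$), so $uv\notin E(H)\supseteq E(P)$. If $|E(P)|=0$ then $u=v$, contradicting $u\neq v$; and if $|E(P)|\geq 2$ then $E(P)\cup\{uv\}$ is the edge set of a cycle in $T$, contradicting that $T$ is a tree. Either way we reach a contradiction, so no such edge in $\ZTokSlide_-(T)$ exists.

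The main step to get right is the degree/parity bookkeeping in $H$: one must verify carefully that $u$ and $v$ are the unique degree-$1$ vertices (so they necessarily land in the \emph{same} path of $H$, ruling out a "two-separate-paths" case), and that the alternation pattern forces the path length to be even. After that, the acyclicity of $T$ does all the work.
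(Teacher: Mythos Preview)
Your proof is correct and follows essentially the same approach as the paper: both take the symmetric difference $M_1\triangle M_2$ of the associated maximum matchings, argue that it is a single even-length alternating $uv$-path, and derive a contradiction from $uv\in E(T)$. The only cosmetic difference is the final step---the paper observes that the unique $uv$-path in the tree $T$ is the single edge $uv$ (odd length, contradicting even), whereas you phrase the same tree property as ``$E(P)\cup\{uv\}$ would be a cycle''; these are equivalent.
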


\begin{proof}
    Let $T$ be a tree and suppose that $S_1$ and $S_2$ are minimum skew forcing sets of $T$ that are adjacent in $\ZTokSlide_-(T)$. Let $M_1$ and $M_2$ be the perfect matchings of $V(T)\setminus S_1$ and $V(T)\setminus S_2$, respectively. Let $H$ be the subgraph of $T$ induced by the symmetric difference of $M_1$ and $M_2$. Since {$S_1S_2 \in E(\ZTokSlide_-(T))$}, there exist vertices $v_1$, $v_2$ such that $S_1 \setminus S_2 = \{v_1\}$ and $S_2 \setminus S_1 = \{v_2\}$. By the proof of \Cref{treesSkew}, $H$ is one path from $v_1$ to $v_2$, which is a single edge. This contradicts the fact that $H$ consists of only even-length paths. Therefore, there are no edges in $\ZTokSlide_-(T)$.  
\end{proof}

\begin{lemma}
\label{SpecialForest}
Let $F$ be a forest.  If $\{A,B\}$ is a bipartition of $V(F)$ such that $\deg(b)\geq 2$ for all $b\in B$, then $\match(F)=|B|$.
\end{lemma}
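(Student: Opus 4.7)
My plan is to apply Hall's marriage theorem to $F$, viewed as a bipartite graph with bipartition $\{A,B\}$ (note that the bipartition hypothesis forces both $A$ and $B$ to be independent in $F$, since $F$ is a forest and hence bipartite). The upper bound $\match(F) \leq |B|$ is immediate: because every edge of $F$ has exactly one endpoint in $B$, any matching uses each vertex of $B$ at most once, so has at most $|B|$ edges.

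For the lower bound, I will verify Hall's condition for $B$, namely $|N_F(S)| \geq |S|$ for every $S \subseteq B$. Hall's theorem then yields a matching saturating every vertex of $B$, giving $\match(F) \geq |B|$ and hence the claimed equality.

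To verify the Hall condition I argue by contradiction: suppose there exists $S \subseteq B$ with $|N_F(S)| \leq |S|-1$ (in particular $|S| \geq 1$). Consider the induced subgraph $H = F[S \cup N_F(S)]$. Since $B$ is independent, every $F$-neighbor of a vertex in $S$ lies in $A$ and hence in $N_F(S)$; therefore $\deg_H(b) = \deg_F(b) \geq 2$ for each $b \in S$. Every edge of $H$ has exactly one endpoint in $S$, so counting edges of $H$ via their $S$-endpoint gives
\[|E(H)| \;=\; \sum_{b \in S} \deg_H(b) \;\geq\; 2|S|.\]
On the other hand $H$ is a subgraph of the forest $F$, hence itself a forest, so
\[|E(H)| \;\leq\; |V(H)| - 1 \;=\; |S| + |N_F(S)| - 1 \;\leq\; 2|S| - 2,\]
contradicting the previous inequality. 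Thus Hall's condition holds and the lemma follows.

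The main subtlety, such as it is, lies in the double-counting step: one must use the independence of $B$ both to transfer the hypothesis $\deg_F(b) \geq 2$ to $\deg_H(b) \geq 2$ (all neighbors of $b$ remain in $H$) and to ensure each edge of $H$ is counted exactly once in $\sum_{b \in S} \deg_H(b)$. Once these two uses of independence are aligned, the acyclicity of $F$ supplies precisely the opposing edge bound needed to close the argument.
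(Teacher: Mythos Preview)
Your proof is correct but takes a different route from the paper's. The paper argues by induction on $|B|$: since every leaf of $F$ must lie in $A$ (having degree~$1$), one picks a leaf $v\in A$ with neighbor $w\in B$, observes that deleting $v$ and $w$ leaves a forest $F'$ in which the degree condition on $B\setminus\{w\}$ is preserved (removing $v$ only affects $w$, and removing $w\in B$ does not touch any other vertex of $B$), applies the inductive hypothesis to get a matching of size $|B|-1$ in $F'$, and adds the edge $vw$. Your argument instead verifies Hall's condition for $B$ directly, via a double count of edges in $F[S\cup N_F(S)]$ against the forest bound $|E|\le |V|-1$. Your approach is arguably cleaner from a pure matching-theory standpoint and isolates exactly where acyclicity is used; the paper's leaf-stripping induction, on the other hand, is more self-contained (no appeal to Hall) and dovetails with the leaf-stripping algorithm for skew forcing on trees that the paper uses repeatedly in the surrounding section.
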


\begin{proof}
We proceed by induction on $|B|$.  If $|B|=0$, then $F$ has no edges, and $\match(F)=0$.  Otherwise, $F$ contains an edge, and therefore a leaf.  Let $v$ be a leaf of $F$, and let $w$ be its neighbor.  $\deg(v)=1$, so $v\in A$ and $w\in B$.  $F'=F-v-w$ is a forest with bipartition $\{A'=A\setminus\{v\},B'=B\setminus\{w\}\}$, and for each $b\in B'$, $\deg_{F'}(b)=\deg_F(b)\geq 2$.  Thus, by the inductive hypothesis, $\match(F')=|B'|=|B|-1$.  Letting $M'$ be a maximum matching of $F'$, we have that $M=M'\cup\{uv\}$ is a matching of $F$ with $|M|=|B|$.  Since any edge in $F$ must be between $A$ and $B$, $\match(F)\leq |B|$, and the proof is complete.
\end{proof}

\begin{corollary}
\label{SkewNontrivialTreeZ-}
Let $T$ be a skew-nontrivial tree.  $\Z_-(T)=|V(T)|-2|\Bnull(T)|$.
\end{corollary}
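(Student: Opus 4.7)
The plan is to combine several already-established results in a short chain. First I would invoke Corollary \ref{ZMinusVsMatchingNumber}, which gives $\Z_-(T) = |V(T)| - 2\match(T)$ for any tree. So the entire task reduces to showing that $\match(T) = |\Bnull(T)|$ whenever $T$ is skew-nontrivial.

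To establish this, I would set up the hypotheses of Lemma \ref{SpecialForest} with the bipartition $A = W^{\emptyset}_-(T)$ and $B = \Bnull(T)$. Proposition \ref{whitedge}(2) tells us that because $T$ is skew-nontrivial, $\{\Bnull(T), W^{\emptyset}_-(T)\}$ is indeed a bipartition of $T$. Lemma \ref{BnullDegree} supplies the other hypothesis: every vertex $u \in \Bnull(T)$ satisfies $\deg(u) \geq 2$. Applying Lemma \ref{SpecialForest} then immediately yields $\match(T) = |\Bnull(T)|$, and substituting into the formula from Corollary \ref{ZMinusVsMatchingNumber} gives the desired equality $\Z_-(T) = |V(T)| - 2|\Bnull(T)|$.

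There is no real obstacle here; all of the work has been done in the preceding lemmas. The only thing to be careful about is correctly matching the labels $A$ and $B$ in Lemma \ref{SpecialForest} to the correct sides of the bipartition — the side whose vertices have guaranteed degree at least two (namely $\Bnull(T)$, by Lemma \ref{BnullDegree}) must play the role of $B$, and then the conclusion $\match(T) = |B|$ gives exactly what is needed.
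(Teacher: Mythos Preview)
Your proposal is correct and follows essentially the same approach as the paper: invoke \Cref{whitedge} for the bipartition, \Cref{BnullDegree} for the degree condition, apply \Cref{SpecialForest} to get $\match(T)=|\Bnull(T)|$, and finish with \Cref{ZMinusVsMatchingNumber}. The only cosmetic difference is that the paper states the matching result first and applies \Cref{ZMinusVsMatchingNumber} at the end, whereas you begin with the reduction; the argument is otherwise identical.
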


\begin{proof}
$\{W^{\emptyset}_-(T),\Bnull(T)\}$ is a bipartition of $V(T)$ by \Cref{whitedge}; by \Cref{BnullDegree}, $\deg(b)\geq 2$ for each $b\in\Bnull(T)$.  Thus, by \Cref{SpecialForest}, $\match(T)=|\Bnull(T)|$.  Finally, by \Cref{ZMinusVsMatchingNumber}, $\Z_-(T)=|V(T)|-2|\Bnull(T)|$.
\end{proof}

\section{Connections between PSD and skew reconfiguration graphs}\label{connections}

In this section we consider two different occurrences where there exist connections between PSD and skew reconfiguration graphs.  First, we explore the connection between the skew reconfiguration graphs of skew-nontrivial trees and the PSD reconfiguration graphs of the distance-2 graphs induced by the set of white vertices in a given tree.  Afterwards, we consider independent duplication and join-duplication, and find that independent duplication has a strikingly similar effect on skew reconfiguration to that which join-duplication has on PSD reconfiguration.  Before proceeding, we make a brief observation concerning the reconfiguration graphs for standard, PSD, and skew forcing.

\begin{observation}
    Standard zero forcing sets are also both PSD forcing sets and skew forcing sets, thus $\Z_+(G) \leq \Z(G)$ and $\Z_-(G) \leq \Z(G)$. As a result,
    \begin{itemize}
        \item if $\Z_+(G)=\Z(G)$, then $\ZTokExch(G)$ is an induced subgraph of $\ZTokExch_+(G)$ and $\ZTokSlide(G)$ is an induced subgraph of $\ZTokSlide_+(G)$, and
        \item if $\Z_-(G)=\Z(G)$, then $\ZTokExch(G)$ is an induced subgraph of $\ZTokExch_-(G)$ and $\ZTokSlide(G)$ is an induced subgraph of $\ZTokSlide_-(G)$.
    \end{itemize}
\end{observation}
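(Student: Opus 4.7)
The plan is to verify the three inclusions in succession and then observe that the edge relations transfer automatically from the set-level inclusions. First, I would check directly from the color change rules that every standard zero forcing set is both a PSD forcing set and a skew forcing set. For the PSD case, if $u$ is blue and has a unique white neighbor $v$ in $G$, then $\{v\} = N_G(u)\cap V(C)$ for $C = \comp(G-B, v)$, so the standard force $u\to v$ satisfies the PSD-CCR. For the skew case, the $\Zs$-CCR strictly relaxes the $\Z$-CCR by dropping the requirement that the forcing vertex be blue, so any valid standard force is a fortiori a valid skew force. Iterating, if $B$ turns every vertex blue under standard forcing, then the same sequence of forces is valid under PSD forcing and under skew forcing, so $B$ is a PSD forcing set and a skew forcing set of $G$.

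Applying this to any minimum standard zero forcing set immediately yields $\Zp(G) \leq \Z(G)$ and $\Zs(G) \leq \Z(G)$.

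For the induced subgraph claims, I would assume $\Zp(G)=\Z(G)$ and argue as follows. Any $B \in V(\ZTokExch(G))$ is a standard zero forcing set with $|B|=\Z(G)=\Zp(G)$, and by the inclusion above it is a PSD forcing set; being a PSD forcing set of cardinality $\Zp(G)$, it is a minimum PSD forcing set, hence $B \in V(\ZTokExch_+(G))$. Thus $V(\ZTokExch(G))\subseteq V(\ZTokExch_+(G))$ (and likewise on the token sliding side, since $V(\GenTokSlide(G))=V(\GenTokExch(G))$ by \Cref{TS<=TE}). The edge relations in $\ZTokExch$ and $\ZTokExch_+$ are both determined solely by whether two sets differ by an exchange in $G$, a purely set-theoretic condition that makes no reference to the forcing rule; analogously, the edge relations in $\ZTokSlide$ and $\ZTokSlide_+$ are both determined by whether two sets differ by a slide in $G$. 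Consequently, the edge set of $\ZTokExch(G)$ coincides with $E(\ZTokExch_+(G))\cap \binom{V(\ZTokExch(G))}{2}$, and similarly for token sliding. This is exactly the definition of an induced subgraph. The skew case is identical, substituting $\Zs$ and the $-$ subscripted reconfiguration graphs throughout.

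There is no substantial obstacle here: the observation is essentially a bookkeeping consequence of the monotonicity of the forcing rules and the fact that the edge relations in the reconfiguration graphs depend only on the source graph $G$, not on which forcing variant picks out the vertex set.
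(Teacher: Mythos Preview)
Your argument is correct and complete. The paper states this as an Observation without any proof, treating it as self-evident; your write-up simply makes explicit the straightforward verification the paper leaves to the reader, namely that the $\Z$-CCR is a special case of both the $\Zp$-CCR and the $\Zs$-CCR, and that the edge relation in any token exchange or token sliding reconfiguration graph depends only on $G$ and not on the particular forcing rule, so vertex inclusion automatically upgrades to an induced-subgraph relation.
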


\subsection{Skew-nontrivial trees and PSD forcing}

We now establish a connection between reconfiguration graphs for PSD and skew forcing when the source graph is a skew-nontrivial tree.

\begin{definition}
Let $T$ be a skew-nontrivial tree.  The \emph{special distance-2 graph} of $T$, defined as \[\SDTwo(T)=D_2(T)[\Wnull(T)]=\bigl(\Wnull(T),\{uw\colon u,w\in \Wnull(T),\,N_T(u)\cap N_T(w)\neq\emptyset\}\bigr),\] is the induced subgraph of the distance-2 graph of $T$ on the set of vertices not skew forced by the empty set.  (See \Cref{SNT:Figure}.)  We also define $\phi_-(uw)\colon E(\SDTwo(T))\to V(T)$ by $\{\phi_-(uw)\}=N_T(u)\cap N_T(w)$ (this is well-defined since $T$ is a tree, and any two vertices in $T$ can have at most one common neighbor).
\end{definition}

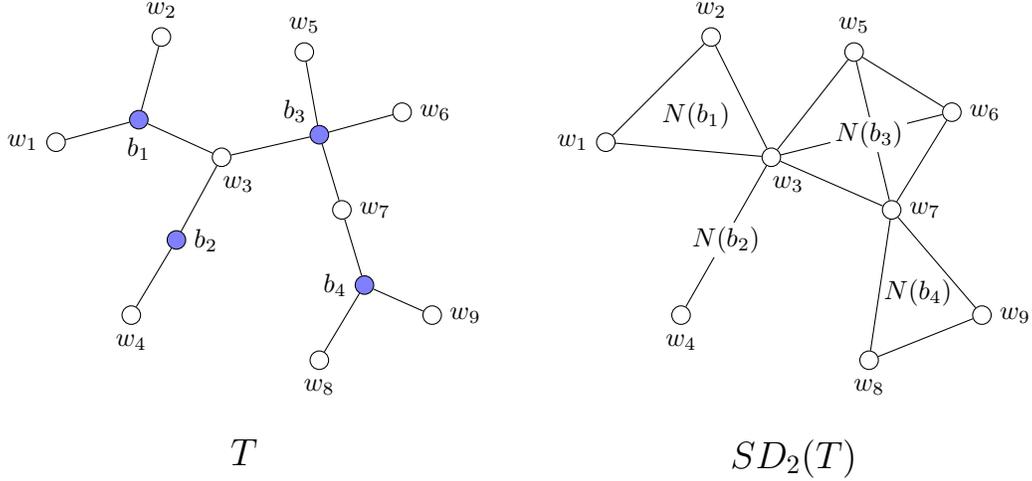
\begin{figure}
\centering
\begin{tikzpicture}[every matrix/.style={graph styles 2,every node/.style={vertex,minimum size=7pt}},blue vertex/.style={fill=blue!50},free text node/.style={text node,inner sep=1.5pt,draw=none}]
\matrix[column sep=0.4in] (illustration) {
\begin{scope}
\node["$w_1$" left] (w1) at  (-2.4, 1.0) {};
\node["$w_2$"] (w2) at       (-1.0, 2.4) {};
\node["-87:$w_3$"] (w3) at   (-0.2, 0.8) {};
\node["$w_4$" below] (w4) at (-1.4,-1.3) {};
\node["$w_5$"] (w5) at       ( 0.9, 2.2) {};
\node["$w_6$" right] (w6) at ( 2.2, 1.4) {};
\node["$w_7$" right] (w7) at ( 1.4, 0.1) {};
\node["$w_8$" below] (w8) at ( 1.1,-1.9) {};
\node["$w_9$" right] (w9) at ( 2.6,-1.3) {};
\node["$b_1$" below,blue vertex] (b1) at      (-1.3, 1.3) {} edge (w1) edge (w2) edge (w3);
\node["$b_2$" right,blue vertex] (b2) at      (-0.8,-0.3) {} edge (w3) edge (w4);
\node["$b_3$" above left,blue vertex] (b3) at ( 1.1, 1.1) {} edge (w3) edge (w5) edge (w6) edge (w7);
\node["$b_4$" left,blue vertex] (b4) at       ( 1.7,-0.9) {} edge (w7) edge (w8) edge (w9);
\node[caption node,below=16pt of current bounding box.south] {$T$};
\end{scope}
&
\begin{scope}
\node["$w_1$" left] (w1) at  (-2.4, 1.0) {};
\node["$w_2$"] (w2) at       (-1.0, 2.4) {} edge (w1);
\node["-87:$w_3$"] (w3) at   (-0.2, 0.8) {} edge (w1) edge (w2);
\node["$w_4$" below] (w4) at (-1.4,-1.3) {} edge (w3);
\node["$w_5$"] (w5) at       ( 0.9, 2.2) {} edge (w3);
\node["$w_6$" right] (w6) at ( 2.2, 1.4) {} edge (w3) edge (w5);
\node["$w_7$" right] (w7) at ( 1.4, 0.1) {} edge (w3) edge (w5) edge (w6);
\node["$w_8$" below] (w8) at ( 1.1,-1.9) {} edge (w7);
\node["$w_9$" right] (w9) at ( 2.6,-1.3) {} edge (w7) edge (w8);
\node[free text node] (b1) at            (-1.2,1.35) {$N(b_1)$};
\node[free text node,fill=white] (b2) at (-0.8,-0.3) {$N(b_2)$};
\node[free text node,fill=white] (b3) at ( 1.1, 1.1) {$N(b_3)$};
\node[free text node] (b4) at            (1.75,-1.0) {$N(b_4)$};
\node[caption node,below=16pt of current bounding box.south] {$\SDTwo(T)$};
\end{scope}
\\
};
\end{tikzpicture}
\caption{A skew-nontrivial tree $T$ and its special distance-2 graph $\SDTwo(T)$.}
\label{SNT:Figure}
\end{figure}

\begin{observation}
If $T$ is a skew-nontrivial tree, then by \Cref{whitedge}, $\{\Bnull(T),\Wnull(T)\}$ is a bipartition of $T$, and it follows that $\phi_-$ actually maps into $\Bnull(T)$.
\end{observation}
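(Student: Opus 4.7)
The plan is to unpack the definition of $\phi_-$ and use the bipartition guaranteed by \Cref{whitedge}(2). Specifically, fix an arbitrary edge $uw \in E(\SDTwo(T))$; by the definition of $\SDTwo(T)$, both endpoints satisfy $u,w \in \Wnull(T)$ and $N_T(u) \cap N_T(w)$ is nonempty. Since $T$ is a tree, any two distinct vertices have at most one common neighbor, so $N_T(u) \cap N_T(w) = \{v\}$ for a unique $v \in V(T)$, and this $v$ is precisely $\phi_-(uw)$.

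The remaining task is to argue that this $v$ lies in $\Bnull(T)$. Since $T$ is skew-nontrivial, \Cref{whitedge}(2) tells us that $\{\Bnull(T), \Wnull(T)\}$ is a bipartition of $T$. In particular, no two vertices of $\Wnull(T)$ are adjacent, and more importantly every neighbor of any vertex in $\Wnull(T)$ must lie in $\Bnull(T)$. Because $v$ is a neighbor of $u$ (and of $w$) in $T$, and $u \in \Wnull(T)$, we conclude $v \in \Bnull(T)$, as required.

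There is no real obstacle here — the observation is essentially a one-line consequence of \Cref{whitedge}(2). The only subtlety worth flagging explicitly in the write-up is that $\phi_-$ is well-defined in the first place (uniqueness of the common neighbor uses the tree structure), which the definition itself already notes; once that is in hand, the bipartition immediately forces the image to lie in $\Bnull(T)$.
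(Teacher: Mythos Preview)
Your proposal is correct and matches the paper's reasoning exactly: the paper states this as an observation without a formal proof, relying precisely on the fact that \Cref{whitedge}(2) gives the bipartition $\{\Bnull(T),\Wnull(T)\}$, so any neighbor of a vertex in $\Wnull(T)$ lies in $\Bnull(T)$.
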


\begin{observation}
$\SDTwo(T)$ can also be created by taking each vertex $b$ in $\Bnull(T)$, deleting it and its edges, and adding an edge between every pair of distinct neighbors of $b$, so that $\SDTwo(T)[N_T(b)]\cong K_{|N_T(b)|}$ is a clique in $\SDTwo(T)$.
\end{observation}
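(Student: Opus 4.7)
The plan is to verify the two constructions produce the same graph by checking vertex sets and edge sets separately, leveraging the bipartite structure of skew-nontrivial trees.

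First, I would establish the vertex sets agree. Since $T$ is skew-nontrivial, \Cref{whitedge} gives that $\{\Bnull(T),\Wnull(T)\}$ is a bipartition of $V(T)$. Deleting every vertex $b\in\Bnull(T)$ (together with its incident edges) leaves precisely the vertex set $\Wnull(T)$, which coincides with $V(\SDTwo(T))$ by definition. This part is immediate and just records that the deletion step is doing the right thing.

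Next I would verify the edge sets agree by showing containment both ways. For the forward direction, suppose $u,w\in\Wnull(T)$ are two distinct neighbors of some $b\in\Bnull(T)$; then $b\in N_T(u)\cap N_T(w)$, so $uw\in E(\SDTwo(T))$ by the defining condition. For the converse, suppose $uw\in E(\SDTwo(T))$, so there exists $v\in N_T(u)\cap N_T(w)$. Because $u,w\in\Wnull(T)$ and the bipartition forbids edges within a part, $v\in\Bnull(T)$, and hence the edge $uw$ is among those added in the step corresponding to $b=v$. Thus the two edge sets coincide. The clique assertion then follows immediately: any two vertices of $N_T(b)$ share $b$ as a common neighbor, so every pair within $N_T(b)$ is adjacent in $\SDTwo(T)$, giving $\SDTwo(T)[N_T(b)]\cong K_{|N_T(b)|}$.

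The only potential subtlety, and the one I would be careful about, is ensuring that no spurious edges arise from the construction: specifically, one must check that the edges added at different $b\in\Bnull(T)$ do not double-count a pair and that no edge is created between vertices that do not share a neighbor in $T$. This is handled cleanly by the tree structure, since in a tree two distinct vertices share at most one common neighbor, so the map $\phi_-$ is well-defined and the assignment of each edge of $\SDTwo(T)$ to a unique $b\in\Bnull(T)$ is unambiguous. No deeper argument is needed beyond bipartiteness and the uniqueness of common neighbors in a tree.
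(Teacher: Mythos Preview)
Your proof is correct; the paper states this as an observation without proof, treating it as an immediate consequence of the definition of $\SDTwo(T)$ together with the bipartition $\{\Bnull(T),\Wnull(T)\}$ from \Cref{whitedge}. Your explicit verification of vertex and edge sets is exactly what is needed, and your remark on uniqueness of common neighbors, while not strictly required for the set equality, correctly anticipates the well-definedness of $\phi_-$ used elsewhere.
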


\begin{observation}
For each $u\in\Bnull(T)$ and $w\in N_T(u)$, $N_T(u)\subseteq N_{\SDTwo(T)}[w]$.
\end{observation}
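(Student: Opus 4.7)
The plan is to show the containment by taking an arbitrary $w'\in N_T(u)$ and verifying $w'\in N_{\SDTwo(T)}[w]$, splitting on whether $w'$ equals $w$. First, I would invoke \Cref{whitedge} to use that $\{\Bnull(T),\Wnull(T)\}$ is a bipartition of $T$ (since $T$ is skew-nontrivial). Because $u\in\Bnull(T)$, every neighbor of $u$ in $T$ lies in $\Wnull(T)$; in particular $w\in\Wnull(T)$ and every $w'\in N_T(u)$ is in $\Wnull(T)=V(\SDTwo(T))$, so it at least makes sense to ask whether $w'\in N_{\SDTwo(T)}[w]$.

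Next, for $w'\in N_T(u)$: if $w'=w$, then trivially $w'\in N_{\SDTwo(T)}[w]$. Otherwise $w'\neq w$, and since $w,w'\in N_T(u)$, the vertex $u$ satisfies $u\in N_T(w)\cap N_T(w')$, so $N_T(w)\cap N_T(w')\neq\emptyset$. By the defining condition of the edge set of $\SDTwo(T)$, this gives $ww'\in E(\SDTwo(T))$, hence $w'\in N_{\SDTwo(T)}(w)\subseteq N_{\SDTwo(T)}[w]$. Since $w'\in N_T(u)$ was arbitrary, this proves $N_T(u)\subseteq N_{\SDTwo(T)}[w]$.

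There is no real obstacle here; the argument is a direct unpacking of the definition of $\SDTwo(T)$ combined with the bipartition property of skew-nontrivial trees provided by \Cref{whitedge}. The only subtlety is remembering to handle the case $w'=w$ separately so that the trivial containment $w\in N_{\SDTwo(T)}[w]$ covers it, rather than needing an edge in $\SDTwo(T)$ (which, as a simple graph, cannot be a loop at $w$).
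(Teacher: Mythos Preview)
Your proof is correct and is exactly the direct unpacking the paper has in mind; the paper states this as an Observation without proof, since it follows immediately from the definition of $\SDTwo(T)$ together with the bipartition property from \Cref{whitedge}.
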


\begin{theorem}
\label{NonTrivTree:FS}
Let $T$ be a skew-nontrivial tree, and let $S\subseteq \Wnull(T)$.  Then $S$ is a PSD forcing set of $\SDTwo(T)$ if and only if $S$ is a skew forcing set of $T$.
\end{theorem}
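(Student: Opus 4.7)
The plan is to recast both forcing conditions in terms of forts using \Cref{thmPSDfort} and \Cref{skewfort}, and then prove a tight correspondence between skew forts of $T$ and PSD forts of $\SDTwo(T)$. Specifically, I will establish two claims: (A) every PSD fort $F'$ of $\SDTwo(T)$ is also a skew fort of $T$ when viewed as a subset of $V(T)$; and (B) for every skew fort $F$ of $T$, $F \cap \Wnull(T)$ is a nonempty PSD fort of $\SDTwo(T)$. Once (A) and (B) are in hand, the biconditional will follow immediately: since $S \subseteq \Wnull(T)$ implies $S \cap F = S \cap (F \cap \Wnull(T))$ for every $F \subseteq V(T)$, $S$ meets every skew fort of $T$ if and only if $S$ meets every PSD fort of $\SDTwo(T)$. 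Throughout I will rely on the bipartition $\{\Bnull(T), \Wnull(T)\}$ of $T$ from \Cref{whitedge} and the observation that each $N_T(v)$ for $v \in \Bnull(T)$ is a clique in $\SDTwo(T)$.

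The main obstacle is Claim (A), which is where the tree structure is essential. The condition $|N_T(v) \cap F'| \neq 1$ is immediate for $v \in \Wnull(T)$ (the intersection is empty). For $v \in \Bnull(T)$, I will argue by contradiction: suppose $N_T(v) \cap F' = \{w\}$ and, by \Cref{BnullDegree}, pick $u \in N_T(v) \setminus \{w\}$, so that $u \in \Wnull(T) \setminus F'$. Letting $C$ be the component of $\SDTwo(T)[F']$ containing $w$, I aim to show $N_{\SDTwo(T)}(u) \cap V(C) = \{w\}$, contradicting the PSD fort property. The element $w$ is in this intersection via the common neighbor $v$; the crucial step is ruling out any other $x \in V(C)$ adjacent to $u$ in $\SDTwo(T)$. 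Given such an $x \neq w$, I take a shortest $\SDTwo(T)[F']$-path $w = y_0, \ldots, y_m = x$, lift it to $T$ via common $T$-neighbors $b_0, \ldots, b_{m-1} \in \Bnull(T)$, and extend by $x, b, u$ (where $b \in \Bnull(T)$ is a common $T$-neighbor of $u$ and $x$, necessarily with $b \neq v$ since $x \notin N_T(v)$). This produces a walk $w, b_0, y_1, \ldots, y_m, b, u$ in $T$; because $T$ is a tree, the walk must pass through the cut vertex $v$ on the unique $wu$-path $w, v, u$, so $v$ equals either $b$ or some $b_i$. In the first case, $x \in N_T(v) \cap F' = \{w\}$, a contradiction; in the second, $y_i, y_{i+1} \in N_T(v) \cap F' = \{w\}$ forces $y_i = y_{i+1}$, violating the simplicity of the shortest path.

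For Claim (B), nonemptiness of $F \cap \Wnull(T)$ will follow from a double count on the subforest $H = T[F \cup N_T(F)]$ when $F \subseteq \Bnull(T)$: \Cref{BnullDegree} gives $|E(H)| \geq 2|F|$, the skew fort condition at each $w \in N_T(F)$ gives $|E(H)| \geq 2|N_T(F)|$, and $H$ being a forest gives $|E(H)| \leq |F| + |N_T(F)| - 1$, yielding the impossible chain $|F| + 1 \leq |N_T(F)| \leq |F| - 1$. For the PSD fort condition on $F'' = F \cap \Wnull(T)$: if $N_{\SDTwo(T)}(u) \cap V(C) = \{w\}$ for some $u \in \Wnull(T) \setminus F''$ and component $C$ of $\SDTwo(T)[F'']$, set $v = \phi_-(uw) \in \Bnull(T)$. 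Since $F$ is a skew fort and $w \in N_T(v) \cap F$, there exists $y \in (N_T(v) \cap F) \setminus \{w\} \subseteq F''$; then $y$ lies in $C$ (being in the clique $N_T(v)$ together with $w$) and is adjacent to $u$ in $\SDTwo(T)$ via $v$, giving the desired contradiction.
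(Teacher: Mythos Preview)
Your proof is correct and takes a genuinely different route from the paper's. The paper argues directly at the level of forcing processes: given a chronological list of PSD forces $\{u_i\to w_i\}$ for $S$ in $\SDTwo(T)$, it replaces each $u_i$ by $b_i=\phi_-(u_iw_i)$ to obtain a chronological list of standard forces for $S\cup\Bnull(T)$ in $T$; in the other direction it starts from a chronological list of skew forces $\{b_i\to w_i\}$ for $S\cup\Bnull(T)$ in $T$, picks $u_i\in N_T(b_i)\setminus\{w_i\}$ via \Cref{BnullDegree}, and verifies that $\{u_i\to w_i\}$ is a valid chronological list of PSD forces for $S$ in $\SDTwo(T)$. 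The tree hypothesis is used there to show $\comp(\SDTwo(T)-u_i,w_i)\cap N_{\SDTwo(T)}(u_i)\subseteq N_T(b_i)$, via a path-lifting argument essentially dual to the one you use in Claim~(A).

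Your approach instead establishes a structural correspondence between forts: every PSD fort of $\SDTwo(T)$ is already a skew fort of $T$, and every skew fort of $T$ meets $\Wnull(T)$ in a PSD fort of $\SDTwo(T)$. This is a somewhat stronger statement than the theorem itself and is reusable: it immediately explains, for instance, why minimum skew forcing sets of $T$ sit inside $\Wnull(T)$ and correspond exactly to minimum PSD forcing sets of $\SDTwo(T)$, making \Cref{NonTrivTree:Rel} a formal consequence. The double-count in Claim~(B) showing $F\not\subseteq\Bnull(T)$ is a nice self-contained replacement for invoking matching theory. Conversely, the paper's constructive translation has the advantage of producing an explicit forcing process in one graph from one in the other, which could be useful if one cared about propagation time or related refinements.
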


\begin{proof}
\begingroup

For convenience, we let $H=\SDTwo(T)$ throughout this proof.

Let $S$ be a PSD forcing set of $H$.  $\Bnull(T)$ can be skew forced in $T$ starting from $\emptyset\subseteq S$; it thus suffices to prove that $S'=S\cup\Bnull(T)$ skew forces $T$.  Let $\mathcal{F}=\{u_i\to w_i\}_i$ be a chronological list of PSD forces for $S$ in $H$ with expansion sequence $\{E^{[i]}_{\mathcal{F}}\}_i$.  It is asserted that $\widetilde{\mathcal{F}}=\{b_i=\phi_-(u_iw_i)\to w_i\}_i$ is a chronological list of standard (and thus skew) forces for $S'$ in $T$ with expansion sequence $\{\widetilde{E}^{[i]}_{\widetilde{\mathcal{F}}}=E^{[i]}_{\mathcal{F}}\cup\Bnull(T)\}_i$.  Obviously, the expansion sequence is correct with respect to the initial set of blue vertices and chosen forces; it only remains to prove that the chosen forces are valid.  For each $i$, $\comp(H-E^{[i-1]}_{\mathcal{F}},w_i)\cap N_H(u_i)=\comp(H-E^{[i-1]}_{\mathcal{F}},w_i)\cap N_H[u_i]=\{w_i\}$.  We have that $w_i\in N_T(b_i)\setminus \widetilde{E}^{[i-1]}_{\widetilde{\mathcal{F}}}\subseteq (N_H[w_i]\setminus E^{[i-1]}_{\mathcal{F}})\cap N_H[u_i]\subseteq\comp(H-E^{[i-1]}_{\mathcal{F}},w_i)\cap N_H[u_i]=\{w_i\}$ and $b_i\in\Bnull(T)\subseteq \widetilde{E}^{[i-1]}_{\widetilde{\mathcal{F}}}$, so $b_i\to w_i$ is a valid standard force for $\widetilde{E}^{[i-1]}_{\widetilde{\mathcal{F}}}$ in $T$.

Now, let $S$ be a skew forcing set of $T$.  Then $S'=S\cup\Bnull(T)$ is also a skew forcing set of $T$.  Let $\mathcal{F}=\{b_i\to w_i\}_i$ be a chronological list of skew forces for $S'$ in $T$ with expansion set $\{E^{[i]}_{\mathcal{F}}\}_i$.  For all $i$, $w_i\in V(T)\setminus E^{[i-1]}_{\mathcal{F}}\subseteq\Wnull(T)$, and so $b_i\in N_T(\Wnull(T))=\Bnull(T)$.  Now, for each $i$, pick $u_i\in N_T(b_i)\setminus\{w_i\}$ (such a vertex exists by \Cref{BnullDegree}).  It is asserted that $\widetilde{\mathcal{F}}=\{u_i\to w_i\}_i$ is a chronological list of PSD forces for $S$ in $H$ with expansion sequence $\{\widetilde{E}^{[i]}_{\widetilde{\mathcal{F}}}=E^{[i]}_{\mathcal{F}}\setminus\Bnull(T)\}_i$.  Again, it is easy to verify that the expansion sequence is correct for the initial set of blue vertices and chosen forces; we only need to prove that the chosen forces are valid.

Therefore, we fix $i$, and consider whether $u_i\to w_i$ is a valid PSD force in $H$ for $\widetilde{E}^{[i-1]}_{\widetilde{\mathcal{F}}}$.  Since $b_i\to w_i$ is a valid skew force in $T$ for $E^{[i-1]}_{\mathcal{F}}$, $N_T(b_i)\setminus E^{[i-1]}_{\mathcal{F}}=N_T(b_i)\setminus \widetilde{E}^{[i-1]}_{\mathcal{\widetilde{F}}}=\{w_i\}$.  We now wish to show that $\comp(H-u_i,w_i)\cap N_H(u_i)\subseteq N_T(b_i)$.  To show this, we proceed by contradiction: let $z\in(\comp(H-u_i,w_i)\cap N_H(u_i))\setminus N_T(b_i)$.  Let $y=\phi_-(u_iz)$ be the unique vertex in $N_T(u_i)\cap N_T(z)$. Note that $y\neq b_i$, since $z\notin N_T(b_i)$.  $T$ is a tree, so $u_i$ separates $b_i$ from the remainder of $N_T(u_i)$ (including $y$).  Thus, $z\notin\comp(T-u_i,w_i)$.  Let $P=p_0\cdots p_k$ (with $p_0=z$ and $p_k=w_i$) be a $zw_i$ path in $H-u_i$.  $z\notin\comp(T-u_i,w_i)$ and $w_i\in\comp(T-u_i,w_i)$, so there must be some $j$ such that $p_j\notin\comp(T-u_i,w_i)$ and $p_{j+1}\in\comp(T-u_i,w_i)$.  $u_i$ separates $p_j$ and $p_{j+1}$ in $T$, so any $p_jp_{j+1}$ path in $T$ must contain $u_i$.  Now, $p_j\phi_-(p_jp_{j+1})p_{j+1}$ is just such a path, so $\phi_-(p_jp_{j+1})=u_i$, and therefore $u_i\in\Bnull(T)$ -- a contradiction since $\Bnull(T) \cap V(H)=\emptyset$.  Thus, $\comp(H-u_i,w_i)\cap N_H(u_i)\subseteq N_T(b_i)$, and 
 $w_i\in\comp(H-\widetilde{E}^{[i-1]}_{\mathcal{\widetilde{F}}},w_i)\cap N_H(u_i)\subseteq (\comp(H-u_i,w_i)\cap N_H(u_i) )\setminus \widetilde{E}^{[i-1]}_{\mathcal{\widetilde{F}}} \subseteq N_T(b_i) \setminus \widetilde{E}^{[i-1]}_{\mathcal{\widetilde{F}}}=\{w_i\}$.
\endgroup
\end{proof}

\begin{corollary}
\label{NonTrivTree:Rel}
Let $T$ be a skew-nontrivial tree.  Then $\Z_-(T)=\Z_+(\SDTwo(T))$ and $\ZTokExch_-(T)=\ZTokExch_+(\SDTwo(T))$.
\end{corollary}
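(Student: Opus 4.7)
The plan is to deduce both equalities directly from \Cref{NonTrivTree:FS} together with the observation that any minimum skew forcing set of $T$ is contained in $\Wnull(T)$. The key point is that $V(\SDTwo(T))=\Wnull(T)$ by definition, so PSD forcing sets of $\SDTwo(T)$ are automatically subsets of $\Wnull(T)$, which puts them exactly in the regime where \Cref{NonTrivTree:FS} applies.

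First I would establish that the families $\mathcal{M}_-=\{S\subseteq V(T)\colon S\text{ is a minimum skew forcing set of }T\}$ and $\mathcal{M}_+=\{S\subseteq V(\SDTwo(T))\colon S\text{ is a minimum PSD forcing set of }\SDTwo(T)\}$ coincide. Any minimum skew forcing set of $T$ lies in $\Wnull(T)$ by the earlier observation, and any PSD forcing set of $\SDTwo(T)$ trivially lies in $V(\SDTwo(T))=\Wnull(T)$. By \Cref{NonTrivTree:FS}, a subset $S\subseteq \Wnull(T)$ is a skew forcing set of $T$ if and only if it is a PSD forcing set of $\SDTwo(T)$. Taking the infimum of the cardinalities of the members of each collection then immediately gives $\Z_-(T)=\Z_+(\SDTwo(T))$, and moreover $\mathcal{M}_-=\mathcal{M}_+$. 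This yields $V(\ZTokExch_-(T))=V(\ZTokExch_+(\SDTwo(T)))$.

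For the edge sets, I would simply appeal to the fact that the token exchange adjacency rule in \Cref{TokReconfigDefs} depends only on the set-theoretic condition ``differ by an exchange,'' and not on the edges of the source graph. Since the vertex sets of $\ZTokExch_-(T)$ and $\ZTokExch_+(\SDTwo(T))$ are literally the same collection of subsets of $V(T)$, the induced edge sets are also identical, and therefore $\ZTokExch_-(T)=\ZTokExch_+(\SDTwo(T))$.

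There is no real obstacle here beyond unpacking notation, since \Cref{NonTrivTree:FS} does all the heavy lifting; the only small item to watch is that the statement of \Cref{NonTrivTree:FS} is restricted to $S\subseteq\Wnull(T)$, which is why invoking the observation $B\subseteq\Wnull(T)$ for every minimum skew forcing set $B$ is essential to make the correspondence cover all minimum skew forcing sets, not merely those that happen to avoid $\Bnull(T)$.
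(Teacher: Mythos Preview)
Your proposal is correct and follows essentially the same approach as the paper: the paper's proof is simply ``If $S$ is a minimum skew forcing set of $T$, then $S\cap\Bnull(T)=\emptyset$. The remainder follows from \Cref{NonTrivTree:FS},'' and you have spelled out exactly what ``the remainder'' entails, including the observation that the token exchange edge relation is purely set-theoretic.
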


\begin{proof}
If $S$ is a minimum skew forcing set of $T$, then $S\cap\Bnull(T)=\emptyset$.  The remainder follows from \Cref{NonTrivTree:FS}.
\end{proof}

For a graph $G$, the graph obtained by inserting a single vertex on every edge of $G$ is called a $1$\emph{-subdivision} of $G$.

\begin{theorem}
Let $T$ be the 1-subdivision of a tree.  Then $T$ is a skew-nontrivial tree.
\end{theorem}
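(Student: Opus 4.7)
The plan is to apply either \Cref{graphnontriv} or \Cref{treenontriv} directly, since the $1$-subdivision of a tree is highly structured. First I would observe that $T$ is a tree (a 1-subdivision of a tree is still connected and acyclic) and identify its leaves: each subdivision vertex has degree exactly $2$ (it is adjacent to the two endpoints of the edge it subdivides), so every leaf of $T$ was already a leaf of the original tree $T'$ and lives in $V(T')\subseteq V(T)$.

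Next, I would point out the natural bipartition $\{V(T'),\,V(T)\setminus V(T')\}$ of $T$: all original edges of $T'$ have been replaced by paths of length $2$ through a new subdivision vertex, so every edge of $T$ joins an original vertex to a subdivision vertex. Since all leaves of $T$ lie in the partite set $V(T')$, \Cref{graphnontriv} applies immediately and yields that $T$ is skew-nontrivial. (Equivalently, one could note that the $T$-distance between any two original vertices is exactly twice their $T'$-distance, hence even, so all pairs of leaves are an even distance apart and \Cref{treenontriv} applies.)

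There is essentially no obstacle here: the proof is a one-line verification once the correct partition is identified, and it reduces entirely to a leaf/degree count on the subdivision.
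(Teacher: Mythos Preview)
Your proposal is correct and essentially matches the paper's proof: the paper applies \Cref{treenontriv} after observing that distances between leaves double under the $1$-subdivision, which is exactly your parenthetical second route. Your primary route via \Cref{graphnontriv} is an equally valid and equally immediate alternative (indeed, \Cref{treenontriv} is itself derived from \Cref{graphnontriv}), so there is no meaningful difference in approach.
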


\begin{proof}
{Let $T$ be the 1-subdivision of a tree $H$.  It is easily verified that $T$ is a tree.  Moreover, the distance between any pair of leaves of $T$ is twice the distance between those leaves in $H$; thus $T$ is skew-nontrivial by \Cref{treenontriv}.}
\end{proof}

\begin{theorem}
\label{SkewNontrivialTreeEquivs}
Let $T$ be a skew-nontrivial tree.  Then the following are equivalent:

\begin{enumerate}
    \item \label{SNT:Z-=1}$ \Z_-(T)=1$.
    \item \label{SNT:SD2Tree} $SD_2(T)$ is a tree.
    \item \label{SNT:1SubdivTree} $T$ is the 1-subdivision of a tree.
    \item \label{SNT:1SubdivSD2} $T$ is the 1-subdivision of $SD_2(T)$.
\end{enumerate}
\end{theorem}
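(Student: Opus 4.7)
The plan is to prove the four-way equivalence as a closed cycle. Specifically, I will verify $(\ref{SNT:Z-=1})\iff(\ref{SNT:SD2Tree})$ directly from earlier results, and then close the loop with $(\ref{SNT:1SubdivSD2})\Rightarrow(\ref{SNT:1SubdivTree})$, $(\ref{SNT:1SubdivTree})\Rightarrow(\ref{SNT:1SubdivSD2})$, and $(\ref{SNT:SD2Tree})\Rightarrow(\ref{SNT:1SubdivSD2})$. The equivalence $(\ref{SNT:Z-=1})\iff(\ref{SNT:SD2Tree})$ is immediate from \Cref{NonTrivTree:Rel}, which gives $\Z_-(T)=\Z_+(\SDTwo(T))$, combined with the characterization of PSD forcing number~$1$ from \Cref{Z+=1}. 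The implication $(\ref{SNT:1SubdivSD2})\Rightarrow(\ref{SNT:1SubdivTree})$ needs no work beyond noting that $\SDTwo(T)$ is itself a tree whenever $T$ is its $1$-subdivision.

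For $(\ref{SNT:1SubdivTree})\Rightarrow(\ref{SNT:1SubdivSD2})$, my approach will be as follows. When $T$ is the $1$-subdivision of a tree $H$, the original vertices $V(H)$ and the subdivision vertices $S$ form the two bipartition classes of $T$. Setting aside the trivial case $|V(H)|\leq 1$, the tree $H$ has a leaf $\ell$, which is also a leaf of $T$; by \Cref{BnullDegree} it lies in $\Wnull(T)$. Since the bipartition of a connected tree is unique, this forces $\Wnull(T)=V(H)$ and $\Bnull(T)=S$. The identification $\SDTwo(T)\cong H$ then drops out immediately, since two vertices of $V(H)$ share a common neighbor in $T$ exactly when they are joined by a subdivision vertex, i.e., when they are adjacent in $H$. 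Under this identification $T$ is, by construction, the $1$-subdivision of $\SDTwo(T)$.

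For $(\ref{SNT:SD2Tree})\Rightarrow(\ref{SNT:1SubdivSD2})$, I would observe that for each $b\in\Bnull(T)$, the neighborhood $N_T(b)\subseteq\Wnull(T)$ induces a clique of size $\deg_T(b)$ in $\SDTwo(T)$. A tree is triangle-free, so $\deg_T(b)\leq 2$; combined with the lower bound $\deg_T(b)\geq 2$ from \Cref{BnullDegree}, every $b\in\Bnull(T)$ has exactly two $T$-neighbors. The map $\phi_-$ then becomes a bijection between $\Bnull(T)$ and $E(\SDTwo(T))$, sending each $b$ to the edge in $\SDTwo(T)$ joining its two neighbors. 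Combined with the bipartition structure $\{\Bnull(T),\Wnull(T)\}$, this explicitly exhibits $T$ as the $1$-subdivision of $\SDTwo(T)$.

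The hardest step will be the correct identification of $\Bnull(T)$ in the direction $(\ref{SNT:1SubdivTree})\Rightarrow(\ref{SNT:1SubdivSD2})$, since a priori either bipartition class of $T$ could play the role of $\Bnull(T)$. The argument hinges on \Cref{BnullDegree} excluding degree-$1$ vertices from $\Bnull(T)$, together with the uniqueness of the bipartition of a connected tree, so that a single leaf of $H$ is enough to anchor which class is which.
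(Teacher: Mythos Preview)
Your proof is correct, and the steps $(\ref{SNT:Z-=1})\Leftrightarrow(\ref{SNT:SD2Tree})$, $(\ref{SNT:SD2Tree})\Rightarrow(\ref{SNT:1SubdivSD2})$, and $(\ref{SNT:1SubdivSD2})\Rightarrow(\ref{SNT:1SubdivTree})$ essentially match the paper's arguments (the paper only states $(\ref{SNT:Z-=1})\Rightarrow(\ref{SNT:SD2Tree})$, but the converse you add is equally immediate from \Cref{NonTrivTree:Rel} and \Cref{Z+=1}).  The one genuine difference is in how the loop is closed.  The paper proves $(\ref{SNT:1SubdivTree})\Rightarrow(\ref{SNT:Z-=1})$ directly by a matching computation: if $T$ is the $1$-subdivision of a tree $H$ on $n$ vertices, then \Cref{SpecialForest} applied to the bipartition $\{V(H),V(T)\setminus V(H)\}$ gives $\match(T)=n-1$, and \Cref{ZMinusVsMatchingNumber} then yields $\Z_-(T)=|V(T)|-2(n-1)=1$.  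You instead prove $(\ref{SNT:1SubdivTree})\Rightarrow(\ref{SNT:1SubdivSD2})$ by using \Cref{BnullDegree} to place a leaf of $H$ into $\Wnull(T)$ and then invoking uniqueness of the bipartition of a connected bipartite graph to force $\Wnull(T)=V(H)$, after which $\SDTwo(T)\cong H$ is immediate.  Your route is arguably cleaner in that it sidesteps the matching lemmas altogether and stays entirely within the $\Bnull/\Wnull$ framework; the paper's route, on the other hand, gives a standalone computation of $\Z_-(T)$ that does not depend on the $\SDTwo$ construction.
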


\begin{proof}
\hfill

\noindent(\ref{SNT:Z-=1} $\Rightarrow$ \ref{SNT:SD2Tree})

Assume that $\Z_-(T)=1$.  By \Cref{NonTrivTree:Rel}, $\Z_+(\SDTwo(T))=1$.  By \Cref{Z+=1}, $\SDTwo(T)$ is a tree.

\noindent(\ref{SNT:SD2Tree} $\Rightarrow$ \ref{SNT:1SubdivSD2})

Assume that $\SDTwo(T)$ is a tree.  A tree has no $K_3$ subgraphs, so each (distinct) edge of $\SDTwo(T)$ must correspond to a (distinct) degree-2 vertex in $\Bnull(T)$.  Moreover, each vertex $v\in\Bnull(T)$ has degree $\geq 2$ by \Cref{BnullDegree}, so there is at least one corresponding edge in $E(\SDTwo(T))$.  Thus, $T$ can be derived from $\SDTwo(T)$ by replacing each edge $uw$ with the path $u\phi_-(uw)w$.

\noindent(\ref{SNT:1SubdivSD2} $\Rightarrow$ \ref{SNT:1SubdivTree})

Assume that $T$ is the 1-subdivision of $\SDTwo(T)$.  If a graph contains a cycle, then so does its 1-subdivision.  Likewise, if a graph is disconnected, then so is its 1-subdivision.  Thus, since the 1-subdivision of $\SDTwo(T)$ is a tree (acyclic and connected), so is $\SDTwo(T)$.

\noindent(\ref{SNT:1SubdivTree} $\Rightarrow$ \ref{SNT:Z-=1})

{Assume that $T$ is the 1-subdivision of a tree $H$ of order $n$.  $\{V(H),V(T)\setminus V(H)\}$ is a bipartition of $V(T)$ such that $\deg_T(v)=2$ for each $v\in V(T)\setminus V(H)$.  Thus, by \Cref{SpecialForest}, $\match(T)=|V(T)\setminus V(H)|=|E(H)|=n-1$.  Moreover, by \Cref{ZMinusVsMatchingNumber}, $\Z_-(T)=|V(T)|-2\match(T)=|V(H)|+|V(T)\setminus V(H)|-2(n-1)=1$.}
\end{proof}

\begin{corollary}
Let $T$ be a skew-nontrivial tree on $n$ vertices with $\Z_-(T)=1$.  Then $\ZTokExch_-(T)\cong K_{(n+1)/2}$ and $\ZTokSlide_-(T)\cong \frac{n+1}{2}K_1$.
\end{corollary}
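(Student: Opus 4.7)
The plan is to leverage the chain of equivalences in \Cref{SkewNontrivialTreeEquivs} together with the correspondence between skew reconfiguration of $T$ and PSD reconfiguration of $\SDTwo(T)$ from \Cref{NonTrivTree:Rel}, then count vertices and edges in the resulting tree $\SDTwo(T)$.

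First, since $T$ is skew-nontrivial and $\Z_-(T) = 1$, \Cref{SkewNontrivialTreeEquivs} applies to give both that $\SDTwo(T)$ is a tree and that $T$ is the 1-subdivision of $\SDTwo(T)$. In particular, setting $m = |V(\SDTwo(T))|$, the 1-subdivision has one new vertex per edge of $\SDTwo(T)$, so $n = |V(T)| = m + |E(\SDTwo(T))| = m + (m-1) = 2m - 1$, giving $m = (n+1)/2$.

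Next, apply \Cref{NonTrivTree:Rel} to obtain $\ZTokExch_-(T) = \ZTokExch_+(\SDTwo(T))$. Because $\SDTwo(T)$ is a tree on $m$ vertices, \Cref{treesPSD} gives $\ZTokExch_+(\SDTwo(T)) \cong K_m \cong K_{(n+1)/2}$, establishing the first isomorphism.

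For the token sliding statement, I note that the vertex set of $\ZTokSlide_-(T)$ coincides with that of $\ZTokExch_-(T)$ by \Cref{TS<=TE}, so it has $(n+1)/2$ vertices. Since $T$ is a tree, \Cref{TslideIsolate} ensures $\ZTokSlide_-(T)$ has no edges, so $\ZTokSlide_-(T) \cong \tfrac{n+1}{2} K_1$. No real obstacle arises here; everything is a direct assembly of previously established results, with the only mild bookkeeping step being the edge count $|E(\SDTwo(T))| = m-1$ used to recover $m$ from $n$.
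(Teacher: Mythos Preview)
Your proof is correct and follows essentially the same approach as the paper: both use \Cref{SkewNontrivialTreeEquivs} to see that $\SDTwo(T)$ is a tree, invoke \Cref{NonTrivTree:Rel} and \Cref{treesPSD} for the token exchange part, and finish with \Cref{TslideIsolate} for token sliding. The only cosmetic difference is in the vertex count: the paper uses \Cref{SkewNontrivialTreeZ-} to get $|\Bnull(T)|=(n-1)/2$ and hence $|\Wnull(T)|=(n+1)/2$, whereas you instead use the 1-subdivision characterization from \Cref{SkewNontrivialTreeEquivs} and count $n=2m-1$ directly---both routes are equally short and valid.
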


\begin{proof}
By \Cref{SkewNontrivialTreeZ-}, $1=n-2|\Bnull(T)|$, so $|\Bnull(T)|=\frac{n-1}{2}$.  Then $|V(\SDTwo)|=|\Wnull(T)|=\frac{n+1}{2}$.  By \Cref{SkewNontrivialTreeEquivs}, $\SDTwo(T)$ is a tree.  Thus, by \Cref{treesPSD}, $\ZTokExch_+(\SDTwo(T))\cong K_{(n+1)/2}$.  It follows from \Cref{NonTrivTree:Rel} that $\ZTokExch_-(T)\cong K_{(n+1)/2}$ as well.  Finally, by \Cref{TslideIsolate}, $\ZTokSlide_-(T)$ contains no edges, so $\ZTokSlide_-(T)\cong \frac{n+1}{2}K_1$ (since $V(\ZTokSlide_-(T))=V(\ZTokExch_-(T))$).
\end{proof}

\subsection{Duplication and reconfiguration for PSD and skew forcing}

We now discuss another similarity between PSD and skew forcing reconfiguration. For PSD forcing, join-duplication is well-behaved, increasing the PSD forcing number by exactly one (other than when duplicating an isolated vertex) and producing token reconfiguration graphs that are closely related to those of the original graph.  Adding an independent twin, however, does not behave so predictably.  Twinning an end vertex of $P_3$ yields $K_{1,3}$, with $\Z_+(P_3)=\Z_+(K_{1,3})=1$.  On the other hand, twinning the central vertex yields $C_4$, with $\Z_+(C_4)=2$.

For skew forcing, it is independent duplication (not join-duplication) that behaves well with respect to reconfiguration graphs.  In fact, the behavior of independent duplication with respect to skew forcing is suprisingly similar to that of join-duplication (of a non-isolated vertex) with respect to PSD forcing: it also increases the forcing number by exactly one, and it has exactly the same effect on its respective token exchange reconfiguration graph.  However, the lack of an edge joining the independent twins in the skew case means that there is a difference in the effects on token sliding reconfiguration graphs.  (To see that join-duplication does not behave well with respect to skew forcing, consider the fact that while join-duplicating the central vertex of $P_3$ yields the diamond graph, with $\Z_-(\text{Diamond})=2=\Z_-(P_3)+1$, join-duplicating an end vertex produces the paw graph, with $\Z_-(\text{Paw})=0$.)

In contrast to PSD forcing and skew forcing, neither independent duplication nor join-duplication is well-behaved with respect to standard forcing.  For instance, if $G=K_3$, then independent duplication of any vertex yields $K_4-e$, where $e$ is any edge of $K_4$, and $\Z(K_4-e)=\Z(K_3)=2$.  Similarly, if $G=K_{1,3}$, then join-duplicating a pendent vertex yields a graph $G^+$ with $\Z(G^+)=\Z(G)=2$.  On the other hand, even though $\Z(P_3)=1$, both join-duplication and independent duplication of any vertex of $P_3$ will yield a graph $G$ with $\Z(G)=2$.

Before proceeding to our results on duplication, we include a pair of brief results about PSD forts that will be useful for the proofs to follow.

\begin{observation}
\label{ConnectedPSDFortChecking}
Let $G$ be a graph, and let $F\subseteq V(G)$.  If $G[F]$ is connected, then $F$ is a PSD fort if and only if for all $w\in V(G)\setminus F$, $|N(w)\cap F|\neq 1$.
\end{observation}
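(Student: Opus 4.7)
The plan is to observe that this is essentially an immediate unpacking of \Cref{defPSDfort} in the special case where $G[F]$ has exactly one component. First, I would note that if $F=\emptyset$, the forward direction is vacuous (but the statement as written assumes $F$ is such that $G[F]$ is connected, which requires $F$ nonempty), so we may take $F$ to be a nonempty subset with $G[F]$ connected, and let $C_1=G[F]$ be its unique component, so that $V(C_1)=F$.

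For the forward direction, assume $F$ is a PSD fort. By \Cref{defPSDfort}, for every $w\in V(G)\setminus F$ and every component $C_i$ of $G[F]$, $|N_G(w)\cap V(C_i)|\neq 1$. Specializing to the unique component $C_1$ with $V(C_1)=F$ gives $|N_G(w)\cap F|\neq 1$, as desired. For the converse, assume that for every $w\in V(G)\setminus F$, $|N_G(w)\cap F|\neq 1$. Since $G[F]$ has exactly one component $C_1$ with $V(C_1)=F$, the condition $|N_G(w)\cap V(C_i)|\neq 1$ in \Cref{defPSDfort} is nothing more than $|N_G(w)\cap F|\neq 1$, which holds by hypothesis, so $F$ is a PSD fort of $G$.

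There is no real obstacle here; the content of the observation is purely that when $G[F]$ is connected, the per-component quantifier in the definition of a PSD fort collapses to a single condition, and the neighborhood-intersection sets $N_G(w)\cap V(C_i)$ simplify to $N_G(w)\cap F$.
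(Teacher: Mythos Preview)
Your proposal is correct and is exactly the intended reasoning: the paper states this as an observation without proof, since it is an immediate specialization of \Cref{defPSDfort} to the case where $G[F]$ has a single component $C_1$ with $V(C_1)=F$.
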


\begin{lemma}
\label{PSDFortComponents}
Let $G$ be a graph, let $F$ be a PSD fort of $G$, and let $v\in F$.  Then $F'=V(\comp(G[F],v))$ is also a PSD fort of $G$.
\end{lemma}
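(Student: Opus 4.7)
The plan is to verify the PSD fort property for $F'$ directly, exploiting the fact that $G[F']$ is connected by construction (it is the vertex set of a component of $G[F]$). By Observation \ref{ConnectedPSDFortChecking}, it then suffices to show that for every $w\in V(G)\setminus F'$ we have $|N_G(w)\cap F'|\neq 1$, so the task reduces to checking this single cardinality condition.

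I would split the verification into two cases according to where $w$ lies. In the first case, $w\in V(G)\setminus F$: here $F'$ is, by definition, the vertex set of a component of $G[F]$, and so the hypothesis that $F$ is a PSD fort of $G$ (applied to this specific component and to $w$) immediately gives $|N_G(w)\cap F'|\neq 1$. In the second case, $w\in F\setminus F'$: then $w$ lies in a component of $G[F]$ distinct from $\comp(G[F],v)$, so no edge of $G[F]$ can join $w$ to any vertex of $F'$. But any edge of $G$ whose endpoints both lie in $F$ is already an edge of $G[F]$, so in fact $N_G(w)\cap F'=\emptyset$, and in particular $|N_G(w)\cap F'|=0\neq 1$.

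Combining the two cases handles every $w\in V(G)\setminus F'$, and the connectedness of $G[F']$ lets us invoke Observation \ref{ConnectedPSDFortChecking} to conclude that $F'$ is a PSD fort of $G$. There is no real obstacle here; the only substantive observation is the second-case remark that two different components of $G[F]$ cannot be joined by an edge of $G$ (since any such edge would already belong to $G[F]$), which collapses that case to a triviality.
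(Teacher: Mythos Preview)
Your proof is correct and follows essentially the same approach as the paper's: both split on whether $w\in V(G)\setminus F$ or $w\in F\setminus F'$, use the PSD fort property of $F$ in the first case and the disjointness of components in the second, and then invoke Observation~\ref{ConnectedPSDFortChecking} via the connectedness of $G[F']$.
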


\begin{proof}
If $w\in V(G)\setminus F$, $|N(w)\cap F'|=|N(w)\cap V(\comp(G[F],v))|\neq 1$; if $w\in F\setminus F'$, then $N(w)\cap F'=\emptyset$, since $\comp(G[F],v)\neq\comp(G[F],w)$.  By \Cref{ConnectedPSDFortChecking}, $F'$ is a PSD fort of $G$, since $G[F']$ is connected.
\end{proof}

\begin{lemma}\label{dupsets}
Let $\star\in\{+,-\}$, let $G$ be a graph, and let $v\in V(G)$.  If $\star=+$, let $v$ be additionally constrained to have degree at least one (i.e., not be an isolated vertex.)  Let $G^-$ be the graph constructed from the independent duplication of $v$ (with twin $v^-$), and let $G^+$ be the graph constructed from the join-duplication of $v$ (with twin $v^+$).

Then $B\subseteq V(G)$ is a minimum $\Z_\star$-forcing set of $G$ if and only if one of the following is true:
\begin{itemize}
    \item $v \not \in B$ and both $B \cup \{v\}$ and $B \cup \{v^\star\}$ are minimum {$\Z_\star$-}forcing sets of $G^\star$, or
    \item $v \in B$ and $B \cup \{v^\star\}$ is a minimum $\Z_\star$-forcing set of $G^\star$.
\end{itemize}
Furthermore, $\Z_\star(G^\star)=\Z_\star(G)+1$.  (Note, given $B \subseteq V(G)$ with $v \not \in B$, $B \cup \{v\}$ is a minimum $\Z_\star$-forcing set of $G^\star$ if and only if $B \cup \{v^\star\}$ is a minimum {$\Z_\star$-}forcing set of $G^\star$.)
\end{lemma}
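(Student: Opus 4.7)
The proof rests on two observations: $\{v,v^\star\}$ is a $\Z_\star$-fort of $G^\star$, and for every $X\subseteq V(G)$, $X$ is a $\Z_\star$-forcing set of $G$ if and only if $X\cup\{v^\star\}$ is a $\Z_\star$-forcing set of $G^\star$ (call this the \emph{closure equivalence}). These, combined with the twin automorphism $\sigma\colon v\leftrightarrow v^\star$ of $G^\star$, yield both the identity $\Z_\star(G^\star)=\Z_\star(G)+1$ and the characterization of minimum forcing sets of $G$.

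To verify that $\{v,v^\star\}$ is a $\Z_\star$-fort: since $v$ and $v^\star$ are twins in $G^\star$, every $w\in V(G^\star)\setminus\{v,v^\star\}$ satisfies $|N_{G^\star}(w)\cap\{v,v^\star\}|\in\{0,2\}$. For $\star=+$, the induced subgraph $G^+[\{v,v^+\}]=K_2$ is connected, so $\{v,v^+\}$ is a PSD fort by \Cref{ConnectedPSDFortChecking}. For $\star=-$, $v$ and $v^-$ are non-adjacent, so each itself has empty intersection with $\{v,v^-\}$ in its neighborhood, making $\{v,v^-\}$ a skew fort directly from the definition. By \Cref{thmPSDfort} or \Cref{skewfort}, every $\Z_\star$-forcing set of $G^\star$ must meet $\{v,v^\star\}$.

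For the closure equivalence, I would argue step by step, maintaining the invariant that the current blue set in $G^\star$ equals the blue set $S$ in $G$ augmented by $\{v^\star\}$. Under this invariant the subgraphs $G^\star-S^\star$ and $G-S$ are identical, so their components coincide. A force $u\to w$ with $u\in V(G)$ is then valid in $G^\star$ from $S^\star=S\cup\{v^\star\}$ exactly when it is valid in $G$ from $S$, since any extra edge from $u$ to $v^\star$ is irrelevant ($v^\star$ is already blue). For forces from $v^\star$: in the skew case, $v$ and $v^-$ share the neighborhood $N_G(v)$, so $v^-\to w$ in $G^-$ corresponds to $v\to w$ in $G$; in the PSD case, either $v\in S$ (same correspondence) or $v\notin S$, in which case a brief case check shows the only valid force is $v^+\to v$ under the condition $N_G(v)\subseteq S$, and then $\{v\}$ is a singleton component of $G-S$ and any $u\in N_G(v)\subseteq S$ PSD-forces $v$ in $G$. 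The reverse direction is symmetric.

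With these tools, $\Z_\star(G^\star)\leq\Z_\star(G)+1$ follows by extending a minimum $\Z_\star$-forcing set $B$ of $G$ to $B\cup\{v^\star\}$. For the lower bound, any minimum $\Z_\star$-forcing set $B^\star$ of $G^\star$ meets $\{v,v^\star\}$; applying $\sigma$ if necessary, we may assume $v^\star\in B^\star$, and then $B=B^\star\setminus\{v^\star\}$ is a $\Z_\star$-forcing set of $G$ by the closure equivalence, giving $\Z_\star(G)\leq|B^\star|-1$. Thus $\Z_\star(G^\star)=\Z_\star(G)+1$. The characterization now follows: in the forward direction, the closure equivalence converts a minimum $\Z_\star$-forcing set $B$ of $G$ into the minimum $\Z_\star$-forcing set $B\cup\{v^\star\}$ of $G^\star$, with $\sigma$ producing $B\cup\{v\}$ as an additional one when $v\notin B$; in the reverse direction, the size equation forces $|B|=\Z_\star(G)$, and the closure equivalence shows $B$ forces $G$. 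The parenthetical equivalence is immediate from $\sigma$. The main obstacle is the PSD sub-case in the closure equivalence, where the analog in $G$ of a force $v^+\to v$ in $G^+$ is supplied by any neighbor of $v$ in $S$; this is where the non-isolation hypothesis on $v$ for $\star=+$ is used.
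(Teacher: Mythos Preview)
Your proof is correct and shares the same scaffolding as the paper's argument---both hinge on the fact that $\{v,v^\star\}$ is a $\Z_\star$-fort of $G^\star$ and on the twin automorphism $\sigma$---but the execution differs in two places worth noting.

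First, to pass from a forcing set of $G^\star$ containing $v^\star$ down to a forcing set of $G$, the paper modifies a chronological list of forces (replacing $v^\star$ by $v$ as the forcer) and then invokes the restriction lemma (\Cref{initialPSD}) on the induced subgraph $G=G^\star-v^\star$. Your ``closure equivalence'' accomplishes the same thing by directly maintaining the invariant $S^\star=S\cup\{v^\star\}$ and checking force-by-force that validity is preserved in both directions; this is more self-contained but requires the PSD sub-case analysis you sketch (where $v^+\to v$ is replaced by some $u\to v$ with $u\in N_G(v)$, using non-isolation).

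Second, and more substantively, for the case where a minimum forcing set $B^\star$ of $G^\star$ contains $v$ but not $v^\star$, the paper argues by contradiction: if $B=B^\star\setminus\{v\}$ failed to force $G$, there would be a $\Z_\star$-fort $F$ of $G$ with $v\in F$ and $F\cap B=\emptyset$, and the paper explicitly constructs $F^\star=(F\setminus\{v\})\cup\{v^\star\}$ and verifies it is a $\Z_\star$-fort of $G^\star$ disjoint from $B^\star$ (with some care in the PSD case to ensure $G^+[F^+]$ stays connected). You bypass this entirely by applying $\sigma$ to reduce to the case $v^\star\in B^\star$, where the closure equivalence applies directly. This is a genuine simplification; the paper's fort-transfer construction is not needed once one exploits the automorphism as you do.
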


\begin{proof}
The PSD and skew forcing numbers are coverable vertex parameters (see \Cref{coverable_def} and \Cref{CoverableParameters}); thus, if $B$ is a $\Z_{\star}$-forcing set of $G$, then $B\cup\{v^\star\}$ is a $\Z_{\star}$-forcing set of $G^\star$.

Now, let $B^\star$ be a minimum $\Z_\star$-forcing set of $G^\star$.  Since $\{v,v^\star\}$ is a $\Z_\star$-fort of $G^\star$ (in the case of PSD forcing, $N_{G^+}(v)\setminus\{v,v^+\}=N_{G^+}(v^+)\setminus\{v,v^+\}$; for skew forcing, $N_{G^-}(v)=N_{G^-}(v^-)$), $\{v,v^\star\}\cap B^\star\neq\emptyset$.  Without loss of generality, we may assume that either $B^\star\cap\{v,v^\star\}=\{v\}$ or $\{v,v^\star\}\subseteq B^\star$ (since the interchange of $v$ and $v^\star$ is an automorphism of $G^\star$).  Pick a chronological list of forces $\mathcal{F}_0=\{u_i^0\to w_i\}_i$ for $B^\star$ in $G^\star$ with expansion sequence $\{E_i\}_i$.  We define a new chronological list of forces $\mathcal{F}=\{u_i\to w_i\}_i$ for $B^\star$ in $G^\star$ with the same expansion sequence by changing any $u_i^0=v^\star$ to $u_i=v$ and leaving $u_i=u_i^0$ otherwise.  This is valid for PSD for all $i$ since $v\in B^+\subseteq E_i$ and when $v^+\in E_i$ (necessary for $v^+\to w_i$ to be valid), {$N_{G^+}(v)\setminus E_i=N_{G^+}(v^+)\setminus E_i$}. It is valid for skew since {$N_{G^-}(v)\setminus E_i=N_{G^-}(v^-)\setminus E_i$}.  By \Cref{initialPSD}, $\ini(\mathcal{F}|_G)=B^\star\setminus\{v^\star\}$ is a $\Z_\star$-forcing set of $G$.  If $v^\star\in B^\star$, we are done.

Otherwise, assume by way of contradiction that $B=B^\star\setminus\{v\}$ is not a $\Z_\star$-forcing set of $G$.  Then there exists a $\Z_\star$-fort $F$ such that $v\in F$ and $F\cap B=\emptyset$, by \Cref{thmPSDfort} and \Cref{skewfort}.  In the PSD forcing case, we can take $G[F]$ to be connected, since for any PSD fort $F_0$ with $v\in F_0$ and $F_0\cap B=\emptyset$, $F=V(\comp(G[F_0],v))$ is also such a fort, by \Cref{PSDFortComponents}, and its induced subgraph $G[F]$ is connected.  Let $F^\star=(F\setminus\{v\})\cup\{v^\star\}$.  It is asserted that $F^\star$ is a $\Z_{\star}$-fort of $G^\star$, which would lead directly to a contradiction, since $F^\star\cap B^\star=\emptyset$.  In the case of skew forcing, for each $x\in V(G^-)$ with $x\neq v^-$, $|N_{G^-}(x)\cap F^-|=|N_G(x)\cap F|\neq 1$; moreover, $|N_{G^-}(v^-)\cap F^-|=|N_G(v)\cap F|\neq 1$, and $F^-$ is thus a skew fort of $G^-$.

The case of PSD forcing is only slightly more complicated.  Since $v$ is not isolated in $G$, there exists $u\in N_G(v)$.  It then follows that $v$ is not isolated in $G[F]$, as that would imply that both $u\notin F$ and $N_G(u)\cap\comp(G[F],v)=\{v\}$, contradicting the fact that $F$ is a PSD fort of $G$.  Thus, there exists $w\in N_G(v)\cap F$; note that $w\in N_{G^+}(v)\cap F^+$ as well.  Now, as $G[F]$ is connected and $N_{G^+}(v^+)\setminus \{v\}=N_G(v)$, $G^+[F^+]$ is also connected.  Therefore, we only need to check that for all $x\in V(G^+)\setminus F^+$, $|N_{G^+}(x)\cap F^+|\neq 1$ by \Cref{ConnectedPSDFortChecking}.   If $x=v$, then we have that $\{v^+,w\}\subseteq N_{G^+}(x)\cap F^+$.  Otherwise, 
\begin{multline*}
|N_{G^+}(x)\cap F^+|=|(N_{G^+}(x)\cap F^+)\setminus\{v^+\}|+|N_{G^+}(x)\cap\{v^+\}|= \\
|(N_G(x)\cap F)\setminus\{v\}|+|N_G(x)\cap\{v\}|=|N_G(x)\cap F|\neq 1.
\end{multline*}

In both cases, $|N_{G^+}(x)\cap F^+|\neq 1$, so $F^+$ is a PSD fort of $G^+$.
\end{proof}

\begin{theorem}
Let $G$ be a graph, let $v \in V(G)$, let $G^-$ be the graph constructed from the independent duplication of $v$ (with twin $v^-)$, let $G^+$ be the graph constructed from the join-duplication of $v$ (with twin $v^+$), let $\star \in \{+,-\}$, and if $\star=+$ suppose $v$ is not an isolated vertex.  Then
\[\ZTokExch_{\star}(G^{\star}) \cong (\ZTokExch_{\star}(G) \cartProd P_2)/\simname \hspace{0.15in} \text{and} \hspace{0.15in} \ZTokSlide _+(G^+) \cong (\ZTokSlide_+(G) \cartProd P_2)/\simname   \hspace{0.15in} \text{and} \hspace{0.15in} \ZTokSlide _-(G^-) \cong (2\ZTokSlide_-(G))/\simname,\]
such that given $B \in V\left(\ZTokExch_{\star}(G)\right)$ we have $(B,v) \simrel (B,v^{\star})$ if and only if $v \in B$, where $V(P_2)=\{v,v^{\star}\}$.
\end{theorem}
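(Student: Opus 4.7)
The plan is to construct an explicit bijection between the vertex sets and verify edge preservation, handling the PSD and skew cases uniformly for token exchange but diverging for token sliding. By \Cref{dupsets}, the minimum $\Z_{\star}$-forcing sets of $G^{\star}$ fall into three types: (A) $B \cup \{v\}$ with $v \notin B$ and $B$ a minimum $\Z_{\star}$-forcing set of $G$; (B) $B \cup \{v^{\star}\}$ with $v \notin B$ and $B$ a minimum $\Z_{\star}$-forcing set of $G$; and (C) $B \cup \{v^{\star}\}$ with $v \in B$ and $B$ a minimum $\Z_{\star}$-forcing set of $G$. I would define $\psi$ that sends the class $[(B, v)]$ (a singleton when $v \notin B$) to $B \cup \{v\}$, the class $[(B, v^{\star})]$ (a singleton when $v \notin B$) to $B \cup \{v^{\star}\}$, and the two-element class $\{(B, v), (B, v^{\star})\}$ (when $v \in B$) to $B \cup \{v^{\star}\}$. \Cref{dupsets} then directly establishes that $\psi$ is a well-defined bijection onto $V(\ZTokExch_{\star}(G^{\star}))$.

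For token exchange, I would verify edge preservation by a case analysis on which of $v, v^{\star}$ each endpoint contains. Product edges of the form $(B_1, x)(B_2, x)$ with $B_1 B_2 \in E(\ZTokExch_{\star}(G))$ lift to exchanges of the same pair of vertices in $G^{\star}$, while the $P_2$-edge $(B, v)(B, v^{\star})$ (when $v \notin B$) lifts to the exchange of $v$ with $v^{\star}$ between $B \cup \{v\}$ and $B \cup \{v^{\star}\}$. Conversely, every edge in $\ZTokExch_{\star}(G^{\star})$ falls into one of these patterns: the fact that $\{v, v^{\star}\}$ is a $\Z_{\star}$-fort of $G^{\star}$ (for both PSD and skew) forces every minimum forcing set to contain at least one of $v, v^{\star}$, constraining how exchanges can interact with these vertices. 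The subtle point is that when $v \in B_1 \cap B_2$, the two product edges $(B_1, v)(B_2, v)$ and $(B_1, v^{\star})(B_2, v^{\star})$ collapse into a single edge in the quotient, which matches the single edge between $B_1 \cup \{v^{\star}\}$ and $B_2 \cup \{v^{\star}\}$ in $G^{\star}$.

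For token sliding, the same vertex bijection applies, but the existence of $P_2$-edges depends on whether $vv^{\star} \in E(G^{\star})$. In the PSD (join-duplication) case, $vv^{+} \in E(G^{+})$, so every $P_2$-edge $(B, v)(B, v^{+})$ corresponds to a valid slide between $B \cup \{v\}$ and $B \cup \{v^{+}\}$, yielding the Cartesian product structure. In the skew (independent-duplication) case $vv^{-} \notin E(G^{-})$, so no direct slide exists between $B \cup \{v\}$ and $B \cup \{v^{-}\}$; the $P_2$-factor edges vanish, leaving only edges within each copy of $\ZTokSlide_{-}(G)$, which is exactly $2\,\ZTokSlide_{-}(G)$ before the identification. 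For edges in $\ZTokSlide_{-}(G^{-})$ between different types (e.g., $B_1 \cup \{v\}$ of Type A and $B_2 \cup \{v^{-}\}$ of Type C), I would invoke the identity $N_{G^{-}}(v^{-}) = N_G(v)$ to show that such an edge corresponds precisely to a slide $B_1 B_2 \in E(\ZTokSlide_{-}(G))$ in one of the two copies, combined with the identification $(B_2, v) \sim (B_2, v^{-})$.

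The main obstacle will be the combinatorial bookkeeping across the three vertex types in $\ZTokExch_{\star}(G^{\star})$, especially for the skew token sliding graph, where edges between $\{v\}$-only sets (Type A) and $\{v, v^{-}\}$ sets (Type C) must be accounted for by the quotient identification on a single copy of $\ZTokSlide_{-}(G)$. Ensuring that every such cross-type edge is captured, and that no spurious edges are introduced, is where the twin relation $N_{G^{-}}(v^{-}) = N_G(v)$ together with \Cref{dupsets} will do the decisive work.
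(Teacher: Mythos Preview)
Your proposal is correct and follows essentially the same approach as the paper: both arguments invoke \Cref{dupsets} to partition the minimum $\Z_\star$-forcing sets of $G^\star$ into the three classes you label A, B, C (the paper calls them $\mathcal{B}_v$, $\mathcal{B}_{v^\star}$, $\mathcal{B}_{v,v^\star}$), then verify by case analysis on pairs of classes that edges match the quotiented product, with the token-sliding split governed by whether $vv^\star\in E(G^\star)$. The only cosmetic difference is that the paper phrases the argument as exhibiting two overlapping copies of $\ZTokExch_\star(G)$ inside $\ZTokExch_\star(G^\star)$ rather than writing down your map $\psi$ explicitly, but the edge-by-edge bookkeeping is the same.
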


\begin{proof}
The proof will proceed by establishing that there are two copies of $\ZTokExch_{\star}(G)$ contained in $\ZTokExch_{\star}(G^{\star})$.  After establishing this fact, we will analyze precisely where the two copies overlap and where edges exist between the two copies.  Let $\mathcal B_v=\left\{B \cup \{v\}: v \not \in B \in V\left(\ZTokExch_{\star}(G)\right)\right\}$, $\mathcal B_{v^{\star}}=\left\{B \cup \{v^{\star}\}: v \not \in B \in V\left(\ZTokExch_{\star}(G)\right)\right\}$, and $\mathcal B_{v,v^{\star}}=\left\{B \cup \{v^{\star}\}: v \in B \in V\left(\ZTokExch_{\star}(G)\right)\right\}$.  By \Cref{dupsets}, the collection of minimum $\Z_{\star}$-forcing sets of $G^{\star}$, and thus the collection of vertices $V\left(\ZTokExch_{\star}(G^{\star})\right)=V\left(\ZTokSlide_{\star}(G^{\star})\right)$, is $\mathcal B_v \cup \mathcal B_{v^{\star}} \cup \mathcal B_{v,v^{\star}}$.  We proceed to examine the different possible cases for edges in $\ZTokExch_{\star}(G^{\star})$:

If $B_1,B_2 \in \mathcal B_{v^{\star}} \cup \mathcal B_{v,v^{\star}}$ we have $B_1B_2 \in E\left(\ZTokExch_{\star}(G^{\star})\right)$ if and only if $B_1'B_2' \in E\left(\ZTokExch_{\star}(G)\right)$, where $B_1'=B_1 \setminus \{v^{\star}\}$ and $B_2'=B_2 \setminus \{v^{\star}\}$, and thus {$\ZTokExch_{\star}(G^{\star})[\mathcal B_{v^{\star}} \cup \mathcal B_{v,v^{\star}}]\cong\ZTokExch_{\star}(G)$.}

Similarly, given $B_1,B_2 \in \mathcal B_v$ we have $B_1B_2 \in E\left(\ZTokExch_{\star}(G^{\star})\right)$ if and only if $B_1'B_2' \in E\left(\ZTokExch_{\star}(G)\right)$, where $B_1'=B_1 \setminus \{v\}$ and $B_2'=B_2 \setminus \{v\}$.

Finally, given $B_1 \in \mathcal B_v$ and $B_2 \in \mathcal B_{v,v^{\star}}$, we let $B_1'=B_1\setminus\{v\}$ and $B_2'=B_2\setminus\{v^{\star}\}$ and observe that the following are equivalent:

\begin{itemize}
\item $B_1B_2\in E(\ZTokExch_{\star}(G^{\star}))$;
\item $B_1\symmDiff B_2=\{w,v^{\star}\}$ for some $w\in V(G)$, $w\neq v$;
\item $B_1'\symmDiff B_2'=\{w,v\}$ for some $w\in V(G)$, $w\neq v$;
\item $B_1'B_2'\in E(\ZTokExch_{\star}(G))$.
\end{itemize}

\noindent Thus $\ZTokExch_{\star}(G^{\star})[\mathcal B_v \cup \mathcal B_{v,v^{\star}}]\cong\ZTokExch_{\star}(G)$.  It has therefore been established that $\ZTokExch_{\star}(G^{\star})$ contains two copies of $\ZTokExch_{\star}(G)$, namely, $\ZTokExch_{\star}(G^{\star})[\mathcal B_v \cup \mathcal B_{v,v^{\star}}]$ and $\ZTokExch_{\star}(G^{\star})[\mathcal B_{v^{\star}} \cup \mathcal B_{v,v^{\star}}]$.

Now we analyze where these two copies of $\ZTokExch_{\star}(G)$ overlap and the edges between them. Given $B_1 \in \mathcal B_v$ and $B_2 \in \mathcal B_{v^{\star}}$, $B_1B_2 \in E\left(\ZTokExch_{\star}(G^{\star})\right)$ if and only if $B_1 \setminus \{v\}=B_2 \setminus \{v^{\star}\}$.  On the other hand, the two copies overlap precisely at $\mathcal B_{v,v^{\star}}$, which occurs when $B \in \ZTokExch_{\star}(G)$ contains $v$. Thus $\ZTokExch_{\star}(G^{\star}) \cong (\ZTokExch_{\star}(G) \cartProd P_2)/\simname$, where for $B \in V\left(\ZTokExch_{\star}(G)\right)$ we have $(B,v) \simrel (B,v^{\star})$ if and only if $v \in B$ (letting $V(P_2)=\{v,v^{\star}\}$).  

We now consider $\ZTokSlide_+(G^+)$ and $\ZTokSlide_-(G^-)$.  Since $vv^+ \in E(G^+)$, a similar argument to that above shows that $\ZTokSlide_+(G^+) \cong (\ZTokSlide_+(G) \cartProd P_2)/\simname$.  On the other hand, since $vv^- \not \in E(G^-)$, another similar argument shows that $\ZTokSlide_-(G^-) \cong (2\ZTokSlide_-(G))/\simname$, that is, there will be no edges between corresponding vertices in the copies of $\ZTokSlide_-(G)$ given by $\ZTokSlide_-(G^-)[\mathcal B_v \cup \mathcal B_{v,v^-}]$ and $\ZTokSlide_-(G^-)[\mathcal B_{v^-} \cup \mathcal B_{v,v^-}]$.
\end{proof}

\section{Comparison of reconfiguration for different color change rules}\label{compare}

The results of \Cref{universal} serve as a summary of some features of reconfiguration graphs that are shared across the standard, PSD, and skew forcing rules (as well as other graph parameters).  In this section we use the results of \Cref{psd}, \Cref{skew}, and \cite{geneson2023reconfiguration} to summarize the contrast in reconfiguration graphs across these zero forcing variants.  We suppress the proofs of results that are determined by direct calculation on finite graphs, which were verified with software.

\subsection{Connectedness of reconfiguration graphs for zero forcing variants}

We begin by discussing the number of connected components for reconfiguration graphs under token exchange for specific graphs $G$ (i.e. $\ZTokExch(G)$, $\ZTokExch_+(G)$, and $\ZTokExch_-(G)$) and token sliding (i.e. $\ZTokSlide(G)$, $\ZTokSlide_+(G)$, and $\ZTokSlide_-(G)$).  In general, the number of connected components is not comparable.  We illustrate this in \Cref{noncomp}, where the representatives are chosen to be graphs of small order.  Images of the nonstandard graphs in the table are illustrated in \Cref{TableGraphs}.  We also note that there are graphs $G$ such that $\ZTokSlide(G)$ and $\ZTokSlide_-(G)$ have isolated vertices (for example, $P_{2k+1}$ for some integer $k>0$).  However, in light of \Cref{migrate}, if $G$ has an edge then $\ZTokSlide_+(G)$ has no isolated vertices.

\begin{figure}
\centering
\begin{tikzpicture}[every matrix/.style={graph styles 2,every node/.style=vertex,scale=0.75}]
\matrix[column sep=0.6in] (top row) {
 \begin{scope}[shift={(-1,1)}]
  \node (1) at (0,0) {};
  \node (2) at (0,-2) {};
  \node (3) at (-1,-1) {};
  \node (4) at (2,0) {};
  \node (5) at (2,-2) {};  
  \node (6) at (3,-1) {};

  \draw (1) -- (3);
  \draw (2) -- (3);
  \draw (1) -- (4);
  \draw (4) -- (5);
  \draw (2) -- (5);
  \draw (2) -- (1);
  \draw (4) -- (6);
  \draw (6) -- (5);
  
  \node[caption node,below=16pt of current bounding box.south] {$G_6$};
 \end{scope}
&
 \begin{scope}[shift={(-1,1)}]
  \node (1) at (0,0) {};
  \node (2) at (0,-2) {};
  \node (3) at (-1,-1) {};
  \node (4) at (2,0) {};
  \node (5) at (2,-2) {};  
  \node (6) at (3,-1) {};

  \draw (1) -- (3);
  \draw (2) -- (3);
  \draw (1) -- (4);
  \draw (4) -- (5);
  \draw (2) -- (5);
  \draw (2) -- (1);
  \draw (4) -- (6);
  \draw (6) -- (5);
  \draw (1) -- (5);
  \draw (3) -- (6);
  \draw (2) -- (4);
  
  \node[caption node,below=16pt of current bounding box.south] {$KB(4,2)$};
 \end{scope}
\\
};
\matrix[below=0.35in of top row,column sep=0.75in] {
 \begin{scope}[scale=0.7]
  \node (0) at (-2, -4) {};
  \node (1) at (-2, -2) {};
  \node (2) at (-2,  0) {};
  \node (3) at (-2,  2) {};
  \node (4) at ( 2, -4) {} edge (0) edge (1) edge (2) edge (3);
  \node (5) at ( 2, -2) {} edge (0) edge (1) edge (2) edge (3);
  \node (6) at ( 2,  0) {} edge (0) edge (1) edge (2) edge (3);
  \node (7) at ( 2,  2) {} edge (0) edge (1) edge (2) edge (3);
  \node (8) at ( 0,  4) {} edge (3) edge (7);
 
  \node[caption node,below=16pt of current bounding box.south] {$G_9$};
 \end{scope}
&
 \begin{scope}[scale=0.25,every node/.append style={minimum size=5.4pt}]
  \node (n1) at (-5,-2) {};
  \node (n2) at (-5,5) {} edge (n1);
  \node (n3) at (2,-2) {} edge (n1);
  \node (n4) at (2,5) {} edge (n2) edge (n3);
  \node (n5) at (-2,-5) {} edge (n1);
  \node (n6) at (-2,2) {} edge (n2) edge (n5);
  \node (n7) at (5,-5) {} edge (n3) edge (n5);
  \node (n8) at (5,2) {} edge (n4) edge (n6) edge (n7);
  \node (n9) at (-12,12) {} edge (n1) edge (n8);
  \node (n10) at (9,12) {} edge (n2) edge (n7) edge (n9);
  \node (n11) at (-12,-9) {} edge (n3) edge (n6) edge (n9);
  \node (n12) at (9,-9) {} edge (n4) edge (n5) edge (n10) edge (n11);
  \node (n13) at (-9,9) {} edge (n4) edge (n5) edge (n9);
  \node (n14) at (12,9) {} edge (n3) edge (n6) edge (n10) edge (n13);
  \node (n15) at (-9,-12) {} edge (n2) edge (n7) edge (n11) edge (n13);
  \node (n16) at (12,-12) {} edge (n1) edge (n8) edge (n12) edge (n14) edge (n15);
  
  \node[caption node,below=16pt of current bounding box.south] {$G_{16}$};
 \end{scope}
\\
};
\end{tikzpicture}
\caption{Nonstandard graphs referenced in \Cref{noncomp}.}
\label{TableGraphs}
\end{figure}
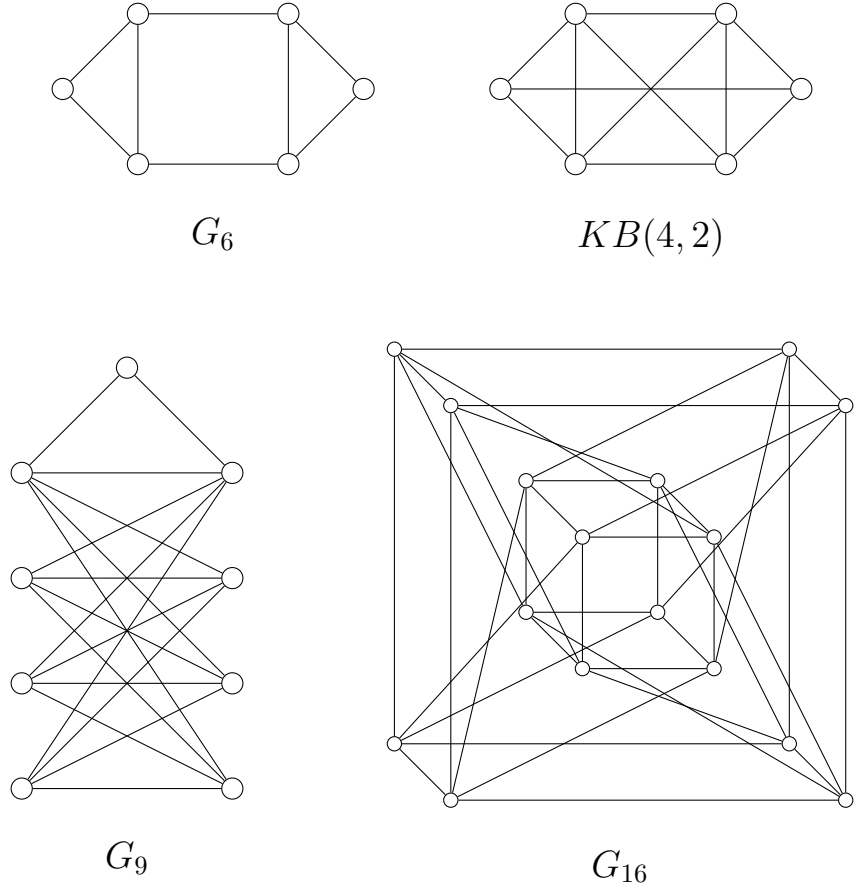

\renewcommand{\arraystretch}{1.25}
\begin{table}[ht]
    \centering
    \begin{tabular}{|c |c ||c|c|c||c|c|c||}\hline
    &&\multicolumn{6}{c||}{Number of connected components} \\
    Graph& Order &\multicolumn{3}{c||}{Token Exchange}&\multicolumn{3}{c||}{Token Sliding}\\
    &&Std &PSD & Skew & Std & PSD & Skew\\
    \hline
    $G_9$&9&1&2&1&2&2&1\\ \hline
    $K_{4,4}\vee K_1$&9&1&2&1&1&2&1\\ \hline
    $K_{4,4}$&8&1&2&1&16&2&16\\\hline
    $G_{16}$&16&1&10&1&1&10&11\\\hline
    $KB(4,2)$&6&1&1&2&1&1&4\\\hline
    $K_4-e$&4&1&1&1&2&1&1\\\hline
    $G_6$&6&2&1&1&2&1&1\\\hline
    $P_3$&3&1&1&1&2&1&2\\\hline
    \end{tabular}
    \caption{Graphs illustrating the noncomparability of connected components of reconfiguration graphs across zero forcing variants}
    \label{noncomp}
\end{table}

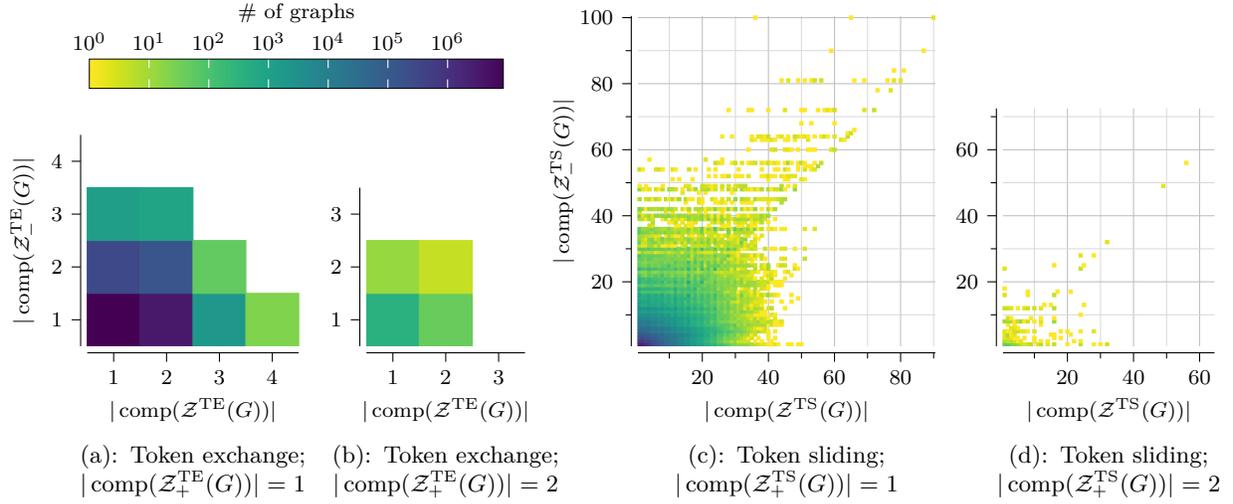
\begin{figure}
\centering
\tikzset{external/export,external/figure name=tgraph10_small2_}
\begin{tikzpicture}[custom matrix plot settings]
\begin{scope}[/pgfplots/.cd,xmin=0.5,xmax=4.5,ymin=0.5,ymax=4.5,point meta min=0,point meta max=6.9330,max space between ticks=18]
\begin{axis}[title={(a): Token exchange; $|\comp(\TokExch_+(G))|=1$},xlabel={$|\comp(\TokExch(G))|$},ylabel={$|\comp(\TokExch_-(G))|$},name=exch 10 psd 1,colorbar horizontal,x=20pt,y=20pt,scale mode=none,colorbar style={width=2*\pgfkeysvalueof{/pgfplots/parent axis width}-0.12cm,at={(0.01,1.22)}}]
\addplot [matrix plot*,mesh/rows=3] table [x=std,y=skew,meta=logc] {
std   skew     count     logc
 1      1   8570979   6.9330
 1      2    256251   5.4087
 1      3      1152   3.0615
 2      1   3044069   6.4835
 2      2    129690   5.1129
 2      3       766   2.8842
 3      1      1788   3.2524
 3      2        47   1.6721
 3      3         0      nan
 4      1        21   1.3222
 4      2         0      nan
 4      3         0      nan
};
\end{axis}

\begin{axis}[at={($(exch 10 psd 1.south east)+(0.9cm,0)$)},anchor=south west,title={(b): Token exchange; $|\comp(\TokExch_+(G))|=2$},xlabel={$|\comp(\TokExch(G))|$},name=exch 10 psd 2,xmax=3.5,ymax=3.5,x=20pt,y=20pt,scale mode=none]
\addplot [matrix plot*,mesh/rows=2] table [x=std,y=skew,meta=logc] {
std   skew     count     logc
 1      1       350   2.5441
 1      2        12   1.0792
 2      1        39   1.5911
 2      2         4   0.6021
};
\end{axis}
\end{scope}

\begin{scope}[/pgfplots/.cd,xmin=0.5,xmax=90.5,ymin=0.5,ymax=100.5,point meta min=0,point meta max=6.9330,minor tick num=1,grid=both,x=1.25pt,y=1.25pt,scale mode=none]
\begin{axis}[at={($(exch 10 psd 2.south east)+(1.5cm,0)$)},anchor=south west,title={(c): Token sliding; $|\comp(\TokSlide_+(G))|=1$},xlabel={$|\comp(\TokSlide(G))|$},ylabel={$|\comp(\TokSlide_-(G))|$},ylabel shift=-6pt,name=slide 10 psd 1]
\addplot [matrix plot*,mesh/rows=100] table [x=std,y=skew,meta=logc] {DataFiles/data_10_slide_psd1.txt};
\end{axis}

\begin{axis}[at={($(slide 10 psd 1.south east)+(0.9cm,0)$)},anchor=south west,title={(d): Token sliding; $|\comp(\TokSlide_+(G))|=2$},xlabel={$|\comp(\TokSlide(G))|$},name=slide 10 psd 2,xmax=64.5,ymax=72.5]
\addplot [matrix plot*,mesh/rows=56] table [x=std,y=skew,meta=logc] {DataFiles/data_10_slide_psd2a.txt};
\end{axis}
\end{scope}
\end{tikzpicture}
\caption{Comparison of the relative frequency of graphs having specific numbers of connected components in their token exchange or token sliding reconfiguration graphs for various color change rules.  (For this figure, all 12,005,168 non-isomorphic graphs on 10 vertices were used.)}
\label{Plots10}
\end{figure}

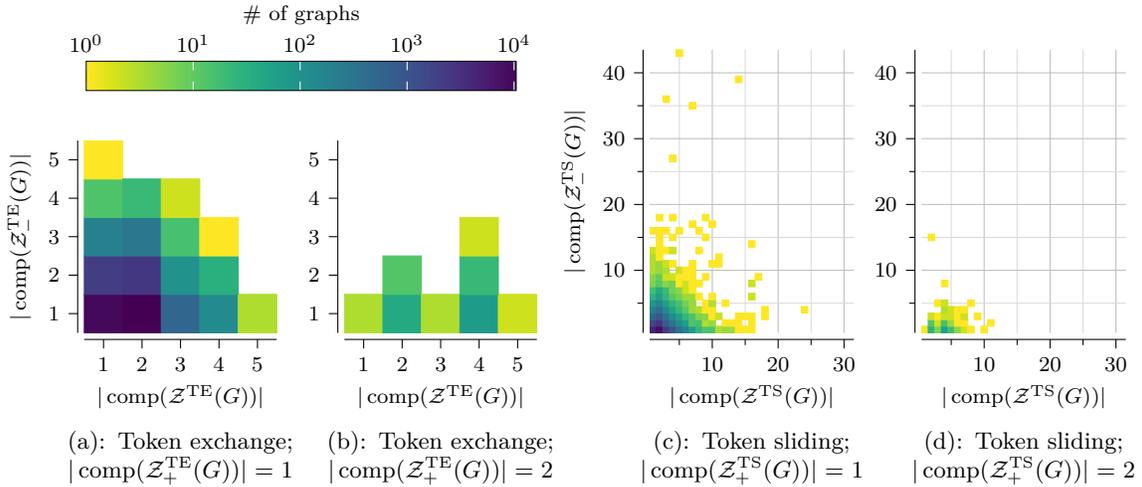
\begin{figure}
\centering
\tikzset{external/export,external/figure name=tgraph20_small2_}
\begin{tikzpicture}[custom matrix plot settings]
\begin{scope}[/pgfplots/.cd,xmin=0.5,xmax=5.5,ymin=0.5,ymax=5.5,point meta min=0,point meta max=4.0201,width=4.8cm,max space between ticks=18]
\begin{axis}[axis equal image,title={(a): Token exchange; $|\comp(\TokExch_+(G))|=1$},xlabel={$|\comp(\TokExch(G))|$},ylabel={$|\comp(\TokExch_-(G))|$},name=exch 20 psd 1,colorbar horizontal,colorbar style={width=2*\pgfkeysvalueof{/pgfplots/parent axis width}+0.6cm,at={(0.01,1.26)}}]
\addplot [matrix plot*,mesh/rows=5] table [x=std,y=skew,meta=logc] {
   std   skew   count     logc
     1      1    8673   3.9382
     1      2    2048   3.3113
     1      3     194   2.2878
     1      4      13   1.1139
     1      5       1   0.0000
     2      1   10473   4.0201
     2      2    2388   3.3780
     2      3     262   2.4183
     2      4      20   1.3010
     2      5       0      nan
     3      1     507   2.7050
     3      2      96   1.9823
     3      3      15   1.1761
     3      4       2   0.3010
     3      5       0      nan
     4      1     126   2.1004
     4      2      31   1.4914
     4      3       1   0.0000
     4      4       0      nan
     4      5       0      nan
     5      1       3   0.4771
     5      2       0      nan
     5      3       0      nan
     5      4       0      nan
     5      5       0      nan
};
\end{axis}

\begin{axis}[at={($(exch 20 psd 1.south east)+(0.9cm,0)$)},anchor=south west,axis equal image,title={(b): Token exchange; $|\comp(\TokExch_+(G))|=2$},xlabel={$|\comp(\TokExch(G))|$},name=exch 20 psd 2]
\addplot [matrix plot*,mesh/rows=3] table [x=std,y=skew,meta=logc] {
   std   skew   count     logc
     1      1       3   0.4771
     1      2       0      nan
     1      3       0      nan
     2      1      39   1.5911
     2      2      12   1.0792
     2      3       0      nan
     3      1       3   0.4771
     3      2       0      nan
     3      3       0      nan
     4      1      65   1.8129
     4      2      21   1.3222
     4      3       2   0.3010
     5      1       2   0.3010
     5      2       0      nan
     5      3       0      nan
};
\end{axis}
\end{scope}

\begin{scope}[/pgfplots/.cd,xmin=0.5,xmax=31.5,ymin=0.5,ymax=43.5,point meta min=0,point meta max=4.0201,width=6.2cm,minor tick num=1,grid=both]
\begin{axis}[at={($(exch 20 psd 2.south east)+(1.5cm,0)$)},anchor=south west,axis equal image,title={(c): Token sliding; $|\comp(\TokSlide_+(G))|=1$},xlabel={$|\comp(\TokSlide(G))|$},ylabel={$|\comp(\TokSlide_-(G))|$},name=slide 20 psd 1]
\addplot [matrix plot*,mesh/rows=43] table [x=std,y=skew,meta=logc] {DataFiles/data_20_slide_psd1.txt};
\end{axis}

\begin{axis}[at={($(slide 20 psd 1.south east)+(0.9cm,0)$)},anchor=south west,axis equal image,title={(d): Token sliding; $|\comp(\TokSlide_+(G))|=2$},xlabel={$|\comp(\TokSlide(G))|$},name=slide 20 psd 2]
\addplot [matrix plot*,mesh/rows=15] table [x=std,y=skew,meta=logc] {DataFiles/data_20_slide_psd2a.txt};
\end{axis}
\end{scope}
\end{tikzpicture}
\caption{Comparison of the relative frequency of graphs having specific numbers of connected components in their token exchange or token sliding reconfiguration graphs for various color change rules.  (For this figure, 25,000 random graphs on 20 vertices were generated, with edge probability $\frac{1}{2}$.)}
\label{Plots20}
\end{figure}

To illustrate the full range of possible number of disconnected components of reconfiguration graphs, the reconfiguration graphs for standard, PSD and skew forcing were computed for two collections of graphs. 
 For each of the graphs considered, the PSD reconfiguration graphs had at most two connected components.  The first heat map in Figure \ref{Plots10} considers graphs from the collection of all 12,005,168 non-isomorphic graphs on 10 vertices for which the PSD token exchange graph is connected, and illustrates the number of connected components of $\ZTokExch(G)$ versus the number of components of $\ZTokExch_-(G)$. The second heat map, makes a similar comparison of graphs for which the PSD token exchange graph has two connected components.  The third and fourth heat maps make analogous comparisons to those made in the first and second for the number of components of $\ZTokSlide(G)$ versus $\ZTokSlide_-(G)$. In each heat map, a tile at position $(x,y)$ indicates that there is at least one graph $G$ in the data set for which $\ZTokExch(G)$ (or $\ZTokSlide(G)$) has $x$ components and $\ZTokExch_-(G)$ (or $\ZTokSlide_-(G)$) has $y$ components, with the color of the tile indicating the approximate number of such graphs in the collection.  Figure \ref{Plots20} shows a similar comparison for the 25,000 random graphs on $20$ vertices.

\subsection{Comparison of reconfiguration graphs corresponding to families of source graphs across zero forcing variants}
\subsubsection{Trees}
Let $T$ be a tree.  Then $\ZTokExch(T)$ is connected and furthermore, if $T$ is not a path then $\ZTokExch(T)$ contains either a $C_3$ or $C_4$ subgraph \cite{geneson2023reconfiguration}.  The PSD color change rule yields $\ZTokExch_+(T) \cong K_r$ where $\vert V(T) \vert = r$ (\Cref{treesPSD}).  Finally, the skew color change rule yields that $\ZTokExch_-(T)$ is connected (\Cref{treesSkew}), with additional information about its reconfiguration given in \Cref{skew}.  For token sliding, we showed that the PSD token sliding graph of a tree is itself (\Cref{treesPSD}), whereas the skew token sliding graph of a tree consists solely of isolated vertices (\Cref{TslideIsolate}).

Now consider the realizability of $T$ as a token exchange graph under the various color change rules.  This article showed that there are no graphs $G$ such that $\ZTokExch_+(G) \cong T$  except in trivial cases (\Cref{noPSDteTree}).  We also conjecture that trees are not realizable as $\ZTokExch_-(G)$ for any graph $G$, but establishing this will require additional tools. 
 However, many different graphs $G$ can satisfy $\ZTokExch(G)\cong P_n$ for a given $n$.  For example, for $n\geq 5$, let $C_n(2)$ denote a cycle with vertices {$v_1,v_2, \dots, v_n$ and the two additional edges $v_1v_3$ and $v_3v_n$.}  Then $\ZTokExch(C_n(2)) \cong P_{n-1}$. Two examples with $P_5$ as their standard token exchange graph, $G_1=C_6(2)$ and $G_2$, are given in \Cref{fig:C6(2)}.  

\begin{figure}
    \centering
\begin{tikzpicture}
  \tikzstyle{vertex}=[circle, draw, inner sep=0pt, minimum size=4mm]

  \foreach \i in {1,2,3,4,5,6}
    \node[vertex] (\i) at ({(\i+1)*360/6}:1cm) {};
  \node[caption node] (b) at (0,-1.5) {$G_1$};
  \draw (1) -- (2);
  \draw (2) -- (3);
  \draw (3) -- (4);
  \draw (4) -- (5);
  \draw (5) -- (6);
  \draw (6) -- (1);
  \draw (1) -- (3);
  \draw (3) -- (6);

  \begin{scope}[shift={(2,1.5)}]

  \node[vertex] (11) at (1, -1) {};
  \node[vertex] (12) at (2, -1) {};
  \node[vertex] (13) at (3, -1) {};
  
  \node[vertex] (21) at (1, -2) {};
  \node[vertex] (22) at (2, -2) {};
  \node[vertex] (23) at (3, -2) {};
  \node[vertex] (24) at (4, -2) {};
  \node[caption node] (a) at (2 , -3) {$G_2$};
  
  \draw (11) -- (12) -- (13);
  \draw (21) -- (22) -- (23);
  
  \draw (11) -- (21);
  \draw (12) -- (22);
  \draw (13) -- (23);
  \draw (23) -- (24);
  \end{scope}
\end{tikzpicture}
    \caption{Two graphs $G_1,G_2$ satisfying $\ZTokExch(G_i)=P_5$ for $i=1,2$.}
    \label{fig:C6(2)}
\end{figure}

\subsubsection{Complete Graphs}
The reconfiguration graphs of a complete graph $K_n$ are straightforward to calculate, but provide a contrast between the color change rules. For any value of $n$, we have that $\ZTokExch(K_n) = \ZTokExch_+(K_n)\cong K_n$ (\Cref{psdCompleteTETS}), while $\ZTokExch_-(K_n) = J(n,n-2)$ (\Cref{SkewOfKn}).  In all cases the results follow directly from the value of the associated zero forcing parameter and symmetry of $K_n$, where $\Z(K_n) = \Z_+(K_n) = n-1$ and $\Z_-(K_n) = n-2$.  Since all token exchanges can be viewed as token slides in the complete graph, the same results apply to the token sliding graphs.

The realizability question of complete graphs provides a more subtle contrast between the zero forcing rules.   {The results for skew forcing are less precise than those for standard and PSD forcing due to the broad variety of graphs achieving extreme values of the skew forcing number.}

Using the standard color change rule, discussed in \cite{geneson2023reconfiguration}, {if $G$ has no isolated vertices, then} $\ZTokExch(G) \cong K_2$ if and only if $G\cong P_n$ for some $n\geq 2$.  Additionally, if $G$ has no isolated vertices and $r\geq 3$, then $\ZTokExch(G) \cong K_r$ if and only if $G\cong K_r$ or $G\cong K_{1,r}$.  Since all zero forcing sets of $K_{1,r}$ consist of $r-2$ leaves, one readily deduces that for a graph $G$ with no isolated vertices, $\ZTokSlide(G) \cong K_r$ if and only if $G\cong K_r$.

For the PSD color change rule, complete graphs were handled in \Cref{psdCompleteTETS} and \Cref{CompleteTokExch}.  Given a graph $G$ with no isolated vertices, we have $\ZTokExch_+(G) \cong K_m $ if and only if $G\cong K_m$ or $G$ is a tree with $m$ vertices.  It then follows that for a graph $G$ with no isolated vertices, $\ZTokSlide_+(G) \cong K_m$ if and only if $G\cong K_m$.

Using the skew color change rule, one can deduce that if $G=K_{1,m}$ or $\Z_-(G)=1$, then $\ZTokExch(G)\cong K_m$ for some $m$ (\Cref{skewTEcomplete}, \Cref{skewstar}).  In the token sliding case there are ways to construct graphs $G$ such that $\ZTokSlide_-(G) \cong K_m$ for some $m$, but no characterization is currently known. As an example, we define a class of graphs $\{G_n\}_{n\geq 3}$ by labeling the vertices of a complete graph $K_n$ on $n\geq 3$ vertices by $\{v_i\}_{i=1}^n$, and adding a path $v_iu_iv_{i+1}$ between each pair of consecutively labeled vertices to form a new graph $G_n$ (note that $v_1$ and $v_n$ are \emph{not} considered to be consecutive).  Then $\ZTokSlide_-(G_n)\cong K_n$, as the minimum skew forcing sets of $G_n$ are precisely the sets $\{v_i\}$ where $1\leq i\leq n$.

\section{Conclusion}

In this article, we presented a universal framework for token exchange and token sliding reconfiguration before going into more detail for PSD forcing and skew forcing. We found that many properties of the token exchange and token sliding reconfiguration graphs are parameter specific, including connectedness and realizability of trees and complete graphs as reconfiguration graphs. Further investigation is needed to study how these properties behave for the other summable (and coverable) graph parameters listed in Section \ref{universal}. 

\begin{appendices}

\section{Further details concerning vertex parameters}\label{table}

In this appendix, we further discuss the details of the table in Section \ref{tablesection}.  First, we identify the vertex parameters we will be discussing.  Then we identify whether each of these vertex parameters is summable (\Cref{def:SummableDefinition}), coverable (\Cref{coverable_def}), or an $X$-set parameter (\Cref{x_set_def}), providing short arguments for each respective conclusion. 

Many common graph parameters satisfy the definition of vertex parameters, including the independence number \cite{zfvsindep}, vertex cover number \cite{zfvsvc}, zero forcing number \cite{barioli2013aim}, PSD forcing number \cite{param}, skew forcing number \cite{allison2010minimum}, $k$-forcing number \cite{kforce} (denoted $F_k$), minor monotone floor of zero forcing number \cite{paramlong} (denoted $\lfloor\Z\rfloor$), power domination number \cite{haynes2002powerdom} (denoted $\gamma_P$), domination number \cite{zfvsdom} (denoted $\gamma$), and total domination number \cite{totald} (denoted $\gamma_t$).

Path covers, tree covers, spider covers, and clique covers are each examples of $\mathcal G$-covers, and the path cover number \cite{param}, tree cover number \cite{bozemantree}, {spider cover number \cite{haynes2002powerdom}}, and clique cover number \cite{barioli2013aim} are each partition parameters.  The chromatic number \cite{zfvschrom} is also an example of a partition parameter. Due to this, the path cover transversal number, tree cover transversal number, spider cover transversal number, clique cover transversal number, and chromatic transversal number are all vertex parameters.

We now show that the skew forcing number, while not an $X$-set parameter \cite{bjorkman2022power,bong2022isomorphisms}, is both summable and coverable.

\begin{proposition}
\label{SkewIsSummable}
The skew forcing number is a summable vertex parameter.
\end{proposition}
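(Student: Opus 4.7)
The plan is to exploit the fact that the skew color change rule is entirely local: whether $u \to v$ is a valid skew force depends only on $N_G(u)$, so forces performed inside one connected component neither involve nor affect vertices of other components. This should make both directions of the summability equivalence essentially automatic.

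Concretely, given a graph $G$ with components $\comp(G)$, any $B \subseteq V(G)$ can be written as $B = \bigcup_{C \in \comp(G)} B_C$ where $B_C = B \cap V(C)$, and I will show that $B$ is a skew forcing set of $G$ if and only if each $B_C$ is a skew forcing set of $C$.

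For the forward direction, I would fix a component $C$ of $G$ and apply \Cref{initialskew} with $H = C$ (which is indeed an induced subgraph of $G$). Let $\mathcal F$ be any relaxed chronology of skew forces for $B$ on $G$. The set of initial vertices $\ini(\mathcal F|_C)$ in $C$ consists of the vertices of $B \cap V(C)$ together with any $u \in V(C)$ such that some force $v \to u$ in $\mathcal F$ has $v \in V(G) \setminus V(C)$. But no such force can exist: a valid skew force $v \to u$ requires $vu \in E(G)$, which is impossible when $v$ and $u$ lie in different components. Hence $\ini(\mathcal F|_C) = B_C$, and \Cref{initialskew} yields that $B_C$ is a skew forcing set of $C$.

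For the backward direction, assume each $B_C$ is a skew forcing set of $C$, and choose for each component a chronology of skew forces witnessing this. Concatenating (or performing in parallel) these chronologies gives a sequence of skew forces on $G$: each force $u \to v$ used is valid in $G$ because $N_G(u) = N_C(u)$ for $u \in V(C)$, so the white-neighbor count used to verify validity in $C$ is the same as in $G$. The resulting process colors every vertex of every component blue, so $B$ is a skew forcing set of $G$. The only subtle point is the locality argument for the forward direction, and that is precisely what the restriction lemma \Cref{initialskew} packages, so no real obstacle remains.
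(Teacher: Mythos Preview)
Your proof is correct and rests on the same underlying observation as the paper's: the skew color change rule is local to components because $N_G(u)=N_C(u)$ whenever $u\in V(C)$ for a component $C$. The paper packages this slightly more directly, simply noting that $N_C(u)\setminus B_C=N_G(u)\setminus B$ and that this equality makes the biconditional immediate in both directions at once; you instead invoke \Cref{initialskew} for the forward direction, which is a bit more machinery than strictly necessary here (the lemma is designed for arbitrary induced subgraphs, where extra initial vertices can genuinely arise), but your argument that $\ini(\mathcal F|_C)=B_C$ is exactly the observation that no cross-component forces exist, so the content is the same.
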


\begin{proof}
Let $G$ be a disconnected graph, and let $B$ be a skew forcing set of $G$.  Also, for each component $C \in \comp(G)$, let $B_C=B \cap V(C)$, observing that $B=\bigcup_{C \in \comp(G)}B_C$.  Note that for each vertex $u \in V(G)$ and each pair of components $C,C' \in \comp(G)$ with $u \in V(C)$ and $u \not \in V(C')$, we have $N_C(u)\setminus B_C=N_G(u)\setminus B$ and $N_G(u) \cap V(C')=\emptyset$.  So it follows that $B$ is a skew forcing set of $G$ if and only if $B_C$ is a skew forcing set of $C$ for each $C \in \comp(G)$.  Thus the skew forcing number is a summable vertex parameter.
\end{proof}

\begin{proposition}
\label{SkewIsCoverable}
The skew forcing number is a coverable vertex parameter.
\end{proposition}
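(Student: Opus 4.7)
The plan is to show directly that if $B$ is a skew forcing set of $G-v$, then $B\cup\{v\}$ is a skew forcing set of $G$. The key idea is that any sequence of valid skew forces in $G-v$ starting from $B$ can be replayed verbatim in $G$ starting from $B\cup\{v\}$, since having the extra vertex $v$ be blue throughout cannot spoil any of the forces and can only help by not adding new white neighbors.

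First I would fix a skew forcing set $B$ of $G-v$ and choose a chronological list of skew forces $\mathcal{F}=\{u_k\to w_k\}_{k=1}^K$ for $B$ on $G-v$ with expansion sequence $\{E^{[k]}\}_{k=0}^K$, so $E^{[0]}=B$, $E^{[K]}=V(G-v)=V(G)\setminus\{v\}$, and for each $k$, $N_{G-v}(u_k)\setminus E^{[k-1]}=\{w_k\}$. Then I would set $E'^{[k]}=E^{[k]}\cup\{v\}$ and claim that $\mathcal{F}$ is also a valid chronological list of skew forces for $B\cup\{v\}$ on $G$ with this new expansion sequence.

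The verification proceeds step by step: for each $k$, one has $N_G(u_k)$ equal to either $N_{G-v}(u_k)$ (if $u_kv\notin E(G)$) or $N_{G-v}(u_k)\cup\{v\}$ (if $u_kv\in E(G)$). In the first case, $N_G(u_k)\setminus E'^{[k-1]}=N_{G-v}(u_k)\setminus E^{[k-1]}=\{w_k\}$; in the second case, since $v\in E'^{[k-1]}$, we have $N_G(u_k)\setminus E'^{[k-1]}=(N_{G-v}(u_k)\cup\{v\})\setminus(E^{[k-1]}\cup\{v\})=N_{G-v}(u_k)\setminus E^{[k-1]}=\{w_k\}$. Either way, $u_k\to w_k$ is a valid skew force in $G$ at time-step $k$. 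Consequently, after all $K$ forces, $E'^{[K]}=V(G)$, so $B\cup\{v\}$ is a skew forcing set of $G$.

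There is no real obstacle here; the argument is essentially bookkeeping and depends only on the observation that adding more blue vertices can never make a previously valid skew force invalid. This matches the pattern of \Cref{initialPSD} (restricted to the skew case), and the proof occupies only a few lines.
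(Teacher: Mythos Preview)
Your proof is correct and follows essentially the same approach as the paper: both rest on the identity $N_G(u)\setminus(B\cup\{v\})=N_{G-v}(u)\setminus B$ for $u\in V(G-v)$, which guarantees that each skew force valid in $G-v$ from $B$ remains valid in $G$ from $B\cup\{v\}$. The paper simply states this neighborhood identity in one line, while you unpack it along a chronological list of forces.
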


\begin{proof}
Let $G$ be a graph, $v \in V(G)$, and $B$ be a skew forcing set of $G-v$.  Since $N_G(u)\setminus (B \cup \{v\})=N_{G-v}(u)\setminus B$ for each vertex $u \in V(G)$, it follows that $B \cup \{v\}$ is a skew forcing set of $G$.  Thus the skew forcing number is a coverable vertex parameter.
\end{proof}

Similar proofs to those of \Cref{SkewIsSummable,SkewIsCoverable} apply for the parameters in the following observations; we omit the proofs for brevity.  We begin by examining which parameters are summable and which are not.

\begin{observation}
The independence number, vertex cover number, zero forcing number, PSD forcing number, skew forcing number, $k$-forcing number, power domination number, domination number, and total domination number 
are all summable vertex parameters.  This occurs because when working with these parameters components operate independently.  
\end{observation}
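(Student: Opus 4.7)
The plan is to verify the summability condition case-by-case, leveraging a single shared principle: for any vertex $u$ in a component $C \in \comp(G)$, both $N_G(u)$ and $N_G[u]$ are contained in $V(C)$, and every edge of $G$ has both endpoints in a common component. Thus every property $x$ under consideration is expressible in a way that only ``sees'' one component at a time, and summability follows immediately. The skew forcing case has already been handled in \Cref{SkewIsSummable}, which serves as the template for the remaining arguments.

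For the independence number and vertex cover number, the property $x$ is defined by the non-existence or the covering of edges, and since every edge sits inside a single component, $B \subseteq V(G)$ has the property in $G$ if and only if each $B \cap V(C)$ has the property in $C$. For the forcing parameters (standard zero forcing, PSD forcing, $k$-forcing, and skew forcing), each color change rule refers only to neighborhoods of vertices, and $N_G(u) = N_C(u)$ when $u \in V(C)$. Hence the forcing closure of $B$ in $G$ is precisely the union over components $C \in \comp(G)$ of the forcing closures of $B \cap V(C)$ in $C$, so $B$ is a forcing set of $G$ if and only if each $B \cap V(C)$ is a forcing set of $C$. (For PSD forcing, one uses additionally that $\comp(G - B) = \bigsqcup_{C \in \comp(G)} \comp(C - (B \cap V(C)))$.)

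For the domination variants, $B$ dominates $G$ if and only if every $v \in V(G)$ satisfies $N_G[v] \cap B \neq \emptyset$; since $N_G[v] \subseteq V(C)$ for $v \in V(C)$, this is equivalent to $B \cap V(C)$ dominating $C$ for each component. Total domination is identical with closed neighborhoods replaced by open ones. Power domination is a two-stage process: first take $N_G[B]$, then apply the standard forcing rule; both operations respect components by the principle above, so the power domination closure of $B$ in $G$ equals the union of the power domination closures of $B \cap V(C)$ in $C$.

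The argument is essentially a collection of routine verifications, with no real obstacle; the only subtlety is ensuring that in each case the property $x$ is phrased component-wise (e.g., for PSD forcing one must use that the components of $G-B$ refine to components of $C - (B \cap V(C))$), which is why I would write the forcing cases together and explicitly invoke the fact that the skew case was already dispatched in \Cref{SkewIsSummable}.
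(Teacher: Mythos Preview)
Your proposal is correct and follows essentially the same approach as the paper. The paper explicitly omits the proofs, stating that arguments similar to \Cref{SkewIsSummable} apply, and your case-by-case verification using the principle that neighborhoods lie entirely within a single component is precisely the template that \Cref{SkewIsSummable} provides.
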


\begin{observation}\label{gcovcon}
Let $\mathcal G$ be a class of graphs.  If for each $G \in \mathcal G$, $G$ is connected, then the $\mathcal G$-cover transversal number is a summable vertex parameter, as this ensures that components operate independently.  In particular, the path cover transversal number, tree cover transversal number, spider cover transversal number, and clique cover transversal number are {summable vertex parameters}.
\end{observation}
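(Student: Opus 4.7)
The plan is to verify summability of $X = \mathcal{G}_\mathcal{T}$ directly from \Cref{def:SummableDefinition}. The property $x$ defining $X$ as a vertex parameter is: ``$B \subseteq V(G)$ is a transversal of some partition $\mathcal{Y}$ of $V(G)$ for which $G[Y] \in \mathcal{G}$ for every $Y \in \mathcal{Y}$.'' The single structural fact driving the argument is that because every graph in $\mathcal{G}$ is connected, whenever $\mathcal{Y}$ witnesses property $x$ in $G$, each part $Y \in \mathcal{Y}$ must lie inside a single component of $G$ (an induced connected subgraph of $G$ cannot straddle two components).

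For the forward direction of the biconditional in \Cref{def:SummableDefinition}, suppose $B$ has property $x$ in $G$, witnessed by $\mathcal{Y}$. For each component $C \in \comp(G)$, set $\mathcal{Y}_C = \{Y \in \mathcal{Y} : Y \subseteq V(C)\}$ and $B_C = B \cap V(C)$. By the observation above, the $\mathcal{Y}_C$ collectively partition $\mathcal{Y}$, and so $\mathcal{Y}_C$ is a partition of $V(C)$; since $C[Y] = G[Y] \in \mathcal{G}$ for every $Y \in \mathcal{Y}_C$, this is a $\mathcal{G}$-cover of $C$. Each $Y \in \mathcal{Y}_C$ meets $B$ in exactly one vertex, which necessarily lies in $V(C)$, so $|Y \cap B_C| = 1$ and $B_C$ is a transversal of $\mathcal{Y}_C$. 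Therefore $B_C$ has property $x$ in $C$, and $B = \bigcup_{C \in \comp(G)} B_C$ as required.

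For the converse, suppose $B = \bigcup_{C \in \comp(G)} B_C$ with each $B_C$ a transversal of some $\mathcal{G}$-cover $\mathcal{Y}_C$ of $C$. Since the $V(C)$ partition $V(G)$, the disjoint union $\mathcal{Y} = \bigcup_C \mathcal{Y}_C$ is a partition of $V(G)$. Each $Y \in \mathcal{Y}$ sits inside a unique component $C$, so $G[Y] = C[Y] \in \mathcal{G}$, making $\mathcal{Y}$ a $\mathcal{G}$-cover of $G$. Moreover, for such $Y$, $|Y \cap B| = |Y \cap B_C| = 1$, so $B$ is a transversal of $\mathcal{Y}$, establishing property $x$ in $G$. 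The ``in particular'' conclusion is then immediate because paths, trees, spiders, and cliques are each, by definition, classes of connected graphs.

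There is essentially no obstacle here beyond careful bookkeeping of the definitions; connectedness of the class members is the one nontrivial ingredient, and it is precisely what prevents parts of a $\mathcal{G}$-cover from straddling components, making the biconditional structural rather than technical. I expect the writeup to be short, and the only place that warrants explicit care is verifying that $G[Y] = C[Y]$ when $Y \subseteq V(C)$ so that the notions of ``$\mathcal{G}$-cover in $G$'' and ``$\mathcal{G}$-cover in $C$'' interchange freely for parts confined to a single component.
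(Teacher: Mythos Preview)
Your proposal is correct and is essentially a detailed unpacking of the paper's one-line justification (``this ensures that components operate independently''): the paper offers no further argument beyond that phrase, and your verification that connectedness of the members of $\mathcal{G}$ forces each part of a $\mathcal{G}$-cover to lie in a single component is precisely the content behind it.
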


\begin{observation}
The minor monotone floor of zero forcing number is not a summable vertex parameter because forces can occur between vertices in different components.  For instance, if $G$ is the graph on $n>1$ isolated vertices, then $\lfloor \Z \rfloor(G)=1$, but for each component $C \in \comp(G)$, $\lfloor \Z \rfloor (C)=1$, so $\lfloor \Z\rfloor (G)=1 < n=\sum_{C \in \comp(G)} \lfloor \Z\rfloor (C)$, which would otherwise contradict \Cref{compsum}.
\end{observation}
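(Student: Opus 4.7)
The plan is to prove the claim by contradiction, fleshing out the counterexample already outlined in the statement. I would fix an integer $n > 1$ and take $G = nK_1$, the graph consisting of $n$ isolated vertices. Recall from \cite{paramlong} that the minor monotone floor of the zero forcing number is given by $\lfloor \Z \rfloor(G) = \min\{\Z(H) : G \text{ is a minor of } H\}$; under this definition the argument reduces to two explicit computations, one for a single component and one for the whole graph.

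First, for a single component $K_1$: any $H$ with $K_1$ as a minor must have at least one vertex, hence $\Z(H) \geq 1$, giving $\lfloor \Z \rfloor(K_1) \geq 1$; taking $H = K_1$ produces the matching upper bound $\lfloor \Z \rfloor(K_1) \leq \Z(K_1) = 1$. Thus $\lfloor \Z \rfloor(K_1) = 1$. Second, for the full disconnected graph $nK_1$, I would observe that $P_n$ contains $nK_1$ as a minor (obtained simply by deleting all edges of $P_n$), and since any leaf of $P_n$ is a standard zero forcing set, $\Z(P_n) = 1$. Therefore $\lfloor \Z \rfloor(nK_1) \leq \Z(P_n) = 1$, and the same trivial lower bound as before yields $\lfloor \Z \rfloor(nK_1) = 1$.

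To finish, suppose toward contradiction that $\lfloor \Z \rfloor$ were a summable vertex parameter in the sense of \Cref{def:SummableDefinition}. Then \Cref{compsum} would force
\[ \lfloor \Z \rfloor(nK_1) \;=\; \sum_{C \in \comp(nK_1)} \lfloor \Z \rfloor(C) \;=\; n \cdot \lfloor \Z \rfloor(K_1) \;=\; n, \]
contradicting $\lfloor \Z \rfloor(nK_1) = 1$ for any $n > 1$. The only real choice in the proof is picking a graph $H$ witnessing $\lfloor \Z \rfloor(nK_1) \leq 1$; the path $P_n$ is the natural selection because its zero forcing number is well known, and no serious obstacle arises. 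Consequently $\lfloor \Z \rfloor$ fails to be summable, as claimed.
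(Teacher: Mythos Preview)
Your proposal is correct and follows essentially the same approach as the paper: both use $G=nK_1$ as the counterexample and appeal to \Cref{compsum} to derive the contradiction $1 = \lfloor\Z\rfloor(nK_1) < n$. You simply flesh out the computation of $\lfloor\Z\rfloor(nK_1)=1$ by explicitly exhibiting $P_n$ as a witness, which the paper leaves implicit.
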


\begin{observation}
\Cref{compsum} also shows that the chromatic transversal number is not a summable vertex parameter because if $G$ is a disconnected graph, then
\[\chi_{\mathcal T}(G)=\chi(G)=\max_{C \in \comp(G)}\chi(C) < \sum_{C \in \comp(G)} \chi(C)=\sum_{C \in \comp(G)}\chi_{\mathcal T}(C),\] where $\chi(G)$ is the chromatic number of $G$.  Intuitively, one can view this as occurring because each color class in a proper coloring of $G$ may occur in multiple components, but a transversal of the proper coloring would only include one vertex from each color class. 
\end{observation}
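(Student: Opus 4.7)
The plan is to apply \Cref{compsum} contrapositively. That lemma asserts that every summable vertex parameter $X$ satisfies $X(G)=\sum_{C\in\comp(G)}X(C)$ for all $G$. So it suffices to exhibit a disconnected graph $G$ for which $\chi_{\mathcal T}(G)\neq\sum_{C\in\comp(G)}\chi_{\mathcal T}(C)$.

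First I would establish the equality $\chi_{\mathcal T}(G)=\chi(G)$ for every graph $G$. This is essentially bookkeeping against the definition of $X_{\mathcal T}$: each proper coloring of $V(G)$ into $\chi(G)$ color classes is a partition with the defining property of $\chi$, and any transversal of such a partition has cardinality exactly $\chi(G)$, so the minimum in the definition of $\chi_{\mathcal T}(G)$ is forced to be $\chi(G)$.

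Next I would verify that for a disconnected graph $G$, $\chi(G)=\max_{C\in\comp(G)}\chi(C)$. The inequality $\chi(G)\geq\max_{C\in\comp(G)}\chi(C)$ is immediate since any proper coloring of $G$ restricts to a proper coloring of each component. For the reverse direction, one takes optimal proper colorings of each component (reusing the same color set of size $\max_{C\in\comp(G)}\chi(C)$ for each) and unions them; no monochromatic edge can arise since there are no edges between distinct components.

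Finally, combining the two previous steps,
\[\chi_{\mathcal T}(G)=\chi(G)=\max_{C\in\comp(G)}\chi(C)<\sum_{C\in\comp(G)}\chi(C)=\sum_{C\in\comp(G)}\chi_{\mathcal T}(C),\]
where the strict inequality holds whenever $G$ has at least two components each containing an edge (so each has $\chi\geq 2$); the explicit witness $G=2K_2$ gives $\chi_{\mathcal T}(G)=2$ while $\sum_C\chi_{\mathcal T}(C)=4$. By \Cref{compsum}, $\chi_{\mathcal T}$ cannot be summable. There is no real obstacle here: the only subtlety is verifying $\chi(G)=\max_{C}\chi(C)$, which is standard.
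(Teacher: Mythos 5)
Your proposal is correct and follows essentially the same route as the paper: establish $\chi_{\mathcal T}=\chi$, use $\chi(G)=\max_{C}\chi(C)$ for disconnected $G$, and invoke \Cref{compsum} contrapositively. One minor remark: your restriction to graphs with two components each containing an edge is unnecessary, since every component has chromatic number at least $1$ and hence the strict inequality $\max_C\chi(C)<\sum_C\chi(C)$ holds for \emph{every} disconnected graph, as the paper asserts; your explicit witness $2K_2$ nonetheless suffices.
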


We now address which vertex parameters are coverable and which are not.

\begin{observation}
\label{CoverableParameters}
For each parameter $X \in \{\Z, \Z_+, \Z_-, \F_k, \lfloor \Z \rfloor,\gamma_P\}$, $X$ is a coverable vertex parameter because if $B$ is an $X$-forcing set of $G-v$, then $B \cup \{v\}$ is an $X$-forcing set of $G$.
\end{observation}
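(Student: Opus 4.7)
The plan is to verify, for each parameter $X\in\{\Z, \Z_+, \Z_-, \F_k, \lfloor \Z\rfloor,\gamma_P\}$, the implication claimed in the observation: if $B\subseteq V(G-v)$ is an $X$-forcing set (or $X$-dominating set) of $G-v$, then $B\cup\{v\}$ is an $X$-forcing set of $G$. Together with \Cref{coverable_def}, this is what is required. The unifying principle is that adding $v$ to the initially blue set can only \emph{help}: since $v$ is blue from the start, $v$ never counts as a white neighbor of any vertex, so it cannot block a force.

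For the four ``pure'' forcing variants $\Z$, $\Z_+$, $\Z_-$, $\F_k$, I would first observe that the underlying induced subgraph $(G-v)-B$ is literally equal to $G-(B\cup\{v\})$, so the component structure relevant to the PSD rule is identical in the two processes. I would then argue that any valid force $u\to w$ under the respective color change rule with blue set $B'\supseteq B$ in $G-v$ remains valid in $G$ with blue set $B'\cup\{v\}$: the white neighbors of $u$ in $G$ with $B'\cup\{v\}$ blue coincide with its white neighbors in $G-v$ with $B'$ blue (because the only possible extra neighbor, $v$, is already blue). So any chronological list of $X$-forces valid for $B$ on $G-v$ extends verbatim to a valid chronological list for $B\cup\{v\}$ on $G$, coloring all of $V(G-v)$ blue; and $v$ is blue initially.

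For the power domination number $\gamma_P$, the process has two phases: take the closed neighborhood of the initial set, then perform standard zero forcing. The only subtlety is that $N_G[S\cup\{v\}]\supseteq N_{G-v}[S]\cup\{v\}$, so starting from $N_G[S\cup\{v\}]$ in $G$ dominates the starting blue set of the $(G-v)$-process augmented by $v$; since adding $v$ does not disrupt zero forcing (by the $\Z$ argument above), the process in $G$ succeeds. For $\lfloor\Z\rfloor$, the relevant property is that $B$ is a standard zero forcing set of some minor of the source graph; since every minor of $G-v$ is also a minor of $G$ (obtained by first deleting $v$), if $B$ witnesses the property in $G-v$, then $B\cup\{v\}$ witnesses it in $G$ by choosing the same minor of $G$ and noting that adding an isolated blue vertex to a zero forcing set keeps it a zero forcing set.

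The main obstacle is not mathematical depth but bookkeeping: one must check that each variant's color change rule has the ``monotonicity'' that an extra initially blue vertex never invalidates a previously valid force. For the standard, PSD, skew, and $k$-forcing rules this is immediate from the rule's phrasing in terms of white neighborhoods; for power domination and the minor monotone floor, the slight care needed is in comparing the two different initial blue sets (respectively, the two different minors). I do not anticipate any deeper difficulty, and the resulting proof should be a short enumeration of six essentially parallel cases.
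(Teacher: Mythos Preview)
Your treatment of $\Z$, $\Z_+$, $\Z_-$, $\F_k$, and $\gamma_P$ is correct and matches the paper's (terse) reasoning: the key point in each case is that $N_G(u)\setminus(B'\cup\{v\})=N_{G-v}(u)\setminus B'$, so every force valid in $G-v$ remains valid in $G$ once $v$ is added to the blue set.

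The gap is in the $\lfloor\Z\rfloor$ case. You have the defining property backwards: the minor monotone floor is $\lfloor\Z\rfloor(G)=\min\{\Z(H):G\text{ is a minor of }H\}$, not $\min\{\Z(H):H\text{ is a minor of }G\}$. So the property ``$B$ is a standard zero forcing set of some minor of the source graph'' is not the right one, and your minor-preservation argument does not apply. (Indeed, with your stated characterization, $\lfloor\Z\rfloor(K_3)$ would be at most $\Z(K_1)=1$, which is false.) The correct route---and the one implicitly used in the paper---is via the $\lfloor\Z\rfloor$ color change rule: a blue vertex $u$ may force any white vertex $w$ provided the white neighborhood of $u$ in $G$ is contained in $\{w\}$ (so $u$ can ``hop'' to a non-neighbor when it has no white neighbors). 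Under this rule, $\lfloor\Z\rfloor$ behaves exactly like your other ``pure'' forcing variants: your white-neighborhood argument for $\Z,\Z_+,\Z_-,\F_k$ applies verbatim, with no separate treatment needed.
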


\begin{observation}
The vertex cover number of a graph is a coverable vertex parameter because if $B$ is a vertex cover of $G-v$, then $B \cup \{v\}$ is a vertex cover of $G$, as by definition each edge in $G-v$ is incident to a member of $B$ and for each $e \in E(G) \setminus E(G-v)$, $e$ is incident to $v$.  If $B$ is a dominating set of $G-v$, then $B \cup \{v\}$ is a dominating set of $G$, as by definition each vertex in $G-v$ is either in $B$ or adjacent to a member of $B$, and clearly $v \in B \cup \{v\}$.  Thus, the domination number of a graph is also a coverable vertex parameter.
\end{observation}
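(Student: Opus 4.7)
The plan is to verify the defining condition of a coverable vertex parameter (see \Cref{coverable_def}) separately for the vertex cover number and for the domination number. In each case we must show that for every graph $G$ and every vertex $v\in V(G)$, if $B\subseteq V(G-v)$ has the defining property in $G-v$, then $B\cup\{v\}$ has the defining property in $G$.

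First I would handle the vertex cover number. Fix a graph $G$ and a vertex $v\in V(G)$, and let $B$ be a vertex cover of $G-v$. To show $B\cup\{v\}$ is a vertex cover of $G$, I would partition $E(G)$ into the edges of $G-v$ and the edges incident to $v$. Edges of $G-v$ are covered by $B$ by hypothesis (hence by $B\cup\{v\}$), while every edge in $E(G)\setminus E(G-v)$ is, by construction of $G-v$, incident to $v\in B\cup\{v\}$. Thus every edge of $G$ has an endpoint in $B\cup\{v\}$, establishing coverability.

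Next I would handle the domination number analogously. Fix a graph $G$, a vertex $v\in V(G)$, and let $B$ be a dominating set of $G-v$. To verify that $B\cup\{v\}$ is a dominating set of $G$, I would split $V(G)\setminus(B\cup\{v\})$ into vertices in $V(G-v)$ and the vertex $v$ itself. Since $v\in B\cup\{v\}$, $v$ trivially needs no further treatment. For $u\in V(G-v)\setminus B$, the hypothesis gives some $w\in B$ with $uw\in E(G-v)\subseteq E(G)$, so $u$ is dominated in $G$ by an element of $B\cup\{v\}$.

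There is essentially no obstacle here: both proofs are immediate unpackings of the definitions, since adjoining $v$ can only help cover edges and dominate vertices. The only minor care needed is to make sure every edge or vertex new to $G$ relative to $G-v$ is handled explicitly, which in both cases is trivial because the only ``new'' object is $v$ itself (and its incident edges), and $v$ is in the augmented set $B\cup\{v\}$.
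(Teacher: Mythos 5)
Your proposal is correct and follows essentially the same route as the paper: the paper's justification is exactly the observation that edges of $G-v$ are covered by $B$ while all remaining edges of $G$ are incident to $v$, and that vertices of $G-v$ are dominated by $B$ while $v$ itself lies in $B\cup\{v\}$. Your slightly more explicit partitioning of $E(G)$ and of $V(G)\setminus(B\cup\{v\})$ adds no new ideas, only a cleaner bookkeeping of the same argument.
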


\begin{observation}
The chromatic transversal number of a graph is coverable because if $\mathcal Y=\{Y_i\}_{i=1}^{\chi(G-v)}$ is a proper coloring of $G-v$, then $\mathcal Y'= \mathcal Y \cup \{\{v\}\}$ is a proper coloring of $G$ and for any transversal $B$ of $\mathcal Y$, $B \cup \{v\}$ is a transversal of $\mathcal Y'$. 
\end{observation}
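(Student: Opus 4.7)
The plan is to verify coverability directly from Definition \ref{coverable_def}. As a vertex parameter in the sense of the observation following Definition \ref{TokReconfigDefs}, $\chi_{\mathcal T}$ is defined by the property that $B\subseteq V(G)$ is a transversal of some proper vertex coloring of $G$ (i.e., some partition of $V(G)$ into independent sets). I therefore need to show that whenever $B$ has this property in $G-v$, the set $B\cup\{v\}$ has the property in $G$.

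First I would take a proper coloring $\mathcal Y=\{Y_i\}_{i=1}^{\chi(G-v)}$ of $G-v$ for which $B$ is a transversal, and form $\mathcal Y'=\mathcal Y\cup\{\{v\}\}$. To confirm that $\mathcal Y'$ is a proper coloring of $G$, I would check that it is indeed a partition of $V(G)$ (immediate, since $\mathcal Y$ partitions $V(G)\setminus\{v\}$ and $\{v\}$ is disjoint from every $Y_i$) and that each of its blocks is independent in $G$. The new block $\{v\}$ is trivially independent, and for each original $Y_i$ we have $(G-v)[Y_i]=G[Y_i]$ since $v\notin Y_i$, so independence in $G-v$ carries over to independence in $G$.

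Next I would verify the transversal condition. For each $Y_i$, the hypothesis on $B$ gives $|(B\cup\{v\})\cap Y_i|=|B\cap Y_i|=1$ (using $v\notin Y_i$), and $|(B\cup\{v\})\cap\{v\}|=1$, so $B\cup\{v\}$ meets each block of $\mathcal Y'$ in exactly one vertex. Hence $B\cup\{v\}$ is a transversal of a proper coloring of $G$, which is precisely the defining property of $\chi_{\mathcal T}$.

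The only real subtlety here is ensuring that $\mathcal Y'$ remains a valid partition, which requires $v$ to be distinct from every vertex appearing in the $Y_i$; this is built into the convention that $G-v$ has vertex set $V(G)\setminus\{v\}$. No comparison of $\chi(G)$ with $\chi(G-v)$ is needed, because Definition \ref{coverable_def} only demands that $B\cup\{v\}$ possess the defining property in $G$, not that it correspond to a \emph{minimum} proper coloring of $G$; this is why coverability holds even though $\chi$ itself is a \enquote{max over components} parameter and is not summable.
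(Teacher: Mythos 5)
Your argument is correct and is essentially the paper's own: the paper's observation likewise extends a proper coloring $\mathcal Y$ of $G-v$ to $\mathcal Y'=\mathcal Y\cup\{\{v\}\}$ and extends the transversal $B$ to $B\cup\{v\}$. You simply spell out the routine verifications (that $\mathcal Y'$ partitions $V(G)$ into independent sets and that $B\cup\{v\}$ meets each block exactly once) and correctly note that no minimality of the coloring is required by \Cref{coverable_def}.
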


\begin{observation}
The independence number and total domination number are not coverable vertex parameters.  For the independence number, we consider $G=K_2$ with vertex set $V(G)=\{u,v\}$.  $\{u\}$ is an independent set of $G-v$; however, $\{u\}\cup\{v\}$ is not an independent set of $G$.  For the total domination number, we consider $H=P_4$ with vertex set $V(H)=\{v_0,v_1,v_2,v_3\}$ (indexed in path order).  $\{v_0,v_1\}$ is a total dominating set of $H-v_3$; however, $\{v_0,v_1\}\cup\{v_3\}$ is not a total dominating set of $H$.
\end{observation}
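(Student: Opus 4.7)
The plan is to disprove the coverability of each parameter by exhibiting a minimal counterexample: a graph $G$ and a vertex $v\in V(G)$ together with a set $B\subseteq V(G-v)$ having the defining property in $G-v$ such that $B\cup\{v\}$ fails to have that property in $G$. Since coverability (\Cref{coverable_def}) is a universal condition over all graphs and vertices, a single explicit failure for each parameter suffices.

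For the independence number, the defining property is that no two vertices in the set are adjacent. Coverability is destroyed exactly when adjoining $v$ to an independent set of $G-v$ creates an edge, which happens whenever $v$ has a neighbor that lies in the independent set. The simplest nontrivial instance is a single edge: I would take $G=K_2$ with $V(G)=\{u,v\}$, note that $\{u\}$ is trivially independent in $G-v\cong K_1$, but $\{u,v\}=V(K_2)$ is not independent in $G$.

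For the total domination number, the defining property is that every vertex of the ambient graph has a neighbor inside the set. The subtle point is that when passing from $G-v$ back to $G$ by restoring $v$, the vertex $v$ itself becomes a new vertex that must be dominated by some neighbor in $B\cup\{v\}$; this can fail even though $B$ totally dominated $G-v$. I would choose $H=P_4$ with vertices $v_0,v_1,v_2,v_3$ in path order, remove $v_3$ to obtain $P_3$, and verify that $B=\{v_0,v_1\}$ totally dominates $P_3$ (each of $v_0,v_1,v_2$ has a neighbor in $B$). Restoring $v_3$ then yields $B\cup\{v_3\}=\{v_0,v_1,v_3\}$, which is not a total dominating set of $H$ because the unique neighbor of $v_3$ in $H$ is $v_2\notin B\cup\{v_3\}$.

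Both constructions are tiny and can be checked by direct inspection, so I do not expect any real obstacle; the only thing to be careful about is choosing a counterexample for total domination in which $v$'s neighbors are all absent from $B$, since otherwise $v$ itself would be dominated by $B\cup\{v\}$ and the counterexample would collapse. The $P_4$ example handles this cleanly because the pendent vertex $v_3$ has only one neighbor, making it easy to exclude from $B$.
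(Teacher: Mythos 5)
Your proposal is correct and uses exactly the counterexamples the paper itself gives: $K_2$ with the singleton $\{u\}$ for the independence number, and $P_4$ with $\{v_0,v_1\}$ and the deleted pendant vertex $v_3$ for the total domination number, verified by the same direct inspection. There is nothing to add.
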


\begin{observation}
Let $\mathcal G$ be a class of graphs.  If $K_1 \in \mathcal G$, then the {$\mathcal G$-cover transversal number} is a {coverable vertex parameter}. In particular, the path cover transversal number of a graph, tree cover transversal number of a graph, spider cover transversal number of a graph, and clique cover transversal number of a graph are {coverable vertex parameters}.
\end{observation}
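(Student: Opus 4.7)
The plan is to directly verify that the $\mathcal{G}$-cover transversal number fits the definition of a coverable vertex parameter (\Cref{coverable_def}). The property $x$ defining $\mathcal{G}_{\mathcal{T}}$ as a vertex parameter is: ``$B$ is a transversal of some $\mathcal{G}$-cover of $G$.'' So I need to show that if $B \subseteq V(G-v)$ is a transversal of some $\mathcal{G}$-cover of $G-v$, then $B \cup \{v\}$ is a transversal of some $\mathcal{G}$-cover of $G$.

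First, fix $B \subseteq V(G-v)$ with the property that $B$ is a transversal of a $\mathcal{G}$-cover $\mathcal{Y} = \{Y_i\}_{i=1}^{m}$ of $G-v$. The natural candidate for a $\mathcal{G}$-cover of $G$ extending $\mathcal{Y}$ is $\mathcal{Y}' = \mathcal{Y} \cup \{\{v\}\}$, since $v \notin V(G-v)$ ensures this is a partition of $V(G)$. To verify that $\mathcal{Y}'$ is a $\mathcal{G}$-cover of $G$, I would observe two things. For each $Y_i \in \mathcal{Y}$, because $v \notin Y_i$, the induced subgraphs satisfy $G[Y_i] = (G-v)[Y_i] \in \mathcal{G}$, which follows from $\mathcal{Y}$ being a $\mathcal{G}$-cover of $G-v$. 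For the new part $\{v\}$, the induced subgraph $G[\{v\}] \cong K_1$ is in $\mathcal{G}$ by hypothesis.

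Next, I need to verify that $B' = B \cup \{v\}$ is a transversal of $\mathcal{Y}'$. Since $B \subseteq V(G-v)$, we have $v \notin B$, so $|Y_i \cap B'| = |Y_i \cap B| = 1$ for each $Y_i \in \mathcal{Y}$, and $|\{v\} \cap B'| = 1$. Thus $B'$ has the property $x$ in $G$, completing the proof that $\mathcal{G}_{\mathcal{T}}$ is coverable.

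For the ``in particular'' part, the four specific cover classes each trivially contain $K_1$: $K_1$ is a path (namely $P_1$), a tree, a spider (a star with no leaves, or equivalently a trivial graph), and a clique. Hence the result applies to the path cover, tree cover, spider cover, and clique cover transversal numbers. There is no substantive obstacle in this proof; the only point worth care is ensuring $\mathcal{Y}'$ is a genuine partition of $V(G)$, which follows from $v$ being a new element not in any $Y_i$, and that the induced subgraph structure on old parts is preserved under vertex addition, which follows because removing $v$ does not affect any induced subgraph not containing $v$.
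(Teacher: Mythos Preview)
Your proof is correct and follows exactly the approach the paper intends: the paper states this as an observation without a formal proof, but the adjacent observation for the chromatic transversal number spells out the identical mechanism (add $\{v\}$ as a new singleton part and note $B\cup\{v\}$ is a transversal of the enlarged cover), which is precisely what you do.
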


Finally, we address which vertex parameters are $X$-set parameters and which are not.  First, in \cite{bong2022isomorphisms}, the following observation was given.

\begin{observation}[\cite{bong2022isomorphisms}]
The zero forcing number, PSD forcing number, power domination number, and domination number are all $X$-set parameters.  
\end{observation}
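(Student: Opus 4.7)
The plan is to verify, for each of the four parameters in turn, that all four defining conditions of an $X$-set parameter (\Cref{x_set_def}) hold. Conditions (2) and (3) will follow quickly from basic facts already in hand, while conditions (1) and (4) require short direct arguments tailored to each parameter.

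First, condition (3) is precisely the summability property, which I would cite from the summability observations preceding the statement (the zero forcing, PSD forcing, power domination, and domination numbers are all summable because forcing and domination both respect the component structure). Condition (2) holds uniformly on any nonempty graph: with no initially blue vertex, no force of any of the three forcing types can occur, and $\emptyset$ vacuously dominates nothing; likewise the power domination process initiated from $\emptyset$ first takes $N[\emptyset]=\emptyset$ and then runs standard forcing from $\emptyset$, producing $\emptyset$.

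For condition (1), I would argue monotonicity by inclusion. For the standard zero forcing, PSD forcing, and power domination numbers, if $B$ has the property and $B\subseteq B'$, then starting the respective forcing process from $B'$ means every vertex of $B$ is already blue, so any valid force from $B$ remains valid, and the expansion from $B'$ includes the expansion from $B$, covering $V(G)$; for power domination one uses in addition that $N[B]\subseteq N[B']$ so the initial domination step also only improves. For the domination number, any vertex of $V(G)$ is either in $B'$ or adjacent to some vertex of $B\subseteq B'$, so $B'$ is a dominating set.

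For condition (4), suppose $G$ has no isolated vertices and let $B=V(G)\setminus\{v\}$. For standard zero forcing (and hence for PSD forcing, since the standard CCR is a special case of the PSD CCR), choose any neighbor $u$ of $v$; then $u\in B$ is blue with $v$ as its unique white neighbor, so $u\to v$ is a valid initial force, and $B$ is a zero forcing set. For the domination number, $v$ has a neighbor in $B$, so $B$ is a dominating set. For the power domination number, after the single closed-neighborhood step from $B$ the set $N[B]$ already equals $V(G)$ (since $v$ has a neighbor in $B$), so $N[B]$ is trivially a standard zero forcing set and $B$ is a power dominating set.

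The main subtlety will be keeping the power domination case honest, since its definition intermixes the closed-neighborhood step with standard forcing; but both of these operations are individually monotone in the initial blue set, which is what is needed for conditions (1) and (4). No single step in the proof is difficult; the work is simply organizing the four verifications into a compact case analysis.
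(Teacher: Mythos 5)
Your verification is correct. The paper offers no proof of this observation---it is simply cited from \cite{bong2022isomorphisms}---but your case-by-case check of the four criteria follows exactly the template the paper itself uses in the appendix for the analogous claim about the $k$-forcing number (monotonicity of the closure under supersets for criterion (1), no valid forces from the empty set for (2), summability for (3), and a single neighbor of the lone white vertex performing the required force for (4)), so this is essentially the expected argument and all four parameter-specific details, including the two-stage power domination process, are handled soundly.
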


\begin{observation}
The $k$-forcing number is an $X$-set parameter.  First, if $B$ is a $k$-forcing set and $\mathcal F$ is a relaxed chronology of $k$-forces of $B$ on $G$, then for any superset $B'$ of $B$ one can construct a relaxed chronology of $k$-forces $\mathcal F'$ of $B'$ on $G$ by deleting any forces in $\mathcal F$ which result in vertices in $B'$ becoming blue.  Second, under the $k$-forcing color change rule, white vertices cannot perform forces, so the empty set is not a $k$-forcing set of any graph.  Finally, if $G$ has no isolated vertices, then for any $u \in  V(G)$, $V(G) \setminus \{u\}$ is a $k$-forcing set because any vertex $v \in N_G(u) \neq \emptyset$ has the single white neighbor $u$ and can perform the necessary force.    
\end{observation}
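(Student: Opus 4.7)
The plan is to verify that the $k$-forcing number satisfies each of the four conditions in \Cref{x_set_def}: monotonicity of the $X$-set collection under supersets, nonemptiness of $X$-sets, summability across components, and that every $(|V(G)|-1)$-subset is an $X$-set when $G$ has no isolated vertices.

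For condition (1), I would start with a $k$-forcing set $B$ and an arbitrary superset $B' \supseteq B$, and argue directly from a chronological list of $k$-forces $\mathcal{F}$ for $B$ that we can obtain a valid chronological list for $B'$ by discarding exactly those forces in $\mathcal{F}$ whose target lies in $B'\setminus B$. The validity check is routine: removing such forces never increases the white neighborhood of any remaining forcing vertex, so each retained force $u\to v$ still satisfies the $k$-forcing condition (at most $k$ white neighbors) when performed in the modified process. For condition (2), I would observe that the $k$-forcing color change rule requires the forcing vertex itself to be blue; with $B=\emptyset$, no vertex is ever blue at the start, so no force can ever occur, and hence $\emptyset$ is not a $k$-forcing set of any (nonempty) graph.

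For condition (3), I would appeal to summability of the $k$-forcing number, which is already asserted in \Cref{compsum}'s surrounding discussion; the argument is essentially that $k$-forces on a disconnected graph never cross between components (a blue vertex in one component sees zero white neighbors in any other component), so a set $B$ is $k$-forcing for $G$ if and only if $B\cap V(C)$ is $k$-forcing for $C$ for each $C\in\comp(G)$. For condition (4), given $G$ with no isolated vertices and $B=V(G)\setminus\{u\}$, I pick any neighbor $v$ of $u$; then $v\in B$ is blue and $u$ is its only white neighbor, so $v$ has exactly one white neighbor, which is $\leq k$, making $v\to u$ a valid $k$-force. This immediately forces $u$ blue and shows $B$ is a $k$-forcing set.

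The only mild subtlety, and the step I would be most careful about, is condition (1): when pruning forces from $\mathcal{F}$ to build a chronology for $B'$, one must check that the pruning does not invalidate later forces. The key observation is that a target vertex $v\in B'\setminus B$ is already blue initially under $B'$, so skipping its originally scheduled force only removes a ``blueing'' event that $B'$ already supplies at time $0$; the white-neighbor count of every other forcing vertex $u$ at each subsequent time step is no larger than it was in the original chronology, preserving the $k$-force validity.
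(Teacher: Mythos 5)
Your proposal is correct and follows essentially the same route as the paper: prune the forces targeting vertices of $B'\setminus B$ from a chronology to handle criterion (1), note that white vertices cannot force to handle criterion (2), invoke summability (which the paper records in a separate preceding observation) for criterion (3), and use a neighbor of the lone white vertex for criterion (4). The extra care you take in checking that pruning does not invalidate later forces is a reasonable elaboration of what the paper states more tersely, not a different argument.
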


For the purposes of the following result, a class of graphs is said to be nontrivial if it contains more than one graph.

\begin{proposition}\label{gcov}
If $\mathcal G$ is a nontrivial class of graphs satisfying the property that: 
\[\text{each }G \in \mathcal G \text{ is connected and for any graph } G \in \mathcal G, \text{ if } H \text{ is a connected subgraph of }G,\text{ then }H \in \mathcal G,\]
then the $\mathcal G$-cover transversal number is an $X$-set parameter.
\end{proposition}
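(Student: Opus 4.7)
The plan is to define an appropriate $X$-set property for the $\mathcal{G}$-cover transversal number, and then verify the four conditions of Definition \ref{x_set_def}. Specifically, I would declare $B \subseteq V(G)$ to be an $X$-set of $G$ if there exists a $\mathcal{G}$-cover $\mathcal{Y}$ of $G$ such that $|B \cap Y| \geq 1$ for every $Y \in \mathcal{Y}$ (equivalently, $B$ contains a transversal of some $\mathcal{G}$-cover). A quick double inequality confirms that the minimum cardinality of such a set equals $\mathcal{G}_{\mathcal{T}}(G)$: any such $B$ satisfies $|B| \geq |\mathcal{Y}| \geq \mathcal{G}_{\mathcal{T}}(G)$, while any transversal of a minimum $\mathcal{G}$-cover is itself an $X$-set of the correct size.

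Condition 1 of Definition \ref{x_set_def} is immediate from the choice of property, since enlarging $B$ only enlarges its intersection with each $Y$. Condition 2 holds because a $\mathcal{G}$-cover of any graph (with at least one vertex) contains at least one nonempty part, which the empty set cannot meet. Condition 3 follows from the assumption that every graph in $\mathcal{G}$ is connected: each part of a $\mathcal{G}$-cover induces a connected subgraph and hence lies entirely within a single component of $G$, so the $\mathcal{G}$-covers of a disconnected $G$ correspond bijectively to the choices of $\mathcal{G}$-covers of each component, and the equivalence of condition 3 follows by routine bookkeeping (this is essentially the same argument underlying \Cref{gcovcon}).

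The main obstacle is verifying condition 4, which requires showing that whenever $G$ has no isolated vertices, every set of $|V(G)|-1$ vertices is an $X$-set. The key preparatory step is to establish that $K_1, K_2 \in \mathcal{G}$, using both the nontriviality of $\mathcal{G}$ and its closure under connected subgraphs. Since every graph in $\mathcal{G}$ contains $K_1$ as a connected subgraph, we have $K_1 \in \mathcal{G}$; nontriviality then forces the existence of some $H \in \mathcal{G}$ with $|V(H)| \geq 2$, and since $H$ is connected it contains an edge, yielding $K_2$ as a connected subgraph of $H$ and hence $K_2 \in \mathcal{G}$. With these in hand, given any $v \in V(G)$ choose a neighbor $u$ of $v$ and consider the partition $\mathcal{Y} = \{\{v,u\}\} \cup \{\{w\} : w \in V(G) \setminus \{v,u\}\}$; each part induces a copy of $K_1$ or $K_2$, so $\mathcal{Y}$ is a $\mathcal{G}$-cover, and the set $V(G) \setminus \{v\}$ is in fact a transversal of $\mathcal{Y}$, which confirms that it is an $X$-set. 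Once the right $X$-set property is identified, the substance of the proof is the $K_1, K_2 \in \mathcal{G}$ lemma; the remaining verifications are largely formal.
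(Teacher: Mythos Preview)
Your proof is correct, and conditions (2), (3), and (4) are handled exactly as in the paper. The one substantive difference is your treatment of condition (1). The paper keeps the literal defining property of the transversal number, namely ``$B$ is a transversal of some $\mathcal{G}$-cover,'' and then proves that a superset $B\cup\{v\}$ is again a transversal of some (refined) $\mathcal{G}$-cover by splitting the part $Y$ containing $v$ into two connected pieces according to whether $v$ is a cut-vertex of $G[Y]$. You instead relax the property to ``$B$ meets every part of some $\mathcal{G}$-cover'' (equivalently, $B$ contains a transversal), verify that the minimum is unchanged, and then get condition (1) for free by monotonicity.

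Both routes are valid. Yours is shorter and avoids the cut-vertex case analysis entirely; the paper's is more constructive and shows, as a byproduct, that the two candidate $X$-set properties actually coincide (every set containing a transversal is itself an exact transversal of a suitably refined cover). Since Definition~\ref{x_set_def} only requires \emph{some} property whose minimum realizes the parameter, your relaxation is legitimate, and your approach gives a genuinely cleaner verification of condition (1).
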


\begin{proof}
Let $G$ be a graph.  We will now show that the $\mathcal G$-cover transversal number satisfies the 4 criteria necessary to be an $X$-set parameter.

(1) Let $\mathcal Y$ be a $\mathcal G$-cover of $G$, and $B$ be a transversal of $\mathcal Y$.  Now let $B'=B \cup \{v\}$ for some vertex $v \in V(G) \setminus B$.  Since $\mathcal Y$ is a partition of $V(G)$, there exists $Y \in \mathcal Y$ such that $v\in Y$. Let $u \in B\cap Y$.  If $v$ is a cut-vertex of $G[Y]$, let $H_1$ be the component of $G[Y]-v$ containing $u$ and $H_2=G[Y\setminus V(H_1)]$, noting that $H_2$ contains $v$ and is thus connected.  If $v$ is not a cut-vertex of $G[Y]$, then let $H_2$ be the trivial graph on $\{v\}$ and $H_1=G[Y]-v$, noting that $H_1$ is connected since $v$ is not a cut-vertex.  In either case, since $H_1$ and $H_2$ are connected subgraphs of $G[Y]$, it follows that $H_1,H_2 \in \mathcal G$, and so $\mathcal Y'=(\mathcal Y \setminus Y) \cup \{V(H_1),V(H_2)\}$ is a $\mathcal G$-cover of $G$ and $B'$ is a transversal of $\mathcal Y'$.  Finally, this process can be repeated as necessary to show that any superset of $B$ is a transversal of a $\mathcal G$-cover of $G$.  

(2) Partitions must be non-empty, and as a result so must their transversals.  Thus, the empty set is not a transversal of any $\mathcal G$-cover of $G$.  

(3) By \Cref{gcovcon}, since $\mathcal G$ is a class of connected graphs, the $\mathcal G$-cover transversal number is a summable vertex parameter. 

(4) First note that since $\mathcal G$ is nontrivial and each member of $\mathcal G$ is connected, $\mathcal G$ must contain a graph $H$ with an edge.  Since each connected subgraph of $H$ is a member of $\mathcal G$, it follows that $\mathcal G$ must contain both $K_1$ and $K_2$.  Now suppose that $G$ is a graph of order $n$ with no isolated vertices and consider the set of vertices $B=V(G) \setminus \{w\}$ for some vertex $w \in V(G)$.  Since $G$ contains no isolated vertices, there must exist a vertex $u \in V(G)$ such that $uw \in E(G)$.  Since $uw \in E(G)$ and $K_1,K_2 \in \mathcal G$, the partition $\{u,w\} \cup \{\{v\}\}_{v \in V(G)\setminus \{u,w\}}$ is a $\mathcal G$-cover of $G$, and thus $B$ is a $\mathcal G$-cover transversal of $G$.
\end{proof}

\begin{observation}
Since the graph classes of paths, trees, cliques, and spiders each satisfy the hypotheses of \Cref{gcov}, it follows that the path cover transversal number, the tree cover transversal number, the clique cover transversal number, and the spider cover transversal number are each $X$-set parameters.
\end{observation}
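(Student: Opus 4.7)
The plan is to apply Proposition \ref{gcov} directly to each of the four graph classes. For each class $\mathcal G$ (paths, trees, cliques, or spiders), I must verify three conditions: (i) $\mathcal G$ is nontrivial, (ii) every $G \in \mathcal G$ is connected, and (iii) every connected (induced) subgraph of a graph in $\mathcal G$ lies in $\mathcal G$. Conditions (i) and (ii) are immediate from the standard definitions of each class, since each contains infinitely many members and each member is connected by definition. So the substantive work lies entirely in verifying condition (iii) class by class.

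For trees, any connected subgraph of a tree is acyclic and connected, hence itself a tree. For paths, a connected induced subgraph of $P_n$ consists of consecutive vertices on the path, and is therefore a subpath. For cliques, any induced subgraph of $K_n$ is again complete. The spider case is the most involved and would proceed by case analysis: let $S$ be a spider with center $c$ (the unique vertex, if any, of degree exceeding two), and let $H$ be a connected induced subgraph of $S$. If $c \notin V(H)$, then $H$ must lie within a single leg of $S$ (otherwise removing $c$ would disconnect the vertices of $H$), so $H$ is a path. If $c \in V(H)$, then every non-central vertex of $H$ retains degree at most two in $H$, so $H$ remains a spider with center $c$.

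The one subtlety worth flagging, and the main obstacle to a purely literal reading, is the precise meaning of ``connected subgraph'' in the hypothesis of Proposition \ref{gcov}: the class of cliques is \emph{not} closed under arbitrary (non-induced) connected subgraphs, since $P_3$ arises as a connected non-induced subgraph of $K_3$. However, the proof of Proposition \ref{gcov} only applies the hypothesis to the induced subgraphs $H_1$ and $H_2$ of $G[Y]$, so the induced-subgraph reading is the one actually required. Under that reading, conditions (i)--(iii) hold for all four classes, and invoking Proposition \ref{gcov} yields that the path cover, tree cover, clique cover, and spider cover transversal numbers are each $X$-set parameters.
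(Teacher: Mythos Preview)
Your proposal is correct and follows exactly the paper's approach: the paper treats this observation as an immediate consequence of \Cref{gcov} and provides no further argument. Your verification of the three hypotheses class by class is more detailed than anything the paper offers, and your catch regarding cliques is genuinely useful---under a literal reading of ``connected subgraph'' in \Cref{gcov}, the class of cliques fails the hypothesis (since $P_3\leq K_3$), and you are right that the proof of \Cref{gcov} only ever invokes the hypothesis on induced subgraphs $H_1,H_2$ of $G[Y]$, so the induced-subgraph reading suffices.
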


\begin{observation}
The minor monotone floor of the zero forcing number and chromatic transversal number are not $X$-set parameters because they are not summable and thus do not satisfy criterion (3).
\end{observation}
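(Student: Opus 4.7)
The plan is to reduce this observation to material already established in the preceding discussion. Criterion (3) of \Cref{x_set_def} states that for an $X$-set parameter, a set $B$ is an $X$-set of a disconnected graph $G$ if and only if $B$ is the union of an $X$-set from each component of $G$. This is precisely the defining property of a summable vertex parameter (\Cref{def:SummableDefinition}): criterion (3) is equivalent to summability for parameters defined by a minimum cardinality. Hence any parameter that fails to be summable automatically violates criterion (3), and so cannot be an $X$-set parameter. I would begin the proof by making this equivalence explicit in a single sentence.

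Next, I would invoke the two counterexamples already recorded earlier in the appendix. For the minor monotone floor of the zero forcing number, I would recall that if $G$ consists of $n>1$ isolated vertices, then $\lfloor \Z \rfloor(G)=1$ while $\sum_{C\in\comp(G)}\lfloor \Z \rfloor(C)=n$, so by \Cref{compsum} the parameter is not summable. For the chromatic transversal number, I would recall that for any disconnected graph $G$,
\[
\chi_{\mathcal T}(G)=\chi(G)=\max_{C\in\comp(G)}\chi(C)<\sum_{C\in\comp(G)}\chi(C)=\sum_{C\in\comp(G)}\chi_{\mathcal T}(C)
\]
(taking any disconnected $G$ with at least one component having $\chi(C)\geq 1$), again contradicting \Cref{compsum}. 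In both cases, the failure of summability rules out criterion (3), completing the argument.

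Since the counterexamples are already provided verbatim in the surrounding observations, there is essentially no hard step. The only subtlety worth spelling out is that criterion (3) as written is a biconditional about \emph{set} decomposition, while summability is a biconditional about \emph{property} decomposition; these coincide for minimum-type vertex parameters because an $X$-set parameter is the minimum cardinality over its $X$-sets, and summability concerns the property defining the parameter. Thus the proof is a short paragraph citing the equivalence and the two worked counterexamples, with no remaining obstacle.
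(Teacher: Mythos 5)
Your proposal is correct and follows the paper's own route exactly: the paper notes immediately after \Cref{x_set_def} that summability is equivalent to criterion (3), and the two non-summability counterexamples you recall are precisely the ones recorded in the preceding observations of the appendix. The only nitpick is your parenthetical for the chromatic case --- strictness of the inequality needs at least two nonempty components, which is automatic for a disconnected graph, so nothing is actually at stake.
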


\begin{observation}
The skew forcing number is not an $X$-set parameter because $\emptyset$ is a skew forcing set of $K_2$.  Similarly, the vertex cover number is not an $X$-set parameter because $\emptyset$ is vacuously a vertex cover of $K_1$.  Thus they both fail criterion (2). 
\end{observation}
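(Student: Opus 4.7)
The plan is to verify the two specific counterexamples directly, since the observation is simply asserting that each of these parameters admits a graph for which the empty set satisfies its defining property. The only work is to check the forcing/covering condition on very small graphs.

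First I will handle the skew forcing case by applying $\Zs$-CCR to $K_2$ with $V(K_2)=\{u,v\}$ starting from $B=\emptyset$. Since $u$ has a unique white neighbor (namely $v$), the skew rule permits $u\to v$, making $v$ blue. The vertex $v$ then has a unique white neighbor $u$, so $v\to u$ is a valid skew force, and $u$ becomes blue. Hence $\cl_-(K_2,\emptyset)=V(K_2)$, so $\emptyset$ is a skew forcing set of $K_2$. This directly violates criterion (2) of \Cref{x_set_def}, which forbids the empty set from ever being an $X$-set.

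Next I will handle the vertex cover case by considering $K_1$. A vertex cover of a graph $G$ is a set $B\subseteq V(G)$ such that every edge of $G$ is incident to some vertex of $B$. Since $E(K_1)=\emptyset$, the condition is vacuously satisfied by $B=\emptyset$, so $\emptyset$ is a vertex cover of $K_1$. Again this violates criterion (2) of \Cref{x_set_def}.

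There is no real obstacle here; the statement is essentially a direct unpacking of definitions, and both witnesses ($K_2$ for skew forcing and $K_1$ for vertex cover) are the minimal graphs on which the empty set succeeds. The only subtlety worth flagging is that the skew rule allows white vertices to perform forces, which is precisely why $\emptyset$ can propagate on $K_2$ under $\Zs$-CCR while it cannot under $\Z$-CCR or $\Zp$-CCR; this is what distinguishes the skew forcing number from the zero forcing and PSD forcing numbers in the table.
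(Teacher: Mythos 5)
Your proposal is correct and matches the paper's justification exactly: the paper's observation simply asserts that $\emptyset$ is a skew forcing set of $K_2$ and vacuously a vertex cover of $K_1$, so both parameters fail criterion (2) of the $X$-set definition, and your direct verification of the two-step skew forcing process on $K_2$ and the vacuous covering of the edgeless $K_1$ is precisely the intended (and only needed) argument.
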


\begin{observation}
The total domination number is not an $X$-set parameter because $\gamma_t(K_2)=2$, thus failing criterion (4).
\end{observation}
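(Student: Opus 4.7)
The plan is to verify directly that criterion (4) of \Cref{x_set_def} fails for the total domination number by exhibiting a graph with no isolated vertices in which some set of $|V(G)|-1$ vertices is not a total dominating set. The natural candidate is $K_2$, which I would use throughout.

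First, I would recall the definition of a total dominating set: a set $D \subseteq V(G)$ is a total dominating set of $G$ if every vertex $v \in V(G)$ (including those in $D$) has a neighbor in $D$, i.e., $N_G(v) \cap D \neq \emptyset$. In particular, singleton sets $\{u\}$ are never total dominating sets, since $u$ has no neighbor in $\{u\}$ (as $G$ is simple and loopless).

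Next, I would let $G = K_2$ with $V(G) = \{u,v\}$. Then $G$ has no isolated vertices, so criterion (4) applies and demands that every set of $|V(G)|-1 = 1$ vertex be a total dominating set of $G$. However, by the preceding observation, neither $\{u\}$ nor $\{v\}$ is a total dominating set. In fact, the only total dominating set of $K_2$ is $\{u,v\}$, giving $\gamma_t(K_2) = 2$. Thus criterion (4) is violated, and the total domination number cannot be an $X$-set parameter.

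There is no real obstacle here; the argument is a single-example disproof. The only thing to be careful about is to explicitly invoke the definition of total domination (requiring a neighbor in the set for \emph{every} vertex, not just those outside the set), since this is precisely what distinguishes it from ordinary domination and causes criterion (4) to fail.
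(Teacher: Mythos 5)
Your argument is correct and matches the paper's reasoning exactly: the paper's observation that $\gamma_t(K_2)=2$ is precisely the statement that no singleton subset of $V(K_2)$ is a total dominating set, which is the violation of criterion (4) you spell out. You simply make explicit the definitional point (every vertex, including those in the set, needs a neighbor in the set) that the paper leaves implicit.
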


\begin{observation}
The independence number fails criterion (4) and is thus not an $X$-set parameter because $K_2$ is the only connected nontrivial graph for which every set of $|V(G)|-1$ vertices is an independent set; it also fails criterion (1) since supersets of maximal independent sets are not independent sets.
\end{observation}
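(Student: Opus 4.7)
The plan is to verify both failures separately, each by exhibiting small counterexamples and an easy structural argument.

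First, I would address criterion (4) of \Cref{x_set_def}, which demands that whenever $G$ has no isolated vertices, every subset of $V(G)$ of cardinality $|V(G)|-1$ is an $X$-set. For the independence number, a minimal counterexample is $G = P_3$ with vertex set $\{v_1, v_2, v_3\}$ and edge set $\{v_1v_2, v_2v_3\}$: this graph has no isolated vertices, but the set $\{v_1, v_2\}$ has cardinality $|V(G)|-1 = 2$ and is not independent, since $v_1v_2 \in E(G)$. To support the stronger claim made in the observation that $K_2$ is the unique connected nontrivial graph in which every $(|V(G)|-1)$-subset is independent, I would argue as follows: if $G$ has this property, then for every $w \in V(G)$ the set $V(G)\setminus\{w\}$ is independent, which means every edge of $G$ must be incident to $w$. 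Since this must hold for all $w \in V(G)$, every edge must be incident to every vertex, forcing $|V(G)| \leq 2$. Combined with nontriviality and connectedness, this yields $G \cong K_2$.

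Next, I would address criterion (1), which requires that $X$-sets are closed under taking supersets. For the independence number this fails essentially by definition: take any graph $G$ containing an edge $uv$, and observe that the singleton $\{u\}$ is an independent set, while its superset $\{u, v\}$ is not, since it contains the edge $uv$. The observation's phrasing in terms of maximal independent sets is an equivalent formulation: if $I$ is a maximal independent set and $v \notin I$, then by maximality $v$ is adjacent to some $u \in I$, so $I \cup \{v\}$ contains the edge $uv$ and is not independent.

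The main obstacle is essentially nonexistent, since both failures can be disproved by direct inspection of extremely small graphs. The observation simply records these failures so that the classification in \Cref{TypesTable} is fully justified, and the only minor care required is noting that the counterexample for criterion (4) must itself have no isolated vertices (which $P_3$ does satisfy).
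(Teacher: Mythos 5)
Your proposal is correct and matches the paper, which states this observation without further proof; you simply supply the elementary details (the $P_3$ counterexample, the uniqueness argument for $K_2$, and the edge $uv$ witnessing the failure of superset closure) that the paper leaves implicit.
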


\end{appendices}

\bibliographystyle{plain}
\bibliography{Reconfig}

\section*{Statements and Declarations}
This group was made possible by American Institute of Mathematics virtual research community Inverse Eigenvalue Problem for a Graph and Zero Forcing. We would like to thank AIM for supporting collaboration.
\subsection*{Funding}
The work of Mary Flagg was partially supported by the National Science Foundation through grant no.2331634. \\
\noindent
The authors have no relevant financial or non-financial interests to disclose.
\subsection*{Data Availability}
All data generated or analysed during this study are included in this published article (and its supplementary information files).
\end{document}